\newcommand{\Nystrom}{Nystr\"om}
\DeclareMathOperator{\p}{\pi}
\DeclareMathOperator{\Diag}{diag}
\newcommand{\DiagI}{\Diag^\Mo}
\DeclareMathOperator{\I}{\mathcal{I}}
\renewcommand{\t}{\top}
\newcommand{\Id}{\mathbf{I}}
\newcommand{\Ex}{\mathbb{E}}
\newcommand{\Tr}{\mathrm{Tr}}
\newcommand{\lrp}[1]{{\left( #1 \right) }}
\newcommand{\lrs}[1]{{\left[ #1 \right] }}
\newcommand{\lrb}[1]{{\left \{ #1 \right \} }}
\newcommand{\One}{\mathbf{1}}
\newcommand{\Zero}{\mathbf{0}}
\newcommand{\Eq}{\coloneqq}
\newcommand{\ve}{\varepsilon}
\newcommand{\Oh}{{1/2}}
\newenvironment{eqn}{\begin{equation}\begin{aligned}}{\end{aligned}\end{equation}\ignorespacesafterend}
\DeclareMathOperator*{\argmax}{arg\,max}
\DeclareMathOperator*{\argmin}{arg\,min}
\DeclarePairedDelimiterX{\norm}[1]{\lVert}{\rVert}{#1}
\DeclarePairedDelimiter\Abs{\lvert}{\rvert}%
\newcommand{\Minus}{-}
\newcommand{\Mo}{{\Minus 1}}
\newcommand{\Mh}{{\Minus 1/2}}
\renewenvironment{leftbar}{%
  \AtBeginEnvironment{itemize}{\parskip=0pt\parsep=0pt\partopsep=0pt}
  \MakeFramed {\advance\hsize-\width \FrameRestore}}%
 {\endMakeFramed}
\declaretheoremstyle[headfont=\bfseries,%
spaceabove=2pt,spacebelow=2pt,%
notefont=\bfseries,%
notebraces={}{},%
headpunct=,%
bodyfont=,%
headformat=\color{black}\NAME~\NUMBER:~{\NOTE}\hfill\smallskip\linebreak,%
preheadhook=\begin{leftbar},%
postfoothook=\end{leftbar},%
]{customDefinition}
\declaretheoremstyle[headfont=\bfseries,%
spaceabove=2pt,spacebelow=2pt,%
notefont=\bfseries,%
notebraces={}{},%
headpunct=,%
bodyfont=,%
headformat=\color{black}\NAME \NUMBER:~{\NOTE}\hfill\smallskip\linebreak,%
preheadhook=\begin{leftbar},%
postfoothook=\end{leftbar},%
]{customDefinition2}
\newcommand{\prooflink}[1]{\ifbool{PROVING}{from \Cref{#1:section}}{proof in \Cref{#1:proof}}}
\def\th@plain{%
  \thm@notefont{}
  \itshape 
}
\def\th@definition{%
  \thm@notefont{}
  \normalfont 
}
\declaretheorem[style=customDefinition]{definition}
\declaretheorem[style=customDefinition, numberwithin=section]{lemma}
\declaretheorem[style=customDefinition, numberwithin=section, sibling=lemma]{proposition}
\declaretheorem[style=customDefinition, numberwithin=section, sibling=lemma]{corollary}
\declaretheorem[style=customDefinition2, numbered=no]{Theorem 1A}
\declaretheorem[style=customDefinition2, numbered=no]{Theorem 1B}
\declaretheorem[style=customDefinition2, numbered=no]{Theorem 3*}
\declaretheorem[style=customDefinition2, numbered=no]{Theorem 4A}
\declaretheorem[style=customDefinition2, numbered=no]{Theorem 4B}
\newenvironment{Proof}[1]{\begin{proof} \label{#1:proof}}{\end{proof}}
\newcommand{\ProofLink}[1]{\ifbool{NOLINK}{}{\normalfont \ifbool{STATING}{(proof in \Cref{#1:proof})}{(from \Cref{#1:section})}}}
\def\th@plain{%
  \thm@notefont{}
  \itshape 
}
\def\th@definition{%
  \thm@notefont{}
  \normalfont 
}
\g@addto@macro\normalsize{%
  \setlength\abovedisplayskip{7pt}
  \setlength\belowdisplayskip{7pt}
  \setlength\abovedisplayshortskip{2pt}
  \setlength\belowdisplayshortskip{2pt}
}
\setlist{nosep}
\crefname{algorithm}{Algorithm}{Algorithms}
\crefname{figure}{Figure}{Figures}
\crefname{section}{Section}{Sections}
\crefname{appendix}{Appendix}{Appendices}
\newcommand{\Opt}{\mathcal{O}}
\newcommand{\Rand}{\zeta}
\newcommand{\Orand}{(1+\zeta)}
\newcommand{\Comp}[1]{\overline{#1}}
\newcommand{\h}{h}
\DeclareMathOperator{\R}{\mathbb{R}}
\DeclareMathOperator{\Prob}{\mathbb{P}}
\newcommand{\ra}{\rightarrow}
\newcommand{\Indices}{[n]}
\newcommand{\Ia}{\Abs{\I}}
\DeclareMathOperator{\E}{\mathbb{E}}
\newcommand{\pp}[1]{\left( #1 \right)}
\let\p\relax
\DeclareMathOperator{\p}{\pi}
\newcommand{\Expt}[1]{\exp \lrp{\Minus #1 t}}
\newcommand{\expt}[1]{e^{\Minus #1 t}}
\newcommand{\Ert}[1]{\exp \lrp{#1 t}}
\newcommand{\ert}[1]{e^{#1 t}}
\newcommand{\Orth}{\mathrm{orth}}
\newcommand{\im}{\mathrm{Im}}
\newcommand{\re}{\mathrm{Re}}
\DeclarePairedDelimiterX{\snorm}[1]{\lVert}{\rVert_{2}}{#1}
\DeclarePairedDelimiterX{\nnorm}[1]{\lVert}{\rVert_{*}}{#1}
\DeclarePairedDelimiterX{\Norms}[1]{\lVert}{\rVert_{\{2,*\}}}{#1}
\newcommand{\En}{\varepsilon_*}
\newcommand{\Es}{\varepsilon_2}
\newcommand{\Nus}{\nu_{\{2,*\}}}
\newcommand{\Eps}{\varepsilon_{\{2,*\}}}
\newcommand{\Ob}{\Psi}
\newcommand{\ObAsym}{\UnSym{\Psi}}
\newcommand{\On}{\psi_*}
\newcommand{\Os}{\psi_2}
\newcommand{\Obs}{\psi_{\{2,*\}}}
\newcommand{\Contour}{\mathcal{C}}
\newcommand{\Kill}{\gamma}
\newcommand{\La}{L_\Kill}
\newcommand{\Ka}{K_\Kill}
\newcommand{\KA}{(\Ka)}
\newcommand{\Ca}{\Co_\Kill}
\newcommand{\Va}{V_\Kill}
\newcommand{\ha}{\hr_\Kill}
\newcommand{\pa}{\pr_\Kill}
\newcommand{\Kt}{\tilde{K}}
\newcommand{\Kti}{\tilde{K}_{\I,\I}^\Mo}
\newcommand{\Shift}{\beta}
\newcommand{\Kb}{K_\Shift}
\newcommand{\InducedChain}{\mathcal{M}_{\I}}
\newcommand{\FullChain}{\mathcal{M}}
\newcommand{\MarkedChain}{\mathcal{M}_{\I}^{\circ}}
\newcommand{\Le}{\mathring{L}} 
\newcommand{\Lea}{\mathring{L}_\Kill} 
\newcommand{\Rex}{\mathring{R}} 
\newcommand{\Ke}{\mathring{K}} 
\newcommand{\he}{\mathring{h}} 
\newcommand{\pe}{\mathring{\pi}} 
\newcommand{\hea}{\mathring{h}_\Kill} 
\newcommand{\pea}{\mathring{\pi}_\Kill} 
\newcommand{\Ic}{\Comp{\I}}
\newcommand{\Iminus}[1]{\I \setminus \{#1\}}
\newcommand{\hr}{\hat{h}}
\newcommand{\Lr}{\hat{L}}
\newcommand{\Lrk}{\hat{L}_\Kill}
\newcommand{\Kr}{\hat{K}}
\newcommand{\Sr}{\hat{S}}
\newcommand{\pr}{\hat{\pi}}
\newcommand{\Rr}{\hat{R}}
\newcommand{\TimeScale}{\omega}
\newcommand{\Ct}{\Cu^\sharp}
\newcommand{\Sym}[1]{#1}
\newcommand{\UnSym}[1]{\tilde{#1}}
\newcommand{\Su}{\UnSym{W}} 
\newcommand{\So}{\Sym{W}} 
\newcommand{\Sa}{\So_\Kill} 
\newcommand{\Qu}{\UnSym{Q}} 
\newcommand{\Qo}{\Sym{Q}} 
\newcommand{\Qa}{\Qo_\Kill} 
\newcommand{\ObliqueRes}{\mathcal{S}_{t}}
\newcommand{\OrthRes}{\mathcal{R}_{t}}
\newcommand{\FullRes}{\mathcal{Q}_{t}}
\newcommand{\Cu}{\UnSym{C}}
\newcommand{\Co}{\Sym{C}}
\newcommand{\OrthP}{\Sym{P}}
\newcommand{\NonSymOrthP}{\UnSym{P}}
\newcommand{\ObliqueP}{\Sym{P}^\mathrm{sp}}
\newcommand{\NonSymObliqueP}{\UnSym{P}^\mathrm{sp}}
\newcommand{\SymP}{\Sym{P}}
\newcommand{\FullP}{\UnSym{P}}
\newcommand{\SymQ}{\Sym{Q}}
\newcommand{\FullQ}{\UnSym{Q}}
\newcommand{\SymPhi}{\Sym{\phi}}
\newcommand{\UnSymPhi}{\UnSym{\phi}}
\newcommand{\Ki}{K_{\I,\I}^\Mo}
\newcommand{\Kn}{K_{:,\I} K_{\I,\I}^\Mo K_{\I,:}}
\newcommand{\hi}{h_{\I}}
\newcommand{\NoninterpolativeP}{\tilde{P}}
\newcommand{\Process}[1]{\{#1_t\}_{t \in \mathbb{R}^+}}
\newcommand{\XI}{Z}
\newcommand{\XM}{Y}
\author[1]{Mark Fornace\,\orcidlink{0000-0002-5829-5839}}
\author[1,2]{Michael Lindsey\,\orcidlink{0000-0002-4508-9454}}
\affil[1]{Lawrence Berkeley National Laboratory}
\affil[2]{University of California, Berkeley}
\title{An approximation theory for Markov chain compression}
\begin{document}



\maketitle

\begin{abstract}
We develop a framework for the compression of reversible Markov chains with rigorous error control. Given a subset of selected states, we construct reduced dynamics that can be lifted to an approximation of the full dynamics, and we prove simple spectral and nuclear norm bounds on the recovery error in terms of a suitably interpreted Nystr\"{o}m approximation error. We introduce two compression schemes: a projective compression based on committor functions and a structure-preserving compression defined in terms of an induced Markov chain over the selected states. The Nystr\"{o}m error appearing in our bounds can be controlled using recent results on column subset selection by nuclear maximization. Numerical experiments validate our theory and demonstrate the scalability of our approach.
\end{abstract}

\section{Introduction}
\setcounter{page}{1}

Markov processes \cite{Aldous1995-reversible,Rogers2000-diffusions,Kemeny1976-markov} play a fundamental role across a broad range scientific application areas \cite{Frenkel2023-understanding,Szekely2014-stochastic,Durr2011-lattice}. From a computational point of view, a key difficulty in understanding Markov processes is imposed by the size of the state space, which may be discrete and combinatorially large or continuous and high-dimensional. However, even over such large state spaces, in many scientific contexts the important dynamics over relevant timescales are dominated by only a few slow modes, while the vast majority of modes relax over extremely short timescales. To take advantage of this perspective, Markov state models (MSMs) \cite{Pande2010-everything,Husic2018-markov} have been advanced as a framework for distilling Markovian dynamics into an effective Markov process over only a few ``macrostates'' \cite{Chung2000-discrete,Aldous1995-reversible}. These may be interpretable as a coarse-graining of the original large state space, potentially corresponding to basins of attraction for metastable equilibria. Although the state space is reduced, the MSM can still capture the dynamics over physically relevant timescales \cite{Pande2010-everything}.

Widely-used practical approaches to such Markovian coarse-graining \cite{Pande2010-everything,Park2006-validation,Junghare2023-markov,Junghare2023-markov,Chodera2014-markov,Pande2010-everything,Shukla2015-markov,Schwantes2013-improvements,Husic2018-markov,Park2006-validation,McGibbon2015-variational,Chatterjee2015-uncertainty,Thiede2019-galerkin} generally rely on clustering and exhaustive repeated simulation in order to sample the complete space of interest. It is a significant problem to design algorithms for this task that are automatic and computationally efficient, while at the same time supporting rigorous error control.

We tackle this problem for reversible Markov chains by formulating a notion of reconstruction error relative to the exact Markovian dynamics $P(t) := e^{-Lt}$, where $L$ is a positive semidefinite ``Laplacian'' obtained from the rate matrix, via reversibility, by diagonal similarity transformation. In order to formulate our theory, we focus on the setting of discrete state spaces, though we comment in \Cref{s:contributions} on the applicability of our theory to 
continuous state spaces.  To the best of our knowledge, our algorithms and accompanying error analysis offer the first approximation theory for Markov process compression capable of rigorously quantifying  the reconstruction error for $P(t)$, measured in both the spectral norm and the nuclear or trace norm.

Interestingly, our approximation theory quantifies this reconstruction error in terms of the \Nystrom{} approximation error of $L^{-1}$, suitably interpreted in the sense of \cite{Fornace2024-column} to account for the null vector of $L$ due to the stationary distribution. This connects our approximation theory with the theory of the column subset selection problem (CSSP) \cite{Deshpande2006-adaptive,Belabbas2009-spectral,Gu2004-strong,Steinerberger2024-randomly,Civril2009-selecting,Williams2000-using,Derezinski2021-determinantal,Guruswami2012-optimal,Cortinovis2024-adaptive,Chaturantabut2010-nonlinear,Mahoney2009-cur}, which typically targets the nuclear norm error for \Nystrom{} approximation \cite{Fornace2024-column,Chen2022-randomly,Zhang2025-accurate}. In particular, the recently introduced nuclear maximization algorithm for Laplacian reduction \cite{Fornace2024-column} is known to perform nearly as well as the \emph{combinatorially optimal} subset selection, which is striking since CSSP is NP-complete in general \cite{Shitov2021-column}. The theory of this paper, together with the nuclear maximization approach to CSSP, then offers practical algorithms for Markov chain compression, enjoying almost-linear scaling \cite{Fornace2024-column} with respect to the number of edges in the Markov chain Laplacian.

\subsection{Contributions} \label{s:contributions}
We outline our technical contributions by highlighting our main theorems and their interpretations.

First in \Cref{s:compression}, we consider the setting of a general $n \times n$ positive definite matrix $L$ without any Laplacian structure. For a selected subset $\I \subset [n] := \{1,\ldots, n\}$ of indices, we also define the compression $P_{\I} (t) = V e^{-(V^\t L V) t} V^\t$ 
where $\Eps (\I)$ is the error of the \Nystrom{} approximation of the kernel $K := L^{-1}$ furnished by the selected subset $\I$, measured in the spectral and nuclear norms, and $V = \mathrm{orth}(K_{:,\I})$. Then we can bound the compression error as follows:

\begin{restatable}[Error bound for compression of dissipative dynamics \ProofLink{th:dissipative}]{theorem}{ThDissipative} \label{th:dissipative}
    For positive definite $L$, nonempty $\I$, and $t>0$:
    \begin{eqn}
        \Norms{\SymP(t) - \OrthP_{\I}(t)} \leq \frac{3 \sqrt{3}}{2 \pi} \frac{\Eps(\I)}{t}.
    \end{eqn}
\end{restatable}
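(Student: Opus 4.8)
The plan is to combine a holomorphic functional-calculus representation of the two semigroups with a single algebraic identity that ties the compressed generator to the \Nystrom{} approximation. Write $\hat{K} := K_{:,\I}K_{\I,\I}^{-1}K_{\I,:}$ for the \Nystrom{} approximation of $K = L^{-1}$, so that $K - \hat{K}\succeq 0$ and $\Norms{K-\hat{K}} = \Eps(\I)$. The key observation is that, since $V = \mathrm{orth}(K_{:,\I})$, we may write $K_{:,\I} = VR$ with $R := V^\t K_{:,\I}$ invertible (as $K$ is positive definite), hence $V = K_{:,\I}R^{-1}$, and therefore
\[
  V^\t L V = R^{-\t}\bigl(K_{\I,:}\,L\,K_{:,\I}\bigr)R^{-1} = R^{-\t}K_{\I,\I}R^{-1},
\]
because $K_{\I,:}LK_{:,\I} = (KLK)_{\I,\I} = K_{\I,\I}$ using $KLK = K$. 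Consequently $V(V^\t L V)^{-1}V^\t = K_{:,\I}K_{\I,\I}^{-1}K_{\I,:} = \hat{K}$: the compressed generator, inverted, is exactly the \Nystrom{} approximation. Thus if $\Gamma$ is any positively oriented contour enclosing $\mathrm{spec}(K)\cup\bigl(\mathrm{spec}(\hat{K})\setminus\{0\}\bigr)\subset(0,\infty)$ but not the origin (the exclusion of $0$ is forced by the essential singularity of $w\mapsto e^{-t/w}$ there), then, using $L = K^{-1}$,
\[
  \SymP(t) = \frac{1}{2\pi i}\oint_\Gamma e^{-t/w}(wI-K)^{-1}\,dw,
  \qquad
  \OrthP_{\I}(t) = \frac{1}{2\pi i}\oint_\Gamma e^{-t/w}(wI-\hat{K})^{-1}\,dw,
\]
the second identity holding because $\Gamma$ encircles the nonzero eigenvalues of $\hat{K}$ but not its null eigenvalue.

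Subtracting and applying the resolvent identity $(wI-K)^{-1}-(wI-\hat{K})^{-1} = (wI-K)^{-1}(K-\hat{K})(wI-\hat{K})^{-1}$ yields
\[
  \SymP(t)-\OrthP_{\I}(t) = \frac{1}{2\pi i}\oint_\Gamma e^{-t/w}\,(wI-K)^{-1}(K-\hat{K})(wI-\hat{K})^{-1}\,dw.
\]
I would then pull $\Eps(\I)$ out using submultiplicativity of the spectral and nuclear norms under left/right multiplication by the $2$-norm-bounded resolvents ($\Norms{ABC}\le\snorm{A}\,\Norms{B}\,\snorm{C}$), reducing the theorem to the one-dimensional estimate
\[
  \frac{1}{2\pi}\oint_\Gamma \bigl|e^{-t/w}\bigr|\;\frac{1}{\mathrm{dist}(w,\mathrm{spec}(K))}\;\frac{1}{\mathrm{dist}(w,\mathrm{spec}(\hat{K}))}\,|dw| \;\le\; \frac{1}{t}\cdot\frac{3\sqrt{3}}{2\pi}.
\]
Equivalently, changing variables $w = 1/z$, the same quantity is $\tfrac{1}{2\pi}\oint_{\Gamma_z}\bigl|e^{-zt}\bigr|\,\snorm{(zI-L)^{-1}L}\,\snorm{(I-z\hat{K})^{-1}}\,|dz|$ over a contour $\Gamma_z$ encircling $\mathrm{spec}(L)$; on the closed left half-plane $\{\re z\le 0\}$ both matrix factors are bounded by $1$, which is the structural reason the final constant will not depend on the spectrum of $L$.

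The remaining and main step is to choose $\Gamma$ (equivalently $\Gamma_z$) optimally and carry out this scalar integral. I expect the extremal $\Gamma$ to be a bounded circle whose radius is calibrated to $t$ and to the (a priori unknown) spectral window, so that the integral balances the exponential growth of $|e^{-t/w}|$ on the far side of the contour against the quadratic resolvent decay and the contour length; tuning the single free scale parameter should then collapse the bound to the stated pure number, the identity $\inf_{x>0}(1+x)^{3/2}/x = 3\sqrt{3}/2$ (attained at $x=2$) indicating the shape of that optimization. The delicate points are (i) legitimizing the contour representations and their convergence in view of the essential singularity at $w = 0$ — handled by keeping $\Gamma$ bounded away from the origin and, after the change of variables, working on a bounded contour in the $z$-plane so that the slowly-decaying resolvent factors cause no trouble — and (ii) verifying that the worst case of the scalar optimization, taken over all admissible locations of $\mathrm{spec}(L)$, is exactly $\tfrac{3\sqrt3}{2\pi}/t$ rather than something larger.
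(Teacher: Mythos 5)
Your setup is exactly the paper's: you represent both $\SymP(t)$ and $\OrthP_{\I}(t)$ as Cauchy integrals of resolvents and reduce to the key algebraic identity $V(V^\t L V)^{-1}V^\t = K_{:,\I}K_{\I,\I}^{-1}K_{\I,:}$, which is the paper's \Cref{lem:refactor}; your proof of it via $V = K_{:,\I}R^{-1}$ and $K_{\I,:}LK_{:,\I}=K_{\I,\I}$ is correct and in fact slightly cleaner than the paper's. The subtraction and resolvent identity, and the extraction of $\Eps(\I)$ by H\"older submultiplicativity, are likewise right. A small but important refinement you are missing: rather than $1/\mathrm{dist}(w,\mathrm{spec})$ you should use $\snorm{(wI-A)^{-1}}\le |\im(w)|^{-1}$ for any real symmetric $A$; this is what makes the scalar estimate genuinely spectrum-free, which your distance-based quantity is not (your left-half-plane bound-by-$1$ observation does not help, since the contour must enclose the positive real spectrum and hence cannot lie in the left half-plane).

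The real gap is in what you call ``the remaining and main step.'' Your guess that the extremal contour is a bounded circle is not just unproven, it is wrong, and a circle cannot give the result. A circle enclosing $\mathrm{spec}(K)$ that stays in the right half-plane is tangent to (or crosses) the positive real axis near $w=0$ and near the top of the spectrum; at those touching points the factor $|\im(w)|^{-2}$ has a nonintegrable singularity, while $|e^{-t/w}|$ is bounded away from zero there (indeed $\re(1/w)$ is \emph{constant} on the circle through $0$ and $M$, so $|e^{-t/w}|\equiv e^{-t/M}$), so the integral diverges. The contour that works is an unbounded V-shaped wedge with vertex approaching the origin at half-opening angle $\theta$ with $\cos\theta = 1/\sqrt 3$ (this is where your $\inf_{x>0}(1+x)^{3/2}/x = 3\sqrt3/2$ actually arises, under $\alpha^2=1/(1+x)$, $\alpha=\cos\theta$). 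What makes the vertex admissible despite the essential singularity of $e^{-1/z}$ at zero and the singularity of $|\im(z)|^{-2}$ on the real axis is that the double zero of $|e^{-1/z}|$ at the origin exactly cancels the double pole; and what makes the unbounded rays admissible (the integrand decays only like $|\im(z)|^{-2}$ and $|e^{-1/z}|\to 1$) is a separate limiting argument capping the V with a crossbar whose contribution vanishes, split as in the paper into near- and far-from-axis pieces. Neither of these two delicate limit arguments, nor the crucial $|\im(w)|^{-1}$ bound, appears in your sketch; together they are the substance of the proof, so the proposal as written does not yet constitute a proof.
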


\noindent Moreover, Theorem \hyperlink{link.Theorem1A}{1A} and Theorem \hyperlink{link.Theorem1B}{1B} extend this theorem to more general choices of $V$ not obtained by \Nystrom{} approximation, as well as to the case of singular $L$.

The result of \Cref{th:dissipative} can be intuitively understood
as follows. For simplicity, we will directly address the intuition
of the spectral norm bound, though the nuclear norm bound can be understood
similarly.
First, note that since $\Vert \SymP(t)\Vert_{2},\Vert \OrthP_{\I}(t)\Vert_{2}\leq1$,
we automatically have a trivial bound $\Vert \SymP(t)-\OrthP_{\I}(t)\Vert_{2}\leq2$
which supersedes the bound of \Cref{th:dissipative} in the
$t\ra0$ limit. Thus the theorem has no content in this limit. 
This is not an artifact of the proof: in fact the approximation error is simply not controlled by $\Es=\Es(\I)$ in the regime of very small $t$, when fast modes not captured in the approximation are yet to relax to equilibrium.

The error is only guaranteed to become small once $t\gg\Es$,
though $\Es$ may indeed be quite small if our \Nystrom{} approximation
is successful. We can think of $\Es$ as the slowest timescale \emph{not} captured in the \Nystrom{} approximation of $K$,
noting that slow timescales in the dynamics (\ref{eq:dynamics}) correspond to large eigenvalues in $K$, which are prioritized by algorithms for \Nystrom{} approximation. We remind the reader that for many systems of interest,
the number of slow timescales, which typically correspond to rare
transition events among metastable equilibria, is quite small compared
to the number of fast timescales.

In summary, the reduced matrix $\OrthP_{\I}(t)$ accurately reproduces the
dynamics on all but the fastest timescales, according to which there
may be a short transient regime in which $\OrthP_{\I}(t)$ disagrees with
$\SymP(t)$.
Finally, observe that for very large times $t\gg \Vert K \Vert_2$, both $\SymP(t)$ and $\OrthP_{\I}(t)$ become exponentially small, so our bound
is once again superseded in this regime. 
However, this regime kicks
in precisely once all the interesting dynamics have already been completed.
We should think of $\Vert K \Vert_2$ (the slowest timescale) as
being arbitrarily large, and we should avoid using $ \Vert K \Vert_2 $
in our analysis.

In \Cref{s:projective}, we move to the setting of Markov dynamics, where $L$ is a positive semidefinite Laplacian in a suitable sense that we define, associated with an arbitrary irreducible reversible Markov chain over the state space $[n]$. In this setting we introduce a suitable generalization of the notions of \Nystrom{} approximation error $\Eps (\I)$, as well as a suitable generalization of $P_{\I} (t)$ as defined above, to accommodate the nontrivial null space of $L$. However, the Markovian structure of the problem can be used to write alternatively $P_{\I} (t) = C e^{-(C^+ L C) t} C^+$, where $C$ is a suitably defined ``committor matrix'' (cf. \Cref{def:killed-committor}) associated to the selected states $\I$. In this context, we refer to $P_{\I} (t)$ as the \emph{projective compression} of the exact dynamics $P(t)$. In terms of these objects, we can phrase an approximation theorem which looks almost identical to \Cref{th:dissipative}:

\begin{restatable}[Error bound for projective Markov chain compression \ProofLink{th:ortho-markov}]{theorem}{ThOrthoMarkov} \label{th:ortho-markov}
    For a Laplacian $L$, nonempty $\I$, and $t > 0$: 
    \begin{eqn} \label{eq:ortho-markov-norm-bound}
    \Norms{\SymP(t)-\OrthP_{\I}(t)} \leq \frac{3\sqrt{3}}{2\pi}\,\frac{\Eps (\I)}{t}.
    \end{eqn}
\end{restatable}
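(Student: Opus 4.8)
The plan is to derive \Cref{th:ortho-markov} by reducing it to the singular-matrix extension of \Cref{th:dissipative} (Theorem~1B): the two bounds have the identical constant and the same $1/t$ decay, so one expects no new analytic input, only bookkeeping that matches the Laplacian objects to the abstract ones. Recall that a Laplacian $L$ is positive semidefinite with a one-dimensional null space spanned by $\sqrt{\pi}$ (the entrywise square root of the stationary distribution), and that on the orthogonal complement $W := \sqrt{\pi}^{\perp}$ it is genuinely positive definite. Both $\SymP(t) = e^{-Lt}$ and the compression $\OrthP_\I(t)$ fix $\sqrt{\pi}$, and therefore---being symmetric---preserve $W$; hence $\SymP(t) - \OrthP_\I(t)$ is supported on $W$, where the analysis of \Cref{th:dissipative} and its generalizations take place with $L|_W$ positive definite and $K|_W = (L|_W)^{-1}$ playing the role of the kernel. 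That $\OrthP_\I(t)$ fixes $\sqrt{\pi}$ uses that $\sqrt{\pi}$ lies in the column span of the compression frame (and that the frame has full column rank), which is how $\OrthP_\I(t)$ is defined in \Cref{s:projective}.

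Two identifications then complete the argument. First, the committor representation $\OrthP_\I(t) = C e^{-(C^+ L C)t} C^+$ coincides with the frame representation $V e^{-(V^\t L V)t} V^\t$ underlying Theorem~1B: if $C$ has full column rank $|\I|$ and $\mathrm{range}(C) = \mathrm{range}(V)$, then $C = VR$ for a square invertible $R$, so $C^+ = R^{-1} V^\t$, and by similarity invariance of the matrix exponential
\begin{eqn}
C e^{-(C^+ L C)t} C^+ \;=\; V R\, e^{-(R^{-1}(V^\t L V) R)t} R^{-1} V^\t \;=\; V e^{-(V^\t L V)t} V^\t .
\end{eqn}
So it suffices that $\mathrm{range}(C) = \mathrm{range}(V)$ with $\mathrm{rank}(C) = |\I|$; here the Markovian structure is used---the columns of $C$ are the committors to the selected states, $L$-harmonic off $\I$ and summing (after the stationary rescaling defining $C$) to $\sqrt{\pi}$, so $\sqrt{\pi} \in \mathrm{range}(C)$, and the killed-committor identities behind \Cref{def:killed-committor} pin down the remainder of $\mathrm{range}(C)$ as the Nyström column space of $K$ in the vanishing-killing limit $\Kill \to 0$. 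Second, the generalized error $\Eps(\I)$ of \Cref{s:projective} equals the $\{2,*\}$-norm Nyström error appearing in Theorem~1B for $L|_W$ and $K|_W$---essentially the definition of $\Eps(\I)$ in the sense of \cite{Fornace2024-column}, accounting for the null vector.

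I expect the main obstacle to be the range identity $\mathrm{range}(C) = \mathrm{range}(V)$, proved uniformly as $\Kill \to 0$: one must show that the killed committor matrix spans exactly the subspace used to build the generalized Nyström compression and that no rank is lost in the limit, which is precisely where the harmonic/killed-chain machinery of \Cref{s:projective} does the real work. Once this and the error identification are in hand, the spectral and nuclear bounds---together with the shared constant---transfer verbatim from Theorem~1B. Alternatively, one could bypass Theorem~1B and rerun the resolvent/functional-calculus estimate of \Cref{th:dissipative} directly on $W$, but this merely re-proves what the reduction already supplies.
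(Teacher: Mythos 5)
Your reduction to Theorem~1B runs into an obstruction that the paper itself flags and that turns out to be real, not cosmetic: the quantity $\Nus = \Norms{K - V(V^\t L V)^+ V^\t}$ appearing in Theorem~1B (with $V = \lim_{\Kill\ra 0}\mathrm{orth}([K_\Kill]_{:,\I}) = \mathrm{orth}(C)$) is \emph{not} equal to $\Eps(\I) := \lim_{\Kill\ra 0}\Norms{\Ka - (\Ka)_{:,\I}(\Ka)_{\I,\I}^\Mo(\Ka)_{\I,:}}$. The paper states this explicitly immediately after Theorem~1B: ``note with caution that the limiting Nystr\"om error $\Eps$ obtained as $\gamma \ra 0$ does \emph{not} coincide with $\Nus$ furnished by Theorem~1B.'' You assert the opposite identity. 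To see the discrepancy concretely, take the triangle graph $L = 3\Id - \One\One^\t$, $\I=\{1\}$: then $C = h = \One/\sqrt{3}$, $V^\t L V = 0$, so $V(V^\t L V)^+V^\t = 0$ and $\Nus = \Norms{K}$, giving $\nu_2 = 1/3$ and $\nu_* = 2/3$; whereas the Schur-complement form of the killed Nystr\"om residual gives $\Eps(\I) = \Norms{(L_{\Ic,\Ic})^\Mo}$ with $\Es = 1$, $\En = 4/3$. The source of the mismatch is that for $\Kill > 0$ the refactoring $\KA_{:,\I}\KA_{\I,\I}^\Mo\KA_{\I,:} = V_\Kill(V_\Kill^\t \La V_\Kill)^\Mo V_\Kill^\t$ (from \Cref{lem:refactor}) holds exactly, but the divergent mode ($h$ for $K_\Kill$, $V^\t h$ for $V_\Kill^\t L_\Kill V_\Kill$) is cancelled against $K_\Kill$ in the first expression before the limit, whereas the Moore--Penrose pseudoinverse in Theorem~1B simply annihilates it; the two limits differ by the finite-rank remainder of that cancellation, which is exactly the second term in the expression for $\Eps$ in \Cref{th:markov-norm-limits}.

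The paper's actual proof of \Cref{th:ortho-markov} sidesteps this altogether. It stays at finite $\Kill > 0$, where $\La$ is genuinely positive definite, applies \Cref{th:dissipative} directly to $\La$ and $\OrthP_{\I,\Kill}(t)$, obtains the bound with constant $\Eps^\Kill$, and then uses the elementary relations $\SymP_\Kill(t) = \expt{\Kill}\SymP(t)$ and $\OrthP_{\I,\Kill}(t)\ra\OrthP_{\I}(t)$ in a triangle inequality before sending $\Kill\ra 0$. Since $\Eps^\Kill \ra \Eps$, this produces precisely the stated quantity. Your route does prove a valid bound---but in terms of $\Nus$, not $\Eps(\I)$; to deduce \Cref{th:ortho-markov} you would then need a separate proof that $\Nus \leq \Eps(\I)$, which (though suggested by the example above) is not established in the paper and is not claimed or argued in your proposal. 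The concern you flagged as the ``main obstacle'' (the range identity $\mathrm{range}(C)=\mathrm{range}(V)$) is, by contrast, handled cleanly in \Cref{th:projective-compression} and is not where the difficulty lies.
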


In \Cref{s:structure-preserving}, we consider an alternative compression $\ObliqueP_{\I}(t)$ of the Markovian dynamics $P(t)$ that is \emph{structure-preserving} in the sense that (1) it can be interpreted in terms of a Markov process over $\I$, governed by a Laplacian $\Lr$, and (2) $\ObliqueP_{\I}(t)$ is entrywise nonnegative, corresponding via similarity transformation to a transition probability matrix that is row stochastic. We refer to the Markov process over $\I$ as the \emph{induced chain}, for which we offer several probabilistic interpretations and constructions. Ultimately, the structure-preserving compression admits the simple formula $\ObliqueP_{\I}(t) = \Co \expt{\Lr} \Co^\t$.

Although the formula for $\ObliqueP_{\I}(t)$ appears similar to that of $P_{\I} (t)$, the analysis of its error is significantly more delicate and much more deeply rooted in probabilistic perspectives. In particular, for our analysis we must construct a Markov chain over an augmented state space, based on marking the original Markov process with a label according to the last state visited among those in $\I$. We call this the \emph{marked chain} by analogy to the marked point process concept in the literature \cite{Jacobsen2005-point,Aldous1995-reversible}. It offers a useful perspective both on the construction of our structure-preserving compression as well as its error analysis. This analysis furnishes the following theorem:

\begin{restatable}[Error bound for structure-preserving Markov chain compression \ProofLink{th:sp-nuclear-bound}]{theorem}{ThSpNuclearBound} \label{th:sp-nuclear-bound}
    For a Laplacian $L$, nonempty $\I$, and $t>0$:
    \begin{eqn} \label{eq:thm3bound}
        \nnorm{\ObliqueP_{\I}(t) - \SymP(t)} \leq \lrp{\frac{3 \sqrt{3}}{2\pi} + \Ia \frac{2}{\pi}} \frac{\En (\I) }{t}.
    \end{eqn}
\end{restatable}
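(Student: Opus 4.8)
The additive shape of the target bound --- the universal constant $\CC$ that already appears in \Cref{th:dissipative,th:ortho-markov}, plus a genuinely new term proportional to $\Ia$ --- suggests routing the estimate through the projective compression $\OrthP_{\I}$ and controlling the gap between the two compressions separately. By the triangle inequality,
\[
\nnorm{\ObliqueP_{\I}(t) - \SymP(t)} \;\le\; \nnorm{\OrthP_{\I}(t) - \SymP(t)} \;+\; \nnorm{\ObliqueP_{\I}(t) - \OrthP_{\I}(t)},
\]
and the nuclear-norm instance of \Cref{th:ortho-markov} bounds the first term by $\CC\,\En(\I)/t$. The entire remaining task is therefore to show $\nnorm{\ObliqueP_{\I}(t) - \OrthP_{\I}(t)} \le \Ia\,\tfrac{2}{\pi}\,\En(\I)/t$.

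The first observation is that this difference is low rank. Using the committor forms $\OrthP_{\I}(t) = \Co\,\expt{(\Co^{\pinv}L\Co)}\Co^{\pinv}$ and $\ObliqueP_{\I}(t) = \Co\,\expt{\Lr}\Co^{\t}$, both operators --- and their transposes --- have all of their range inside the $\Ia$-dimensional committor subspace $\operatorname{col}(\Co)$, so $\ObliqueP_{\I}(t) - \OrthP_{\I}(t)$ has rank at most $\Ia$ and $\nnorm{\ObliqueP_{\I}(t) - \OrthP_{\I}(t)} \le \Ia\,\snorm{\ObliqueP_{\I}(t) - \OrthP_{\I}(t)}$. It thus suffices to prove the spectral bound $\snorm{\ObliqueP_{\I}(t) - \OrthP_{\I}(t)} \le \tfrac{2}{\pi}\,\En(\I)/t$. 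For this I would use an inverse-Laplace representation: along a vertical contour $\operatorname{Re}(z) = c > 0$, the difference equals $\tfrac{1}{2\pi i}\int e^{zt}\,\bigl[\Co(z\Id+\Lr)^{-1}\Co^{\t} - \Co(z\Id+\Co^{\pinv}L\Co)^{-1}\Co^{\pinv}\bigr]\,dz$, and I would bound the bracketed resolvent difference, uniformly in $z$ along the contour, by the suitably interpreted \Nystrom{} error of $K := L^{-1}$. Feeding this into the resulting scalar optimization --- of the same flavour as the one producing $\CC$ in the proof of \Cref{th:dissipative} --- then yields both the $1/t$ decay and the explicit constant $2/\pi$.

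The heart of the matter, and the reason the structure-preserving analysis is intrinsically more probabilistic than the projective one, is this resolvent comparison. Here I would bring in the marked chain on the augmented state space $[n]\times\I$ obtained by labelling the trajectory with the last state it visited among those in $\I$: its dynamics project onto the original chain under $[n]$-marginalization and onto the induced chain governed by $\Lr$ under an appropriate reduction, and this is the bridge connecting $(z\Id+\Lr)^{-1}$ --- and the oblique map $\Co(\cdot)\Co^{\t}$ around it --- to the \Nystrom{} approximant $\Kn$ of $K$. I expect two subtleties to dominate the work: (i) reconciling the oblique factor $\Co^{\t}$ in $\ObliqueP_{\I}$ with the pseudoinverse factor $\Co^{\pinv}$ in $\OrthP_{\I}$, which I would handle via the harmonicity of the committor columns (they are $L$-harmonic off $\I$), absorbing the mismatch into the same \Nystrom{} error term; and (ii) the common null vector of $L$, $\Co^{\pinv}L\Co$, and $\Lr$ coming from the stationary distribution, which forces one to track the $z\to 0$ behaviour of both resolvents so that their $1/z$ poles cancel and the limit is governed by the \Nystrom{} error in the sense of \cite{Fornace2024-column}. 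The factor $\Ia$ in the final bound is then exactly the price of converting the $z$-uniform spectral estimate on this rank-$\Ia$ correction back into a nuclear-norm estimate, while the $\CC$ term is inherited untouched from \Cref{th:ortho-markov}.
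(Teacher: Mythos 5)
Your high-level decomposition is the same as the paper's: triangle inequality through the projective compression $\OrthP_{\I}(t)$, inherit the $\CC\,\En/t$ term from \Cref{th:ortho-markov}, and then bound $\nnorm{\ObliqueP_{\I}(t)-\OrthP_{\I}(t)}$ by a contour-integral argument that leans on the marked chain to orthogonalize. But the step where you propose to bound the resolvent difference ``uniformly in $z$\dots by the suitably interpreted \Nystrom{} error of $K$'' is where the proposal breaks. The paper's resolvent bound (\Cref{th:resolvent-obliqueness}) is \emph{not} in terms of the \Nystrom{} error: it is in terms of the obliqueness quantity
\[
\Ob_\Kill \;=\; \Lrk^\Mh\,(\Id-\Ca^\t\Ca)\,\Lrk^\Mh,
\]
which measures the non-orthogonality of the committor columns \emph{reweighted by the slow modes of the reduced chain}. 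That is a genuinely different object from $K-\Kn$, and nothing in the contour-integral machinery converts one into the other for free. The paper must prove \emph{separately}, via the renewal/cycle argument of \Cref{th:probabilistic-obliqueness-factor}, that $\On(\I)\le\Ia\,\En(\I)$; the factor $\Ia$ in the theorem comes from summing $\Ia$ diagonal contributions of the obliqueness trace, each of which is bounded probabilistically by $\En(\I)$, not from a rank-to-nuclear conversion.

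Your rank observation is correct as stated ($\ObliqueP_{\I}(t)-\OrthP_{\I}(t)$ has rank $\le\Ia$, so $\nnorm{\cdot}\le\Ia\snorm{\cdot}$), but it does not rescue the argument. What the paper actually establishes is $\snorm{\ObliqueP_{\I}(t)-\OrthP_{\I}(t)}\le\tfrac{2}{\pi}\,\Os(\I)/t$, and all that is known about $\Os$ relative to $\En$ is $\Os\le\On\le\Ia\,\En$. Feeding that through your rank reduction produces $\nnorm{\ObliqueP_{\I}(t)-\OrthP_{\I}(t)}\le\tfrac{2}{\pi}\,\Ia^2\,\En/t$, which is a factor of $\Ia$ worse than \eqref{eq:thm3bound}. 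The claimed spectral bound with $\En$ in place of $\Os$ is unsupported, and absent a new argument for $\Os(\I)\le\En(\I)$ (which the paper does not prove and which does not follow from its lemmas) the proposed route yields only the weaker $\Ia^2$ statement. Your instincts about the marked chain, the oblique-versus-pseudoinverse mismatch, and the null-space cancellation are all in the right direction, but the true bottleneck is the probabilistic obliqueness-to-\Nystrom{} comparison, and your proposal has no substitute for it.
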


\booltrue{STATING}

\Cref{th:sp-nuclear-bound} successfully bounds the nuclear norm approximation error in terms of $\ve_* (\I)$, which can in turn be controlled \emph{a priori} via the aforementioned results on column selection via nuclear maximization \cite{Fornace2024-column}. However, our path to proving \Cref{th:sp-nuclear-bound}  also yields Theorem \hyperlink{Theorem3*}{3*}, which expresses both the spectral and nuclear norm approximation errors in terms of an intermediate quantity that can be calculated \emph{a posteriori} for any chosen subset $\I$. We observe in practice that the bounds furnished by Theorem \hyperlink{Theorem3*}{3*} are approximately tight and moreover quantitatively similar to those of \Cref{th:ortho-markov}.

In \Cref{s:algs}, we explain how the nuclear maximization algorithm can be used to compute a subset $\I$ with guaranteed bounds on $\ve_{*} (\I)$. Via the aforementioned  \Cref{th:dissipative,th:ortho-markov,th:sp-nuclear-bound}, we can in turn use these bounds to control the error in our compressions of $P(t)$. In particular, Theorem \hyperlink{Theorem4A}{4A}, restated from \cite{Fornace2024-column}, shows that the subset $\I$ furnished by nuclear maximization is nearly optimal. Moreover, Theorem \hyperlink{Theorem4B}{4B}, which we prove in this work as a slight extension of \cite{Fornace2024-column}, offers an \emph{a priori} bound on $\ve_{*} (\I)$ in terms of the spectrum of $K = L^+$. 

Overall, our analysis involves a combination of techniques from complex analysis and linear algebra, as well as probabilistic techniques. In addition to their role in the analysis itself, our constructions and characterizations of the induced and marked chains may be of independent interest, as they convey novel intuitions about the construction of Markov state models.

In future work, we plan to extend this work to continuous state spaces, for example, via an initial basis projection step, leveraging Theorem \hyperlink{link.Theorem1B}{1B} to reduce the problem to a discrete and possibly large but enumerable state space. Further reduction via the \Nystrom{} techniques and analysis of this paper may then be employed.

\subsection{Related work}

We are motivated by the widespread interest in Markov state models (MSMs) in computational chemistry applications. Here we review some of the existing methods and available analysis. First, \cite{Sarich2010-approximation} states a bound of the approximation error of an MSM within the reduced subspace in the long-time limit, while later work by some of the same authors analyzes the approximation of an MSM's slowest eigenvalues \cite{Djurdjevac2012-estimating}.
Other authors \cite{Hummer2015-optimal,Kells2020-correlation} constructed reduced kinetic models based on mean first passage times within the original dynamics, the latter yielding a variational formulation based on the Kemeny constant of the Markov chain.
Additionally, \cite{Schutte2011-markov} and \cite{Berezhkovskii2019-committors} include a number of results related to the induced chain and marked chain that we define in this work, although the approximation strategies are different. 
In \cite{Yao2013-hierarchical}, a hierarchical MSM construction based on spectral clustering is introduced.
Other researchers have focused on efficient algorithms for Markov chain ``lumping'' (e.g., \cite{Valmari2010-simple,Derisavi2003-optimal}) on discrete spaces.
These methods seek to partition the state space into clusters such that the outgoing transition probabilities are approximately constant on each cluster.

We also review some related computational methods for characterizing the long-timescale properties of Markov chains without explicitly constructing MSMs. 
Galerkin projection methods \cite{Thiede2019-galerkin} use a basis set to form a low-rank approximation, not necessarily interpretable as a Markov state model. 
The variational approach to conformational dynamics (VAC) \cite{Webber2021-error,Lorpaiboon2020-integrated} estimates the slowest eigenmodes of a Markov transition operator, constructed by measurement of an autocorrelation matrix at a fixed lag time $\tau$. Up to estimation errors due to stochastic simulation, VAC is equivalent to the Rayleigh-Ritz method.
\cite{Webber2021-error} analyzes the errors due to basis projection and stochastic simulation, with a focus on the large $\tau$ regime. 
Meanwhile, milestoning methods \cite{Faradjian2004-computing,Vanden-Eijnden2008-assumptions,Elber2021-modeling} seek to accelerate or analyze trajectory sampling by recording transition times and probabilities between selected conformations.
Relevant theoretical efforts (e.g., \cite{Schutte2011-markov,Lin2018-mathematical}) involve committor analysis and marking constructions similar to those considered in this work.

To the best of our knowledge, no previous work bounds the recovery error for the full dynamics $P(t) = e^{-Lt}$. Notably, our bounds do not depend any notion of spectral gap. Recall that MSMs are often based on the intuition of metastable macrostates, for which the timescales of mixing within a macrostate are much faster than those of mixing between macrostates.
One of the advantages of our theory is that the approximation error bounds can be stated simply through the lens of low-rank approximation, without involving more complicated formulations of metastability or scale separation. 
Indeed, note that augmentation of the column subset can only decrease the \Nystrom{} error, regardless of how poorly conditioned the columns may be.

Perhaps the work most closely related to our perspective concerns the approximation of a sum of exponential decays by a single exponential function \cite{Keilson1979-rarity,Brown1983-approximating}.
Such results can be used to show that the exit time distribution of a Markov chain starting in an approximately metastable set is approximately exponential  \cite{Aldous1992-inequalities,Aldous1995-reversible}.

\subsection{Outline}

Here is an outline of the paper. Refer above to \Cref{s:contributions} for further context.

\noindent In \textbf{\Cref{s:compression}}, we consider the case of a general positive definite $L$.

\begin{adjustwidth}{2em}{0em}
\begin{description}
    \item [\Cref{s:compnystrom}:] We introduce the notion of compression via \Nystrom{} approximation.
    \item [\Cref{s:contour}:] We prove \Cref{th:dissipative} on the compression error, offering a sketch in the main text that illustrates the contour integration argument that underlies the proofs of all of the main theorems.
    \item [\Cref{s:non-nystrom}:] We prove extensions (Theorem \hyperlink{link.Theorem1A}{1A} and Theorem \hyperlink{link.Theorem1B}{1B}), generalizing to non-\Nystrom{} approximation and allowing $L$ to be singular.
\end{description}
\end{adjustwidth}

\noindent In \textbf{\Cref{s:projective}}, we consider the \emph{projective} compression of $P(t)$ where $L$ is obtained from a Markov process.
\begin{adjustwidth}{2em}{0em}
\begin{description}
    \item [\Cref{s:symmetrization}:] We explain how a positive semidefinite $L$, which we call a Laplacian in a sense that we define, can be obtained from a reversible Markov chain.
    \item [\Cref{s:killedchain}:] We define the ``killed'' Markov chain, which allows us to view $L$ as a limit of strictly positive definite killed Laplacians $L_\Kill$ as $\gamma \ra 0$.
    \item [\Cref{s:committor}:] We define the committor functions induced by the selection $\I$ and explain how they arise from the limit of \Nystrom{} approximations in the $\gamma \ra 0$ limit.
    \item [\Cref{s:projapproxbounds}:] We prove \Cref{th:ortho-markov} on the projective compression error.
\end{description}
\end{adjustwidth}

\noindent In \textbf{\Cref{s:structure-preserving}}, we consider the \emph{structure-preserving} compression of the Markov process.
\begin{adjustwidth}{2em}{0em}
\begin{description}
    \item [\Cref{s:induced}:] We define the induced chain over the selected states $\I$ and offer several probabilistic interpretations.
    \item [\Cref{s:lifting}:] We explain how the induced chain dynamics are lifted to a structure-preserving approximation of $P(t)$ on the full state space.
    \item [\Cref{s:marked}:] We construct the marked chain, prove several of its key properties, and show how the induced chain can be constructed as a projection of the marked chain.
    \item [\Cref{s:oblique-approximation}:] We use the marked chain projection to prove our approximation results (\Cref{th:sp-nuclear-bound} and Theorem \hyperlink{Theorem3*}{3*}) for the structure-preserving approximation.
\end{description}
\end{adjustwidth}

\noindent In \textbf{\Cref{s:algs}}, we show how the nuclear norm \Nystrom{} error $\ve_{*} (\I)$ can be controlled by choosing the subset $\I$ by nuclear maximization \cite{Fornace2024-column}. Our results are summarized in Theorem \hyperlink{Theorem4A}{4A} and \hyperlink{Theorem4B}{4B}.

\noindent In \textbf{\Cref{s:experiments}}, we present numerical experiments validating our approximation theory on sample problems from nucleic acid secondary structure modeling \cite{Fornace2024-column,Fornace2020-unified,Schaeffer2015-stochastic,Flamm2000-rna} and webgraph analysis \cite{Rozemberczki2021-multi-scale}.

\subsection{Notation}\label{sec:notation}

Throughout the paper, $L$ is a symmetric positive semidefinite $n \times n$ operator and $K \Eq L^+$.
In \Cref{s:compression}, we assume $L$ to be positive definite (so that $K=L^\Mo$). In \Cref{s:projective,s:structure-preserving}, where $L$ can be viewed as a graph Laplacian associated to an arbitrary Markov chain, we allow $L$ to have a zero eigenvalue corresponding to the Markov chain stationary distribution.
In the Markov chain analysis, we consider a killed operator $\La \Eq L+\Kill \Id$ and its inverse $\Ka \Eq \La^\Mo$ for $\gamma > 0$. Much of our analysis will consider the $\gamma \ra 0$ limit, but note carefully that $K_\gamma$ does not converge to $K$ in this limit.

Throughout this work, we consider only stationary and irreducible continuous-time Markov chains.
Within this scope, we consider the approximation of a reversible Markov chain $\FullChain$ over a state space $\Indices \Eq \lrb{1, \dots, n}$. Depending on the context, we identify $\FullChain$ with a stochastic process $\Process{X}$ initialized with distribution $X_0 \sim \pi$ where $\pi$ is the stationary distribution, or alternatively with the rate matrix $R$, which can be viewed as an equivalent specification of this process.
For a general set $\mathcal{S} \subset \Indices$, we also define the stopping time
\begin{eqn} \label{eq:stoppingtime}
\tau_{\,\mathcal{S}} := \min \{ t : t \geq 0, \, X_t \in \mathcal{S} \}.
\end{eqn}
Finally, we define $h \Eq \pi^{1/2}$, the entrywise square root of $\pi$. 

We use $\FullP(t)$ for the probability evolution operator of $\FullChain$, i.e., $\FullP_{i,j}(t)$ is the probability that the chain starting in state $i$ at time 0 occupies state $j$ at time $t$.
We first consider a \emph{projective compression} $\NonSymOrthP_{\I}(t)$ of the dynamics based on \Nystrom{} approximation with selected indices $\I \subset \Indices$, so that $\NonSymOrthP_{\I}(t) \approx \FullP(t)$. We generally assume $\I$ to be non-empty with a non-empty set complement $\Ic \Eq \Indices \setminus \I$.

In order to conserve a probabilistic interpretation and positivity constraints in our compression, we also introduce a \emph{structure-preserving compression} of the dynamics, denoted $\NonSymObliqueP_{\I}(t)$, which is also meant to satisfy $\NonSymObliqueP_{\I}(t) \approx \FullP(t)$.
In this case, the dynamics may be written in terms of an induced chain $\InducedChain$, a reversible Markov chain corresponding to a stochastic process $\Process{\XI}$ governed by rate matrix $\Rr$, possessing stationary distribution $\pr$, and initialized with distribution $\XI_0 \sim \pr$.
All hat quantities ($\Lr$, $\Kr$, $\hr$) may be interpreted analogously to their non-hat analogs, but in a reduced space.
Our analysis will also leverage the concept of a marked chain $\MarkedChain$, a non-reversible Markov chain associated to a stochastic process $\Process{\XM}$ governed by rate matrix $\Rex$, possessing stationary distribution $\pe$, and initialized with distribution $\XM_0 \sim \pe$.
All circled quantities ($\Le$, $\Ke$, $\he$) can be interpreted similarly to their non-hat antecedents, but in an augmented space.

In order to work with symmetric matrices, our analysis mostly works with symmetrized operators $\SymP_{\I}(t)$, $\OrthP_{\I}(t)$, $\ObliqueP_{\I}(t)$ formed from $\FullP(t)$, $\NonSymOrthP_{\I}(t)$, $\NonSymObliqueP_{\I}(t)$ via the similarity transformation $\Diag(h) \cdots \DiagI(h)$. 
More generally, the tilde indicates ``asymmetric'' objects defined in terms of the untransformed representation, while counterparts without tildes are their symmetrized equivalents.
There is no difference in the particular case of $\pi \propto \One$.
(Generally we use $\Id$, $\One$, and $\Zero$ to denote the identity matrix, the column vector of ones, and the zero column vector. When a matrix or vector shape is not implied by context, we specify, e.g., $\Id_{\I,\I}$ for an identity matrix of shape $\Abs{\I} \times \Abs{\I}$, and $\One_{\I}$ for a column of $\Abs{\I}$ ones.)


Throughout we will consider bounds involving both the operator spectral norm $\Vert \, \cdot \, \Vert_2$ (i.e., the largest singular value) and the operator nuclear or trace norm $\Vert \, \cdot \, \Vert_*$ (i.e., the sum of the singular values). In order to make simultaneous statements about both norms, we adopt notation such as
$\Vert A \Vert_{\{2,*\}} \leq \cdots $ to indicate, e.g., simultaneous upper bounds. Analogous notation for other quantities will be clear from context.
We use $\norm{\,\cdot\,}_\pi$ to denote the 2-norm of a vector with respect to $\pi$, i.e. $\norm{x}_\pi \Eq \sqrt{x^\t \Diag(\pi) x}$.

\section{Compression of linear dissipative dynamics \label{s:compression}}

Consider any real positive definite $n\times n$ matrix $L$.
In understanding the dissipative dynamics 
\begin{equation}
    \frac{\odif}{\odif t} \phi(t) = -L \phi(t),\label{eq:dynamics}
\end{equation}
 it is natural to consider the solution matrix 
\begin{equation}
\SymP(t)=e^{-Lt}.\label{eq:Pt}
\end{equation}
We will be interested in the low-rank compression of such dynamics.
In this section, we will assume that $L$ is invertible and work with the definition $K:=L^{-1}$. 
The scenario in which $L$ has a null space corresponding to the stationary distribution of a Markov process will be considered in \Cref{s:projective,s:structure-preserving}.

\subsection{Compression via \Nystrom{} approximation}\label{s:compnystrom}

Our approach for compressing $\SymP(t)$ is inspired by a recently introduced
column selection approach \cite{Fornace2024-column} for kernel matrices which directly
seeks a subset of indices $\I \subset \{1,\ldots,n\}$ such
that the corresponding \Nystrom{} approximation (e.g., \cite{Drineas2005-nystrom})
\begin{equation}
    K \approx \Kn \label{eq:nystrom}
\end{equation}
 achieves minimal error in the nuclear (trace) norm. The developments of this section can be generalized straightforwardly in terms of a generalized \Nystrom{} approximation of the form $K \approx (KX) (X^\t K X)^{-1} (KX)^\t$, where $X$ is an arbitrary rectangular matrix with linearly independent columns. However, for thematic consistency with later sections we restrict our attention to the setting of true \Nystrom{} approximation.

In this work, we define two notions of error for the \Nystrom{} approximation 
\begin{equation}
    \Es(\I):= \snorm{K-\Kn}, \quad
    \En(\I):= \nnorm{K-\Kn}, \label{eq:errors}
\end{equation}
 where $\norm{\,\cdot\,}_2$ denotes the spectral norm (i.e., the largest singular value) and $\nnorm{\,\cdot\,}$ the nuclear norm (i.e., the sum of the singular values).
Although the algorithm of \cite{Fornace2024-column} targets the trace error directly,
it still achieves favorable spectral norm error in practice. Moreover,
we would like to state error bounds for our approximation of $\SymP(t)$
in both norms, which will require both norm estimates in (\ref{eq:errors}).

Given $\I$, we define our compressed solution matrix
\begin{equation}
\OrthP_{\I}(t):=Ve^{-(V^{\top}LV) t}V^{\top},\label{eq:lowrankP}
\end{equation}
in terms of a matrix $V$ with orthonormal columns (whose dependence on $\I$ is frequently omitted for visual clarity) defined by 
\begin{equation}
V = V[\I] := \mathrm{orth}(K_{:,\I})=K_{:,\I} (K^{2})_{\I,\I}^\Mh.\label{eq:V}
\end{equation}
 Although the symmetric orthogonalization is considered in the last expression, in fact any orthogonalization can be used without altering the solution matrix $\OrthP_{\I}$ defined in (\ref{eq:lowrankP}).

 It is possible to verify by direct calculation (using \Cref{lem:refactor}) that time integration of the compressed dynamics recovers the \Nystrom{} approximation:
 \begin{eqn} \label{eq:integrated}
 \int_0^\infty P_{\I} (t) \odif t = \Kn,
 \end{eqn}
 while the original dynamics more straightforwardly satisfy $\int_0^\infty P(t) \odif t = K$.

\booltrue{STATING}

\subsection{Approximation error bounds via contour integration \label{s:contour}}

Remarkably, extremely simple bounds on the discrepancy between $\OrthP_{\I}$
and $\SymP$ can be stated in terms of the \Nystrom{} errors (\ref{eq:errors}),
as follows:

\label{th:dissipative:section}
\ThDissipative*

We turn to a sketch of the proof of the theorem. The full proof is
given in \Cref{si:thm}. Before giving the sketch we will
state three useful lemmas.

\label{lem:contour0:section}
\begin{restatable}[Contour integral identity \ProofLink{lem:contour0}]{lemma}{LemContour} \label{lem:contour0}
    Suppose that $A \in \R^{k \times k}$ has eigenvalues in the strict right half-plane and $U \in \R^{n\times k}$ has orthonormal columns. Then 
    \begin{eqn}
        U e^{-A} U^\t = \frac{1}{2\pi i} \oint_{\Contour} e^{-1/z} \,  (z \Id - U A^{-1} U^\t )^{-1} \, \odif z
    \end{eqn}
    for any contour $\Contour$ in the strict right half-plane enclosing the eigenvalues of $A^{-1}$.
\end{restatable}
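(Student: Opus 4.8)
The strategy is to reduce the claimed $n\times n$ identity to a statement about the $k\times k$ matrix $A^{-1}$ by writing the resolvent of $M\Eq UA^{-1}U^\t$ explicitly, and then evaluating the resulting scalar and $k\times k$ contour integrals directly. The point to keep in mind is that $M$ has a spurious zero eigenvalue (of multiplicity $n-k$) which is \emph{not} enclosed by $\Contour$, and the factor $e^{-1/z}$, being holomorphic away from the origin, automatically kills the contribution of that eigenvalue.

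First I would record the preliminaries. Since the eigenvalues of $A$ lie in the open right half-plane, $A$ is invertible, and because $\lambda\mapsto 1/\lambda$ maps $\{\re z>0\}$ into itself, the eigenvalues of $A^{-1}$ also lie in the open right half-plane; in particular $0\notin\mathrm{spec}(A^{-1})$. Hence any contour $\Contour$ in the strict right half-plane enclosing $\mathrm{spec}(A^{-1})$ has winding number $0$ about the origin, and the function $z\mapsto e^{-1/z}$, holomorphic on $\mathbb{C}\setminus\{0\}$, is holomorphic on $\Contour$ together with the region it encloses. Next, complete $U$ to an orthogonal matrix $[\,U\ \ U_\perp\,]\in\R^{n\times n}$; then $M=[\,U\ U_\perp\,]\,\mathrm{diag}(A^{-1},0)\,[\,U\ U_\perp\,]^\t$, so for $z\notin\mathrm{spec}(A^{-1})\cup\{0\}$,
\[
(z\Id_n-M)^{-1}\;=\;U\,(z\Id_k-A^{-1})^{-1}\,U^\t\;+\;\tfrac1z\,U_\perp U_\perp^\t .
\]
Substituting into the right-hand side of the lemma and using linearity splits the integral into $U\big[\tfrac{1}{2\pi i}\oint_\Contour e^{-1/z}(z\Id_k-A^{-1})^{-1}\,\odif z\big]U^\t$ plus $\big[\tfrac{1}{2\pi i}\oint_\Contour \tfrac{e^{-1/z}}{z}\,\odif z\big]U_\perp U_\perp^\t$. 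The scalar integral vanishes by Cauchy's theorem, since $e^{-1/z}/z$ is holomorphic on $\mathbb{C}\setminus\{0\}$ and $\Contour$ does not enclose $0$.

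It remains to show the $k\times k$ integral equals $e^{-A}$. The clean elementary route is the substitution $w=1/z$: as $z\mapsto 1/z$ is a conformal (hence orientation-preserving) automorphism of $\widehat{\mathbb{C}}$ carrying the right half-plane to itself, and $0\notin\overline{\mathrm{int}\,\Contour}$, the image $\Contour'\Eq\{1/z:z\in\Contour\}$ is a contour in the right half-plane, not enclosing $0$, positively oriented about $1/\mathrm{spec}(A^{-1})=\mathrm{spec}(A)$. Using $\odif z=-w^{-2}\,\odif w$ together with $(w^{-1}\Id_k-A^{-1})^{-1}=-w\,(w\Id_k-A)^{-1}A$ and $(w\Id_k-A)^{-1}A=-\Id_k+w(w\Id_k-A)^{-1}$, the integral becomes $-\big[\tfrac{1}{2\pi i}\oint_{\Contour'}\tfrac{e^{-w}}{w}\,\odif w\big]\Id_k+\tfrac{1}{2\pi i}\oint_{\Contour'}e^{-w}(w\Id_k-A)^{-1}\,\odif w$; the first term vanishes by Cauchy's theorem (again $0$ is not enclosed) and the second is exactly the Cauchy integral representation of $e^{-A}$ (valid since $\Contour'$ encloses $\mathrm{spec}(A)$ and $e^{-w}$ is entire). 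Combining everything, the right-hand side equals $Ue^{-A}U^\t$. Alternatively one can avoid the substitution entirely and invoke the composition rule of the holomorphic functional calculus: since $g(z)=1/z$ is holomorphic near $\mathrm{spec}(A^{-1})$ with $g(A^{-1})=A$, and $f(w)=e^{-w}$ is entire, $\tfrac{1}{2\pi i}\oint_\Contour e^{-1/z}(z\Id_k-A^{-1})^{-1}\,\odif z=(f\circ g)(A^{-1})=f(g(A^{-1}))=e^{-A}$.

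I do not anticipate a real obstacle. The only care needed is bookkeeping: checking that $\Contour$ (and hence its inversion image $\Contour'$) genuinely has winding number zero about the origin — which is precisely where the strict-right-half-plane hypothesis is used — and tracking the orientation of $\Contour'$; if $\Contour$ is a finite union of simple closed curves with winding number $1$ about each eigenvalue of $A^{-1}$, as is implicit in the statement, all of the above applies verbatim.
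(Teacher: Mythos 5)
Your proof is correct and follows essentially the same route as the paper's: decompose the resolvent of $UA^{-1}U^\top$ via the block form $U(z\Id-A^{-1})^{-1}U^\top + z^{-1}U_\perp U_\perp^\top$, observe that the $z^{-1}$ piece integrates to zero because $e^{-1/z}/z$ is holomorphic inside $\Contour$, and identify the remaining $k\times k$ integral with $e^{-A}$. The only cosmetic difference is that the paper invokes the matrix Cauchy integral formula from Higham as a black box, whereas you justify that step explicitly via the $w=1/z$ substitution (or the composition rule of the functional calculus), which is a fine and more self-contained way to do it.
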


\label{lem:refactor:section}
\begin{restatable}[Refactoring identity \ProofLink{lem:refactor}]{lemma}{LemRefactor} \label{lem:refactor}
Given symmetric positive definite $K$ and $V = \Orth(K_{:,\I})$, the following identity holds: 
\begin{eqn}
    \label{eq:refactor}
    K_{:,\I} \Ki K_{\I,:} =V(V^{\t} K^\Mo V)^\Mo V^{\t}.
\end{eqn}
\end{restatable}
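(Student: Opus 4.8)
The plan is to reduce the identity to the basis-invariance of Nyström-type expressions. Since $K$ is symmetric positive definite, $K_{:,\I}$ has full column rank $\Ia$, so $V = \Orth(K_{:,\I})$ satisfies $\mathrm{range}(V) = \mathrm{range}(K_{:,\I})$ and also has full column rank $\Ia$. Hence there is an invertible matrix $N \in \RR^{\Ia \times \Ia}$ with $V = K_{:,\I} N$; for the symmetric orthogonalization of \eqref{eq:V} one may take $N = (K^2)_{\I,\I}^{\Mh}$, but the computation below uses only the existence of \emph{some} invertible $N$, which is precisely why any orthogonalization produces the same $\OrthP_{\I}$.

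Two invertibility facts are needed: $K_{\I,\I}$ is invertible, being a principal submatrix of a positive definite matrix, and $V^\t K^\Mo V$ is invertible, since $V$ has full column rank and $K^\Mo \succ 0$. I also use the identity $K_{\I,:} K^\Mo K_{:,\I} = K_{\I,\I}$, which is immediate upon writing $K_{:,\I} = K E$ and $K_{\I,:} = E^\t K$, where $E$ is the $n \times \Ia$ matrix whose columns are the standard basis vectors indexed by $\I$: then $K_{\I,:} K^\Mo K_{:,\I} = E^\t K K^\Mo K E = E^\t K E = K_{\I,\I}$.

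Substituting $V = K_{:,\I} N$ and $V^\t = N^\t K_{\I,:}$ (using symmetry of $K$), the inner factor becomes
\begin{eqn}
V^\t K^\Mo V = N^\t K_{\I,:} K^\Mo K_{:,\I} N = N^\t K_{\I,\I} N,
\end{eqn}
so that $(V^\t K^\Mo V)^\Mo = N^\Mo K_{\I,\I}^\Mo N^{\Mt}$, and therefore
\begin{eqn}
V(V^\t K^\Mo V)^\Mo V^\t &= K_{:,\I} N \,(N^\t K_{\I,\I} N)^\Mo\, N^\t K_{\I,:} \\
&= K_{:,\I} N N^\Mo K_{\I,\I}^\Mo N^{\Mt} N^\t K_{\I,:} = K_{:,\I} K_{\I,\I}^\Mo K_{\I,:},
\end{eqn}
which is \eqref{eq:refactor}.

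The only real subtlety — and the step where a careless argument would slip — is ensuring that every matrix being inverted is genuinely invertible and that the conjugation-by-$N$ cancellation $(N^\t M N)^\Mo = N^\Mo M^\Mo N^{\Mt}$ is legitimate; both are guaranteed by the full column rank of $K_{:,\I}$ (equivalently of $V$) together with positive definiteness of $K$. Since no further property of $N$ is used, this simultaneously reproves the remark following \eqref{eq:V} that the choice of orthogonalization is immaterial.
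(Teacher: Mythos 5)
Your proof is correct and follows essentially the same computation as the paper's: write $V$ as $K_{:,\I}$ times an invertible normalizing factor, show that the inner factor $V^\t K^\Mo V$ collapses to a conjugate of $K_{\I,\I}$, and let the normalizing factor cancel upon substitution. The only cosmetic difference is that you abstract the normalizer to a general invertible $N$ rather than carrying the explicit symmetric-orthogonalization factor $(X^\t K^2 X)^{-1/2}$ as the paper does; this cleanly packages the remark that the choice of orthogonalization is immaterial, but the underlying cancellation is identical.
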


\label{lem:polebound:section}
\begin{restatable}[Resolvent bound \ProofLink{lem:polebound}]{lemma}{LemPolebound} \label{lem:polebound}
Let $A$ be a real symmetric matrix and $z\in\mathbb{C}$
with $\im(z)\ne0$. Then 
\begin{eqn}
    \snorm{(z\Id-A)^\Mo} \leq \Abs{\im(z)}^\Mo .
\end{eqn}
\end{restatable}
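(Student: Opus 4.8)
I would prove this by diagonalizing $A$ and reducing the bound to a one-dimensional estimate. By the spectral theorem there is a real orthogonal matrix $Q$ and real eigenvalues $\lambda_1,\dots,\lambda_n$ with $A = Q\,\Diag(\lambda_1,\dots,\lambda_n)\,Q^\t$. Since $\im(z)\ne 0$, the number $z$ is not real, hence $z\notin\{\lambda_1,\dots,\lambda_n\}$ and $z\Id - A$ is invertible, with
\[
(z\Id - A)^\Mo = Q\,\Diag\!\big((z-\lambda_1)^\Mo,\dots,(z-\lambda_n)^\Mo\big)\,Q^\t .
\]
Because conjugation by a real orthogonal (hence unitary) matrix preserves singular values, and the singular values of a diagonal matrix are the moduli of its entries, this gives $\snorm{(z\Id-A)^\Mo} = \max_j \Abs{z-\lambda_j}^\Mo$.

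It then remains to lower-bound each $\Abs{z-\lambda_j}$. Since $\lambda_j\in\R$, we have $z-\lambda_j = (\re(z)-\lambda_j) + i\,\im(z)$, so $\Abs{z-\lambda_j}^2 = (\re(z)-\lambda_j)^2 + \im(z)^2 \ge \im(z)^2$, i.e.\ $\Abs{z-\lambda_j}\ge\Abs{\im(z)}$. Hence $\Abs{z-\lambda_j}^\Mo \le \Abs{\im(z)}^\Mo$ for every $j$, and taking the maximum over $j$ yields the claimed bound $\snorm{(z\Id-A)^\Mo}\le\Abs{\im(z)}^\Mo$.

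As an alternative that avoids appealing to normality of $z\Id - A$, I could instead lower-bound its smallest singular value directly: writing $z = a+ib$ with $a,b\in\R$, the conjugate transpose of $z\Id - A$ is $\overline{z}\Id - A$ (as $A$ is real symmetric), and one computes $(\overline{z}\Id-A)(z\Id-A) = (a\Id-A)^2 + b^2\Id$, whose eigenvalues are $(a-\lambda_j)^2 + b^2 \ge b^2$; thus $\sigma_{\min}(z\Id-A)\ge\Abs{b}=\Abs{\im(z)}$ and $\snorm{(z\Id-A)^\Mo} = \sigma_{\min}(z\Id-A)^\Mo \le \Abs{\im(z)}^\Mo$. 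There is no genuine obstacle in either route; the only point requiring a moment's care is to record, before writing down the inverse, that $z\Id-A$ is invertible, which holds because a non-real $z$ cannot lie in the real spectrum of $A$.
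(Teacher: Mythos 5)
Your proof is correct and rests on the same core facts the paper uses: the spectral theorem for real symmetric $A$, normality of the resolvent (so its spectral norm is the maximum of $\Abs{z-\lambda_j}^{-1}$), and the elementary bound $\Abs{z-\lambda_j}\geq\Abs{\im(z)}$ for real $\lambda_j$. The paper merely packages the same argument slightly differently, factoring out $ib$ and invoking a sub-lemma that $\snorm{(\Id-iB)^{-1}}\leq 1$ for real symmetric $B$, so there is no substantive difference.
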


Now we are equipped to give the sketch of \Cref{th:dissipative}.
\begin{proof}
[Proof sketch for \Cref{th:dissipative}.] Fix $t>0$, and
consider a connected contour $\mathcal{C}$ in the strict right half-plane that encloses
the spectrum of $L^{-1} / t$. Note that consequently it also encloses the spectrum of  $(V^{\top} L V)^{-1} / t$. We will discuss further details of the choice
of contour in the full proof. (To enclose the spectrum it must thread between $1/ ( t \Vert L\Vert_2 )$
and the origin, but for technical reasons we will consider a limit
in which the contour touches the origin.)

\begin{figure}
    \centering
    \includegraphics[width=3.3in]{./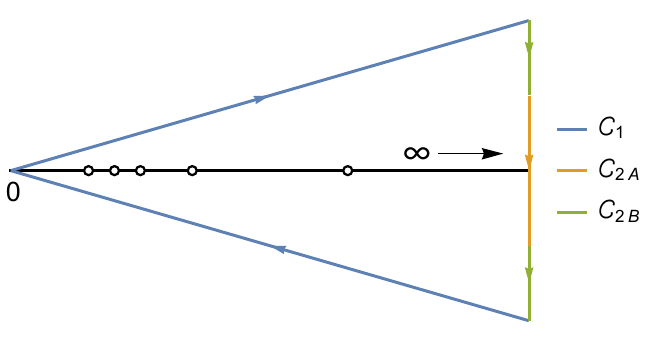}
    \vspace{-0.1in}
    \caption{
    Illustration of our limiting contour in the complex plane.
    Eigenvalues of $K / t$ are depicted as circles on the positive real axis.
    The contributions of $\Contour_{2A}$ and $\Contour_{2B}$ asymptotically vanish.
    \label{fig:contour}
    }
\end{figure}

We will apply Cauchy's integral formula: 
\begin{equation}
e^{-s^{-1}}=\frac{1}{2\pi i}\oint_{\Contour} \frac{e^{-1/z}}{z-s}\,\odif z.\label{eq:cauchy}
\end{equation}
 Since the contour encloses the spectrum of $(L t)^{-1}$, by substituting
$s\leftarrow (L t)^{-1} $ we obtain \cite{Higham_2008} the following exact expression for $\SymP(t)=e^{-Lt}$:
\begin{eqn} \label{eq:int1}
    \SymP(t)=\frac{1}{2\pi i}\oint_{\Contour} e^{-1/z} \,  \FullRes(z) \,\odif z,
\end{eqn}
where we define the resolvent function:
\begin{eqn}
    \label{eq:full-resolvent}
    \FullRes(z) \Eq (z \Id - K / t)^\Mo
\end{eqn}
and recall that $K = L^{-1}$.

Meanwhile, applying \Cref{lem:contour0} with $A = (V^\t L V) t$ and $U=V$ implies that 
\begin{eqn} \label{eq:int2}
    \OrthP_{\I}(t) = \frac{1}{2\pi i} \oint_{\Contour} e^{-1/z} \,  \OrthRes(z) \, \odif z,
\end{eqn}
where we define:
\begin{eqn} \label{eq:orth-resolvent}
    \OrthRes(z) \Eq \left( z \Id - V ( V^\top K^{-1} V )^{-1} V^\top / t \right) ^\Mo.
\end{eqn}
Note that by \Cref{lem:refactor}, we can
alternatively write 
\begin{eqn} \label{eq:orth-resolvent2}
\OrthRes(z) = \left( z \Id - (\Kn)/t \right) ^\Mo.
\end{eqn}

Now by subtracting (\ref{eq:int1}) and (\ref{eq:int2}) we
deduce: 
\begin{eqn}
    \Norms{\SymP(t)-\OrthP_{\I}(t)} \leq \frac{1}{2\pi}\oint_{\Contour} \Abs{e^{-1/z}}\, \Norms{\FullRes(z)-\OrthRes(z)} \, \Abs{\odif z}.\label{eq:intbound}
\end{eqn}

Then by factoring 
\begin{eqn}
    \FullRes(z)-\OrthRes(z) = \FullRes(z) \left[\OrthRes(z)^\Mo - \FullRes(z)^\Mo\right] \OrthRes(z),
\end{eqn}
 we can bound 
\begin{eqn}
    \Norms{\FullRes(z)-\OrthRes(z)} \leq \snorm{\FullRes(z)} \, \snorm{\OrthRes(z)} \, \Norms{\OrthRes(z)^\Mo-\FullRes(z)^\Mo},\label{eq:intbound2}
\end{eqn}
where in the ``$\, * \, $" case we have used the general inequality $\Vert A B \Vert_* \leq \Vert A \Vert_2 \, \Vert B \Vert_*$, which follows from H\"{o}lder's inequality for the Schatten norms. 
Now by Lemma \ref{lem:polebound}, we have that $\snorm{\FullRes(z)}, \snorm{\OrthRes(z)} \leq \Abs{\im(z)}^\Mo$,
and by \Cref{eq:full-resolvent} and \Cref{eq:orth-resolvent2}, we also have that  
\begin{eqn} 
    \Norms{\OrthRes(z)^\Mo-\FullRes(z)^\Mo} = \frac{1}{t} \Norms{K - \Kn} = \frac{\Eps}{t},
\end{eqn}
hence
\begin{eqn}\label{eq:resdiffbound0}
    \Norms{\FullRes(z)-\OrthRes(z)} \leq \frac{\Eps}{t  \,  \vert\im{(z)}\vert^2 }.
\end{eqn}


Finally, (\ref{eq:intbound}) and (\ref{eq:resdiffbound0}) imply that
\begin{eqn}
    \Norms{\SymP(t)-\OrthP_{\I}(t)} \leq \frac{\Eps}{2\pi t}\oint_{\mathcal{C}}\,\Abs{e^{-1/z}} \,\Abs{\im(z)}^{-2}\, \Abs{\odif z}.
\end{eqn}
 Now the contour integrand is independent of the problem, and by making
a suitable limiting choice of contour (\Cref{fig:contour}, which encloses all of $\mathbb{R}^{+}$
in the limit), we can achieve the numerical value of $\frac{3\sqrt{3}}{2\pi}$. 
(Note that the singularity of $\Abs{\im(z)}^{-2}$ on the real
axis is cancelled by the zero of $\Abs{e^{-1/z}}$ provided that
our limiting contour passes through the real axis at the origin.)
\end{proof}

\subsection{Extension to non-\Nystrom{} approximation}\label{s:non-nystrom}

In the following result, we extend the low-rank approximation bound of \Cref{th:dissipative} to cover constructions in which $V$ is an arbitrary matrix with orthonormal columns, not necessarily constructed as $\Orth(K_{:,\I})$. It recovers \Cref{th:dissipative} as a special case.

\label{th:non-nystrom:section}
\begin{restatable}[Non-\Nystrom{} compression error]{Theorem 1A}{ThNonNystrom}\label{th:non-nystrom}
    \hypertarget{link.Theorem1A}{} Let $L$ be positive definite, let $V$ have orthonormal columns, and define $\NoninterpolativeP(t) \Eq V \expt{(V^\t L V)} V^\t$. Then 
    \begin{eqn}
        \Norms{ \SymP(t) - \NoninterpolativeP(t)} \leq \frac{3 \sqrt{3}}{2 \pi} \frac{\Nus}{t},
    \end{eqn}
    where $\Nus \Eq \Norms{K - V (V^\t L V)^{-1} V^\t}$.
\end{restatable}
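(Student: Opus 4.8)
The plan is to run the contour-integration argument behind \Cref{th:dissipative} essentially verbatim, simply omitting the one step (\Cref{lem:refactor}) that exploited the \Nystrom{} structure $V=\Orth(K_{:,\I})$. Fix $t>0$. Because $L$ is positive definite and $V$ has orthonormal columns, for any unit vector $u$ we have $u^\t(V^\t L V)u=(Vu)^\t L (Vu)\ge\lambda_{\min}(L)\norm{u}^2$, so $\lambda_{\min}(V^\t L V)\ge\lambda_{\min}(L)$; dually $\lambda_{\max}(V^\t L V)\le\lambda_{\max}(L)$. Hence the eigenvalues of $(V^\t L V)^{-1}/t$ lie in $[\,1/(t\snorm{L}),\,\snorm{K}/t\,]$, precisely the interval containing the spectrum of $K/t$. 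Consequently the very same limiting contour $\Contour$ used in the proof of \Cref{th:dissipative} --- threading between $1/(t\snorm{L})$ and the origin, and in the limit enclosing all of $\mathbb{R}^+$ --- simultaneously encloses the spectrum of $K/t$ and the eigenvalues of $(V^\t L V)^{-1}/t$.

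On this contour, Cauchy's formula with the substitution $s\leftarrow(Lt)^{-1}$ gives $\SymP(t)=\frac{1}{2\pi i}\oint_\Contour e^{-1/z}\,\FullRes(z)\,\odif z$ with $\FullRes(z)=(z\Id-K/t)^{-1}$, exactly as in \Cref{eq:int1}. Applying \Cref{lem:contour0} with $A=(V^\t L V)t$ (whose eigenvalues lie in the strict right half-plane) and $U=V$ gives $\NoninterpolativeP(t)=\frac{1}{2\pi i}\oint_\Contour e^{-1/z}\,\OrthRes(z)\,\odif z$, where now $\OrthRes(z)\Eq(z\Id-V(V^\t L V)^{-1}V^\t/t)^{-1}$; this is the analog of \Cref{eq:int2}, except that we leave $V(V^\t L V)^{-1}V^\t$ as is rather than refactoring it via \Cref{lem:refactor}. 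Since $V^\t L V=V^\t K^{-1}V$, a direct computation gives $\OrthRes(z)^{-1}-\FullRes(z)^{-1}=\frac{1}{t}\bigl(K-V(V^\t L V)^{-1}V^\t\bigr)$, whose $\{2,*\}$-norm is exactly $\Nus/t$. (Equivalently $\int_0^\infty\NoninterpolativeP(t)\,\odif t=V(V^\t L V)^{-1}V^\t$, so $\Nus$ is the error of a generalized \Nystrom{}-type approximation of $K$.)

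The estimate then proceeds as in the sketch: subtract the two integral representations, factor $\FullRes(z)-\OrthRes(z)=\FullRes(z)\bigl[\OrthRes(z)^{-1}-\FullRes(z)^{-1}\bigr]\OrthRes(z)$, and bound $\snorm{\FullRes(z)},\snorm{\OrthRes(z)}\le\Abs{\im(z)}^{-1}$ via \Cref{lem:polebound}. The latter is legitimate because $V(V^\t L V)^{-1}V^\t$ is real and symmetric, being $V$ times the symmetric matrix $(V^\t L V)^{-1}$ times $V^\t$. This yields $\Norms{\FullRes(z)-\OrthRes(z)}\le\Nus/(t\Abs{\im(z)}^2)$ and hence $\Norms{\SymP(t)-\NoninterpolativeP(t)}\le\frac{\Nus}{2\pi t}\oint_\Contour\Abs{e^{-1/z}}\,\Abs{\im(z)}^{-2}\,\Abs{\odif z}$. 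The remaining contour integral is problem-independent and identical to the one appearing in \Cref{th:dissipative}; with the limiting contour passing through the real axis at the origin (where the zero of $\Abs{e^{-1/z}}$ cancels the $\Abs{\im(z)}^{-2}$ singularity) it evaluates to $3\sqrt3/(2\pi)$, which is the claimed bound. Finally, choosing $V=\Orth(K_{:,\I})$ and invoking \Cref{lem:refactor} identifies $V(V^\t L V)^{-1}V^\t=\Kn$ and thus $\Nus=\Eps(\I)$, recovering \Cref{th:dissipative} as a special case.

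I do not expect a genuinely new obstacle here: the work consists of confirming that each ingredient of the \Cref{th:dissipative} argument survives the removal of the \Nystrom{} hypothesis. The only points requiring care are bookkeeping ones --- (i) that a single contour serves both integral representations, which follows from the interlacing bound above; (ii) that $V(V^\t L V)^{-1}V^\t$ is symmetric so that \Cref{lem:polebound} controls $\OrthRes$; and (iii) the limiting contour touching the origin, whose justification can be imported wholesale from the full proof of \Cref{th:dissipative}.
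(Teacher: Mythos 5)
Your proposal is correct and is essentially identical to the paper's own proof, which reads: "The proof is exactly the same as that of \Cref{th:dissipative}, except that we carry forward the expression \Cref{eq:orth-resolvent}, rather than substituting \Cref{eq:orth-resolvent2}." You have simply spelled out the bookkeeping that this sentence summarizes --- in particular the eigenvalue interlacing $\lambda_{\min}(L)\le\lambda_{\min}(V^\t L V)\le\lambda_{\max}(V^\t L V)\le\lambda_{\max}(L)$ which lets one contour serve both resolvents, and the symmetry of $V(V^\t L V)^{-1}V^\t$ needed for \Cref{lem:polebound} --- and all of those checks are sound.
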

\begin{proof}
    The proof is exactly the same as that of \Cref{th:dissipative}, except that we carry forward the expression \Cref{eq:orth-resolvent}, rather than substituting \Cref{eq:orth-resolvent2}.
\end{proof}

In fact, we can extend Theorem \hyperlink{link.Theorem1A}{1A} further to the setting where $L$ is only positive semidefinite. Crucially, the columns of $V$ must contain the null space of $L$ in their span.

\label{th:non-nystrom-sing:section}
\begin{restatable}[Singular non-\Nystrom{} compression error \ProofLink{th:non-nystrom-sing}]{Theorem 1B}{ThNonNystromSing} \label{th:non-nystrom-sing}
    \hypertarget{link.Theorem1B}{} Let $L$ be positive semidefinite, let $V$ have orthonormal columns which contain the null space of $L$ in their span, and define 
    $\NoninterpolativeP(t) \Eq V \expt{(V^\t L V)} V^\t$. Then 
    \begin{eqn}
        \Norms{ \SymP(t) - \NoninterpolativeP(t)} \leq \frac{3 \sqrt{3}}{2 \pi} \frac{\Nus}{t},
    \end{eqn}
    where $\Nus \Eq \Norms{K - V (V^\t L V)^{+} V^\t}$.
\end{restatable}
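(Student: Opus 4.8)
The plan is to reduce to the positive-definite case handled by Theorem~\hyperlink{link.Theorem1A}{1A} by decomposing the state space into the null space of $L$ and its orthogonal complement, and arguing that the dynamics restricted to the null space are exactly captured by the compression because $V$ contains that null space. Write $\Pi_0$ for the orthogonal projector onto $\ker L$ and $\Pi_\perp = \Id - \Pi_0$ for the projector onto $(\ker L)^\perp = \operatorname{ran} L$. Since $L$ is symmetric, both $\SymP(t) = e^{-Lt}$ and $\Pi_\perp$ commute, and $\SymP(t) = \Pi_0 + e^{-L_\perp t}\Pi_\perp$ where $L_\perp$ denotes $L$ restricted to $\operatorname{ran} L$, which is positive definite there. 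Likewise $K = L^+$ is supported on $\operatorname{ran} L$ and equals $L_\perp^{-1}$ there. The key structural observation is that, because $\ker L \subseteq \operatorname{span}(\text{columns of } V)$, we may choose an orthonormal basis adapted to $V$ in which the first block of columns of $V$ spans $\ker L$ exactly; then $V^\t L V$ is block-diagonal with a zero block on the $\ker L$ part, so $(V^\t L V)^+$ has a matching zero block, and one checks $V(V^\t L V)^+ V^\t$ is likewise supported on $\operatorname{ran} L$.

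First I would make this reduction precise: show $\NoninterpolativeP(t) = V e^{-(V^\t L V)t} V^\t$ splits as $\Pi_0 + [\text{compression of the nonsingular part}]$. Concretely, split $V = [V_0 \ \ V_1]$ with $\operatorname{span}(V_0) = \ker L$ (possible by the hypothesis and a change of orthonormal basis within $\operatorname{span}(V)$, which as noted after~\Cref{eq:V} does not change $\NoninterpolativeP$). Then $V^\t L V = \begin{bmatrix} 0 & 0 \\ 0 & V_1^\t L V_1 \end{bmatrix}$ since $\operatorname{ran} V_0 \subseteq \ker L$, so $e^{-(V^\t L V)t} = \begin{bmatrix} \Id & 0 \\ 0 & e^{-(V_1^\t L V_1)t} \end{bmatrix}$, giving $\NoninterpolativeP(t) = V_0 V_0^\t + V_1 e^{-(V_1^\t L V_1)t} V_1^\t = \Pi_0 + V_1 e^{-(V_1^\t L V_1)t} V_1^\t$. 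Similarly $V(V^\t L V)^+ V^\t = V_1 (V_1^\t L V_1)^{-1} V_1^\t$, noting $V_1^\t L V_1$ is invertible (its kernel would give a vector in $\operatorname{span}(V_1) \cap \ker L = \{0\}$). Hence $\SymP(t) - \NoninterpolativeP(t) = e^{-L_\perp t}\Pi_\perp - V_1 e^{-(V_1^\t L V_1)t} V_1^\t$ and $K - V(V^\t L V)^+ V^\t = L_\perp^{-1}\Pi_\perp - V_1 (V_1^\t L V_1)^{-1} V_1^\t$, so both differences live entirely within $\operatorname{ran} L$.

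Next I would apply Theorem~\hyperlink{link.Theorem1A}{1A} inside the subspace $\operatorname{ran} L$. Restricting everything to $\operatorname{ran} L$: the operator $L_\perp$ is positive definite on this $(n - \dim\ker L)$-dimensional space, the matrix $V_1$ (viewed as a map into $\operatorname{ran} L$ — note $\operatorname{ran} V_1 \subseteq \operatorname{ran} L$ since $V_1 \perp V_0$ and $\operatorname{ran} V_0 = \ker L$) has orthonormal columns, and $V_1 e^{-(V_1^\t L_\perp V_1)t} V_1^\t$ is exactly the non-\Nystrom{} compression $\NoninterpolativeP(t)$ of Theorem~\hyperlink{link.Theorem1A}{1A} for the positive-definite operator $L_\perp$. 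Therefore
\begin{eqn}
\Norms{\SymP(t) - \NoninterpolativeP(t)} = \Norms{e^{-L_\perp t} - V_1 e^{-(V_1^\t L_\perp V_1)t} V_1^\t} \leq \frac{3\sqrt 3}{2\pi}\,\frac{\Norms{L_\perp^{-1} - V_1(V_1^\t L_\perp V_1)^{-1}V_1^\t}}{t} = \frac{3\sqrt 3}{2\pi}\,\frac{\Nus}{t},
\end{eqn}
where the first and last equalities use that the relevant operators vanish on $\ker L$, so the spectral and nuclear norms are unchanged by passing to the subspace, and the middle step is Theorem~\hyperlink{link.Theorem1A}{1A}.

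The main obstacle — really the only delicate point — is the bookkeeping of the block decomposition: one must verify carefully that the hypothesis $\ker L \subseteq \operatorname{span}(V)$ lets us choose $V_0, V_1$ as claimed, that $\NoninterpolativeP$ is genuinely invariant under the orthonormal change of basis within $\operatorname{span}(V)$ (so the choice of $V_0$ is harmless), and that $V_1^\t L V_1$ is invertible so that $(V^\t L V)^+$ really is block-diagonal with the stated blocks — in particular that the Moore--Penrose pseudoinverse of the block-diagonal matrix $\operatorname{diag}(0, V_1^\t L V_1)$ is $\operatorname{diag}(0, (V_1^\t L V_1)^{-1})$, which is immediate once invertibility of the second block is established. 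Everything else is a direct transcription of the proof of Theorem~\hyperlink{link.Theorem1A}{1A} carried out on the subspace $\operatorname{ran} L$; no new analytic input (contour integrals, resolvent bounds) is needed beyond what that theorem already supplies.
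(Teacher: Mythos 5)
Your proposal is correct, but it takes a genuinely different route from the paper. The paper's proof is a regularization/limiting argument: it shifts $L$ only on its null space, setting $L^{(\gamma)} := L + \gamma \Pi$ where $\Pi$ is the orthogonal projector onto $\ker L$, applies Theorem~\hyperlink{link.Theorem1A}{1A} to the resulting positive definite matrix for each $\gamma>0$, and then verifies that ${P}^{(\gamma)}(t) \to {P}(t)$, $\tilde{P}^{(\gamma)}(t) \to \tilde{P}(t)$, and $\nu_{\{2,*\}}^{(\gamma)} \to \nu_{\{2,*\}}$ as $\gamma \to 0$; the last convergence uses the identities $VV^\top\Pi = \Pi VV^\top = \Pi$ and $(V^\top L V)^+ = (V^\top L V + \gamma V^\top \Pi V)^{-1} - \gamma^{-1} V^\top \Pi V$. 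Your approach instead performs a one-shot block reduction: choose $V=[V_0\ V_1]$ with $\operatorname{span}(V_0)=\ker L$, observe that $V^\top L V$ is block-diagonal with zero block and invertible block $V_1^\top L V_1$, check that $\tilde{P}(t) = \Pi_0 + V_1 e^{-(V_1^\top L V_1)t}V_1^\top$ and $V(V^\top L V)^+V^\top = V_1(V_1^\top L V_1)^{-1}V_1^\top$, and then invoke Theorem~\hyperlink{link.Theorem1A}{1A} on the invariant subspace $\operatorname{ran} L$, where both $\SymP(t)-\tilde{P}(t)$ and $K - V(V^\top L V)^+V^\top$ are supported. Your bookkeeping is right — in particular $V_1^\top L V_1$ is indeed invertible since $\operatorname{span}(V_1)\cap\ker L = \{0\}$, and $\tilde{P}$ is invariant under orthogonal changes of basis within $\operatorname{span}(V)$. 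Your argument is more elementary in that it sidesteps the limit and the convergence verification, giving an exact identity rather than an asymptotic statement; the paper's regularization route is more thematically aligned with the killed-chain machinery used throughout the rest of the paper, where adding $\gamma\Id$ (or $\gamma\Pi$) and passing to the $\gamma\to 0$ limit is the recurring tool.
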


It is interesting to contrast Theorem \hyperlink{link.Theorem1B}{1B} with the developments of the following sections, which focus specifically on the scenario where $L$ is obtained from a reversible Markov chain and possesses a one-dimensional null space due to the stationary distribution. Since  $L^{-1}$ does not exist, it is not trivial to define a notion $\Eps$ of \Nystrom{} error in this setting. We will do so by defining $L_\gamma := L + \gamma \Id$ and considering the limiting \Nystrom{} approximation of $K_\gamma := L_\gamma^{-1}$ as $\gamma \ra 0$. However, note with caution that the limiting \Nystrom{} error $\Eps$ obtained as $\gamma \ra 0$ does \emph{not} coincide with $\Nus$ furnished by Theorem \hyperlink{link.Theorem1B}{1B} with $V := \lim_{\gamma \ra 0} \mathrm{orth}( [K_\gamma]_{:,\I} )$. Importantly, it is $\ve_*$ that can be controlled directly via column selection \cite{Fornace2024-column}. We still highlight Theorem \hyperlink{link.Theorem1B}{1B} as a potentially useful tool for alternative constructions of reduced Markov models that are not tied to the \Nystrom{} framework. Indeed, the partitioning approaches \cite{Valmari2010-simple,Derisavi2003-optimal} used in some constructions of MSMs could be used to produce $V$ with orthonormal columns.

\section{Projective compression of Markov chains \label{s:projective}}

At this stage we turn our attention to reversible continuous time Markov chains.
We assume these chains to be time-homogeneous and irreducible, possessing a single stationary mode.
As such, the developments of \Cref{s:compression} must be extended to deal with this stationary mode.
The approach that we pursue is based on a physically motivated limit.


A continuous-time Markov chain $\FullChain$ on a discrete space is defined by a rate matrix $R$, which satisfies $R \One = \Zero$, $R_{i,j} \geq 0$ for all $i \neq j$. The rate matrix induces the following evolution of the probability density
\begin{eqn}
    \frac{\odif }{\odif t} \UnSymPhi(t) = R^\t \UnSymPhi(t). \label{eq:markov-ode}
\end{eqn}
Following the conventions of \Cref{sec:notation}, we use tildes in the notation to indicate objects prior to the introduction of a symmetrizing similarity transformation. Suitably transformed quantities in which the tildes are absent will be introduced below in \Cref{s:symmetrization}.

Now under the assumption that $\FullChain$ is irreducible, these dynamics have a unique, entrywise positive stationary distribution $\pi$, satisfying $\One^\t \pi = 1$, which any initial probability distribution approaches in the infinite time limit.
Relatedly, $R$ has non-positive eigenvalues, and its only zero eigenvalue corresponds to the left eigenvector $\pi$, satisfying that $R^\t \pi = \Zero$.
Under our assumption of reversibility (or detailed balance), we have that $\Diag(\pi) R = R^\t \Diag(\pi)$. 
We will always consider irreducible and reversible Markov chains in this work.

Solution of \cref{eq:markov-ode} immediately yields the probability evolution:
\begin{eqn}
    \UnSymPhi(t) = \FullP^\t(t)  \, \UnSymPhi(0) \text{ where } \FullP(t) \Eq e^{R t}.
\end{eqn}
The entry $\FullP_{i,j}(t)$ can be interpreted as the probability that the Markov chain 
starting in state $i$ at time 0 will be in state $j$ at time $t$.
The total probability $\One^\t \UnSymPhi(t)$ is constant.

\subsection{Symmetrization}\label{s:symmetrization}

Following much of the past literature (e.g., \cite{Aldous1995-reversible,Chung2000-discrete}), we work within a symmetrized framework, which is often convenient in linear algebraic derivations for reversible chains.
Given a reversible Markov chain with rate matrix $R$ and stationary distribution $\pi$, we consider 
\begin{eqn}
    \label{eq:L-h-def}
    L \Eq \Minus \Diag(h) R \DiagI(h), \qquad h \Eq \pi^\Oh.
\end{eqn}
$L$ is then symmetric (by reversibility) and positive semidefinite, with the same eigenvalues as $-R$.
Since $\pi$ is a probability density, $h$ is entrywise positive with $\norm{h}_2 = 1$.
If we define $\phi(t) = \DiagI(h) \UnSymPhi(t)$, then we may translate \cref{eq:markov-ode} as
\begin{eqn}
    \frac{\odif}{\odif t} \phi(t) = \Minus L \phi(t), \label{eq:markov-sym-ode}
\end{eqn}
bringing our perspective in alignment with \cref{eq:dynamics}, except for the fact that we must now accommodate a stationary mode in $L$.
After this transformation, conservation of probability implies that $h^\t \SymPhi(t)$ is constant.

We pause to comment on the connection to graph Laplacians. A graph Laplacian in the traditional sense is a symmetric positive semidefinite matrix $\bar{L}$ with a single zero eigenvalue corresponding to the constant function, i.e., $\bar{L} \One = \Zero$ and satisfying $L_{i,j} \leq 0$ for all $i \neq j$.
This corresponds to the special case of our framework in which $h = \One / \sqrt{n}$.
Conversely, any $L$ satisfying \cref{eq:L-h-def} furnishes a graph Laplacian $\bar{L} = \Diag(h) L \Diag(h)$, and any reversible rate matrix $R$ corresponds to a graph Laplacian $\bar{L} = \Minus \Diag(\pi) R$.
In the rest of this paper, we shall just use \emph{Laplacian} to refer to any operator $L$ satisfying \cref{eq:L-h-def} in terms of a irreducible, reversible rate matrix, and $h$ will mean the zero eigenvector of $L$.

Observe that 
\begin{eqn}
    \FullP(t) = e^{R t} = \DiagI(h) \expt{L} \Diag(h).
\end{eqn}
As such, we define: 
\begin{eqn}
    \SymP(t) \Eq \expt{L} = \Diag(h) \ert{R} \DiagI(h).
\end{eqn}
Since this matrix exponential is symmetric, we can begin to leverage the approach of \Cref{s:compression}.

\subsection{Invocation of the killed chain}\label{s:killedchain}
In \Cref{s:compression} we demonstrated how \Nystrom{} approximation of $K = L^\Mo$ could be translated into effective $1/t$ error bounds.
However, for Laplacians $L$ of the sort we examine here, $L^\Mo$ does not exist, and the stationary mode of irreducible Markov chains is tricky to handle in approximation analysis.

On the other hand, it is natural to consider \Nystrom{} approximation of a regularized dynamics obtained by killing $\FullChain$ randomly at an exponentially distributed time with mean $1/\Kill$.
    For killing rate $\Kill > 0$, we define the killed Laplacian and its inverse:
    \begin{eqn}
        \label{eq:killed-L-K}
        \La \Eq L + \Kill \Id, \qquad
        \Ka \Eq \La^\Mo.
    \end{eqn}
Then $h$ is an eigenvector of $\La$ with eigenvalue $\Kill$ and of $\Ka$ with eigenvalue $\Kill^\Mo$, and $\La$ and $\Ka$ are both symmetric positive definite.
Given a finite time $t$, the probability evolution of the killed chain may be made arbitrarily close to that of the unmodified chain by taking the limit $\Kill \ra 0$.
Therefore, we consider the \Nystrom{} approximation of $\Ka = (L + \Kill \Id)^\Mo$ in the $\Kill \ra 0$ limit.
While the inverse does not exist in this limit, the normalized quantity $\Va \Eq \Orth((\Ka)_{:,\I})$ converges suitably.

In summary, we consider a projective approach to Markov chain reduction obtained as the limiting result of the procedure of Section~\ref{s:compression} as $\gamma \ra 0$.
    To wit, given a Laplacian $L$ and nonempty subset $\I$, we define the projective compression of the dynamics killed at rate $\Kill$ as:
\begin{eqn}
    \label{eq:killed-projective-compression}
    \OrthP_{\I,\Kill}(t) &\Eq \Va \Expt{\Va^\t \La \Va} \Va^\t \text{ where } \Va \Eq \Orth((\Ka)_{:,\I}), 
\end{eqn}
and the projective compression of the dynamics as:
\begin{eqn} \label{eq:projective-compression}
    \OrthP_{\I}(t) &\Eq \lim_{\Kill \ra 0} \OrthP_{\I,\Kill}(t).
\end{eqn}

\subsection{Projective compression in terms of the committor}\label{s:committor}

While \cref{eq:projective-compression} does not express $\OrthP_{\I}(t)$ in closed form, in this section we show how this can be achieved using the committor functions of the Markov chain, defined as follows:

\begin{definition}[Committor matrix]
\label{def:sym-operators}
Given non-empty $\I$, the committor $\Cu$ is the $n \times \Ia$ matrix where: 
\begin{eqn} \label{eq:committor-prob-def}
    \Cu_{i,j} \Eq \Prob(X_{\tau_{\I}} = j \mid X_0 = i), 
\end{eqn}
i.e., $\Cu_{i,j}$ is the probability that the Markov chain started at $i$ will reach $j$ first among all indices in $\I$. (Recall the notation \Cref{eq:stoppingtime} used for hitting times.)
We also define:
\begin{eqn}
    \label{eq:stationary-chp}
    \pr \Eq \Cu^\t \pi, \qquad \hr \Eq \pr^\Oh, \qquad
    \Co \Eq \Diag(h) \Cu \DiagI(\hr).
\end{eqn}
\end{definition}
\noindent As we will explain later in \Cref{s:structure-preserving}, $\pr$ can be interpreted as the stationary distribution of an induced Markov chain on the smaller subset $\I$ of states. For now, we simply adopt this definition for ease of notation in defining the transformed committor $C$ following~\cref{eq:stationary-chp}.

It follows immediately from the irreducibility of the Markov chain that $\Cu \One = \One$ in general.
As a result, $\One^\t \pr = 1$ and $\norm{\hr}_2 = 1$, consistent with the aforementioned probabilistic interpretation.
Observe moreover that $\Cu$ and $\Co$ are entrywise non-negative, while $\pr$ and $\hr$ are entrywise positive.




We may now use \cref{eq:stationary-chp} to provide a closed-form expression for $\OrthP_{\I}(t)$:
\label{th:projective-compression:section}
\begin{restatable}[Projective compression in terms of committor  \ProofLink{th:projective-compression}]{proposition}{ThProjectiveCompression} \label{th:projective-compression}
    The projectively compressed dynamics \cref{eq:projective-compression} can be expressed 
    \begin{eqn}
        \OrthP_{\I}(t) = \Co \Expt{\Co^+ L \Co} \Co^+ = V \Expt{V^\t L V} V^\t  \text{ where } V \Eq \Orth(\Co).
        \label{eq:orth-prop}
    \end{eqn}
\end{restatable}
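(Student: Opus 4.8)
The plan is to pass to the $\Kill \to 0$ limit in \cref{eq:projective-compression} by tracking the subspace $\mathrm{colspan}((\Ka)_{:,\I})$, showing it converges to $\mathrm{colspan}(\Co)$, and then letting continuity do the rest. I would start with two reductions. First, since $\Va^{\t}\La\Va = \Va^{\t}L\Va + \Kill\,\Id$ and $\Kill\,\Id$ commutes with everything, $\OrthP_{\I,\Kill}(t) = \expt{\Kill}\,\Va\Expt{\Va^{\t}L\Va}\Va^{\t}$; moreover, as noted after \cref{eq:V}, the matrix $V\Expt{V^{\t}LV}V^{\t}$ is unchanged when the orthonormal $V$ is replaced by $VQ$ with $Q$ orthogonal, so it depends only on the projector $\Pi := VV^{\t}$, defining a map $F(\Pi)$ that is continuous on rank-$\Ia$ projectors (take the local section $\Pi \mapsto \Orth(\Pi V_{0})$ about a fixed reference $V_{0}$ and compose with the matrix exponential). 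Second, a null vector of the principal submatrix $L_{\Ic,\Ic}$, extended by zero on $\I$, would be a null vector of $L$ and hence a scalar multiple of $h$; since $h$ is nowhere zero and $\I \neq \emptyset$, that multiple is zero, so $L_{\Ic,\Ic}$, and therefore $L_{\Ic,\Ic}+\Kill\,\Id$ for every $\Kill \geq 0$, is positive definite. Consequently, for $\Kill > 0$ (where $\Ka = \La^{\Mo}$ is genuinely invertible), a vector $v$ lies in $\mathrm{colspan}((\Ka)_{:,\I})$ iff $\La v$ is supported on $\I$ iff $(\La v)_{\Ic} = \Zero$, so eliminating the $\Ic$ block gives
\[
  \mathrm{colspan}((\Ka)_{:,\I}) \;=\; \mathrm{colspan}\begin{bmatrix} \Id_{\I,\I} \\ -(L_{\Ic,\Ic}+\Kill\,\Id)^{\Mo}L_{\Ic,\I}\end{bmatrix},
\]
with rows ordered so that $\I$ precedes $\Ic$.

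Next I would identify the $\Kill \to 0$ limit of this subspace with $\mathrm{colspan}(\Co)$. By the standard characterization of committors as harmonic functions, the committor $\Cu$ of \cref{def:sym-operators} satisfies $\Cu_{\I,:} = \Id_{\I,\I}$ and $(R\Cu)_{\Ic,:} = \Zero$; using $R = -\DiagI(h)\,L\,\Diag(h)$, the second identity becomes $(L\,\Diag(h)\,\Cu)_{\Ic,:} = \Zero$, whence $(\Diag(h)\,\Cu)_{\Ic,:} = -L_{\Ic,\Ic}^{\Mo}L_{\Ic,\I}\,\Diag(h_{\I})$ and $(\Diag(h)\,\Cu)_{\I,:} = \Diag(h_{\I})$ (which already shows $\Co$ has full column rank $\Ia$). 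Since $\Co = \Diag(h)\,\Cu\,\DiagI(\hr)$ differs from $\Diag(h)\,\Cu$ by an invertible right factor, $\mathrm{colspan}(\Co)$ equals the subspace in the display above with $\Kill$ set to $0$ (legitimate because $L_{\Ic,\Ic}$ is invertible). That matrix has full column rank for all $\Kill \geq 0$ and varies continuously with $\Kill$, so the orthogonal projector onto its column span is continuous at $\Kill = 0$; hence $\Pi_{\Kill} := \Va\Va^{\t} \to \Pi_{0} := VV^{\t}$ with $V := \Orth(\Co)$, and combining with the first reduction, $\OrthP_{\I}(t) = \lim_{\Kill \to 0}\expt{\Kill}\,F(\Pi_{\Kill}) = F(\Pi_{0}) = V\Expt{V^{\t}LV}V^{\t}$.

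The committor form then follows by a refactoring: writing $\Co = VT$ with $T := V^{\t}\Co$ (invertible, since $\Co$ and $V$ share the full column rank $\Ia$), we get $\Co^{+} = T^{\Mo}V^{\t}$, hence $\Co^{+}L\Co = T^{\Mo}(V^{\t}LV)T$ and $\Co\Expt{\Co^{+}L\Co}\Co^{+} = V\Expt{V^{\t}LV}V^{\t}$, which is \cref{eq:orth-prop}.

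I expect the main obstacle to be the subspace limit in the second step. The crucial, mildly counterintuitive point is that this limit is \emph{not} obtained by simply setting $\Kill = 0$ in $\Ka = \La^{\Mo}$: that would give $\mathrm{colspan}((L^{+})_{:,\I}) \subseteq h^{\perp}$, which omits the stationary direction $h$, whereas the true limit $\mathrm{colspan}(\Co)$ contains $h$; it is exactly the blow-up of $\Ka$ along $h$ as $\Kill \to 0$ that deposits $h$ into the limiting span. The Schur-eliminated representation sidesteps this because it has a genuine, continuous limit as $\Kill \to 0$, owing to invertibility of $L_{\Ic,\Ic}$ (a proper principal submatrix of an irreducible Laplacian). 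Everything else is routine linear algebra.
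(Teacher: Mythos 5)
Your proposal is correct, and it takes a genuinely different route from the paper's. The paper observes that $(\Ka)_{:,\I}$ and $\Ca$ span the same column space, then invokes \Cref{th:killed-committor} (which is proved via a Sherman--Morrison expansion of $\Ka$ as a rank-one update of $\tilde K_\gamma$, compared against the explicit closed form for the committor furnished by \Cref{th:committor}) to conclude $\Ca \to \Co$, hence $\Orth(\Ca) \to \Orth(\Co)$. You bypass both \Cref{th:committor} and \Cref{th:killed-committor} entirely: you identify $\mathrm{colspan}((\Ka)_{:,\I})$ via Schur elimination as the graph of $-(L_{\Ic,\Ic}+\Kill\Id)^{-1}L_{\Ic,\I}$, show $L_{\Ic,\Ic}\succ 0$ by the PSD-null-vector argument, and match the $\Kill=0$ graph to $\mathrm{colspan}(\Co)$ through the Laplace-equation characterization $(R\Cu)_{\Ic,:}=\Zero$, $\Cu_{\I,:}=\Id$ (a fact the paper itself uses and establishes, e.g.\ in the proofs of \Cref{th:dirichlet-trace} and \Cref{th:sls}, so it is available to you). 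Your version is more self-contained and makes the convergence of the subspace elementary and transparent; the paper's version reuses a lemma it needs elsewhere anyway. Your closing observation that the naive $\Kill=0$ substitution in $\Ka$ would land in $h^\perp$, and that the blow-up along $h$ is what deposits $h$ into the limiting span, is exactly the right intuition and is worth keeping.
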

\noindent The proof of this proposition makes use of \Cref{th:killed-committor}, stated below in terms of the suitable committor quantities for the killed chain. Note that in \cref{eq:orth-prop}, any orthogonalization of $\Co$ works equivalently, but for concreteness we will later just consider the symmetric orthogonalization $V = \Co (\Co^\t \Co)^\Mh$.

Before proceeding, we comment that it is possible to prove closed-form expressions for $\Cu$ (and $\Co$) in closed form using the definition of $K \Eq L^+$, the Moore-Penrose pseudoinverse of $L$.
Note that in this setting, $K$ coincides (modulo diagonal scalings) with the \emph{fundamental matrix} of the Markov chain, which appears commonly in the wider literature (e.g., \cite{Aldous1995-reversible,Kemeny1976-markov,Chung2000-discrete}).
While previous work has provided an expression for $\Cu$ in the special case $\Ia = 2$ \cite{Aldous1995-reversible}, we offer a closed-form expression in the general case of non-empty $\I$ in \Cref{th:committor}, which is novel to the best of our knowledge. An alternative derivation of \Cref{th:committor}, via the limit of Markov chains killed at rate $\gamma \ra 0$, is outlined in the following section.

\subsection{Approximation error bounds}\label{s:projapproxbounds}

To properly treat the stationary mode associated with the Markov chain, the above discussion suggests treating the chain killed at rate $\Kill$, performing requisite analysis, and taking the limit $\Kill \ra 0$ at the conclusion.
This will be our general strategy in extending the approximation approach of \Cref{s:compression} to handle the stationary distribution of the Markov chain.
We begin by designing committor-related quantities for the killed chain as follows.
\begin{definition}[Committor-related quantities for the killed chain] \label{def:killed-committor}
    In terms of $\Ka = \La^\Mo$ \Cref{eq:killed-L-K}, define 
    \begin{eqn}
        \label{eq:killed-committor}
        \Cu_\Kill \Eq \Diag^\Mo(h) (\Ka)_{:,\I} (\Ka)_{\I,\I}^\Mo \Diag(\hi)
    \end{eqn}
    as well as the downstream quantities 
\begin{eqn}
    \label{eq:killed-chp}
    \pa \Eq \Cu_\Kill^\t \pi, \qquad \ha \Eq \pa^\Oh, \qquad
    \Ca \Eq \Diag(h) \Cu_\Kill \DiagI(\ha).
\end{eqn}
\end{definition}

\noindent Note that though we choose to define it here via an explicit formula, $\Cu_\Kill$ admits a probabilistic interpretation analogous to the specification of $\Cu$ in \Cref{def:sym-operators}, by consideration of a Markov chain that is killed at rate $\gamma > 0$. This interpretation is useful to us later, and we prove it in \Cref{th:killed-committor-prob}.

Then we show that all killed quantities converge to their anticipated limits as $\Kill \ra 0$. Note that for the committor, the proof of this fact offers an alternative derivation of the formula for $C$ in terms of the fundamental matrix (\Cref{th:committor}).

\label{th:killed-committor:section}
\begin{restatable}[Convergence of committor quantities   \ProofLink{th:killed-committor}]{lemma}{ThKilledCommittor} \label{th:killed-committor}
    Given \Cref{def:sym-operators} and \Cref{def:killed-committor},
    in the $\Kill \ra 0$ limit:
    \begin{eqn}
        \Cu_\Kill \ra \Cu, \qquad
        \Ca \ra \Co, \qquad
        \pa \ra \pr, \qquad
        \ha \ra \hr.
    \end{eqn}
\end{restatable}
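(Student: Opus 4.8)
The plan is to establish the convergence $\Cu_\Kill \to \Cu$ first, since the other three limits follow from it by continuity of the maps defining $\pa$, $\ha$, $\Ca$ (each is a composition of matrix multiplication, entrywise square root, and diagonal inversion, all continuous wherever the limiting quantities are entrywise positive, which holds here because $\pi$ and $\hr$ are entrywise positive and $\Cu$ is nonnegative with $\Cu\One=\One$, forcing $\pr = \Cu^\t\pi$ to be entrywise positive). So the crux is: $\lim_{\Kill\to 0} \Diag^{\Mo}(h)(\Ka)_{:,\I}(\Ka)_{\I,\I}^{\Mo}\Diag(\hi) = \Cu$, where the right side has the probabilistic meaning of \cref{eq:committor-prob-def}.

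First I would rewrite the formula \cref{eq:killed-committor} in terms of the un-symmetrized resolvent. Since $\La = \Diag(h)(\Minus R + \Kill\Id)\DiagI(h)$ — wait, more precisely $L = -\Diag(h) R \DiagI(h)$ so $\La = \Diag(h)(-R + \Kill\Id)\DiagI(h)$ — we have $\Ka = \Diag(h)(\Kill\Id - R)^{\Mo}\DiagI(h)$. Substituting this into \cref{eq:killed-committor}, the outer $\Diag^{\Mo}(h)$ and $\Diag(\hi)$ cancel the symmetrizing conjugations, leaving
\begin{eqn}
\Cu_\Kill = \big[(\Kill\Id - R)^{\Mo}\big]_{:,\I}\,\big[(\Kill\Id - R)^{\Mo}\big]_{\I,\I}^{\Mo}.
\end{eqn}
Now $(\Kill\Id - R)^{\Mo} = \int_0^\infty e^{\Minus\Kill s} e^{Rs}\,\odif s = \int_0^\infty e^{\Minus\Kill s}\FullP(s)\,\odif s$, the resolvent of the generator, which is the expected occupation measure of the chain killed at rate $\Kill$. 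The standard probabilistic identity (e.g.\ via the strong Markov property at $\tau_\I$, conditioning on the first state of $\I$ visited before the killing time, or equivalently a first-step/Dynkin argument) gives that $\big[(\Kill\Id - R)^{\Mo}\big]_{:,\I}\big[(\Kill\Id - R)^{\Mo}\big]_{\I,\I}^{\Mo}$ is exactly the matrix whose $(i,j)$ entry is $\Prob(X_{\tau_\I}=j,\ \tau_\I < T_\Kill \mid X_0 = i)$ normalized appropriately — in fact it equals $\Prob(\text{chain killed at rate }\Kill\text{, started at }i\text{, is absorbed into }\I\text{ at state }j)$. I would prove this algebraic identity directly: let $G = (\Kill\Id - R)^{\Mo}$; partitioning indices into $\I$ and $\Ic$ and using the block structure of $\Kill\Id - R$, one checks $G_{:,\I}(G_{\I,\I})^{\Mo}$ solves the same linear system (harmonic off $\I$ for the killed generator, identity on $\I$) that characterizes these absorption probabilities. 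This is \Cref{th:killed-committor-prob} referenced in the text, which I would either cite or include as a short sub-lemma.

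Given the probabilistic interpretation, the limit is immediate: as $\Kill\to 0$ the killing time $T_\Kill\to\infty$ almost surely, and by irreducibility $\tau_\I < \infty$ almost surely, so by dominated convergence $\Prob(X_{\tau_\I}=j,\ \tau_\I<T_\Kill\mid X_0=i) \to \Prob(X_{\tau_\I}=j\mid X_0=i) = \Cu_{i,j}$, with the (possible) normalization factor tending to $1$. Hence $\Cu_\Kill\to\Cu$ entrywise, which is what we want since these are finite matrices. Then $\pa = \Cu_\Kill^\t\pi \to \Cu^\t\pi = \pr$, $\ha = \pa^{\Oh}\to\pr^{\Oh}=\hr$ (continuity of entrywise square root at the positive vector $\pr$), and $\Ca = \Diag(h)\Cu_\Kill\DiagI(\ha)\to\Diag(h)\Cu\DiagI(\hr)=\Co$ (continuity of matrix products and of inversion of the diagonal matrix $\Diag(\ha)$ at $\Diag(\hr)$, which is invertible since $\hr$ is entrywise positive).

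The main obstacle is the algebraic/probabilistic identity $\Cu_\Kill = G_{:,\I}(G_{\I,\I})^{\Mo}$ with the stated absorption-probability meaning — i.e.\ correctly handling the normalization by $(G_{\I,\I})^{\Mo}$ and verifying it via the block decomposition of the resolvent. Once that is in hand (and it is essentially \Cref{th:killed-committor-prob}), the convergence statement itself is a routine continuity argument, and the three downstream limits are corollaries. A purely linear-algebraic alternative, avoiding the probabilistic detour, would be to expand $G = (\Kill\Id - R)^{\Mo}$ in its spectral decomposition, note that $R$ has a simple zero eigenvalue with right eigenvector $\One$ and left eigenvector $\pi$ while all other eigenvalues are strictly negative, and show that the $\Kill^{\Mo}$-divergent rank-one part $\Kill^{\Mo}\One\pi^\t$ present in both $G_{:,\I}$ and $G_{\I,\I}$ cancels in the product $G_{:,\I}(G_{\I,\I})^{\Mo}$, leaving a finite limit that one then identifies with $\Cu$ using \Cref{th:committor}; I would present whichever of these is cleaner but expect the probabilistic route to be the more transparent one.
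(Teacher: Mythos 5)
Your proposal is correct, but your primary (probabilistic) route differs from the paper's. The paper's proof is purely linear-algebraic: it writes $\Ka = \tilde K_\gamma + \gamma^{-1}hh^\top$ where $\tilde K_\gamma := \Ka - \gamma^{-1}hh^\top \to K$ as $\gamma\to 0$, applies the Sherman--Morrison formula to $(\Ka)_{:,\I}(\Ka)_{\I,\I}^{-1}$, and observes that the resulting expression converges to the closed form for $\Cu$ already established in \Cref{th:committor}; the downstream limits are then immediate. Your primary route instead leans on the probabilistic interpretation of $\Cu_\Kill$ (which is exactly \Cref{th:killed-committor-prob}) and a coupling/dominated-convergence argument as $\gamma\to 0$, bypassing \Cref{th:committor} altogether; this is also valid, and arguably more conceptual, though it transfers the algebraic work into proving the absorption-probability identity (which is \Cref{th:killed-committor-prob} and uses the same Schur-complement block inversion). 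One small correction: there is no residual ``normalization factor tending to $1$'' in your identity $\Cu_\Kill = G_{:,\I}G_{\I,\I}^{-1}$ --- the $G_{\I,\I}^{-1}$ factor \emph{is} the normalization and the product equals the absorption probability exactly; the hedging is unnecessary. Finally, your stated linear-algebraic ``alternative'' route --- isolating the $\gamma^{-1}$-divergent rank-one piece and cancelling it, then matching against \Cref{th:committor} --- is essentially the paper's Sherman--Morrison argument in different clothing, and the paper prefers it precisely because (as it remarks just before \Cref{th:killed-committor}) it doubles as an alternative, non-probabilistic derivation of the committor formula.
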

\noindent \Cref{th:killed-committor} highlights how, in addition to its probabilistic motivation, $C_\gamma$ may be thought of as a convenient normalization of $(\Ka)_{:,\I}$ in the $\Kill \ra 0$ limit.

Next we define the \Nystrom{} approximation error norms for the original and killed chains:

\begin{definition}[\Nystrom{} errors for reversible Markov chains]
    \label{def:markov-epsilon}
    For the killed chain with $\gamma > 0$, define the approximation norms:
    \begin{eqn}
        \label{eq:killed-epsilon}
        \Eps^{\Kill}(\I) \Eq \Norms{\Ka - (\Ka)_{:,\I} (\Ka)_{\I,\I}^\Mo (\Ka)_{\I,:}},
    \end{eqn}
    and extend these definitions to the original Markov chain by taking the limit $\gamma \ra 0$: 
    \begin{eqn}
        \label{def:recurrent-epsilon}
        \Eps(\I) \Eq \lim_{\Kill \ra 0} \Eps^{\Kill}(\I).
    \end{eqn}
\end{definition}

In \Cref{th:markov-norm-limits} we show that the limit \Cref{eq:killed-epsilon} does in fact exist and can be written in closed form in terms of the fundamental matrix $K$. Note that it is precisely the limiting quantity $\En (\I) $ that is controlled \emph{a priori} by choosing $\I$ using the nuclear maximization approach of \cite{Fornace2024-column}, as we shall elaborate further in \Cref{s:algs} below.

At this point, we can apply \Cref{th:dissipative} directly to obtain an  approximation error bound for all $\Kill > 0$. By taking the limit of the compressed dynamics and the error bound as $\gamma \ra 0$, we obtain the following:

\label{th:ortho-markov:section}
\ThOrthoMarkov*
\noindent \Cref{s:experiments} shows the empirical utility of these bounds through multiple case studies.








\subsection{Relating error bounds to autocorrelation}\label{s:untransformed}

Finally, we show how norm bounds on the approximation of $\SymP (t)$ may be interpreted in terms of the original, untransformed Markov chain dynamics $\FullP (t)$. 
Autocorrelation analysis (e.g., \cite{Park2017-fundamentals,Frenkel2023-understanding}) offers an elegant perspective.

Given dynamics $\FullP$, the autocorrelation of a function on the state space, viewed as a vector $f \in \R^n$, is defined by 
\begin{eqn} \label{eq:acorr}
A_{t}(f, \FullP) \Eq \Ex \lrs{f_{X_0} f_{X_t} \mid X_0 \sim \pi, \,  X_t \sim \FullP_{X_0,\,:\,}(t)}.
\end{eqn}
More generally, the autocorrelation of vector-valued function, viewed as a matrix $F \in \R^{n \times m}$ for arbitrary $m$, is defined by 
\begin{eqn} \label{eq:acorr2}
A_{t}(F, \FullP) \Eq \Ex \lrs{F_{X_0,\,:\,}^\t \, F_{X_t,\,:\,} \mid X_0 \sim \pi, \, X_t \sim \FullP_{X_0,\,:\,}(t)}.
\end{eqn}
Note that the output is an $m \times m$ matrix.

A more proper definition of autocorrelation should assume that the functions have zero mean with respect to the stationary distribution $\pi$, but the following proposition, which relates our symmetric approximation error to autocorrelation discrepancies, is invariant to such a shift.

\label{th:autocorrelation:section}
\begin{restatable}[Approximation of autocorrelation \ProofLink{th:autocorrelation}]{proposition}{ThAutocorrelation} \label{th:autocorrelation}
Given dynamics $\FullQ(t)$ which approximate $\FullP(t)$ with $\SymQ(t) \Eq \Diag(h) \FullQ(t) \DiagI(h)$:
\begin{eqn}
    \label{eq:autocorrelation}
    \snorm{\SymP(t) - \SymQ(t)} &= \sup_{f \in \R^n} \lrb{ \Abs{A_t(f, \FullQ) - A_t(f, \FullP)} \;:\; \norm{f}_\pi = 1}, \\
    \nnorm{\SymP(t) - \SymQ(t)} &= \sup_{m \leq n, \, F \in \R^{n\times m} } \lrb{ \nnorm{A_t(F, \FullQ) - A_t(F, \FullP)} \;:\; F^\t \Diag(\pi) F = \Id}.
\end{eqn}
\end{restatable}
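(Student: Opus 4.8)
The plan is to unfold the definition of autocorrelation directly into a matrix expression and then recognize the two norm identities as the standard variational characterizations of the spectral and nuclear norms. First I would compute, for a vector-valued function $F \in \R^{n \times m}$, that $A_t(F, \FullP) = \Ex[F_{X_0,:}^\t F_{X_t,:} \mid X_0 \sim \pi,\, X_t \sim \FullP_{X_0,:}(t)]$ equals $F^\t \Diag(\pi) \FullP(t) F$: conditioning on $X_0 = i$ (probability $\pi_i$) and then on $X_t = j$ (probability $\FullP_{i,j}(t)$) gives $\sum_{i,j} \pi_i \FullP_{i,j}(t)\, F_{i,:}^\t F_{j,:}$, which is exactly $F^\t \Diag(\pi) \FullP(t) F$. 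The analogous computation holds for $\FullQ$, so the difference is $A_t(F, \FullQ) - A_t(F, \FullP) = F^\t \Diag(\pi)\,[\FullQ(t) - \FullP(t)]\, F$.

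Next I would pass to the symmetrized picture using $\Diag(h)$ with $h = \pi^{1/2}$, so $\Diag(\pi) = \Diag(h)^2$ and $\Diag(\pi) \FullP(t) = \Diag(h)\, \SymP(t)\, \Diag(h)$ (and likewise for $\FullQ$). Writing $G \Eq \Diag(h) F$, the difference becomes $G^\t [\SymP(t) - \SymQ(t)]\, G$, and the constraint $\norm{f}_\pi = 1$ (respectively $F^\t \Diag(\pi) F = \Id$) translates exactly to $\norm{G}_2 = 1$ (respectively $G^\t G = \Id$, i.e. $G$ has orthonormal columns). Since $F \mapsto \Diag(h) F$ is a bijection on matrices ($h$ entrywise positive), the suprema over $F$ satisfying the stated constraints equal the suprema over all $G$ with the corresponding constraints. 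For the spectral norm, $\sup_{\norm{g}_2 = 1} \Abs{g^\t M g} = \snorm{M}$ for symmetric $M$ by the variational (Rayleigh quotient) characterization — here $M = \SymP(t) - \SymQ(t)$ is symmetric since both $\SymP$ and $\SymQ$ are symmetric matrix exponentials / their symmetrized analogs. For the nuclear norm, I would invoke the duality $\nnorm{M} = \sup \{ \Tr(W^\t M) : \snorm{W} \leq 1 \}$ together with the fact that any $m \leq n$ and $G \in \R^{n\times m}$ with orthonormal columns gives $\nnorm{G^\t M G} \leq \nnorm{M}$, with equality attained by taking $G$ to be the matrix of left (or right, by symmetry) singular vectors of $M$ spanning its range and $m = \mathrm{rank}(M)$; more carefully, $\nnorm{M} = \max \{ \nnorm{G^\t M G} : G^\t G = \Id \}$ is exactly the claim that the nuclear norm equals the sum of singular values, recovered by the eigenbasis of the symmetric matrix $M$.

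The main obstacle is not any single hard inequality but making the nuclear-norm direction airtight: one must argue both $\nnorm{G^\t M G} \leq \nnorm{M}$ for every $G$ with orthonormal columns (which follows from $\nnorm{ABC} \leq \snorm{A}\nnorm{B}\snorm{C}$ with $\snorm{G} = \snorm{G^\t} = 1$, the same Hölder-for-Schatten-norms inequality already used in the proof sketch of \Cref{th:dissipative}) and that the bound is saturated (take $M = \sum_k \sigma_k u_k u_k^\t$ the spectral decomposition of the symmetric matrix $M$, let $G = [u_1 \cdots u_r]$ where $r = \mathrm{rank}(M)$, so $G^\t M G = \Diag(\sigma_1, \ldots, \sigma_r)$ and $\nnorm{G^\t M G} = \sum_k \Abs{\sigma_k} = \nnorm{M}$). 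A minor point to handle cleanly is the case $M = 0$, where both sides vanish trivially, and the observation that restricting to $m \leq n$ loses nothing since $G^\t M G$ for $m > n$ can only repeat or null out existing singular directions. Once these variational facts are in place, the proposition follows by composing the bijection $F \leftrightarrow G$ with the two norm characterizations.
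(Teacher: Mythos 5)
Your proposal is correct and follows essentially the same route as the paper's proof: unfold the autocorrelation to $F^\t \Diag(\pi)\,[\FullQ(t)-\FullP(t)]\,F$, change variables $G = \Diag(h)F$ to land on the symmetric difference $\SymP(t)-\SymQ(t)$, then invoke the Rayleigh-quotient characterization for the spectral norm and the H\"{o}lder-plus-equality-case argument for the nuclear norm. The only cosmetic difference is the equality witness for the nuclear norm (the paper simply takes $G = \Id$ with $m = n$, whereas you construct $G$ from eigenvectors spanning the range); both work, and your explicit note that $\SymP(t)-\SymQ(t)$ must be symmetric for the Rayleigh-quotient step is a detail the paper leaves implicit.
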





\section{Structure-preserving compression of Markov chains \label{s:structure-preserving}}

In \Cref{s:projective}, we demonstrated a projective compression approach to Markov chain approximation. 
In summary (cf. \Cref{th:projective-compression}), our approximation to the dynamics was defined by the following formula:
\begin{eqn}
    \label{eq:ortho-p}
    \OrthP_{\I}(t) \Eq \Co \expt{\Co^+ L \Co} \Co^+ 
\end{eqn}
where $\Co$ is the rescaled committor of the process with respect to subset $\I$ (cf. \Cref{def:sym-operators}).
This approximation is not fully structure-preserving in the sense that (1)
$\OrthP_{\I}(t)$ is not guaranteed to be entrywise non-negative (hence in particular the untransformed dynamics $\NonSymOrthP_{\I}(t)$ cannot be row stochastic), (2) the generator $\Co^+ L C$ is not guaranteed to have Laplacian sign structure, and (3) the compressed dynamics cannot be interpreted in terms of a Markov chain on a smaller state space.

In this section, we introduce a structure-preserving compression that addresses each of these shortcomings. Although the derivation and analysis are considerably more complicated, the approximation ultimately admits the following simple formula:  
\begin{eqn}
    \label{eq:oblique-p}
    \ObliqueP_{\I}(t) \Eq \Co \expt{\Lr} \Co^\t, \ \text{ where } \Lr \Eq \Co^\t L \Co.
\end{eqn}
In this formula, $\Lr$ is a Laplacian in the sense of \Cref{s:symmetrization} with stationary eigenvector $\hr$ defined as in \cref{eq:L-h-def}. As we shall explain in \Cref{s:induced}, $\Lr$ corresponds to a reversible Markov chain over $\Ia$ states. Meanwhile, $\ObliqueP_{\I}(t)$ is guaranteed to be entrywise non-negative and symmetric. Moreover, it is similar to the matrix 
\begin{eqn} \label{eq:oblique-nonsym}
    \NonSymObliqueP_{\I}(t) \Eq \DiagI(h) \ObliqueP_{\I}(t) \Diag(h),
\end{eqn}
which is entrywise non-negative and row stochastic (cf. \Cref{th:sp-nonnegativity} below).

While \cref{eq:ortho-p} and \cref{eq:oblique-p} appear very similar, the error analyses necessitate substantially different techniques, as the columns of $C$ are not orthonormal in general.
In spite of this difficulty, we can prove the following simple bound on the nuclear norm error of the approximation \cref{eq:oblique-p}:

\booltrue{NOLINK}
\label{th:sp-nuclear-bound:section}
\ThSpNuclearBound*
\boolfalse{NOLINK}
\Cref{th:sp-nuclear-bound} (which is a direct consequence of Theorem \hyperlink{Theorem3*}{3*} and \Cref{th:probabilistic-obliqueness-factor}, to be introduced and proved below) represents a worst-case \textit{a priori} bound on the structure-preserving approximation error.
Here we use \textit{a priori} to indicate that the bound that can be formulated solely in terms of the \Nystrom{} error $\En (\I)$, which can in turn be controlled via the nuclear column selection algorithm \cite{Fornace2024-column}, as we shall elaborate in \Cref{s:algs}.

We also prove empirically tighter \textit{a posteriori} spectral and nuclear norm error bounds in Theorem \hyperlink{Theorem3*}{3*}. This result is more complicated to formulate, though here we comment that no explicit factor of $\Ia$ appears, in contrast with \eqref{eq:thm3bound}. These \textit{a posteriori} bounds can be computed in practice given a choice of subset $\I$, after which point they can be used to bound the approximation error for all times $t > 0$.

To proceed, first we motivate \cref{eq:oblique-p} and prove its structure-preserving properties using probabilistic arguments in \Cref{s:induced,s:lifting}. 
Then in \Cref{s:marked} we will construct the marked chain, which conveys useful probabilistic insights as we finally turn to the proof of \Cref{th:sp-nuclear-bound} in \Cref{s:oblique-approximation}.


\subsection{The induced chain \label{s:induced}}

In this section, we motivate \cref{eq:oblique-p} in terms of an \emph{induced} Markov chain $\InducedChain$ over a non-empty subset of states $\I \subset \Indices$. We introduce the nomenclature ``induced chain'' as a generalization of the construction appearing in \cite{Aldous1995-reversible}, which is recovered as a special case if $\Cu$ is a strict partitioning, i.e., 0-1 entrywise. More generally, the entries in our committor matrix $\Cu$ can lie in $[0, 1]$.

Concretely, any choice of subset $\I$ determines an induced rate matrix
\begin{eqn}
    \Rr &\Eq \Ct R \Cu
    \label{eq:simple-induced}
\end{eqn}
which governs the induced chain $\InducedChain$, where 
\begin{eqn}\label{eq:csharp}
\Ct \Eq \DiagI(\pr) \Cu^\t \Diag(\pi) \in \R^{\Ia \times n}
\end{eqn}
can be viewed as the adjoint of $\Cu$ (cf. \Cref{def:sym-operators}) accounting for the stationary distribution. 
Here $\pr \Eq \Cu^\t \pi$, in accordance with \cref{eq:L-h-def} above. 
We denote the stochastic process corresponding to $\InducedChain$ as $\Process{\XI}$, which is governed by rate matrix $\Rr$ and is initialized via $\XI_0 \sim \pr$.
We will provide a probabilistic interpretation of this chain below, but for now we verify several formal properties.

\label{th:induced-basics:section}
\begin{restatable}[Elementary properties of the induced chain \ProofLink{th:induced-basics}]{lemma}{ThInducedBasics}\label{th:induced-basics}
    The induced chain $\InducedChain$ determined by  $\Rr$  \cref{eq:simple-induced} is an irreducible reversible Markov chain with stationary distribution $\pr$.
    Specifically, $0 < \pr_i \leq 1$ for all $i$, $\One^\t \pr = 1$, $\Rr_{i,i} < 0$ for all $i$, $\Rr_{i,j} \geq 0$ for all $i \neq j$, $\Rr^\t \pr = \Zero$, $\Rr \One = \Zero$, and $\Diag(\pr) \Rr = \Rr^\t \Diag(\pr)$. 
\end{restatable}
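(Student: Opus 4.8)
The plan is to derive everything from two structural properties of the committor matrix $\Cu$ of \Cref{def:sym-operators}, supplementing the already-noted $\Cu\One=\One$ and entrywise nonnegativity: (i) the boundary values $\Cu_{\I,:}=\Id$, since $X_0\in\I$ forces $\tau_{\I}=0$ and $X_{\tau_{\I}}=X_0$; and (ii) the harmonicity relation $R_{\Ic,:}\,\Cu=\Zero$, i.e. the committor is harmonic away from $\I$. These are standard potential-theoretic facts (first-step analysis), and within the present framework they also fall out of $\La\Ka=\Id$ (\Cref{eq:killed-L-K}): the matrix $\La\,(\Ka)_{:,\I}$ equals the columns of $\Id$ indexed by $\I$, so it vanishes on the rows $\Ic$; multiplying on the right by $(\Ka)_{\I,\I}^\Mo$, substituting \Cref{eq:killed-committor} and $\La=L+\Kill\Id=-\Diag(h)R\DiagI(h)+\Kill\Id$, and cancelling the positive diagonal factors yields $R_{\Ic,:}\,\Cu_\Kill=\Kill\,(\Cu_\Kill)_{\Ic,:}$, while the analogous computation on the rows $\I$ yields $(\Cu_\Kill)_{\I,:}=\Id$; letting $\Kill\ra 0$ (\Cref{th:killed-committor}, the entries of $\Cu_\Kill$ being bounded) gives (i) and (ii).

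Granting (i)--(ii), the linear-algebraic claims about $\pr=\Cu^\t\pi$ and $\Rr=\Ct R\,\Cu$ are short. From $\Cu\One=\One$ we get $\One^\t\pr=(\Cu\One)^\t\pi=\One^\t\pi=1$; since each row of $\Cu$ sums to one with nonnegative entries and $\Cu_{i,i}=1$ by (i), $\pi_i\leq\pr_i=\sum_j\pi_j\Cu_{j,i}\leq\sum_j\pi_j=1$, so $0<\pr_i\leq 1$. Next $\Rr\One=\Ct R\,\Cu\One=\Ct R\One=\Zero$. For reversibility, substitute $\Ct=\DiagI(\pr)\Cu^\t\Diag(\pi)$ to obtain $\Diag(\pr)\Rr=\Cu^\t\,(\Diag(\pi)R)\,\Cu$; because $\Diag(\pi)R$ is symmetric (reversibility of $\FullChain$), so is the right-hand side, hence $\Diag(\pr)\Rr=\Rr^\t\Diag(\pr)$; multiplying this on the right by $\One$ and using $\Rr\One=\Zero$ gives $\Rr^\t\pr=\Zero$.

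For the sign structure, I would first simplify $\Diag(\pr)\Rr=\Cu^\t\,(\Diag(\pi)R)\,\Cu$ using (i)--(ii): by harmonicity $R\,\Cu$ is supported on the rows indexed by $\I$, and on those rows $\Cu_{k,i}=\delta_{k,i}$, so $(\Diag(\pr)\Rr)_{i,j}=\pi_i\,(R\Cu)_{i,j}$, i.e. $\Rr_{i,j}=(\pi_i/\pr_i)\,(R\Cu)_{i,j}$. For $i\neq j$ in $\I$, $(R\Cu)_{i,j}=R_{i,i}\Cu_{i,j}+\sum_{k\neq i}R_{i,k}\Cu_{k,j}=\sum_{k\neq i}R_{i,k}\Cu_{k,j}\geq 0$, since $\Cu_{i,j}=0$ by (i) and $R_{i,k},\Cu_{k,j}\geq 0$; hence $\Rr_{i,j}\geq 0$, and then $\Rr\One=\Zero$ forces $\Rr_{i,i}=-\sum_{j\neq i}\Rr_{i,j}\leq 0$. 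For irreducibility: given distinct $i,j\in\I$, pick a path $i=v_0,\dots,v_m=j$ in $\FullChain$ with positive rates and let $i=w_0,\dots,w_p=j$ be its subsequence of vertices lying in $\I$; each consecutive segment from $w_q$ to $w_{q+1}$ passes only through $\Ic$, so its first step $k:=v_{a+1}$ has $R_{w_q,k}>0$ and, following the segment, $\Cu_{k,w_{q+1}}>0$, whence $\Rr_{w_q,w_{q+1}}=(\pi_{w_q}/\pr_{w_q})\sum_{k'\neq w_q}R_{w_q,k'}\Cu_{k',w_{q+1}}>0$; chaining these shows $\InducedChain$ is irreducible. Finally, if some $\Rr_{i,i}=0$ then $\Rr_{i,j}=0$ for every $j\neq i$, and by the detailed-balance relation $\Rr_{j,i}=(\pr_i/\pr_j)\Rr_{i,j}=0$ as well, so $i$ communicates with no other state --- impossible once $\Ia\geq 2$; hence $\Rr_{i,i}<0$.

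The main obstacle is (i)--(ii), in particular the harmonicity relation and the reachability fact used for irreducibility (that $\Cu_{k,j}>0$ whenever there is a positive-probability trajectory from $k$ reaching $\I$ first at $j$): these potential-theoretic inputs carry the real content, and everything downstream is bookkeeping. One caveat worth flagging: the strict inequality $\Rr_{i,i}<0$ and the isolation argument presume $\Ia\geq 2$; when $\Ia=1$ the induced chain is the trivial one-state chain with $\Rr=[0]$.
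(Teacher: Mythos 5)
Your proof is correct and follows essentially the same chain of reasoning as the paper's: reversibility from the symmetry of $\Cu^\t\Diag(\pi)R\Cu$, offdiagonal nonnegativity from $\Cu_{\I,:}=\Id$ together with the harmonicity $R_{\Ic,:}\Cu=\Zero$ (which the paper cites via \cref{eq:laplace-equation} from \Cref{th:dirichlet-trace}, while you re-derive it directly from $\La\Ka=\Id$ in the killed chain), and irreducibility by chaining committor-positive segments along any $\FullChain$-path between states of $\I$. Your caveat about $\Ia=1$ is well taken: in that degenerate case $\Rr=[0]$ and the strict inequality $\Rr_{i,i}<0$ fails, a restriction the paper does not flag in this lemma, although it is assumed explicitly in nearby results such as \Cref{th:induced-interpretation}.
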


Recall that $\Process{X}$ denotes the stochastic process corresponding to $\FullChain$ with initial probability distribution $X_0 \sim \pi$ and that $\tau_{\,\mathcal{S}}$ denotes the hitting time for a subset $\mathcal{S} \subset \Indices$, cf. \Cref{eq:stoppingtime}.
Then the objects $\Rr$ and $\pr$ can then be interpreted probabilistically as follows.

\label{th:induced-interpretation:section}
\begin{restatable}[Characteristics of induced chain  \ProofLink{th:induced-interpretation}]{proposition}{ThInducedInterpretation} \label{th:induced-interpretation}
    Assuming $\Ia > 1$:
    \begin{enumerate}
        \item The average time for $\FullChain$ started at state $i \in \I$ to reach any state in $\Iminus{i}$ is $\Minus \Rr_{i,i}^\Mo$. More precisely, 
        \begin{eqn}
            \Minus \Rr_{i,i}^\Mo = \Ex(   \tau_{\,\Iminus{i}} \mid X_0 = i).
        \end{eqn}
        \item Starting $\FullChain$ in $i$ and stopping it once it hits any state $j \in \Iminus{i}$, the probability of stopping in state $j$ is $\Minus \Rr_{i,j}/\Rr_{i,i}$. More precisely, 
        \begin{eqn}
            \Minus \Rr_{i,j}/\Rr_{i,i} = \Prob(X_{\tau_{\,\Iminus{i}}} = j \mid X_0 = i).
        \end{eqn} 
        \item The probability of reaching state $i$ first of the states in $\I$ is $\pr_i$. More precisely,
        \begin{eqn}
            \pr_i = \Prob(X_{\tau_{\,\I}} = i ).
        \end{eqn}
    \end{enumerate}
\end{restatable}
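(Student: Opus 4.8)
The plan is to reduce all three claims to first-step analysis in the original chain $\FullChain$, after first collapsing the algebraic definition \eqref{eq:simple-induced}--\eqref{eq:csharp} of $\Rr$ into a single-term expression. Throughout one uses that $\Ia > 1$ makes $\Iminus{i}$ nonempty and that irreducibility on the finite state space makes all the relevant hitting times have finite expectation. Part 3 is the warm-up and is essentially immediate: since $\pr = \Cu^\t \pi$ and $\Cu_{k,i} = \Prob(X_{\tau_\I} = i \mid X_0 = k)$ by \Cref{def:sym-operators} (with $\Cu_{k,i} = \delta_{k,i}$ for $k \in \I$, because $\tau_\I = 0$ when started inside $\I$), the law of total probability with $X_0 \sim \pi$ gives $\pr_i = \sum_k \pi_k \Cu_{k,i} = \Prob(X_{\tau_\I} = i)$.

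Next I would establish the pivotal algebraic fact that, for $i \in \I$, the $i$-th row of $\Rr$ is merely a rescaling of the $i$-th row of $R\Cu$. This uses harmonicity of the committor off $\I$, i.e. $(R\Cu)_{l,:} = 0$ for every $l \in \Ic$, which follows from a single first-step step applied to $\Cu_{l,j}$ (equivalently, it can be read off from the closed form in \Cref{th:committor}). Substituting this together with $\Cu_{\I,:} = \Id_{\I,\I}$ into $\Rr = \DiagI(\pr)\,\Cu^\t \Diag(\pi)\,(R\Cu)$ makes the sum over states collapse to its one surviving term, yielding $\Rr_{i,j} = (\pi_i/\pr_i)(R\Cu)_{i,j}$ for every $i \in \I$ and every $j$. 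Consequently $-\Rr_{i,j}/\Rr_{i,i} = -(R\Cu)_{i,j}/(R\Cu)_{i,i}$ and $-1/\Rr_{i,i} = -\pi_i/\big(\pr_i (R\Cu)_{i,i}\big)$, so Parts 1 and 2 become assertions purely about the matrix $R\Cu$.

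For Parts 1 and 2 I would then run the original chain from $i \in \I$ and set up renewal (first-step) equations: condition on the first jump out of $i$ (to $l \neq i$, with probability $R_{i,l}/(-R_{i,i})$) and, when $l \in \Ic$, on the next state at which $\I$ is reached (which is $m$ with probability $\Cu_{l,m}$). Using $(R\Cu)_{i,i} = R_{i,i} + \sum_{l \in \Ic} R_{i,l}\Cu_{l,i}$ and $(R\Cu)_{i,j} = R_{i,j} + \sum_{l \in \Ic} R_{i,l}\Cu_{l,j}$, the equations for $q_{i,j} := \Prob(X_{\tau_{\Iminus{i}}} = j \mid X_0 = i)$ and $T_i := \Ex(\tau_{\Iminus{i}} \mid X_0 = i)$ simplify to $(R\Cu)_{i,j} + q_{i,j}(R\Cu)_{i,i} = 0$ and $1 + (Rg)_i + T_i (R\Cu)_{i,i} = 0$, where $g$ is the vector of expected hitting times of $\I$ (so $g_\I = 0$ and $(Rg)_\Ic = -\One$). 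The first immediately yields Part 2 via the rescaling above. The second gives $T_i = -(1 + (Rg)_i)/(R\Cu)_{i,i}$, so Part 1 reduces to the single identity $1 + (Rg)_i = \pr_i/\pi_i$ for $i \in \I$.

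\textbf{The hard part} is exactly this last identity, $(Rg)_i = \pr_i/\pi_i - 1$ on $\I$ — it is the one place where reversibility is genuinely used. The plan there is to note that $Rg + \One$ is supported on $\I$ and that $\Diag(\pi) g$ is supported on $\Ic$, multiply by $\Diag(\pi)$, and use $\Diag(\pi) R = R^\t \Diag(\pi)$ to get $\pi_i (Rg)_i = \sum_{k \in \Ic} \pi_k R_{k,i}\, g_k$ for $i \in \I$; then, invoking the Green's-function representations $g_\Ic = (-R_{\Ic,\Ic})^{-1}\One$ and $\Cu_{\Ic,i} = (-R_{\Ic,\Ic})^{-1}R_{\Ic,i}$ and the symmetry of $\Diag(\pi_\Ic)(-R_{\Ic,\Ic})^{-1}$ (reversibility on $\Ic$), a short symmetrization shows $\sum_{k \in \Ic} \pi_k R_{k,i}\, g_k = \sum_{k \in \Ic} \pi_k \Cu_{k,i} = \pr_i - \pi_i$, the last step being Part 3. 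Conceptually this is a Kac-type flux-balance statement, consistent with $\Rr^\t\pr = \Zero$ from \Cref{th:induced-basics}. An alternative to this step computes $T_i$ directly as the $i$-th row sum of $(-R_{B,B})^{-1}$ with $B = \Ic \cup \{i\}$ and collapses it with a Schur-complement identity, but it needs the very same reversibility input; everything outside this one identity is routine first-step bookkeeping.
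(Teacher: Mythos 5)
Your proof is correct, and it takes a genuinely different route from the paper. The paper's proof of parts~1 and~2 is linear-algebraic: it introduces a perturbed rate matrix $R - \Shift \One\pi^\t - \alpha\,\Id_{:,\Iminus{i}}\Id_{:,\Iminus{i}}^\t$, integrates the probability evolution, invokes the large-$\alpha$ block-limit lemma (\Cref{lem:expalpha}) together with Woodbury and Sherman--Morrison to push the computation into the fundamental matrix $K$, and then matches the resulting expressions entry-by-entry against the closed forms for $\Rr$ in \Cref{th:induced-from-k}. Your approach is instead a direct first-step analysis: you collapse $\Rr_{i,:} = (\pi_i/\pr_i)(R\Cu)_{i,:}$ using harmonicity of the committor plus $\Cu_{\I,:}=\Id$, set up renewal equations for $q_{i,j}$ and $T_i$ that the strong Markov property justifies, and then close the loop with the identity $1 + (Rg)_i = \pr_i/\pi_i$ on $\I$, proved by one application of reversibility via the symmetry of $\Diag(\pi_\Ic)(-R_{\Ic,\Ic})^{-1}$. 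Your argument is more self-contained and more transparently probabilistic, and it cleanly isolates where reversibility enters (only the last identity); the paper's version has the advantage of reusing \Cref{th:induced-from-k} and \Cref{lem:expalpha}, machinery that it needs elsewhere. Part~3 is argued the same way in both. One small slip: you wrote $-1/\Rr_{i,i} = -\pi_i/(\pr_i\,(R\Cu)_{i,i})$, but with $\Rr_{i,i} = (\pi_i/\pr_i)(R\Cu)_{i,i}$ the inverse is $-\pr_i/(\pi_i\,(R\Cu)_{i,i})$; your subsequent reduction to $1 + (Rg)_i = \pr_i/\pi_i$ already uses the correct form, so this is just a transcription error.
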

Given \Cref{th:induced-interpretation}, it can be shown that $\InducedChain$ exactly preserves the mean first passage times between the chosen states $\I$ (\Cref{th:hitting-times-preserved}).



It is also possible to derive $\InducedChain$ via consideration of the stationary flow in the original chain $\FullChain$.
For this we define the related operators: 
\newcommand{\Flow}{\Delta}
\newcommand{\InducedFlow}{\hat{\Delta}}
\begin{eqn} \label{eq:flow-matrices}
    \Flow \Eq \Minus \Diag(\pi) R, \qquad \hat \Flow \Eq \Minus \Diag(\pr) \Rr.
\end{eqn}
By reversibility, these are symmetric positive semidefinite matrices satisfying $\Flow \One = \Zero$, $\hat \Flow \One = \Zero$, and they coincide with expectations of the flow rates of $\FullChain$ as follows:
\label{th:induced-flow:section}
\begin{restatable}[Induced chain from stationary flow \ProofLink{th:induced-flow}]{proposition}{ThInducedFlow} \label{th:induced-flow}
    Consider the stochastic processes $\Process{X}$ and $\Process{\XI}$ corresponding to the original chain $\FullChain$ and the induced chain $\InducedChain$, respectively.
    Let $\Flow$ and $\InducedFlow$ be as in \cref{eq:flow-matrices} .
    Then:
\begin{eqn}
    \Flow &= \lim_{t \ra 0} \frac{1}{2t} \Ex \lrs{ (\Id_{X_t,:} - \Id_{X_0,:})^\t (\Id_{X_t,:} - \Id_{X_0,:})  }, \qquad 
    \InducedFlow &= \lim_{t \ra 0} \frac{1}{2t} \Ex \lrs{ (\Id_{\XI_t,:} - \Id_{\XI_0,:})^\t (\Id_{\XI_t,:} - \Id_{\XI_0,:})  } 
\end{eqn}
and $\InducedFlow$ is the projection of the stationary flow onto the committor functions: 
\begin{eqn}
    \InducedFlow = \lim_{t \ra 0} \frac{1}{2t} \Ex \lrs{ (\Cu_{X_t,:} - \Cu_{X_0,:})^\t (\Cu_{X_t,:} - \Cu_{X_0,:}) }. 
\end{eqn}
\end{restatable}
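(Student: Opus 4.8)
All three identities follow from one second-moment computation, so the plan is to isolate that computation first. Write $\Id_{i,:}$ for the $i$-th standard basis row vector, so that $\Cu_{i,:} = \Id_{i,:}\Cu$, and set
\[
    M(t) \Eq \Ex\lrs{(\Id_{X_t,:} - \Id_{X_0,:})^\t (\Id_{X_t,:} - \Id_{X_0,:})} \in \R^{n\times n}.
\]
First I would expand the outer product into four terms and evaluate each by conditioning on $X_0$ and using that $\Process{X}$ is stationary (so $X_0, X_t \sim \pi$ marginally) together with reversibility. The two ``diagonal'' terms each give $\Ex[\Id_{X_s,:}^\t\Id_{X_s,:}] = \Diag(\pi)$; the cross term $\Ex[\Id_{X_0,:}^\t\Id_{X_t,:}]$ has $(i,j)$ entry $\pi_i\FullP_{i,j}(t)$, hence equals $\Diag(\pi)\FullP(t)$, and the remaining cross term is its transpose, which equals $\Diag(\pi)\FullP(t)$ as well because detailed balance $\Diag(\pi)R = R^\t\Diag(\pi)$ propagates to $\Diag(\pi)\FullP(t) = \FullP(t)^\t\Diag(\pi)$. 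Collecting terms yields $M(t) = 2\,\Diag(\pi)(\Id - \FullP(t))$, which is analytic in $t$, so all limits below exist.

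For the first identity I would divide by $2t$ and use $\FullP(t) = \ert{R} = \Id + Rt + O(t^2)$ to obtain $\tfrac{1}{2t}M(t) = \tfrac{1}{t}\Diag(\pi)(\Id - \ert{R}) \ra -\Diag(\pi)R = \Flow$. The second identity is the identical computation applied to the induced process $\Process{\XI}$: by \Cref{th:induced-basics}, $\InducedChain$ is irreducible and reversible with stationary distribution $\pr$ and rate matrix $\Rr$, so substituting $(\pi, R, \FullP)$ by $(\pr, \Rr, \ert{\Rr})$ gives the limit $-\Diag(\pr)\Rr = \InducedFlow$ by \cref{eq:flow-matrices}.

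For the third identity, the key observation is that $\Cu_{X_s,:} = \Id_{X_s,:}\Cu$ lets us factor
\[
    (\Cu_{X_t,:} - \Cu_{X_0,:})^\t(\Cu_{X_t,:} - \Cu_{X_0,:}) = \Cu^\t(\Id_{X_t,:} - \Id_{X_0,:})^\t(\Id_{X_t,:} - \Id_{X_0,:})\Cu,
\]
after which linearity pulls the constant matrix $\Cu$ outside the expectation, giving $\Ex[\,\cdot\,] = \Cu^\t M(t)\Cu$ and hence $\tfrac{1}{2t}\Ex[\,\cdot\,] \ra \Cu^\t\Flow\Cu = -\Cu^\t\Diag(\pi)R\Cu$. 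It then remains only to recognize this as $\InducedFlow = -\Diag(\pr)\Rr$, which is immediate from the definitions $\Rr = \Ct R\Cu$ and $\Ct = \DiagI(\pr)\Cu^\t\Diag(\pi)$: indeed $\Diag(\pr)\Rr = \Diag(\pr)\DiagI(\pr)\Cu^\t\Diag(\pi)R\Cu = \Cu^\t\Diag(\pi)R\Cu$.

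I do not expect a genuinely hard step here. The only points demanding care are the row-versus-column bookkeeping in the outer products and the use of reversibility to see that the two cross terms coincide (equivalently, that $\Diag(\pi)\FullP(t)$ is symmetric); in part three, the reconciliation of $\Cu^\t\Flow\Cu$ with $\InducedFlow$ is purely a matter of unfolding the definition of the induced rate matrix. Thus the whole proposition rests on the single limit $\tfrac{1}{2t}M(t)\ra\Flow$ and its verbatim analog for $\InducedChain$.
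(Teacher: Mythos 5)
Your proof is correct and follows essentially the same route as the paper's: expand the outer product of indicator increments, use stationarity to reduce to the cross-moment, invoke reversibility to symmetrize, condition on $X_0$ to read off $-\Diag(\pi)R$ in the $t\to 0$ limit, and for the third identity factor the constant matrix $\Cu$ out of the expectation. The only cosmetic difference is that you record the intermediate exact formula $M(t)=2\Diag(\pi)(\Id-\FullP(t))$ before taking the limit, whereas the paper passes directly to the limit after symmetrizing; the underlying decomposition and use of reversibility coincide.
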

\noindent $\Rr$ can be recovered from $\InducedFlow$ via \cref{eq:flow-matrices}, yielding each rate constant $\Rr_{i,j}$ as the flow divided by the stationary occupancy of the committor $\pr_i$. In fact this approach yielded our original derivation of $\hat{R}$.

Finally, the induced chain can also be obtained as a result of a Dirichlet form minimization of the following type, generalizing a known result for $\Ia=2$ \cite{Doyle1984-random,Aldous1995-reversible}:
\label{th:dirichlet-trace:section}
\begin{restatable}[Induced chain via Dirichlet form minimization \ProofLink{th:dirichlet-trace}]{proposition}{ThDirichletTrace} \label{th:dirichlet-trace}
    Consider $\Ia > 1$ and $\Flow$, $\InducedFlow$ as defined above.
    Then the stationary flow of $\InducedChain$ is yielded by the following minimization:
    \begin{eqn}
        \InducedFlow &= \Cu^\t \Flow \Cu, 
        \quad
    \text{ where } \Cu = \argmin_{F \in \R^{n \times \Ia}} \left\{ \Tr [F^\t \Flow F] :\; F_{\I,:}  = \Id \right\}
    \label{eq:dirichlet-trace}
    \end{eqn}
    where $\Cu$ is the committor induced by states $\I$.
\end{restatable}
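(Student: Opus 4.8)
The plan is to prove the two claims of the proposition in turn. The identity $\InducedFlow = \Cu^\t \Flow \Cu$ is a one-line unwinding of definitions: combining \cref{eq:flow-matrices}, \cref{eq:simple-induced}, and \cref{eq:csharp}, and using $\Diag(\pr)\Ct = \Cu^\t\Diag(\pi)$ together with $\Flow = -\Diag(\pi)R$,
\[
\InducedFlow = -\Diag(\pr)\,\Rr = -\Diag(\pr)\,\Ct R \Cu = -\Cu^\t\Diag(\pi) R \Cu = \Cu^\t \Flow \Cu .
\]
So the substance lies in showing that $\Cu$ solves the constrained trace minimization.

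First I would note that the minimization decouples columnwise. Writing $F = [f^{(1)} \ \cdots \ f^{(\Ia)}]$, we have $\Tr[F^\t\Flow F] = \sum_j (f^{(j)})^\t \Flow f^{(j)}$, while the constraint $F_{\I,:} = \Id$ amounts to fixing $f^{(j)}_\I$ to be the $j$-th coordinate vector of $\R^{\Ia}$ for each $j$. Hence it suffices to show that, for each fixed boundary datum on $\I$, the committor column is the unique minimizer of the Dirichlet energy $f \mapsto f^\t\Flow f$ over $\{f \in \R^n : f_\I \text{ fixed}\}$.

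Next I would establish uniqueness via strict convexity on the feasible set. The feasible perturbations form the subspace $W = \{v \in \R^n : v_\I = \Zero\}$; since $\Flow \succeq 0$ with $\ker\Flow = \ker R = \R\One$ by irreducibility of $R$, and since $\One \notin W$ because $\I \neq \emptyset$, the form $v \mapsto v^\t\Flow v$ is strictly positive on $W \setminus \{\Zero\}$. Thus the restricted quadratic is strictly convex, its minimizer is unique, and it is characterized by the stationarity condition $(\Flow f)_k = 0$ for all $k \in \Ic$, equivalently $(Rf)_k = 0$ for $k \in \Ic$ --- i.e., $f$ is harmonic off $\I$ with the prescribed boundary values. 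I would then identify this minimizer with the committor by the standard first-step argument: for $i \in \Ic$, conditioning the chain started at $i$ on its first jump yields $(R\Cu_{:,j})_i = 0$, while $\Cu_{\I,\I} = \Id$ follows directly from \cref{eq:committor-prob-def} since $\tau_{\I} = 0$ when $X_0 \in \I$. Uniqueness then gives $F^\star = \Cu$, and substituting back yields the claimed minimum value $\Tr[\Cu^\t\Flow\Cu] = \Tr[\InducedFlow]$.

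The main obstacle is not any single hard estimate but rather assembling two classical facts cleanly: that $\ker\Flow = \R\One$ (so that $\Flow$ is positive definite on $W$ and the constrained minimizer is genuinely unique, rather than only unique modulo the null direction), and the harmonic characterization of the committor --- the latter is already implicit in the derivation of the closed form in \Cref{th:committor} and could simply be cited. Everything else is bookkeeping.
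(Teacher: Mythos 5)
Your proof is correct and follows the same overall structure as the paper's: decouple the trace over columns, derive the Euler--Lagrange (Laplace) equation $(\Flow f)_{\Ic}=\Zero$ for each column, and identify the stationary point with the committor. The one genuine difference is how you establish uniqueness. The paper reaches it probabilistically: from $(R\fopt)_{\Ic}=\Zero$ one gets the first-step recursion, and the optional stopping theorem then yields both uniqueness and the representation $\fopt_k=\E\bigl[\fopt_{X_{\tau_\I}}\mid X_0=k\bigr]$, which is the committor by definition. You instead argue linear-algebraically: $\Flow\succeq 0$ with $\ker\Flow=\R\One$ by irreducibility, and since the feasible-perturbation subspace $W=\{v:v_\I=\Zero\}$ does not contain $\One$ (as $\I\neq\emptyset$), the restricted quadratic form is strictly positive definite, making the minimizer unique; you then verify separately, by first-step analysis, that $\Cu$ satisfies the stationarity and boundary conditions, and conclude by uniqueness. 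Both arguments are sound. Your route has the advantage of not invoking a martingale theorem (it needs only $\ker R=\R\One$, which is part of the standing irreducibility hypothesis), while the paper's route packages uniqueness and the probabilistic identification of the minimizer in a single step. One small accounting note: your strict-convexity argument uses $\I\neq\emptyset$, whereas the proposition assumes $\Ia>1$; that is of course implied, but it's worth being precise that nonemptiness is the property actually needed for positive definiteness on $W$. Your opening identity $\InducedFlow=\Cu^\t\Flow\Cu$ by unwinding \cref{eq:flow-matrices,eq:simple-induced,eq:csharp} matches the paper's closing ``straightforward computation.''
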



\vspace{-1mm}
\subsection{Structure-preserving compression via the induced chain \label{s:lifting}}

In the same way that we symmetrize $\FullChain$ to yield $h$, $L$, and $K$ (\Cref{s:symmetrization}), we symmetrize $\InducedChain$ to yield suitable definitions for $\hr$, $\Lr$, $\Kr$: 
\begin{eqn}
    \label{eq:reduced-h-L-K}
    \hr = \pr^\Oh, \qquad \Lr = \Minus \Diag(\hr) \Rr \DiagI(\hr), \qquad \Kr := \Lr^+,
\end{eqn}
where this prescription of $\Lr$ recovers its definition in \cref{eq:oblique-p}.
Recall that \Cref{th:committor} offers a closed-form expression for $\Cu$ in terms of  $K = L^+$. Using this, we can derive closed-form expressions for $\pr$, $\Rr$, $\Lr$, $\Kr$, which are reported in \Cref{th:induced-from-k}.

The compression $\hat{L}$ of $L$ is structure-preserving in the following sense:
\label{th:induced-is-laplacian:section}
\begin{restatable}[$\Lr$ as a Laplacian \ProofLink{th:induced-is-laplacian}]{corollary}{ThInducedIsLaplacian} \label{th:induced-is-laplacian}
    $\Lr$  is a Laplacian with stationary distribution $\hr$.
\end{restatable}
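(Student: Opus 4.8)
The plan is to verify that $\Lr \Eq \Co^\t L \Co$ satisfies the defining properties of a Laplacian as laid out in \Cref{s:symmetrization}, namely that $\Lr = \Minus \Diag(\hr) \Rr \DiagI(\hr)$ for an irreducible, reversible rate matrix $\Rr$ with stationary distribution $\pr = \hr^{\odot 2}$. Since $\Rr$ has already been constructed in \cref{eq:simple-induced} and shown in \Cref{th:induced-basics} to be an irreducible reversible rate matrix with stationary distribution $\pr$, the only thing that needs checking is the algebraic identity $\Co^\t L \Co = \Minus \Diag(\hr) \Rr \DiagI(\hr)$, i.e., that the symmetrization of the already-established induced chain coincides with the matrix $\Lr$ defined in \cref{eq:oblique-p}.

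First I would unfold the definitions. From \cref{eq:stationary-chp} we have $\Co = \Diag(h) \Cu \DiagI(\hr)$, so $\Co^\t = \DiagI(\hr) \Cu^\t \Diag(h)$. From \cref{eq:L-h-def}, $L = \Minus \Diag(h) R \DiagI(h)$. Substituting,
\begin{eqn}
    \Co^\t L \Co = \DiagI(\hr) \Cu^\t \Diag(h) \cdot \lrp{\Minus \Diag(h) R \DiagI(h)} \cdot \Diag(h) \Cu \DiagI(\hr) = \Minus \DiagI(\hr) \Cu^\t \Diag(h)^2 R \Cu \DiagI(\hr).
\end{eqn}
Since $\Diag(h)^2 = \Diag(\pi)$, this equals $\Minus \DiagI(\hr) \lrp{\Cu^\t \Diag(\pi) R \Cu} \DiagI(\hr)$. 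Now compare with $\Minus \Diag(\hr) \Rr \DiagI(\hr)$: using $\Rr = \Ct R \Cu = \DiagI(\pr) \Cu^\t \Diag(\pi) R \Cu$ from \cref{eq:simple-induced} and \cref{eq:csharp}, we get $\Diag(\hr) \Rr = \Diag(\hr) \DiagI(\pr) \Cu^\t \Diag(\pi) R \Cu = \DiagI(\hr) \Cu^\t \Diag(\pi) R \Cu$, where the last step uses $\pr = \hr^{\odot 2}$ so that $\Diag(\hr)\DiagI(\pr) = \DiagI(\hr)$. This matches exactly, proving $\Co^\t L \Co = \Minus \Diag(\hr) \Rr \DiagI(\hr)$.

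With the identity in hand, the corollary follows immediately from \Cref{th:induced-basics}: that lemma guarantees $\Rr$ is the rate matrix of an irreducible reversible Markov chain with stationary distribution $\pr$, and since $\hr = \pr^{\odot 1/2}$, the matrix $\Lr = \Minus \Diag(\hr)\Rr\DiagI(\hr)$ is by definition a Laplacian with stationary eigenvector $\hr$ in the sense of \Cref{s:symmetrization}. I would also note $\Lr \hr = \Minus \Diag(\hr) \Rr \DiagI(\hr) \hr = \Minus \Diag(\hr) \Rr \One = \Zero$ since $\Rr \One = \Zero$, confirming $\hr$ is the zero eigenvector, and symmetry of $\Lr$ follows either directly from $\Lr = \Co^\t L \Co$ with $L$ symmetric, or from the detailed-balance relation $\Diag(\pr)\Rr = \Rr^\t \Diag(\pr)$.

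I do not anticipate a genuine obstacle here; the result is essentially bookkeeping, the only mild subtlety being to make sure the diagonal-scaling cancellations using $\pr = \hr^{\odot 2}$ and $\pi = h^{\odot 2}$ are handled consistently, and to invoke \Cref{th:induced-basics} (rather than reprove irreducibility/reversibility of $\Rr$) so as not to duplicate work. If anything, the one point worth stating carefully is that $\Co$ need not have full column rank issues — but since $\Cu$ has rank $\Ia$ (its restriction $\Cu_{\I,:} = \Id$) and the diagonal rescalings are invertible, $\Co$ has rank $\Ia$, so $\Lr$ is a genuine $\Ia \times \Ia$ Laplacian and not degenerate.
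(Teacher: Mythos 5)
Your proof is correct and follows essentially the same route as the paper's: both reduce the result to \Cref{th:induducedbasics} (via the identity $\Co^\t L \Co = \Minus\Diag(\hr)\Rr\DiagI(\hr) = \Minus\DiagI(\hr)\Cu^\t\Diag(\pi)R\Cu\DiagI(\hr)$). The only difference in emphasis is that you spell out the diagonal-cancellation algebra establishing that \cref{eq:oblique-p} and \cref{eq:reduced-h-L-K} define the same $\Lr$, whereas the paper states this identity as a single display and then explicitly re-derives the downstream properties (symmetry, sign structure, positive semidefiniteness, and $\Lr\hr=\Zero$) rather than appealing directly to the definition of a Laplacian; both arrangements are sound.
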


We now complete our construction of the structure-preserving compression by describing how the dynamics of $\InducedChain$ should be lifted to an approximation of the dynamics of $\FullChain$ on the full state space:
\begin{eqn}
    \label{eq:sp-lifting}
    \ObliqueP(t) = C \expt{\Lr} C^\t, \qquad
    \NonSymObliqueP(t) = \Cu \ert{\Rr} \Ct,
\end{eqn}
so that $\ObliqueP(t) \approx \SymP(t)$, $\NonSymObliqueP(t) \approx \FullP(t)$, and
$\ObliqueP_{\I}(t) =  \Diag(h) \NonSymObliqueP_{\I}(t) \DiagI(h)$ (as in \cref{eq:oblique-nonsym}).

First, we show that these approximations naturally preserve the essential structure of the original probability evolution:
\label{th:sp-nonnegativity:section}
\begin{restatable}[Preservation of structure \ProofLink{th:sp-nonnegativity}]{corollary}{ThSpNonnegativity} \label{th:sp-nonnegativity}
    For any $t \geq 0$, the entries of $\NonSymObliqueP_{\I}(t)$ and $\ObliqueP_{\I}(t)$ are non-negative.
    Furthermore, $\NonSymObliqueP_{\I}(t)$ is row stochastic (i.e., $\NonSymObliqueP_{\I}(t) \One = \One$), while $\ObliqueP_{\I}(t)$ is symmetric with $\lim_{t \ra \infty} \ObliqueP_{\I}(t) = h h^\t$.
\end{restatable}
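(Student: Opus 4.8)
The plan is to read off each claimed property from the explicit formulas \cref{eq:sp-lifting}, using the elementary facts about the induced chain already established in \Cref{th:induced-basics,th:induced-is-laplacian} together with the probabilistic/combinatorial properties of the committor matrix $\Cu$ (entrywise non-negative, $\Cu \One = \One$). First I would handle non-negativity. Since $\Rr$ is a rate matrix for the irreducible reversible chain $\InducedChain$ (\Cref{th:induced-basics}), the matrix exponential $\ert{\Rr}$ is entrywise non-negative for all $t \geq 0$: this is standard, following e.g.\ from $\ert{\Rr} = \lim_{m \to \infty} (\Id + \tfrac{t}{m}\Rr)^m$ where each factor $\Id + \tfrac{t}{m}\Rr$ is entrywise non-negative for $m$ large (off-diagonals of $\Rr$ are non-negative, diagonal entries are finite), or from the fact that $\ert{\Rr}$ is the transition probability matrix of $\InducedChain$. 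Then $\NonSymObliqueP_{\I}(t) = \Cu\, \ert{\Rr}\, \Ct$ is a product of entrywise non-negative matrices — here I use that $\Ct = \DiagI(\pr)\Cu^\t \Diag(\pi)$ is entrywise non-negative because $\pr, \pi > 0$ and $\Cu \geq 0$ — hence $\NonSymObliqueP_{\I}(t) \geq 0$ entrywise. Since $\ObliqueP_{\I}(t) = \Diag(h)\NonSymObliqueP_{\I}(t)\DiagI(h)$ with $h > 0$ entrywise, $\ObliqueP_{\I}(t) \geq 0$ entrywise as well.

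Next, row stochasticity: $\NonSymObliqueP_{\I}(t)\One = \Cu\, \ert{\Rr}\, \Ct \One$. I compute $\Ct \One = \DiagI(\pr)\Cu^\t\Diag(\pi)\One = \DiagI(\pr)\Cu^\t\pi = \DiagI(\pr)\pr = \One_{\I}$, using $\Cu^\t\pi = \pr$ from \cref{eq:stationary-chp}. Then $\ert{\Rr}\One_{\I} = \One_{\I}$ since $\Rr\One = \Zero$ (\Cref{th:induced-basics}), so finally $\Cu \One_{\I} = \One$ by $\Cu\One = \One$ (irreducibility, stated just after \Cref{def:sym-operators}). For symmetry of $\ObliqueP_{\I}(t) = C\expt{\Lr}C^\t$: this is immediate because $\Lr = \Co^\t L \Co$ is symmetric (as $L$ is), hence $\expt{\Lr}$ is symmetric, and conjugation by $\Co$ and $\Co^\t$ preserves symmetry — it is manifestly of the form $C M C^\t$ with $M = \expt{\Lr}$ symmetric.

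Finally, the $t \to \infty$ limit. Here I would use that $\Lr$ is a Laplacian with stationary eigenvector $\hr$ (\Cref{th:induced-is-laplacian}): its unique zero eigenvalue has normalized eigenvector $\hr$ (with $\norm{\hr}_2 = 1$, noted after \cref{eq:stationary-chp}), all other eigenvalues are strictly positive by irreducibility of $\InducedChain$, so $\expt{\Lr} \to \hr\hr^\t$ as $t \to \infty$ (spectral decomposition). Then $\ObliqueP_{\I}(t) \to \Co\hr\hr^\t\Co^\t = (\Co\hr)(\Co\hr)^\t$, and $\Co\hr = \Diag(h)\Cu\DiagI(\hr)\hr = \Diag(h)\Cu\One_{\I} = \Diag(h)\One = h$ using again $\Cu\One = \One$; hence $\ObliqueP_{\I}(t) \to hh^\t$. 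I do not anticipate a genuine obstacle — every ingredient is already in hand — the only mild care needed is in citing the standard fact that $\ert{\Rr}$ is entrywise non-negative for a rate matrix, and in invoking the spectral description of $\expt{\Lr}$ from the Laplacian structure supplied by \Cref{th:induced-is-laplacian}; if a self-contained argument is preferred, $\ert{\Rr} \geq 0$ can be justified via the limit-of-powers formula as indicated above.
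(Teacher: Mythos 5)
Your proposal is correct and matches the paper's proof step for step: same use of entrywise non-negativity of $\Cu$, $\Ct$, and $e^{\Rr t}$; same chain $\Ct\One = \One_\I$, $e^{\Rr t}\One = \One$, $\Cu\One = \One$ for row stochasticity; same direct observation that $\Lr$ symmetric gives symmetry of $\ObliqueP_\I(t)$. The only cosmetic difference is in the $t\to\infty$ limit, where you invoke the spectral decomposition of $e^{-\Lr t}$ directly (zero eigenvector $\hr$, positive remaining eigenvalues) while the paper passes through the convergence theorem $e^{\Rr t}\to\One\pr^\t$ in the unsymmetrized picture and then conjugates; both reduce to the same computation $\Co\hr\hr^\t\Co^\t = hh^\t$.
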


Next we show how the structure-preserving dynamics recover the \Nystrom{} approximation via integration in time. In order to ensure convergence of the integral, we must consider more generally the 
structure-preserving dynamics killed at rate $\Kill$:
\begin{eqn} \label{eq:Pspgamma}
    \ObliqueP_{\I,\Kill} (t) \Eq \Ca \expt{ \Lrk } \Ca^\t, \ \text{ where }  \Lrk \Eq \Ca^\t \La \Ca.
\end{eqn}
Here $\Ca$ is the killed analog of $\Co$, defined in \Cref{def:killed-committor}.

\label{th:sp-integral:section}
\begin{restatable}[\Nystrom{} approximation as an integral over time \ProofLink{th:sp-integral}]{lemma}{ThSpIntegral} \label{th:sp-integral}
With $\ObliqueP_{\I,\Kill}(t)$ as defined above and $\Ka$ as in \cref{eq:killed-L-K}:
\begin{eqn} \label{eq:gammanystrom}
    \int_0^\infty \ObliqueP_{\I,\Kill}(t) \; \odif t &= \KA_{:, \I} \KA_{\I,\I}^\Mo \KA_{\I,:}.
\end{eqn}
\end{restatable}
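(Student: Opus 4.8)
The plan is to evaluate the time integral in closed form by exploiting that the reduced killed Laplacian $\Lrk$ is positive definite, and then to recognize the resulting matrix as the \Nystrom{} approximation of $\Ka$ via a short direct computation. The first step is to record that $\Ca$ has full column rank. Indeed, unwinding \Cref{def:killed-committor} gives $\Ca = (\Ka)_{:,\I}\,M$ with $M \Eq (\Ka)_{\I,\I}^\Mo\,\Diag(\hi)\,\DiagI(\ha)$; here $(\Ka)_{:,\I}$ has rank $\Ia$ because $\Ka \succ 0$, and $M$ is invertible since $(\Ka)_{\I,\I}\succ 0$ and $\hi,\ha$ are entrywise positive. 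Hence $\Lrk = \Ca^\t\La\Ca$ is symmetric positive definite (recall $\La = L + \Kill\Id \succ 0$), so $\expt{\Lrk}$ decays exponentially, the integrand is absolutely integrable, and $\int_0^\infty \expt{\Lrk}\,\odif t = \Lrk^\Mo$. By linearity of the integral,
\[
\int_0^\infty \ObliqueP_{\I,\Kill}(t)\,\odif t = \Ca\,\Lrk^\Mo\,\Ca^\t = \Ca\,(\Ca^\t\La\Ca)^\Mo\,\Ca^\t .
\]

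Next I would simplify $\Ca^\t\La\Ca$ using $\Ka = \La^\Mo$. Since $(\Ka)_{\I,:}\,\La = [\La^\Mo\La]_{\I,:} = \Id_{\I,:}$, we obtain the clean identity $(\Ka)_{\I,:}\,\La\,(\Ka)_{:,\I} = (\Ka)_{\I,\I}$, so that $\Ca^\t\La\Ca = M^\t (\Ka)_{\I,\I}\,M$. Substituting into the expression above and cancelling the invertible factors $M$ yields
\[
\Ca\,(\Ca^\t\La\Ca)^\Mo\,\Ca^\t = (\Ka)_{:,\I}\,M\,\bigl(M^\t (\Ka)_{\I,\I} M\bigr)^\Mo\,M^\t (\Ka)_{\I,:} = (\Ka)_{:,\I}\,(\Ka)_{\I,\I}^\Mo\,(\Ka)_{\I,:} = \KA_{:,\I}\KA_{\I,\I}^\Mo\KA_{\I,:},
\]
which is exactly the claimed right-hand side. (An equivalent route: $\Ca$ and $V \Eq \Orth((\Ka)_{:,\I})$ span the same subspace, so $\Ca(\Ca^\t\La\Ca)^\Mo\Ca^\t = V(V^\t\La V)^\Mo V^\t$, after which \Cref{lem:refactor} applied with $K = \Ka$, $K^\Mo = \La$ gives the same conclusion; I would likely include whichever is shorter.)

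The linear algebra here is light, and the only point needing genuine care is the well-definedness and invertibility of the diagonal rescalings $\Diag(\hi)$ and $\DiagI(\ha)$ hidden inside $\Ca$ — that is, that $h$ and $\ha$ are entrywise positive. The former is immediate since $\pi$ is entrywise positive by irreducibility. The latter amounts to $\pa = \Cu_\Kill^\t\pi$ being entrywise positive, which follows from the probabilistic interpretation of $\Cu_\Kill$ for the chain killed at rate $\Kill$ (cf.\ the remark following \Cref{def:killed-committor}); alternatively it can be checked directly from the formula in \Cref{def:killed-committor} using $\Ka\succ0$ together with irreducibility. Once this is in hand, both steps above are routine.
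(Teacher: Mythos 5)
Your proposal is correct and follows essentially the same path as the paper: integrate $\expt{\Lrk}$ to get $\Lrk^\Mo$ (justified by positive definiteness of $\Lrk$), substitute the closed form $\Ca = (\Ka)_{:,\I}(\Ka)_{\I,\I}^\Mo\Diag(\hi/\ha)$, and use $(\Ka)_{\I,:}\La(\Ka)_{:,\I} = (\Ka)_{\I,\I}$ to produce the cancellation. You are somewhat more careful than the paper in spelling out why $\Ca$ has full column rank, why $\Lrk \succ 0$, and why $\ha$ is entrywise positive — these are all left implicit in the paper's short computation — but these are supporting observations rather than a different argument.
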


%
\noindent Observe that the right-hand side of \Cref{eq:gammanystrom} is the \Nystrom{} approximation of $K_\gamma$, which is exact on the principal submatrix corresponding to the subset of indices $\I$. After considering the diagonal similarity transformation relating $\ObliqueP_{\I,\Kill}(t)$ to $\NonSymObliqueP_{\I,\Kill}(t)$, we arrive at a key interpretation of the structure-preserving dynamics: for any killing rate $\gamma>0$ and any initialization, the integrated occupation time in each of the selected states $i \in \I$ exactly matches that of the original dynamics. Note that we must consider a positive killing rate in order for the integrated occupation time to be finite.

Moreover, observe that \cref{eq:integrated}, applied with $K_\gamma$ in the place of $K$, establishes that the \emph{projective} compression satisfies a similar identity:
\begin{eqn}
\int_0^\infty P_{\I,\Kill}(t) \odif t = \KA_{:, \I} \KA_{\I,\I}^\Mo \KA_{\I,:}.
\end{eqn}
This correspondence can be taken as a motivation for our lifting \Cref{eq:sp-lifting} of the induced chain dynamics on $\I$ to the structure-preserving dynamics on the full state space $[n]$.

In the next section we will explore a deeper motivation for the same structure-preserving dynamics $\ObliqueP_{\I}(t)$, as well as $\ObliqueP_{\I,\Kill} (t)$ for $\gamma > 0$.

\subsection{The marked chain \label{s:marked}}

\begin{figure}
    \centering
    \includegraphics[width=0.75\textwidth]{./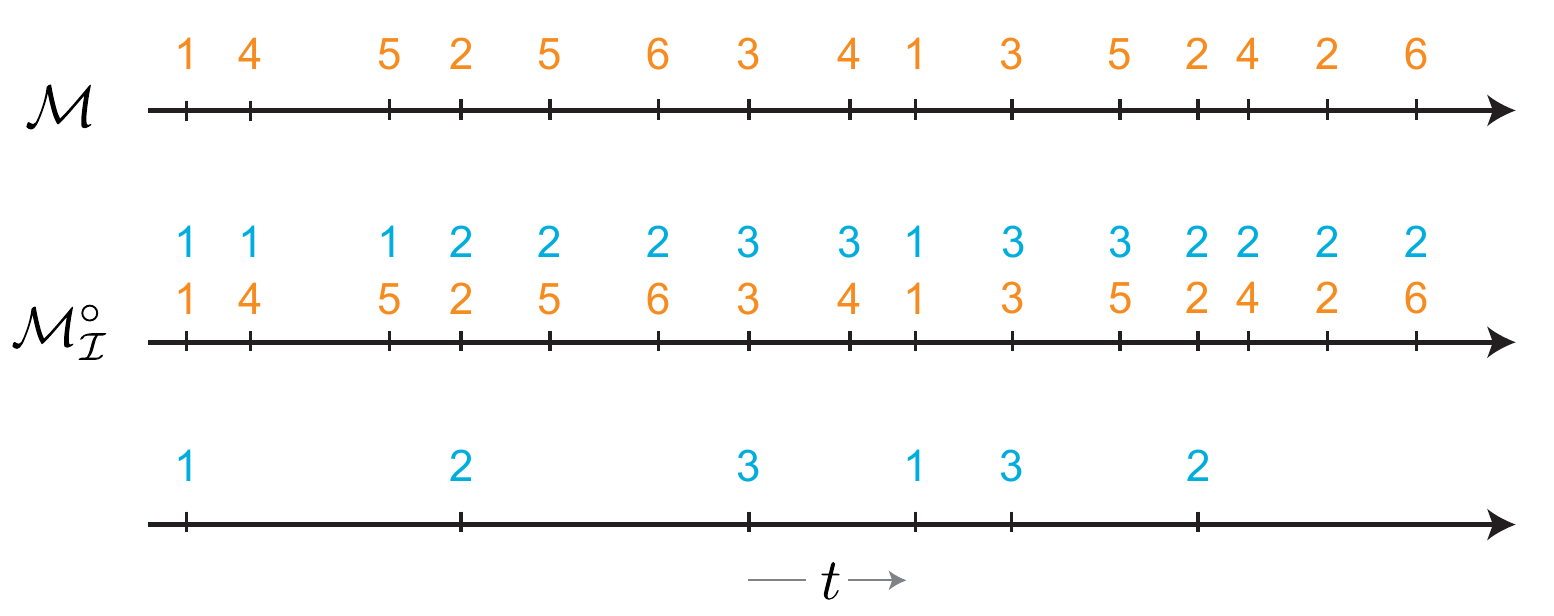}
    \caption{
        \label{fig:marked-chain}
        Depiction of sample trajectories from the original and marked chains $\FullChain$ and $\MarkedChain$, where $\I = \lrb{1,2,3}$ and $\Ic = \lrb{4,5,6}$. 
        The marked chain is obtained by labeling the original chain with the last visited state from the subset $\I$. The sample trajectory at the bottom is obtained by deleting the original position state from the marked chain samples. This does not define a Markov process but can be viewed as a projection $\So^\t \exp(-\Le t) \So$ of the marked chain dynamics, cf. \Cref{def:marked-laplacian} and \Cref{eq:marked-projections}.
    }
\end{figure}

In this section, we consider the construction of a \emph{marked chain} $\MarkedChain$ over a space augmenting that of the original chain. 
The construction of the marked chain will allow us (1) to interpret the lifting of the induced chain to the the full state space (cf. \cref{eq:sp-lifting}) and (2) to analyze the approximation error of this lifting (cf. \Cref{th:sp-nuclear-bound}).

Sample paths of the marked chain $\MarkedChain$ are generated by running the original chain without modification, while additionally applying a label $i$ during each traversal of a state $i \in \I$ (\Cref{fig:marked-chain}). 
Concretely, a state in the marked chain may be referenced as an ordered pair consisting of:
\begin{enumerate}
    \item An auxiliary marking in $\I$, which we call the \emph{marking space}, indicating which element of $\I$ the system last reached.
    \item A state in the original state space $[n]$ of $\FullChain$, which we term the \emph{position space}, corresponding to the current state of $\FullChain$.
\end{enumerate}
\newcommand{\AugmentedSpace}{\mathcal{A}^{\otimes}_{\I}}
Note that it is impossible for a state in position $i \in \I$ to have a marking other than $i$.
Therefore, we can define the \emph{augmented space}, i.e., the state space of the marked chain, as the subset of the product space $\I \times \Indices$ obtained by omitting all impossible states $\lrb{(i,j): i \in \I, j \in \I, i \neq j}$. For our linear-algebraic manipulations, it is useful to identify the augmented space with the set $\I \cup [\I \times \Ic]$, where  $\Ic \Eq \Indices \setminus \I$. To understand the bijection between $\I \cup [\I \times \Ic]$ and the augmented space, observe that the former is comprised of:
\begin{enumerate}
    \item The position states $i \in \I$ at which the original process is marked, which correspond to the ordered pairs or augmented states $(i,i) \in \I \times \I \subset \I \times [n]$. We call such pairs ``marked states.'' 
    \item The augmented states $(i,j) \in \I \times \Ic \subset \I \times [n]$ in which the position state $j$ lies outside of $\I$ but carries a marking $i$ from $\I$. We call such pairs ``unmarked states.''
\end{enumerate}


The dynamics of $\MarkedChain$ are Markovian but not reversible.
To see this concretely, note that for $i \neq j$ and $k \in \Ic$, direct transitions $(i,k) \ra (j,j)$ are possible while direct transitions $(j,j) \ra (i,k)$ are impossible.

It is possible that there could exist an unmarked state $(j,k) \in \I \times \Ic$ that is not reachable from the marked state $(j,j)$ and therefore not reachable from any marked state. This occurs if $k \in \Ic$ lies in a subset of the position space that cannot be reached from $j$ except by passing through some $i \in \I \setminus \{j \}$. Then the ``unphysical'' state $(j,k)$ cannot be reached from $(j,j)$ because along any path from $j$ to $k$ in the position space, we must pick up the marking from $i$ on the way. In such a case, the marked chain is not irreducible. Note that from an arbitrary initialization, the probability of the unphysical states converges to zero in the long-time limit because, by the irreducibility of the original chain, all states in the augmented space must eventually transition to some marked state, after which point the unphysical states become unreachable. However, it is sometimes convenient to adopt a convention explicitly ruling out such unphysical states (i.e., states not reachable from any marked state) by deleting them from the augmented space. Likewise, in linear-algebraic formulations of the dynamics, we sometimes understand suitable vector entries and matrix rows/columns to be omitted, in order to avoid divisions by zero due to the zero entries in the stationary distribution. (In Markov chain terminology \cite{LevinPeresWilmer2006}, we can restrict the augmented chain to its unique essential communicating class.)

After adopting this convention, the Marked chain $\MarkedChain$ is by construction \emph{irreducible}, hence possesses a unique stationary distribution with positive entries. It follows that even if the convention is not adopted, the marked chain still admits a unique stationary distribution, which is extended by zeros to the unphysical states. In the ensuing developments, the interpretation should be clear from context.

Next we turn toward a linear-algebraic specification of the dynamics of the marked chain. To do so, we identify the augmented space with the indexing set $\I \cup [\I \times \Ic]$. Note that vectors over this indexing set can be viewed as elements of $\R^{\Ia} \oplus \big( \R^{\Ia} \otimes \R^{\vert \Ic \vert} \big)$, i.e., the indexing convention induces a two-block structure in which the first block is simply indexed by $i \in \I$, while the second block has a tensor product structure and may be indexed by a composite index $(j,k) \in \I \times \Ic$.

Accordingly, the rate matrix for the marked chain $\MarkedChain$ can be viewed as a $2 \times 2$ block matrix, in which the rows of the lower blocks, as well as the columns of the right blocks, are indexed with such a composite index. Meanwhile, the rows of the upper blocks, as well as the columns of the left blocks, are indexed by $\I$. In order to express the blocks of the rate matrix, we use the Kronecker $\otimes$ and face-splitting $\bullet$ products \cite{Slyusar1999-family} in our notation, which are defined by $(A \otimes B)_{(i,k), (j,l)} \Eq A_{i,j} B_{k, l}$ for all $i,j,k,l$ and $(A \bullet B)_{i,(j,k)} = A_{i,j} B_{i,k}$ for all $i,j,k$.

Then the following result computes the marked chain's rate matrix explicitly.

\label{th:marked-chain-dynamics:section}
\begin{restatable}[Marked chain dynamics \ProofLink{th:marked-chain-dynamics}]{proposition}{ThMarkedChainDynamics} \label{th:marked-chain-dynamics}
    Let $\Rex$ be the rate matrix corresponding to the probability evolution of the marked chain $\MarkedChain$, and let $\pe$ be its unique stationary distribution. Then according to our block indexing:  
    \begin{eqn}
        \label{eq:expanded-R}
            \Rex \Eq \begin{bmatrix}
            R_{\I, \I} & \Id_{\I,\I} \bullet R_{\I,\Ic} \\
            \One_{\I} \otimes R_{\Ic,\I} & \Id_{\I,\I} \otimes R_{\Ic,\Ic}
        \end{bmatrix}, \qquad \pe \Eq \begin{bmatrix} \pi_{\I} \\ \mathrm{vec}(\Diag(\pi_{\Ic}) \Cu_{\Ic,:}) \end{bmatrix}.
    \end{eqn}
\end{restatable}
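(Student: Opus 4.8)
The plan is to derive the rate matrix $\Rex$ directly from the probabilistic description of the marked chain, using the characterization of infinitesimal transition rates: for distinct augmented states $a, b$, the entry $\Rex_{a,b}$ equals $\lim_{t \to 0^+} t^{-1} \Prob(\XM_t = b \mid \XM_0 = a)$, and the diagonal entries are fixed by the requirement $\Rex \One = \Zero$. The four blocks correspond to the four types of transitions. \textbf{(i)} A marked state $(i,i)$ with $i \in \I$ transitions to $(j,j)$ for $j \in \I$, $j \neq i$: the position jumps $i \to j$ in the original chain (picking up the new marking instantly), so the off-diagonal rate is $R_{i,j}$, giving the block $R_{\I,\I}$ up to diagonal correction. \textbf{(ii)} A marked state $(i,i)$ transitions to an unmarked state $(i,k)$ for $k \in \Ic$: the position jumps $i \to k$ while the marking stays $i$, so the rate is $R_{i,k}$, but the target is constrained to have first coordinate $i$; this is exactly the action of $\Id_{\I,\I} \bullet R_{\I,\Ic}$, since $(\Id_{\I,\I} \bullet R_{\I,\Ic})_{i,(j,k)} = \delta_{ij} R_{i,k}$. \textbf{(iii)} An unmarked state $(j,k)$ with $k \in \Ic$ transitions to a marked state $(i,i)$: this requires the position to jump $k \to i$ with $i \in \I$ and then the marking updates from $j$ to $i$; the rate is $R_{k,i}$, independent of the old marking $j$, so the block is $\One_{\I} \otimes R_{\Ic,\I}$, whose $((j,k),i)$ entry is $R_{k,i}$. \textbf{(iv)} An unmarked state $(j,k)$ transitions to $(j,\ell)$ with $\ell \in \Ic$: the position jumps $k \to \ell$ (both outside $\I$) while the marking is unchanged, rate $R_{k,\ell}$, which is the $((j,k),(j,\ell))$ entry of $\Id_{\I,\I} \otimes R_{\Ic,\Ic}$. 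The diagonal entries of $\Rex$ are then whatever is needed to make rows sum to zero; one should check this is consistent, i.e. that within each block-row the total outgoing rate matches the original chain's rate out of the position state — this follows because the transitions enumerated above exhaust all possible position moves.

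Next I would verify the stationary distribution formula. The cleanest route is to check detailed-balance-like flow conditions or simply to verify $\Rex^\t \pe = \Zero$ directly using the block structure. On the marked states, $\pe$ restricted to $\I$ is $\pi_{\I}$; on the unmarked states, $\pe_{(j,k)} = \pi_k \Cu_{k,j}$, i.e. the stationary mass $\pi_k$ of position $k$ split among markings $j$ according to the probability $\Cu_{k,j}$ that $j$ was the last $\I$-state visited — which is precisely the meaning of the committor entry. Writing out $(\Rex^\t \pe)$ block by block: the marking-space component requires $\sum_{i} R_{i,i}\pi_i + \sum_{k \in \Ic} R_{k,i}\, \pi_k \Cu_{k,i} + (\text{contributions from }(i,\cdot))$ to vanish for each $i$, which I would reduce, using reversibility $\Diag(\pi) R = R^\t \Diag(\pi)$ of the original chain and the committor harmonicity identity $(R \Cu)_{\Ic,:} = \Zero$ (equivalently $R_{\Ic,\Ic}\Cu_{\Ic,:} + R_{\Ic,\I} = \Zero$, since $\Cu_{\I,:} = \Id$), to an identity that holds termwise. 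The $\Ic$-block of $\Rex^\t \pe$ similarly collapses using $(R\Cu)_{\Ic,:} = \Zero$ together with $\Diag(\pi_{\Ic}) R_{\Ic,\Ic} = R_{\Ic,\Ic}^\t \Diag(\pi_{\Ic})$. Uniqueness of the stationary distribution follows from irreducibility, already argued in the text after restriction to the essential class.

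\textbf{The main obstacle} I anticipate is not any single computation but bookkeeping: correctly translating the face-splitting and Kronecker product conventions $(A \bullet B)_{i,(j,k)} = A_{i,j}B_{i,k}$ and $(A \otimes B)_{(i,k),(j,l)} = A_{i,j}B_{k,l}$ into the right index patterns for each of the four blocks, and keeping straight which coordinate (marking vs. position) each product factor acts on. In particular the asymmetry between block (ii) (face-splitting, because the new marking is \emph{forced} to equal the source marking) and block (iii) (Kronecker with $\One_\I$, because the new marking is forced to equal the new position and is \emph{independent} of the source marking) is the subtle point; getting this backwards would be the easy mistake. A secondary technical point is justifying the instantaneous re-marking: when the position jumps into a state $i \in \I$, the marking updates to $i$ with no delay, so the composite transition is a single jump of the marked chain with the same rate as the underlying position jump — this is immediate from the construction but worth stating explicitly to justify that $\Rex$ is a genuine rate matrix (finite off-diagonal entries, rows summing to zero) and that the marked process is Markov. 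Once the block identification is pinned down, both the rate-matrix claim and the stationarity claim reduce to short applications of reversibility and the committor identity.
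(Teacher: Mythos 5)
Your derivation of the rate matrix $\Rex$ is essentially the same block-by-block inspection as the paper's proof, and your index bookkeeping for the four blocks (including the face-splitting/Kronecker asymmetry) is correct. Where you genuinely diverge is the stationary distribution. The paper derives $\pe$ probabilistically: it uses ergodicity to identify $\pe$ with the infinite-time law of $\MarkedChain$, observes that the marginal over positions must be $\pi$, and argues via the time-reversed chain that the conditional probability that the last $\I$-visit was $i$ given the current position is $j$ equals $\Cu_{j,i}$, yielding $\pe_{(i,j)} = \pi_j \Cu_{j,i}$. You instead posit the formula and verify $\Rex^\t \pe = \Zero$ by direct blockwise computation using reversibility $\Diag(\pi)R = R^\t\Diag(\pi)$, the committor normalization $\Cu\One = \One$, and the harmonicity identity $(R\Cu)_{\Ic,:} = \Zero$. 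Both are valid. Your route is more elementary and self-contained (the top block reduces to $R^\t\pi = \Zero$ via $\Cu\One = \One$; the bottom block reduces via reversibility to $\pi_l (R\Cu)_{l,j}$ with $l \in \Ic$, which vanishes by harmonicity), and arguably more robust to readers uncomfortable with time-reversal arguments. The paper's route has the advantage of making the probabilistic meaning of the lower-block entries transparent without calculation, which is thematically consistent with the rest of that section; it also makes the factorization $\pe_{(i,j)} = \tilde C_{j,i}\pi_j$ in \Cref{eq:pealt} look inevitable rather than coincidental. Either way, uniqueness follows from irreducibility after restricting to the essential communicating class, which you correctly defer to the surrounding text.
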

\noindent It is useful to observe that, indexing the augmented space by $(i,j) \in \I \times [n]$, we can write 
\begin{eqn} \label{eq:pealt}
    \pe_{(i,j)} = \tilde{C}_{j,i} \, \pi_{j},
\end{eqn}
since $\tilde{C}_{\I,:} = \Id_{\I,\I}$. Note that this formula extends the expression in \Cref{eq:expanded-R} by zeros to the impossible states $(i,j)$ for which $j \in \I \setminus \{i\}$. 

Given \Cref{th:marked-chain-dynamics}, we define the stochastic process corresponding to the marked chain $\MarkedChain$ as $\Process{\XM}$, where the dynamics are governed by $\Rex$ and the process is initialized via $\XM_0 \sim \pe$.

Note that \cref{eq:expanded-R} directly implies that $\MarkedChain$ is irreversible, since the off-diagonal blocks have different sparsity patterns.
Although the marked chain is not reversible, it is still useful to define suitably rescaled quantities analogous to those constructed in our symmetrization of $\FullChain$ (\Cref{s:symmetrization}):

\begin{definition}[Rescaled marked chain] \label{def:marked-laplacian}
    In terms of the marked chain rate matrix $\Rex$ and stationary distribution $\pe$, define:
    \begin{eqn}
        \he \Eq \pe^\Oh, \qquad \Le \Eq \Minus \Diag(\he) \Rex \DiagI(\he).
    \end{eqn}
\end{definition}

\noindent Note carefully that $\Le$ is asymmetric (following the irreversibility of $\MarkedChain$), hence not technically a ``Laplacian'' in the sense that we have defined. 
But though $\Le$ is asymmetric, it is still purely dissipative in the sense that it has non-negative real eigenvalues (precisely one of which is zero), as we show in the following result. Although it is not used in our analysis of structure-preserving compression, it conveys potentially interesting intuition about the marked chain.

\label{prop:markeig:section}
\begin{restatable}[Eigenvalues of marked chain \ProofLink{prop:markeig}]{proposition}{PropMarkeig} \label{prop:markeig}
    The eigenvalues of $\Rex$ and hence $-\Le$ are given by the $n$ eigenvalues of $R$, each according to its multiplicity, together with the $n - \vert \I\vert$ eigenvalues of $R_{\Ic,\Ic}$, each repeated with multiplicity $\vert \I \vert - 1$.
\end{restatable}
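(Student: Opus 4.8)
The plan is to compute the characteristic polynomial of $\Rex$ directly from the block form \eqref{eq:expanded-R} using a Schur complement, and then to recognize the resulting factor as the characteristic polynomial of $R$ divided by that of $R_{\Ic,\Ic}$. Fix $\lambda \in \mathbb{C}$ outside the finite set $\mathrm{spec}(R_{\Ic,\Ic})$, so that the lower-right block of $\lambda\Id - \Rex$, namely $\Id_{\I,\I}\otimes(\lambda\Id_{\Ic,\Ic} - R_{\Ic,\Ic})$, is invertible with determinant $\det(\lambda\Id_{\Ic,\Ic} - R_{\Ic,\Ic})^{\Ia}$ and inverse $\Id_{\I,\I}\otimes(\lambda\Id_{\Ic,\Ic} - R_{\Ic,\Ic})^{-1}$. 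Forming the Schur complement with respect to this block expresses $\det(\lambda\Id - \Rex)$ as $\det(\lambda\Id_{\Ic,\Ic} - R_{\Ic,\Ic})^{\Ia}$ times $\det\big(\lambda\Id_{\I,\I} - R_{\I,\I} - M(\lambda)\big)$, where $M(\lambda) \Eq (\Id_{\I,\I}\bullet R_{\I,\Ic})\big(\Id_{\I,\I}\otimes(\lambda\Id_{\Ic,\Ic} - R_{\Ic,\Ic})^{-1}\big)(\One_{\I}\otimes R_{\Ic,\I})$ (the two sign changes from the off-diagonal blocks cancelling).

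The key step is to simplify $M(\lambda)$. Writing $G \Eq (\lambda\Id_{\Ic,\Ic} - R_{\Ic,\Ic})^{-1}$ and using the Kronecker mixed-product rule, $\big(\Id_{\I,\I}\otimes G\big)\big(\One_{\I}\otimes R_{\Ic,\I}\big) = \One_{\I}\otimes(G R_{\Ic,\I})$, so $M(\lambda) = (\Id_{\I,\I}\bullet R_{\I,\Ic})\big(\One_{\I}\otimes(G R_{\Ic,\I})\big)$. Evaluating this product entrywise via $(\Id_{\I,\I}\bullet R_{\I,\Ic})_{i,(j,k)} = \delta_{ij}R_{i,k}$ and $(\One_{\I}\otimes(G R_{\Ic,\I}))_{(j,k),m} = (G R_{\Ic,\I})_{k,m}$, the Kronecker delta collapses the composite index $(j,k)$, leaving $M(\lambda) = R_{\I,\Ic}\,G\,R_{\Ic,\I}$. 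Hence $\det\big(\lambda\Id_{\I,\I} - R_{\I,\I} - M(\lambda)\big)$ is precisely the Schur complement of $\lambda\Id_{\Ic,\Ic} - R_{\Ic,\Ic}$ in $\lambda\Id - R$ (after the symmetric index permutation placing $\I$ before $\Ic$), which by the Schur determinant formula applied to $\lambda\Id - R$ equals $\det(\lambda\Id - R)/\det(\lambda\Id_{\Ic,\Ic} - R_{\Ic,\Ic})$.

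Combining the two factorizations gives $\det(\lambda\Id - \Rex) = \det(\lambda\Id - R)\cdot\det(\lambda\Id_{\Ic,\Ic} - R_{\Ic,\Ic})^{\Ia - 1}$ for all $\lambda \notin \mathrm{spec}(R_{\Ic,\Ic})$, and, both sides being polynomials in $\lambda$, for all $\lambda$. Reading off roots with multiplicity yields the asserted spectrum of $\Rex$: the $n$ eigenvalues of $R$ together with the $n - \Ia$ eigenvalues of $R_{\Ic,\Ic}$, each of multiplicity $\Ia - 1$; one checks $n + (n - \Ia)(\Ia - 1) = \Ia(n - \Ia + 1)$, the dimension of the augmented space. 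Finally, by \Cref{def:marked-laplacian} we have $-\Le = \Diag(\he)\,\Rex\,\DiagI(\he)$, a diagonal similarity transform of $\Rex$ on the essential communicating class (where $\pe$, hence $\he$, is strictly positive), so $-\Le$ has the same eigenvalues.

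The main obstacle is purely the index bookkeeping in the second paragraph: one must carry the two-block, tensor-structured indexing of the augmented space through the $\otimes$ and $\bullet$ products to verify that $M(\lambda)$ collapses to $R_{\I,\Ic} G R_{\Ic,\I}$. The degenerate case $\Ia = 1$ (where $\Rex = R$ and no extra factor appears) and the treatment of zero stationary entries at unphysical states when relating $\Rex$ to $\Le$ are both covered by the conventions already set up in \Cref{s:marked}. (An alternative route avoids determinants entirely: the map $\phi \mapsto (\phi_{\I},\,V)$ with $V_{i,\cdot} = \phi_{\Ic}$ for every $i \in \I$ is an injective intertwiner of $R$ into $\Rex$ whose image is $\Rex$-invariant, and the induced operator on the quotient is $\Id\otimes R_{\Ic,\Ic}$ because the lower-left block of \eqref{eq:expanded-R} has image inside the intertwined subspace; this yields the same factorization of the characteristic polynomial.)
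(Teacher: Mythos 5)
Your Schur-complement computation is correct, and it is a genuinely different route from the paper's. The paper exhibits $n$ right eigenvectors of the form $[\;z_{\I}^\t,\;(\One_{\I}\otimes z_{\Ic})^\t\;]^\t$ (for $Rz=\lambda z$) and $(|\I|-1)(n-|\I|)$ left eigenvectors of the form $[\;\Zero^\t,\;(u\otimes v)^\t\;]^\t$ (for $R_{\Ic,\Ic}^\t v=\lambda v$, $\One_{\I}^\t u=0$), then notes these two families are mutually annihilating and uses that orthogonality to build an explicit change of basis that block-triangularizes $\Rex$ — a step the authors include precisely to \emph{rule out} multiplicity gaps when eigenvalues of $R$ and $R_{\Ic,\Ic}$ coincide. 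Your determinant argument gets the full factorization
\[
\det(\lambda\Id-\Rex)=\det(\lambda\Id-R)\cdot\det(\lambda\Id_{\Ic,\Ic}-R_{\Ic,\Ic})^{|\I|-1}
\]
in one shot, so multiplicities fall out automatically with no case analysis; that is the cleaner route for the stated proposition. What it gives up is the explicit eigenvector/eigenform data that the paper's construction produces as a byproduct. Your parenthetical alternative (the intertwiner $\phi\mapsto J\phi$ with $\Rex J=JR$, plus the quotient identification $(\R^{\I}/\One_{\I}\R)\otimes\R^{\Ic}$ on which the induced operator is $\bar\Id\otimes R_{\Ic,\Ic}$) is in fact a coordinate-free restatement of the paper's $X$, $Y$, $Y^\t X=0$ construction, so it is not a third approach but a re-packaging of theirs.

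One small point of care you and the paper both treat somewhat loosely: if unphysical states exist, $\Le$ is only defined after deleting them, and $\Rex$ is then block-triangular with blocks $\Rex_{E,E}$ (essential class) and $\Rex_{U,U}$ (unphysical). In that case $-\Le$ is similar to $\Rex_{E,E}$ only, and its spectrum is a proper subset of $\operatorname{spec}(\Rex)$; the sentence ``$\Rex$ and hence $-\Le$'' is therefore only literally correct when no unphysical states are present (or should be read as the spectrum of $\Rex$ restricted to the essential class). This caveat is shared with the paper's own proof, which does not discuss $\Le$ at all, so it does not count against you — but since you explicitly invoke the similarity ``on the essential communicating class,'' it would be worth noting that this restriction can strictly shrink the spectrum you just computed.
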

\noindent For context, note that $R_{\Ic,\Ic}$ is diagonally similar to $-L_{\Ic,\Ic}$,
hence diagonalizable with the same real negative eigenvalues.

Next we consider projections from the augmented space onto the marking and position spaces.
For this purpose, we first define the respective indicator matrices:
\begin{eqn}
    \label{eq:tildeproj}
    \Su \Eq \begin{bmatrix} 
        \Id_{\I,\I} \\ 
        \Id_{\I,\I} \otimes \One_{\Ic}
    \end{bmatrix}, \qquad
    \Qu \Eq \begin{bmatrix}
        \Id_{\I,\I} & \Zero_{\I, \Ic} \\
        \Zero_{\I \times \Ic, \I} & \One_{\I} \otimes \Id_{\Ic,\Ic}
    \end{bmatrix}.
\end{eqn}
Denoting the size of the augmented space as $m = \Ia + \Ia \cdot \Abs{\Ic}$, the sizes of the matrices $\Su$ and $\Qu$ are $m \times \Ia$ and $m \times n$, respectively.
By construction, $\Su_{(i,j),k} = \delta_{i,k}$ and $\Qu_{(i,j),l} = \delta_{j,l}$ for all $(i,j)$ in the augmented space, $k \in \I$, and $l \in [n]$. Indeed, these identities can be viewed as an alternative construction equivalent to \Cref{eq:tildeproj}.
Thus $\Su$ extracts the marking of a given state in the augmented space, while $\Qu$ extracts its position. In defining $\Qu$ blockwise, we have assumed without loss of generality that the states in $[n]$ are ordered so that the states in $\I$ appear first.

To work with \emph{orthogonal} projectors, we rescale $\Su$ and $\Qu$ to define 
\begin{eqn} \label{eq:marked-projections}
    \So \Eq \Diag(\he) \Su \DiagI(\hr), \qquad
    \Qo \Eq \Diag(\he) \Qu \DiagI(h)
\end{eqn}
and show that $\So$ has orthonormal columns (\Cref{th:s-orthogonality}), $\Qo$ has orthonormal columns (\Cref{th:q-contraction}), and $\Qo^\t \So = \Co$ (\Cref{th:q-s-equals-c}).

We can connect the dynamics of $\MarkedChain$ (the marked chain) to those of $\FullChain$ (the original chain) and $\InducedChain$ (the induced chain) using these projections (see also \Cref{fig:marked-chain} for intuition).
Note that observing the dynamics of $\MarkedChain$ in position space is equivalent to observing the dynamics of $\FullChain$.
This ultimately implies that $\Qo$ defines an invariant subspace of $\Le$ and that $L = \Qo^\t \Le \Qo$ (\Cref{th:qlq}). 
On the other hand, we can also establish that $\Lr = \So^\t \Le \So$ (\Cref{th:sls}), thereby connecting $\MarkedChain$ to $\InducedChain$.

Now we establish that while the exact dynamics $P(t)$ of the original Markov chain can be viewed as a projection of the marked chain dynamics, the structure-preserving compression  $\ObliqueP_{\I}(t)$ can be viewed as a projection of the marked chain dynamics after perturbation by a dissipative term which mixes states of a given marking together infinitely fast. More concretely, we consider the perturbation $\Le \rightarrow \Le + \alpha (\Id - \So \So^\t)$  and show that this perturbation yields our structure-preserving compression in the $\alpha \ra \infty$ limit.
Here, note that $\Id - \So \So^\t$ may be viewed as a (reducible) Laplacian which mixes states of a shared marking in the augmented space. Moreover, the null vector $\he$ of $\Le$ is contained in the null space of $\Id - \So \So^\t$ (\Cref{th:s-orthogonality}), so the stationary distribution is preserved by the perturbation. As promised, this result offers another probabilistic derivation of $\ObliqueP_{\I}(t)$. 
\label{th:induced-from-marked:section}
\begin{restatable}[Original and induced chains from the marked chain \ProofLink{th:induced-from-marked}]{proposition}{ThInducedFromMarked} \label{th:induced-from-marked}
    For $t > 0$, $\Le$ as in \Cref{def:marked-laplacian}, and $\Qo$, $\So$ as in \Cref{eq:marked-projections}:
    \begin{eqn}
        \SymP(t) =  \Qo^\t \exp( - \Le t) \,   \Qo, \qquad
        \ObliqueP_{\I}(t)  = \lim_{\alpha \ra \infty} \Qo^\t \Expt{\left[\Le + \alpha (\Id - \So \So^\t)\right]}  \Qo.
    \end{eqn}
\end{restatable}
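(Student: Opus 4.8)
The plan is to establish the two identities separately, in each case reducing to a statement about invariant subspaces of $\Le$ (or a perturbation thereof) and then applying the functional-calculus fact that if $W$ has orthonormal columns spanning an invariant subspace of a matrix $M$, then $W^\t f(M) W = f(W^\t M W)$ restricted appropriately. For the first identity, I would invoke the results cited in the surrounding text: $\Qo$ has orthonormal columns (\Cref{th:q-contraction}), $\Qo$ defines an invariant subspace of $\Le$, and $L = \Qo^\t \Le \Qo$ (\Cref{th:qlq}). The key point is that observing the marked chain in position space is exactly observing $\FullChain$, so the column span of $\Qo$ is invariant under $\Le$; this lets one push $\Qo^\t$ and $\Qo$ through the matrix exponential, giving $\Qo^\t \exp(-\Le t)\Qo = \exp(-(\Qo^\t \Le \Qo)t) = \exp(-Lt) = \SymP(t)$. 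A small technical care is needed because $\Le$ is asymmetric, so ``invariant subspace'' should be phrased via $\Le \Qo = \Qo (\Qo^\t \Le \Qo)$, which still suffices for the exponential identity since then $\Le^k \Qo = \Qo (\Qo^\t \Le \Qo)^k$ for all $k$ by induction.

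For the second identity, the strategy is to analyze the perturbed operator $M_\alpha \Eq \Le + \alpha(\Id - \So\So^\t)$ in the $\alpha \to \infty$ limit. Since $\Id - \So\So^\t$ is the orthogonal projector onto the orthogonal complement of $\mathrm{range}(\So)$ (using that $\So$ has orthonormal columns, \Cref{th:s-orthogonality}), its eigenvalues are $0$ on $\mathrm{range}(\So)$ and $1$ elsewhere. As $\alpha\to\infty$, the dynamics $\exp(-M_\alpha t)$ should converge to the dynamics confined to $\mathrm{range}(\So)$, i.e., $\So \exp(-(\So^\t \Le \So) t)\So^\t$ — intuitively, the perturbation damps out all components outside $\mathrm{range}(\So)$ infinitely fast while leaving the $\So$-component governed by the compression $\So^\t\Le\So$. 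Using $\Lr = \So^\t \Le \So$ (\Cref{th:sls}) and $\Qo^\t \So = \Co$ (\Cref{th:q-s-equals-c}), sandwiching by $\Qo^\t \cdot \Qo$ then yields $\Qo^\t \So \exp(-\Lr t)\So^\t \Qo = \Co \exp(-\Lr t)\Co^\t = \ObliqueP_{\I}(t)$, which is the claimed formula \cref{eq:oblique-p}. One must also use that $\he$ lies in the null space of $\Id - \So\So^\t$ (\Cref{th:s-orthogonality}), so the stationary mode is untouched by the perturbation and the limit is well-behaved at the bottom of the spectrum.

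The main obstacle is making the $\alpha\to\infty$ limit rigorous, since $\Le$ is not symmetric and standard perturbation theory for self-adjoint operators does not directly apply. I would handle this by a block decomposition adapted to $\R^m = \mathrm{range}(\So)\oplus \mathrm{range}(\So)^\perp$: writing $\Le$ in this basis as a $2\times 2$ block matrix $\begin{bmatrix}A & B\\ C& D\end{bmatrix}$ with $A = \So^\t\Le\So$, the perturbed operator is $\begin{bmatrix}A & B\\ C & D+\alpha\Id\end{bmatrix}$, and one can either apply the Schur-complement / resolvent representation $\exp(-M_\alpha t) = \frac{1}{2\pi i}\oint e^{-zt}(z\Id - M_\alpha)^{-1}dz$ (in the spirit of \Cref{lem:contour0} used elsewhere in the paper) and take $\alpha\to\infty$ inside the contour integral, using that $(z\Id - D - \alpha\Id)^{-1} = O(1/\alpha)$ uniformly on the contour, or alternatively use a direct Duhamel/variation-of-parameters estimate. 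Either way the lower-right block's contribution vanishes and one is left with $\exp(-At)$ acting on $\mathrm{range}(\So)$; the off-diagonal blocks $B,C$ contribute only $O(1/\alpha)$ corrections. A subtlety worth flagging is that one needs a uniform-in-$\alpha$ bound to justify exchanging limit and contour integral, which follows because all eigenvalues of $M_\alpha$ have nonnegative real part (the perturbation is positive semidefinite and $\Le$ is purely dissipative by the marked-chain analog of \Cref{prop:markeig}), so the contour can be fixed independently of $\alpha$.
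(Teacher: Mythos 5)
Your proposal tracks the paper's proof closely: the first identity is dispatched exactly as you describe via the invariant-subspace relation $\Le \Qo = \Qo L$ (\Cref{th:qlq}) and the orthonormality of $\Qo$, and for the second identity the paper invokes the block-perturbation lemma (\Cref{lem:expalpha}), whose proof is precisely the Duhamel/Gr\"{o}nwall estimate you sketch, before finishing with $\So^\t\Le\So = \Lr$ (\Cref{th:sls}) and $\Qo^\t\So = \Co$ (\Cref{th:q-s-equals-c}). So on the route you flag as primary, the argument matches the paper's.

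One caution about the contour-integral alternative you also offer: the claim that one can ``fix the contour independently of $\alpha$'' fails, because a subset of the eigenvalues of $M_\alpha = \Le + \alpha(\Id - \So\So^\t)$ diverges like $\alpha$, so any fixed bounded contour eventually fails to enclose the spectrum and the Cauchy representation becomes invalid. Also, inferring right-half-plane eigenvalues of $M_\alpha$ from ``$\Le$ dissipative plus a psd perturbation'' is not valid reasoning for a non-normal $\Le$ (real nonnegative eigenvalues are not a property stable under addition of a psd matrix); one would instead have to appeal to positive semidefiniteness of the Hermitian part $\tfrac12(\Le + \Le^\t)$, which does hold here because $-\Diag(\pe)\Rex - \Rex^\t\Diag(\pe)$ is a graph Laplacian, but that is a separate fact and still leaves the escaping-eigenvalue issue unresolved. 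The Duhamel route, which you also propose and which is what \Cref{lem:expalpha} uses, sidesteps all of these spectral issues, so your overall plan is sound.
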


While our construction of the marked chain and its linear-algebraic formulation may be of independent interest, in this work we are specifically motivated by our goal of offering an approximation theory for the structure-preserving compression. This theory is enabled precisely by the orthogonality of the projector $\So$. Recall that a simpler approach was blocked by the fact that $\Co$ does not have orthonormal columns.

Indeed, although it is not technically necessary for our approximation theory (\Cref{th:sp-nuclear-bound}) to define the ``marked chain Laplacian'' $\Le$, it \emph{is} necessary to define the projectors $W$ and $Q$,  whose definitions \Cref{eq:marked-projections} involve the null vector of $\Le$ and indeed are strongly motivated by the identities $L = \Qo^\t \Le \Qo$ (\Cref{th:qlq}) and  $\Lr = \So^\t \Le \So$ (\Cref{th:sls}) explaining how the original and reduced Laplacian may be obtained by projections of the ``marked chain Laplacian.''

Moreover, since our analysis makes use of the limit $\gamma \ra 0$ of killed chains, it will be important for our analysis to extend the definitions of $W$ and $Q$ to the killed setting where $\gamma > 0$, ensuring that two properties analogous to those mentioned above still hold: (1) $W_\gamma$ has orthonormal columns and (2) $Q_\gamma^\t W_\gamma = C_\gamma$. We define such extensions $\Sa$  and $\Qa$ in \Cref{def:killed-projections} of \Cref{a:marked-properties} in terms of suitable $\mathring{\pi}_\gamma$ and $\mathring{h}_\gamma$. It follows closely from their definitions that these objects converge to $W$, $Q$, $\mathring{\pi}$, and $\mathring{h}$, respectively, as $\gamma \ra 0$. Given these definitions, the aforementioned \Cref{th:s-orthogonality} and \Cref{th:q-s-equals-c} also ensure the extended properties (1) and (2), as desired.

For additional context, observe that the appropriate definition of the rate matrix and its rescaling are as follows:
\begin{eqn} \label{eq:Lea}
    \mathring{R}_\gamma \Eq  \mathring{R} - \gamma \Id, \qquad  \Lea \Eq - \mathrm{diag}(\mathring{h}_\gamma) \, \mathring{R}_\gamma \, \mathrm{diag}^{-1} (\mathring{h}_\gamma).
\end{eqn}
Under these definitions, the property $\Lr = \So^\t \Le \So$ extends to $\Lrk = \So^\t_\gamma \Le_\gamma \So_\gamma$ for $\gamma > 0$ (also shown in \Cref{th:sls}). (Meanwhile, note with caution that certain identities listed above do \emph{not} extend to the killed case: $Q_\gamma$ does not have orthonormal columns and $L_\gamma \neq Q_\gamma^\t \Le Q_\gamma$, for $\gamma > 0$.)

Then the same proof as for \Cref{th:induced-from-marked}, given that $\Lrk = \Sa^\t \Lea \Sa$ by \Cref{th:sls}, establishes that $ \ObliqueP_{\I,\Kill}(t)$ as in \Cref{eq:Pspgamma} can be recovered as: 
\begin{eqn} \label{eq:oblique-p-killed}
    \ObliqueP_{\I,\Kill}(t) = \lim_{\alpha \ra \infty} \Qa^\t \Expt{\left[\Lea + \alpha (\Id - \Sa \Sa^\t)\right]} \Qa.
\end{eqn}
We will not use this fact in our approximation theory, but it offers a natural motivation for the definition of $\ObliqueP_{\I,\Kill}(t)$ in terms of the marked killed chain.

\subsection{Approximation theory \label{s:oblique-approximation}}

In this section, we develop a theory supporting the accuracy of our structure-preserving approximation, culminating with \Cref{th:sp-nuclear-bound}. 
The  proof requires substantially more work and complication than those of \Cref{th:dissipative,th:ortho-markov} due to the ``obliqueness'' of the compression, i.e., the fact that $\Co$ does not have orthonormal columns.
We will take the following approach:
\begin{enumerate}
    \item Bound the error of structure-preserving compression relative to projective compression, in terms of quantities $\Obs$ related to a notion of obliqueness (\Cref{def:obliqueness}). 
    The bound is proved via contour integration by way of the marked chain, using the fact that our structure-preserving compression can be viewed as an orthogonal projection from the marked chain's expanded state space.
    \item Show by probabilistic arguments that $\On$ is at most $\Ia$ times larger than $\En$, yielding \Cref{th:sp-nuclear-bound}.
\end{enumerate}

\subsubsection{Contour integration bound}



In this section, we show how we can invoke Cauchy's integral formula to bound the difference between the structure-preserving compression and the \emph{projective compression}. Note that we do not \emph{directly} bound the difference between the structure-preserving compression and the exact dynamics. To bound this difference, it suffices to examine the difference between the two compressions via \Cref{th:ortho-markov} and the triangle inequality.

To rigorously achieve our results, we will work throughout this section with Markov chains killed at rate $\Kill$ to avoid singular matrices related to the Markov chain steady state.
Therefore, we will pose intermediate results in terms of the killed Laplacian $\La$, its inverse $\Ka$, and the committor analog $\Ca$ as defined in \Cref{def:killed-committor}.
In the final results, we will take the limit of $\Kill \ra 0$, and on first reading the $\Kill$-dependence can mostly be overlooked for the purpose of intuitive understanding.

To motivate the structure of our argument, recall that we introduced a contour integral approach in \Cref{s:contour} to bound the difference of matrix exponentials of equal dimension. In particular, for the \emph{projective} compression specified by $\Va \Eq \Orth( [K_\gamma]_{:,\I} ) = \Orth(\Ca)$  (cf. \Cref{def:killed-committor}) we obtained by Cauchy's integral formula (cf. \Cref{lem:contour0}):
\begin{eqn}
    \label{eq:orthogonal-cauchy}
    \OrthP_{\I,\Kill}(t) = \Va \Expt{\Va^\t \La \Va} \Va^\t = \frac{1}{2 \pi i} \oint_{\Contour} e^{\Minus 1/z} \OrthRes^\Kill(z) \; \odif z
\end{eqn}
where $\Contour$ is a contour surrounding the eigenvalues of $K_\gamma / t$ and $\OrthRes^\Kill$ is the resolvent (identical to \Cref{eq:orth-resolvent} with $\Ka$ in place of $K$), which can be rewritten: 
\begin{eqn}
    \label{eq:orth-res-def}
    \OrthRes^\Kill(z) \Eq (z \Id - t^{-1} \Ca \Lrk^\Mo \Ca^\t)^\Mo.
\end{eqn}

Next we establish a similar formula for \emph{structure-preserving} compression, using the fact that $\Lrk = \Sa^\t \Lea \Sa$ by \Cref{th:sls}.
Here we interpret the structure-preserving compression of $\FullChain$ using $\Ca$ (which \emph{is not} an orthogonal projection) as the projective compression of $\MarkedChain$ using $\Sa$ (which \emph{is} an orthogonal projection) followed by later projection into the position space via $\Qa$.
($\Sa$ and $\Qa$ are precisely defined in \Cref{def:killed-projections}; they are analogs of $\So$ and $\Qo$ for the killed chain, respectively.) 
Using this logic and working through the linear algebraic simplications, we obtain:

\label{th:oblique-cauchy:section}
\begin{restatable}[Structure-preserving contour integral using marked chain \ProofLink{th:oblique-cauchy}]{proposition}{ThObliqueCauchy} \label{th:oblique-cauchy}
    For a Laplacian $\La$ killed at rate $\Kill > 0$, $\Ca$ as in \Cref{def:killed-committor}, $\ObliqueP_{\I,\Kill}$ as in \cref{eq:oblique-p-killed}, and $t>0$: 
    \begin{eqn} \label{eq:SplusT}
        \ObliqueP_{\I,\Kill}(t) = \Ca e^{- \Lrk t} \Ca^\t = \frac{1}{2 \pi i} \oint_\Contour e^{\Minus 1/z} \left[ \ObliqueRes^\Kill(z) + \mathcal{T}^{\,\Kill}(z) \right] \; \odif z
    \end{eqn}
    where $\Contour$ is a contour in the strict right half-plane enclosing the eigenvalues of $(t \Lrk)^{-1}$ and
    \begin{eqn}
        \label{eq:oblique-res-def}
        \ObliqueRes^\Kill(z) \ & \Eq \  \lrp{z \Id - \Ca \left[ \Lrk t - z^\Mo (\Id - \Ca^\t \Ca)\right]^\Mo \Ca^\t}^\Mo, \\ 
        \mathcal{T}^{\,\Kill}(z) \ & \Eq \  z^{-1} (Q_\gamma^\t Q_\gamma - \Id).
    \end{eqn}
\end{restatable}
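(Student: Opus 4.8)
The plan is to reduce the claim to \Cref{lem:contour0} by routing through the marked chain, exploiting that $\Sa$ has orthonormal columns even though $\Ca$ and $\Qa$ do not (for $\gamma > 0$). Starting from the definition $\ObliqueP_{\I,\Kill}(t) = \Ca e^{-\Lrk t}\Ca^\t$ in \cref{eq:Pspgamma}, I would use the killed-chain versions of \Cref{th:q-s-equals-c} and \Cref{th:sls}, namely $\Ca = \Qa^\t\Sa$ and $\Lrk = \Sa^\t\Lea\Sa$, to rewrite $\ObliqueP_{\I,\Kill}(t) = \Qa^\t\bigl(\Sa e^{-\Lrk t}\Sa^\t\bigr)\Qa$. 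Since $\La$ is positive definite and $\Ca$ has full column rank, $\Lrk = \Ca^\t\La\Ca$ is positive definite, so \Cref{lem:contour0} with $U = \Sa$ (orthonormal columns by the killed-chain version of \Cref{th:s-orthogonality}) and $A = t\Lrk$ yields
\[
\Sa e^{-\Lrk t}\Sa^\t = \frac{1}{2\pi i}\oint_{\Contour} e^{-1/z}\,\bigl(z\Id - \Sa(t\Lrk)^{-1}\Sa^\t\bigr)^{-1}\,\odif z
\]
for any contour $\Contour$ in the strict right half-plane enclosing the eigenvalues of $(t\Lrk)^{-1}$. Conjugating by $\Qa$ and pulling $\Qa^\t(\cdot)\Qa$ inside the integral by linearity, it suffices to establish, for $z$ on the contour, the pointwise resolvent identity $\Qa^\t\bigl(z\Id - \Sa(t\Lrk)^{-1}\Sa^\t\bigr)^{-1}\Qa = \ObliqueRes^\Kill(z) + \mathcal{T}^{\,\Kill}(z)$.

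For that identity, I would first record the elementary decomposition, valid for any $B$ precisely because $\Sa$ has orthonormal columns,
\[
\bigl(z\Id - \Sa B\Sa^\t\bigr)^{-1} = \Sa(z\Id - B)^{-1}\Sa^\t + z^{-1}\bigl(\Id - \Sa\Sa^\t\bigr),
\]
checked by multiplying out using $\Sa^\t\Sa = \Id$ and $\Sa^\t(\Id - \Sa\Sa^\t) = 0$. Taking $B = (t\Lrk)^{-1}$, applying $\Qa^\t(\cdot)\Qa$, and using $\Qa^\t\Sa = \Ca$ gives $\Ca(z\Id - (t\Lrk)^{-1})^{-1}\Ca^\t + z^{-1}(\Qa^\t\Qa - \Ca\Ca^\t)$, and splitting $z^{-1}(\Qa^\t\Qa - \Ca\Ca^\t) = z^{-1}(\Qa^\t\Qa - \Id) + z^{-1}(\Id - \Ca\Ca^\t)$ peels off exactly $\mathcal{T}^{\,\Kill}(z) = z^{-1}(\Qa^\t\Qa - \Id)$. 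What then remains is the purely algebraic claim
\[
\Ca\bigl(z\Id - (t\Lrk)^{-1}\bigr)^{-1}\Ca^\t + z^{-1}\bigl(\Id - \Ca\Ca^\t\bigr) = \ObliqueRes^\Kill(z).
\]

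I expect this last identity to be the main obstacle, though it is a bounded computation. The cleanest route I see is to apply the Sherman--Morrison--Woodbury formula to $\bigl[\ObliqueRes^\Kill(z)\bigr]^{-1} = z\Id - \Ca\,\bigl[\,t\Lrk - z^{-1}(\Id - \Ca^\t\Ca)\,\bigr]^{-1}\Ca^\t$; writing $E^{-1} \Eq t\Lrk - z^{-1}(\Id - \Ca^\t\Ca)$, Woodbury gives $\ObliqueRes^\Kill(z) = z^{-1}\Id + z^{-2}\Ca\,(E^{-1} - z^{-1}\Ca^\t\Ca)^{-1}\,\Ca^\t$, and the crucial cancellation $E^{-1} - z^{-1}\Ca^\t\Ca = t\Lrk - z^{-1}\Id$ collapses this to $z^{-1}\Id + z^{-2}\Ca\,(t\Lrk - z^{-1}\Id)^{-1}\Ca^\t$. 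On the other side, using $(z\Id - \Lambda^{-1})^{-1} - z^{-1}\Id = z^{-1}(z\Lambda - \Id)^{-1}$ with $\Lambda \Eq t\Lrk$ reduces the left-hand side to $z^{-1}\Id + z^{-1}\Ca\,(z\,t\Lrk - \Id)^{-1}\Ca^\t$, and $z^{-2}(t\Lrk - z^{-1}\Id)^{-1} = z^{-1}(z\,t\Lrk - \Id)^{-1}$ closes the gap. I would finish by noting that on the contour all the inverses in question exist and the integrand is analytic, so moving $\Qa^\t(\cdot)\Qa$ inside the integral is legitimate; the limit $\gamma \to 0$ is not needed here and is taken only in the downstream error bounds.
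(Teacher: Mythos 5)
Your proof is correct and follows essentially the same route as the paper: rewrite $\ObliqueP_{\I,\Kill}(t) = \Qa^\t(\Sa e^{-\Lrk t}\Sa^\t)\Qa$ via $\Ca = \Qa^\t\Sa$ and $\Lrk = \Sa^\t\Lea\Sa$, apply \Cref{lem:contour0} with $U = \Sa$, conjugate by $\Qa$, split off $\mathcal{T}^{\,\Kill}(z) = z^{-1}(\Qa^\t\Qa - \Id)$, and close the gap with Woodbury. The paper's version of the algebra is arranged slightly differently (applying Woodbury once forward and once in reverse on the conjugated resolvent rather than expanding your elementary block decomposition and verifying the residual identity separately), but the two computations are equivalent manipulations of the same identities and give the same result.
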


\noindent We comment that the formula yielded by \Cref{th:oblique-cauchy} is substantially different than what is yielded by direct application of Cauchy's integral formula to expand the inner exponential $e^{-\Lrk t}$ as a contour integral. Such an alternative approach is ultimately not useful to us. Also we point out that $\mathcal{T}^\gamma (z)$, defined in \Cref{eq:oblique-res-def}, vanishes as $\gamma \ra 0$ and does not end up contributing to the error bound of Theorem \hyperlink{Theorem3*}{3*}. However, we have to treat it delicately in our analysis because the contour must depend on $\gamma$ to enclose the spectrum of $(t \Lrk)^{-1}$.

Comparing $\OrthRes^\Kill(z)$ from \Cref{eq:orth-res-def} and $\ObliqueRes^\Kill(z)$ from \Cref{eq:oblique-res-def}, we see that the non-orthogonality $\Id - \Ca^\t \Ca$ distinguishes the projective and structure-preserving compressions from one another.
Given that $\norm{\Ca}_2 \leq 1$ (\Cref{th:committor-norm}), $\Id - \Ca^\t \Ca$ is positive semidefinite, and it is tempting to develop bounds based directly on $\norm{\Id - \Ca^\t \Ca}_2$. However, we have observed empirically that such bounds are not very tight. Indeed, we observe that non-orthogonality may be significant even in a good approximation.

Instead, we examine the difference between resolvents of the projective and structure-preserving compressions $\OrthRes^\Kill(z) - \ObliqueRes^\Kill(z)$ in a different way. 
(See \Cref{a:sp-contour} for proofs and additional motivation.)
Specifically, we consider the following notion of ``obliqueness,'' which (having units of time) weights the lack of orthogonality more heavily along the slower modes of the compressed dynamics:

\begin{definition}[Obliqueness of the structure-preserving compression]
    \label{def:obliqueness}
    Given a Laplacian $\La$ killed at rate $\gamma > 0$, a non-empty subset $\I$, $\Ca$ as in \Cref{def:killed-committor}, and recalling that $\Lrk = \Ca^\t \La \Ca$, define: 
    \begin{eqn}
        \Ob_\Kill &\Eq \Lrk^\Mh (\Id - \Ca^\t \Ca) \Lrk^\Mh.
    \end{eqn}
    Further define 
    \begin{eqn} 
        \Obs^\Kill &\Eq \Norms{ \Ob_\Kill }, 
        \qquad \Obs \Eq \lim_{\Kill \ra 0} \Obs^\Kill.
        \label{eq:oblique-norm-def}
    \end{eqn}
\end{definition}
\noindent We prove in \Cref{th:recurrent-obliqueness} that the limits in \Cref{eq:oblique-norm-def} do in fact exist and moreover admit closed-form expressions in terms of $K$. Note that $\Lrk = \Ca^\t \La \Ca$  and $\Id - \Ca^\t \Ca$ are both symmetric positive definite (the latter by \Cref{th:committor-norm}). Therefore $\Ob_\Kill$ is symmetric positive definite, and, in particular, $\On^\Kill = \Tr[\Ob_\Kill]$.

This notion of obliqueness allows us to state the following bound on the resolvent difference.

\newcommand{\SqrtResolvent}{\mathcal{W}}

\label{th:resolvent-obliqueness:section}
\begin{restatable}[Resolvent difference bound using obliqueness \ProofLink{th:resolvent-obliqueness}]{lemma}{ThResolventObliqueness} \label{th:resolvent-obliqueness}
    For $z \in \mathbb{C}$ with $\re(z) > 0$, $t > 0$, $\Obs^\Kill$ as in \Cref{def:obliqueness}, $\OrthRes^\Kill$ as in \cref{eq:orth-res-def}, and $\ObliqueRes^\Kill$ as in \cref{eq:oblique-res-def}:
    \begin{eqn}
        \Norms{\ObliqueRes^\Kill(z) - \OrthRes^\Kill(z)} 
        \leq \frac{1}{2}\lrp{1 - \frac{\re(z)}{\Abs{z}}}^\Mo \frac{\Obs^\Kill}{t \cdot \Abs{z}^2}. 
        \label{eq:resolvent-diff-bound}
    \end{eqn}
\end{restatable}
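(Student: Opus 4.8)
The plan is to reduce the whole estimate to resolvents of real symmetric positive definite matrices — for which the needed pointwise bounds are elementary — while arranging that all $\gamma$-dependent factors (which individually diverge as $\gamma\to 0$) cancel. Since the right-hand side is $+\infty$ whenever $\im(z)=0$, I may assume $\im(z)\neq 0$, which guarantees that all inverses below exist. Write $M_\Kill \Eq t^\Mo\Ca\Lrk^\Mo\Ca^\t$ and $N_\Kill(z)\Eq \Ca\lrs{\Lrk t - z^\Mo(\Id - \Ca^\t\Ca)}^\Mo\Ca^\t$, so $\OrthRes^\Kill(z)=(z\Id - M_\Kill)^\Mo$ and $\ObliqueRes^\Kill(z)=(z\Id - N_\Kill(z))^\Mo$. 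The second resolvent identity gives $\ObliqueRes^\Kill(z)-\OrthRes^\Kill(z) = \ObliqueRes^\Kill(z)\,(N_\Kill(z)-M_\Kill)\,\OrthRes^\Kill(z)$, and since the inner matrices $A_0,B_0$ of $N_\Kill,M_\Kill$ satisfy $B_0^\Mo - A_0^\Mo = z^\Mo(\Id-\Ca^\t\Ca)$, a short computation gives $N_\Kill(z)-M_\Kill = (tz)^\Mo\Ca A_0(\Id-\Ca^\t\Ca)\Lrk^\Mo\Ca^\t$.

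Next I would pass to the reduced space. Set $D_\Kill\Eq\Lrk^\Mh\Ca^\t$ and $H_\Kill\Eq D_\Kill D_\Kill^\t = \Lrk^\Mh\Ca^\t\Ca\Lrk^\Mh$; then, directly from \Cref{def:obliqueness}, $\Ob_\Kill = \Lrk^\Mo - H_\Kill$ and $\Lrk t - z^\Mo(\Id-\Ca^\t\Ca) = \Lrk^{\Oh}\lrp{t\Id - z^\Mo\Ob_\Kill}\Lrk^{\Oh}$. With $A\Eq\lrp{t\Id - z^\Mo\Ob_\Kill}^\Mo$ this yields $M_\Kill = t^\Mo D_\Kill^\t D_\Kill$, $N_\Kill = D_\Kill^\t A D_\Kill$, and $N_\Kill - M_\Kill = (tz)^\Mo D_\Kill^\t A\Ob_\Kill D_\Kill$. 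Applying the push-through identity ($\ObliqueRes^\Kill D_\Kill^\t = D_\Kill^\t(z\Id - A H_\Kill)^\Mo$ and $D_\Kill\OrthRes^\Kill = (z\Id - t^\Mo H_\Kill)^\Mo D_\Kill$) collapses the product to the $\Ia$-dimensional space. The key cancellation is the identity $z\Id - A H_\Kill = A\lrp{zt\Id - \Lrk^\Mo}$, which holds exactly because $H_\Kill + \Ob_\Kill = \Lrk^\Mo$; hence $(z\Id - AH_\Kill)^\Mo A\Ob_\Kill = (zt\Id - \Lrk^\Mo)^\Mo\Ob_\Kill$ and
\[
\ObliqueRes^\Kill(z)-\OrthRes^\Kill(z) = (tz)^\Mo\, D_\Kill^\t(zt\Id - \Lrk^\Mo)^\Mo\,\Ob_\Kill\,(z\Id - t^\Mo H_\Kill)^\Mo D_\Kill ,
\]
in which $zt\Id - \Lrk^\Mo$ and $z\Id - t^\Mo H_\Kill$ are now shifts of real symmetric positive (semi)definite matrices.

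To finish, factor $D_\Kill = H_\Kill^{\Oh}\Omega_\Kill$ with $\Omega_\Kill\Eq H_\Kill^{\Mh}D_\Kill$; since $\Ca$ has full column rank (its $\I$-rows form the identity, cf. \Cref{def:killed-committor}), $H_\Kill$ is invertible and $\Omega_\Kill\Omega_\Kill^\t = \Id$, so left/right multiplication by $\Omega_\Kill^\t,\Omega_\Kill$ preserves every singular value. Using this and the fact that $H_\Kill^{\Oh}$ commutes with $(z\Id - t^\Mo H_\Kill)^\Mo$, the displayed identity gives
\[
\Norms{\ObliqueRes^\Kill(z)-\OrthRes^\Kill(z)} = \frac{1}{t\Abs{z}}\,\Norms{H_\Kill^{\Oh}(zt\Id - \Lrk^\Mo)^\Mo\,\Ob_\Kill\,H_\Kill^{\Oh}(z\Id - t^\Mo H_\Kill)^\Mo}.
\]
I would then bound this by $\snorm{H_\Kill^{\Oh}(zt\Id-\Lrk^\Mo)^\Mo}\cdot\Norms{\Ob_\Kill}\cdot\snorm{H_\Kill^{\Oh}(z\Id - t^\Mo H_\Kill)^\Mo}$ via Hölder for Schatten norms, with $\Norms{\Ob_\Kill}=\Obs^\Kill$. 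For the two scalar resolvent norms I would use $H_\Kill\preceq\Lrk^\Mo$ (equivalently $\Ob_\Kill\succeq 0$, from $\snorm{\Ca}\le 1$, \Cref{th:committor-norm}) to replace $H_\Kill^{\Oh}$ in the first factor by $\Lrk^\Mh$, followed by the elementary scalar optimization $\max_{\lambda>0}\frac{\sqrt\lambda}{\Abs{w-\lambda}} = \lrp{2(\Abs{w}-\re(w))}^{\Mh}$ (attained at $\lambda=\Abs{w}$), which gives $\snorm{H_\Kill^{\Oh}(zt\Id-\Lrk^\Mo)^\Mo}\le\lrp{2t(\Abs{z}-\re(z))}^{\Mh}$ and $\snorm{H_\Kill^{\Oh}(z\Id - t^\Mo H_\Kill)^\Mo}\le\sqrt{t}\,\lrp{2(\Abs{z}-\re(z))}^{\Mh}$. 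Multiplying the three factors and using $\Abs{z}(\Abs{z}-\re(z)) = \Abs{z}^2\lrp{1-\re(z)/\Abs{z}}$ produces exactly $\tfrac12\lrp{1-\re(z)/\Abs{z}}^\Mo\,\Obs^\Kill/(t\Abs{z}^2)$.

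The main obstacle is the second paragraph: identifying the change of variables $D_\Kill,H_\Kill$ and the cancellation $z\Id - AH_\Kill = A(zt\Id-\Lrk^\Mo)$ that turns the non-normal resolvent $(z\Id - N_\Kill(z))^\Mo$ into a product of resolvents of real symmetric positive definite matrices. This is precisely what makes every subsequent bound uniform in $\gamma$ — a direct estimate $\snorm{\ObliqueRes^\Kill}\cdot\Norms{N_\Kill-M_\Kill}\cdot\snorm{\OrthRes^\Kill}$ instead drags in factors like $\snorm{\Lrk^\Mo}$, which diverge as $\gamma\to 0$, and cannot recover the clean constant $\tfrac32$ or the geometric factor $\lrp{1-\re(z)/\Abs{z}}^\Mo$.
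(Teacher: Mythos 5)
Your proof is correct, and it reaches the same bound by a route that differs from the paper's in its key algebraic step. The paper (Lemma~\ref*{th:obliqueness-equivalence} and Lemma~\ref*{th:obliqueness-norm}) derives the resolvent difference by applying the Woodbury identity to both $\OrthRes^\Kill$ and $\ObliqueRes^\Kill$, producing a factorization of the form $\Ca\,\SqrtResolvent_z[\Ca^\t \La \Ca]\,\Phi_\Kill\,\SqrtResolvent_z[\Va^\t\La\Va]\,\Va^\t/t$ with the committor $\Ca$ and its orthogonalization $\Va$ as the outer isometric factors and the auxiliary matrix $\Phi_\Kill$ in the middle; a separate lemma then establishes $\Norms{\Phi_\Kill}=\Obs^\Kill$ by a similarity computation. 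You instead change variables to $D_\Kill = \Lrk^\Mh\Ca^\t$, use the second resolvent identity plus push-through, and exploit the exact cancellation $z\Id - AH_\Kill = A(zt\Id - \Lrk^\Mo)$ --- a consequence of $\Ob_\Kill = \Lrk^\Mo - H_\Kill$ --- to collapse the non-normal resolvent $(z\Id - N_\Kill(z))^\Mo$ into $(zt\Id - \Lrk^\Mo)^\Mo$, a genuine resolvent of a fixed positive definite matrix. This places $\Ob_\Kill$ itself in the center of the sandwich (so its norm enters directly without a $\Phi_\Kill$-type lemma), and uses the PSD domination $H_\Kill \preceq \Lrk^\Mo$ plus the same scalar optimization $\max_{\lambda>0}\sqrt{\lambda}\,/\,\abs{w-\lambda}=(2(\Abs{w}-\re w))^{\Minus 1/2}$ that underlies the paper's Lemma~\ref*{th:resolvent-sqrt}. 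What you buy is a shorter path with fewer auxiliary objects and no separate lemma identifying the obliqueness norm; what the paper's route buys is that the intermediate object $\Va$ is the same orthogonalized committor used in the projective compression, making structural parallels to Theorem~\ref*{th:ortho-markov} visible. Both factorizations are exact identities (not merely bounds) yielding precisely $\tfrac12(1-\re(z)/\Abs{z})^\Mo\,\Obs^\Kill/(t\Abs{z}^2)$ with the same tight constant.
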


Using the resolvent difference bound, we achieve a bound on $\Norms{\ObliqueP_{\I}(t) - \OrthP_{\I}(t)}$, the difference between the structure-preserving and projective compressions. In \Cref{a:sp-contour}, immediately before the proof of Theorem \hyperlink{Theorem3*}{3*}, we offer evidence supporting the optimality of the proof technique based on \Cref{th:resolvent-obliqueness}.

\label{th:oblique-sum-bound:section}
\begin{restatable}[Composite bound on structure-preserving compression \ProofLink{th:oblique-sum-bound}]{Theorem 3*}{ThObliqueSumBound} \label{th:oblique-sum-bound}
    \hypertarget{Theorem3*}{} With $\Obs = \Obs(\I)$ as in \Cref{def:obliqueness}, and $\Eps = \Eps(\I)$ as in \Cref{def:markov-epsilon}, the difference between the projective and structure-preserving compressions is bounded as:
    \begin{eqn}
        \label{eq:oblique-bound}
        \Norms{\ObliqueP_{\I}(t) - \OrthP_{\I}(t)} \leq \frac{2}{\pi}  \frac{\Obs(\I)}{t},
    \end{eqn}
    and the approximation error of the structure-preserving compression is bounded as:
    \begin{eqn}
        \label{eq:composed-bound}
        \Norms{\ObliqueP_{\I}(t) - \SymP(t)} \leq \frac{1}{t} \lrp{\frac{3\sqrt3}{2\pi} \Eps(\I) + \frac{2}{\pi} \Obs(\I)}.
    \end{eqn}
\end{restatable}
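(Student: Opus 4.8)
The plan is to observe first that the second bound is an immediate consequence of the first: given $\Norms{\ObliqueP_{\I}(t) - \OrthP_{\I}(t)} \leq \tfrac{2}{\pi}\,\Obs(\I)/t$, the triangle inequality together with \Cref{th:ortho-markov} yields the claimed bound on $\Norms{\ObliqueP_{\I}(t) - \SymP(t)}$, simultaneously in the spectral and nuclear norms. So the real work is to bound the discrepancy between the structure-preserving and projective compressions. Following the outline above, I would carry out the argument for the chain killed at rate $\Kill > 0$ (where all operators are nonsingular) and take the limit $\Kill \ra 0$ only at the end, invoking $\OrthP_{\I,\Kill}(t) \ra \OrthP_{\I}(t)$ by \Cref{eq:projective-compression}, $\ObliqueP_{\I,\Kill}(t) \ra \ObliqueP_{\I}(t)$ by \Cref{th:killed-committor} (since $\Ca \ra \Co$, hence $\Lrk = \Ca^\t \La \Ca \ra \Co^\t L \Co = \Lr$), and $\Obs^{\Kill} \ra \Obs$ by \Cref{th:recurrent-obliqueness}.

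Concretely, I would fix $\Kill, t > 0$ and subtract the contour representations \Cref{eq:orthogonal-cauchy} and \Cref{th:oblique-cauchy}, taken over a common contour $\Contour_\gamma$ in the strict right half-plane enclosing the relevant spectra; the matrix exponentials cancel and leave
\begin{eqn}
    \ObliqueP_{\I,\Kill}(t) - \OrthP_{\I,\Kill}(t) = \frac{1}{2\pi i}\oint_{\Contour_\gamma} e^{\Minus 1/z}\left[\ObliqueRes^{\Kill}(z) - \OrthRes^{\Kill}(z)\right] \odif z \; + \; \frac{1}{2\pi i}\oint_{\Contour_\gamma} e^{\Minus 1/z}\,\mathcal{T}^{\,\Kill}(z)\,\odif z.
\end{eqn}
The $\mathcal{T}^{\,\Kill}$ integral is dispatched first: since $\mathcal{T}^{\,\Kill}(z) = z^{-1}(Q_\Kill^\t Q_\Kill - \Id)$ is a fixed matrix times $z^{-1}$ and $e^{-1/z}z^{-1}$ is holomorphic off the origin, this integral equals either $0$ or $Q_\Kill^\t Q_\Kill - \Id$ (depending on whether $\Contour_\gamma$ encircles the origin, the latter via $\mathrm{Res}_{z=0}\,e^{-1/z}/z = 1$); in either case it vanishes as $\Kill \ra 0$ because $Q_\Kill \ra Q$ has orthonormal columns. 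Then, taking norms in the first integral and invoking \Cref{th:resolvent-obliqueness}, one gets
\begin{eqn}
    \Norms{\ObliqueP_{\I,\Kill}(t) - \OrthP_{\I,\Kill}(t)} \leq \frac{\Obs^{\Kill}}{4\pi t}\oint_{\Contour_\gamma} \Abs{e^{\Minus 1/z}}\lrp{1 - \frac{\re(z)}{\Abs{z}}}^{\Mo}\frac{\Abs{\odif z}}{\Abs{z}^2} \; + \; o(1),
\end{eqn}
where, exactly as in the proof sketch of \Cref{th:dissipative}, the nuclear-norm case uses $\nnorm{AB} \leq \snorm{A}\,\nnorm{B}$, so that both norms are handled at once and \Cref{th:resolvent-obliqueness,th:ortho-markov} apply as joint spectral/nuclear statements.

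What remains is the universal contour integral, whose integrand no longer depends on the problem data. As in \Cref{th:dissipative}, one takes the limiting contour of \Cref{fig:contour}, which wraps around all of $\mathbb{R}^{+}$ and passes through the real axis at the origin; the zero of $\Abs{e^{-1/z}}$ at the origin (approached along the positive real direction) cancels the singularities of $(1 - \re(z)/\Abs{z})^{\Mo}$ and $\Abs{z}^{-2}$ on the positive real axis, so the integral is finite. Evaluating it — optimizing over the limiting contour shape, a calculus exercise — produces the numerical value yielding the factor $\tfrac{2}{\pi}$. Passing to $\Kill \ra 0$ with the convergence statements above then establishes $\Norms{\ObliqueP_{\I}(t) - \OrthP_{\I}(t)} \leq \tfrac{2}{\pi}\,\Obs(\I)/t$, and composing with \Cref{th:ortho-markov} via the triangle inequality gives the bound on $\Norms{\ObliqueP_{\I}(t) - \SymP(t)}$.

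The hard part will be the $\Kill \ra 0$ limit taken through a $\Kill$-dependent contour. Because the smallest eigenvalue of $\Lrk$ vanishes as $\Kill \ra 0$ (it carries the recovered stationary mode), $\Contour_\gamma$ must be allowed to crowd the origin, and one must check carefully that (i) the $\mathcal{T}^{\,\Kill}$ contribution disappears \emph{uniformly} — delicate precisely because $\snorm{Q_\Kill^\t Q_\Kill - \Id}$ must outpace any growth of $\oint_{\Contour_\gamma}\Abs{e^{-1/z}}\Abs{z}^{-1}\Abs{\odif z}$ as the contour stretches toward infinity; (ii) the contour correctly encloses the poles of $\ObliqueRes^{\Kill}$ and $\OrthRes^{\Kill}$ — nontrivial for $\ObliqueRes^{\Kill}$ owing to the $z^{-1}(\Id - \Ca^\t \Ca)$ term inside its definition, which is the technical point already absorbed into \Cref{th:oblique-cauchy}; and (iii) the limiting universal integral evaluates to the exact stated constant rather than merely being finite, which (as in \Cref{th:dissipative}) requires an explicit optimization over contour shapes. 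The remaining convergences ($\ObliqueP_{\I,\Kill}(t) \ra \ObliqueP_{\I}(t)$, $\OrthP_{\I,\Kill}(t) \ra \OrthP_{\I}(t)$, $\Obs^{\Kill} \ra \Obs$) are routine given the cited results.
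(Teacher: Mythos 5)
Your proposal is correct and essentially reproduces the paper's argument: subtract the contour representations from \Cref{th:oblique-cauchy} and \cref{eq:orthogonal-cauchy}, bound the resolvent difference via \Cref{th:resolvent-obliqueness}, optimize the universal contour angle (the optimum here is $\re(\phi)=1/2$, giving $2/\pi$, as opposed to $1/\sqrt{3}$ in \Cref{th:dissipative}), pass to $\gamma \ra 0$ through a $\gamma$-dependent contour, and compose with \Cref{th:ortho-markov} via the triangle inequality for the second bound. The only immaterial difference is in the $\mathcal{T}^{\,\gamma}$ term: you note its contour integral vanishes exactly because any valid contour lies in the strict right half-plane where $e^{-1/z}z^{-1}$ is holomorphic (so the ``encircles the origin'' case you mention never arises), whereas the paper contents itself with the cruder $O(\gamma\log(1/\gamma))$ estimate forced by its $\gamma$-dependent contour length $T=\gamma^{-2}$.
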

\noindent 
The bound \Cref{eq:oblique-bound} is proved with a contour integration argument similar to that in the proof of \Cref{th:dissipative}, using the resolvent difference bound of \Cref{th:resolvent-obliqueness} and taking the limit of $\Kill \ra 0$ appropriately.
Note that \Cref{eq:composed-bound} follows directly from \Cref{eq:oblique-bound} via \Cref{th:ortho-markov} and the triangle inequality.


We will demonstrate empirically in \Cref{s:experiments} that the \emph{a posteriori} bounds \cref{eq:composed-bound} are relatively tight in practice.
Indeed, $\Obs$ is comparable to or smaller than $\Eps$ in practice, so the structure-preserving compression bound \Cref{eq:composed-bound} is empirically similar to our projective compression error bound from \Cref{th:ortho-markov}.


\subsubsection{Bounding obliqueness via probabilistic interpretation}

Next we turn toward showing that the obliqueness $\On$ appearing in Theorem \hyperlink{Theorem3*}{3*} can be controlled in terms of the nuclear norm \Nystrom{} error $\En$. In the language of the \Cref{s:structure-preserving} introduction, this argument converts an \emph{a posteriori} bound (which is empirically quite tight, cf., \Cref{s:experiments}) with a somewhat looser but still quite functional \emph{a priori} bound that can be controlled via nuclear column selection (cf. \Cref{s:algs}).
We rely on probabilistic arguments to achieve this proof.


In \Cref{th:recurrent-obliqueness}, we compute $\On$ in closed form in terms of $K$.
To leverage this result, we define the $n \times n$ matrix $H$ of hitting times (also known as mean first passage times):
\begin{eqn} \label{eq:hitting-time-def}
    H_{i,j} \Eq \Ex(\tau_\lrb{j} \mid X_0 = i)
 = \Ex(\min \lrb{t: t \geq 0, \, X_t = j} \mid X_0 = i).
\end{eqn}
With this definition we can obtain a remarkably simple expression for $\On$:

\label{th:obliqueness-from-hitting:section}
\begin{restatable}[Obliqueness in terms of first passage times \ProofLink{th:obliqueness-from-hitting}]{proposition}{ThObliquenessFromHitting} \label{th:obliqueness-from-hitting}
    Given $\On$ as in \Cref{def:obliqueness} and $H$ as in \cref{eq:hitting-time-def}:
    \begin{eqn} \label{eq:nuclear-using-hitting}
        \On = \Tr[H_{:,\I}^\t  \Diag(\pi) \Cu].
    \end{eqn}
\end{restatable}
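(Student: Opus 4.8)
The approach signposted by the surrounding text is to invoke \Cref{th:recurrent-obliqueness}, which already supplies $\On = \lim_{\Kill \ra 0}\On^\Kill$ in closed form in terms of the fundamental matrix $K = L^+$, and then reduce the claim to an algebraic identity between that $K$-expression and $\Tr[H_{:,\I}^\t\Diag(\pi)\Cu]$. The bridge I would use is the classical fundamental-matrix identity for mean first passage times: writing $Z \Eq \DiagI(h)\,K\,\Diag(h) = \int_0^\infty\!\big(\FullP(t) - \One\pi^\t\big)\,\odif t$ for the deviation (Green's) matrix of $\FullChain$ — note $\FullP(t) = \DiagI(h)\SymP(t)\Diag(h)$ and $\int_0^\infty(\SymP(t) - hh^\t)\,\odif t = L^+ = K$ — a renewal decomposition of $\Prob(X_t = j \mid X_0 = i)$ through the first visit to $j$ gives the standard relation $\pi_j H_{i,j} = Z_{j,j} - Z_{i,j}$, i.e.\ $H_{i,j} = \big(K_{j,j} - \sqrt{\pi_j/\pi_i}\,K_{i,j}\big)/\pi_j$.

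Substituting this into $\Tr[H_{:,\I}^\t\Diag(\pi)\Cu] = \sum_{j\in\I}\sum_i \pi_i\,\Cu_{i,j}\,H_{i,j}$ and using $\sum_i \pi_i\Cu_{i,j} = (\Cu^\t\pi)_j = \pr_j$ collapses the first group of terms to $\sum_{j\in\I} K_{j,j}\,\pr_j/\pi_j$, while the remainder equals $-\sum_{j\in\I}\sum_i \sqrt{\pi_i/\pi_j}\,K_{i,j}\,\Cu_{i,j}$; these I would rewrite using the closed form for $\Cu$ in terms of $K$ from \Cref{th:committor} together with the rescaling $\Cu = \DiagI(h)\,\Co\,\Diag(\hr)$, so that the result matches the $K$-formula of \Cref{th:recurrent-obliqueness} term by term. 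A complementary — and in fact logically self-contained — route proceeds instead from the observation (noted after \Cref{def:obliqueness}) that $\Ob_\Kill$ is symmetric positive definite, so $\On^\Kill = \Tr[\Lrk^{\Mo}(\Id - \Ca^\t \Ca)] = \Tr[\Lrk^{\Mo}] - \Tr[\Ca \Lrk^{\Mo} \Ca^\t]$; here the second trace equals $\Tr[\KA_{\I,\I}^{\Mo}(\Ka^2)_{\I,\I}]$ by \Cref{th:sp-integral} (since $\int_0^\infty \ObliqueP_{\I,\Kill}(t)\,\odif t = \Ca\Lrk^{\Mo}\Ca^\t = \KA_{:,\I}\KA_{\I,\I}^{\Mo}\KA_{\I,:}$), and the first is the trace of the killed induced chain's Green's matrix, which I would read off using the probabilistic dictionary of \Cref{th:induced-interpretation} and \Cref{th:hitting-times-preserved} (the induced chain holds at $i$ for exactly the time the original chain needs to reach $\I\setminus\{i\}$).

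The main obstacle is the $\Kill \ra 0$ bookkeeping. Both $\Tr[\Lrk^{\Mo}]$ and $\Tr[\Ca \Lrk^{\Mo}\Ca^\t]$ diverge individually, since $\Lrk \ra \Lr$ acquires the null vector $\hr$ while $\Id - \Ca^\t\Ca \ra \Id - \Co^\t\Co$ annihilates exactly that direction (because $\Co\hr = h$, hence $\Co^\t\Co\hr = \hr$, using $\Cu\One = \One$ and $\pr = \Cu^\t\pi$); only their difference has a finite limit. The subtraction must therefore be organized so that the divergent stationary-mode contributions cancel \emph{before} passing to the limit — most cleanly by expressing both traces against the same killed Green's matrix $\Ka$ of the original chain, so that the cancellation removes the stationary eigenvector. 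Once this is done, what survives is precisely a sum of mean first passage times $H_{i,j}$ weighted by the stationary committor flow $\pi_i\Cu_{i,j}$, which is $\Tr[H_{:,\I}^\t\Diag(\pi)\Cu]$. A minor remaining point is to justify the interchange of the trace with the $\Kill \ra 0$ limit, which follows from the convergences in \Cref{th:killed-committor} and \Cref{th:recurrent-obliqueness}.
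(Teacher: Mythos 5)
Your primary route (first paragraph) is essentially the paper's own proof: invoke \Cref{th:recurrent-obliqueness} for the closed-form $K$-expression of $\On$, express the hitting times via the deviation/fundamental-matrix relation $\pi_j H_{i,j} = Z_{j,j} - Z_{i,j}$ (the paper's $S := \DiagI(h)K\DiagI(h)$ is a diagonal rescaling of your $Z$ but yields the identical $H$-formula), substitute the closed form of $\Cu$ from \Cref{th:committor}, and match terms against \eqref{eq:obliqueness-nuclear-limit}. Your second paragraph sketches a genuinely different decomposition, $\On^\Kill = \Tr[\Lrk^{\Mo}] - \Tr[\Ca\Lrk^{\Mo}\Ca^\t]$, which is an attractive and probabilistically transparent alternative, but you have correctly diagnosed (without resolving) the core obstacle that both traces diverge as $\Kill\ra 0$ and the stationary-mode cancellation must be arranged before the limit; as written this second route is an outline rather than a complete proof.
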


Using \Cref{th:obliqueness-from-hitting} and further probabilistic arguments, we bound the obliqueness $\On$ in terms of the \Nystrom{} approximation error in the nuclear norm:
\begin{proposition}[Bounding obliqueness in terms of the \Nystrom{} approximation error]
\label{th:probabilistic-obliqueness-factor}
    Given a non-empty subset $\I$, $\En (\I)$ as in \Cref{def:markov-epsilon}, and $\On = \On (\I)$ as in \Cref{def:obliqueness}:
    \begin{eqn}
        \On (\I) \leq \Ia  \, \En(\I).
    \end{eqn}
\end{proposition}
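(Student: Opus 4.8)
My plan is to argue at the level of the killed chains (killing rate $\gamma>0$) and to pass to the limit $\gamma \to 0$ only at the end, since this keeps every matrix invertible. The starting point is to rewrite the killed obliqueness so as to expose a \Nystrom{}-type cancellation. Since $\Ob_\Kill \succeq 0$ we have $\On^\Kill(\I) = \Tr[\Ob_\Kill] = \Tr[\Lrk^{-1}] - \Tr[\Lrk^{-1} \Ca^\t \Ca]$, where $\Lrk = \Ca^\t \La \Ca$. Writing $\Lrk^{-1} = \int_0^\infty e^{-\Lrk t}\,\odif t$, cycling the trace, recalling $\ObliqueP_{\I,\Kill}(t) = \Ca e^{-\Lrk t}\Ca^\t$, and invoking \Cref{th:sp-integral},
\begin{eqn}
\Tr[\Lrk^{-1}\Ca^\t\Ca] = \Tr\!\left[\int_0^\infty \ObliqueP_{\I,\Kill}(t) \, \odif t\right] = \Tr[\KA_{:,\I}\KA_{\I,\I}^\Mo \KA_{\I,:}] = \Tr[\Ka] - \En^\Kill(\I),
\end{eqn}
where the last step uses that the \Nystrom{} error of $\Ka$ is positive semidefinite, so its nuclear norm equals its trace. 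Hence $\On^\Kill(\I) = \Tr[\Lrk^{-1}] - \Tr[\Ka] + \En^\Kill(\I)$.

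Next I would eliminate $\Tr[\Ka]$ using the marked chain. Because $\mathring{R}_\Kill = \mathring{R} - \gamma \Id$ shifts every eigenvalue of $\mathring{R}$ by $\gamma$, \Cref{prop:markeig} shows that the spectrum of $\Lea$ (which is diagonally similar to $\gamma \Id - \mathring{R}$) consists of the eigenvalues of $\La$ together with those of $L_{\Ic,\Ic} + \gamma \Id$, each repeated $\Ia - 1$ times, and is positive. Combined with the Schur-complement computation $\En^\Kill(\I) = \Tr[(L_{\Ic,\Ic} + \gamma \Id)^{-1}]$ (this is how \Cref{th:markov-norm-limits} is obtained: the \Nystrom{} error of $\Ka$ vanishes on rows and columns indexed by $\I$, and on the $\Ic$-block equals $[(\La)_{\Ic,\Ic}]^{-1}$), this gives $\Tr[\Ka] = \Tr[\Lea^{-1}] - (\Ia - 1)\En^\Kill(\I)$. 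Substituting,
\begin{eqn}
\On^\Kill(\I) = \Tr[\Lrk^{-1}] - \Tr[\Lea^{-1}] + \Ia \, \En^\Kill(\I),
\end{eqn}
so it suffices to prove the compression inequality $\Tr[\Lrk^{-1}] \le \Tr[\Lea^{-1}]$ for each $\gamma > 0$; letting $\gamma \to 0$ then yields $\On(\I) \le \Ia\, \En(\I)$.

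The compression inequality is where the probabilistic content enters. Using the killed versions of \Cref{th:sls} and \Cref{th:s-orthogonality}, $\Lrk = \Sa^\t \Lea \Sa$ with $\Sa$ an isometry, namely the rescaled ``marking extractor.'' Tracking the diagonal rescalings, $\Tr[\Lea^{-1}]$ equals the sum of the diagonal entries of the $\gamma$-discounted Green's function of the marked chain $\MarkedChain$, i.e. $\Tr[\Lea^{-1}] = \sum_b G_{b,b}$ over augmented states $b$, while $\Tr[\Lrk^{-1}]$ is the analogous quantity for the induced chain $\InducedChain$ --- whose trajectory is exactly the marking coordinate of $\MarkedChain$ --- so that $\Tr[\Lrk^{-1}] = \sum_{i\in\I} \sum_{j} G_{(i,i),(i,j)}$, the expected discounted occupation of marking $i$ started from the marked state $(i,i)$, summed over positions $j$ compatible with marking $i$. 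The elementary transient-chain identity $G_{a,b} = \Ex_a[e^{-\gamma \tau_{\{b\}}}]\, G_{b,b} \le G_{b,b}$ then gives $\sum_{i}\sum_{j} G_{(i,i),(i,j)} \le \sum_{i}\sum_{j} G_{(i,j),(i,j)} = \sum_b G_{b,b}$, i.e. $\Tr[\Lrk^{-1}] \le \Tr[\Lea^{-1}]$, which completes the proof.

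The main obstacle is the bookkeeping that produces the exact constant $\Ia$ in the second displayed identity: it only works because \Cref{th:sp-integral}, the precise eigenvalue accounting of \Cref{prop:markeig}, and the Schur-complement form of $\En^\Kill$ fit together exactly, and because all the killed objects ($\Ca$, $\Lea$, $\Sa$, $\mathring{h}_\Kill$) must be set up so that the diagonal-rescaling identities survive for $\gamma > 0$ and converge correctly as $\gamma \to 0$ (recall $\Ka \not\to K$). The single conceptual point, once the machinery is in place, is that the marked chain is a lifting of the induced chain in which only the $\Ia$ marked states $(i,i)$ ever carry a fresh marking, so the bound collapses to ``expected occupation time is largest when started at the target.'' An alternative, more hands-on route would begin instead from the hitting-time formula $\On(\I) = \Tr[H_{:,\I}^\t \Diag(\pi) \Cu]$ of \Cref{th:obliqueness-from-hitting}, decompose $H$ by first passage to $\I$, and bound the resulting intra-$\I$ term; but there the crude bounds overshoot the factor $\Ia$, so a more careful probabilistic comparison --- essentially the same marked-chain lifting picture --- would still be required.
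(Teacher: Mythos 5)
Your route is essentially correct in structure and genuinely different from the paper's: the paper starts from the hitting-time expression $\On = \Tr[H_{:,\I}^\t \Diag(\pi)\Cu]$ of \Cref{th:obliqueness-from-hitting}, bounds each summand $\pi_k(H_{k,i}+H_{i,k})\Cu_{k,i}\le(L_{\Ic,\Ic}^\Mo)_{k,k}$ individually, and proves that per-index bound by a renewal--reward and cycle-counting argument; you instead derive the exact killed-chain identity $\On^\Kill(\I) = \Tr[\Lrk^\Mo] - \Tr[\Lea^\Mo] + \Ia\,\En^\Kill(\I)$ from \Cref{th:sp-integral}, the Schur-complement form $\En^\Kill(\I)=\Tr[(L_{\Ic,\Ic}+\Kill\Id)^\Mo]$, and \Cref{prop:markeig}, which makes the factor $\Ia$ appear transparently as the multiplicity count in \Cref{prop:markeig} and reduces the probabilistic content to a single discounted-Green's-function comparison on the marked chain. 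This is a nice reduction, and it deploys \Cref{th:sp-integral} and \Cref{prop:markeig}, which the paper proves but does not otherwise use in this argument.

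The genuine gap is in your justification of the intermediate identity $\Tr[\Lrk^\Mo] = \sum_{i\in\I}\sum_j G_{(i,i),(i,j)}$, where $G=(\Kill\Id-\Rex)^\Mo$. You deduce it from the claim that the trajectory of $\InducedChain$ ``is exactly the marking coordinate of $\MarkedChain$.'' That is false: the marking coordinate is not a Markov process --- this is precisely why the paper needs \Cref{th:induced-from-marked}, where the induced-chain dynamics emerge from the marked chain only after the dissipative perturbation $\alpha(\Id-\So\So^\t)$ with $\alpha\ra\infty$, and why \Cref{fig:marked-chain} labels the bottom trace a non-Markov projection. Moreover, even under a Green's-function reading of the marking process, starting the induced chain at $i$ would not correspond to starting the marked chain at the single augmented state $(i,i)$, so the displayed formula would not follow from your premise. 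The identity you need is nonetheless true, and your argument can be repaired by proving it directly from the block form \Cref{eq:expanded-R} of $\Rex$: the marked-to-marked block of $G$ is $G_{\I,\I}=[(\Kill\Id-R)^\Mo]_{\I,\I}$ (Schur complement), and, because the upper-right block of $\Rex$ is the face-splitting product $\Id\bullet R_{\I,\Ic}$, the marked-to-unmarked block factors entrywise as $G_{(i,i),(i',j)}=(G_{\I,\I})_{i,i'}\,[R_{\I,\Ic}(\Kill\Id-R_{\Ic,\Ic})^\Mo]_{i',j}$; summing over $j\in\Ic$ and using reversibility to show $1+[R_{\I,\Ic}(\Kill\Id-R_{\Ic,\Ic})^\Mo\One]_i=(\pa)_i/\pi_i$ then yields $\sum_{i,j}G_{(i,i),(i,j)}=\sum_{i}\big((\pa)_i/\pi_i\big)(\Ka)_{i,i}=\Tr[\Lrk^\Mo]$, consistent with the formula obtained in the proof of \Cref{th:obliqueness-using-k}. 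With that patch (and reading $\Tr[\Lea^\Mo]$ throughout as $\Tr[(\Kill\Id-\Rex)^\Mo]$ so the argument is robust to any unphysical augmented states), your monotonicity step $G_{a,b}\le G_{b,b}$ and the $\Kill\ra 0$ limit complete the proof as you describe.
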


\noindent Observe that \Cref{th:probabilistic-obliqueness-factor}, together with Theorem \hyperlink{Theorem3*}{3*}, yields \Cref{th:sp-nuclear-bound} immediately.
 

\begin{proof}
In the proof, we tackle each of the $\Ia$ contributions to the trace in \cref{eq:nuclear-using-hitting} separately. 
Indeed, observe that it suffices to show that $H_{:,i}^\t \Diag(\pi) \Cu_{:,i} \leq \En (\I)$ for each $i \in \I$. Therefore fix $i \in \I$, and consider the ratio:   
\begin{eqn}
    \frac{H_{:,i}^\t \Diag(\pi) \Cu_{:,i}}{\En (\I)} = \frac{\sum_{k \in \Ic} \pi_k H_{k,i} \Cu_{k,i}}{\sum_{k \in \Ic} (L_{\Ic,\Ic}^\Mo)_{k,k}}, 
\end{eqn}
where the denominator on the right-hand side is an alternative expression for $\En (\I)$ derived using Schur complement identities in \eqref{eq:killed-epsilon}.
Then in turn it suffices to show that $\pi_k H_{k,i} \Cu_{k,i} \leq (L_{\Ic,\Ic}^\Mo)_{k,k}$ for any $k \in \Ic$.

Therefore fix $k \in \Ic$ as well as $i \in \I$. 
In fact, we will show that the stronger condition  
\begin{eqn} \label{eq:obliqueWTS}
    \frac{ {\pi_k (H_{k,i} + H_{i,k}) \Cu_{k,i}}}{ (L_{\Ic,\Ic}^\Mo)_{k,k} } \leq 1
\end{eqn}
holds using a probabilistic technique.

\begin{figure}
    \centering
    \includegraphics[width=5.5in]{./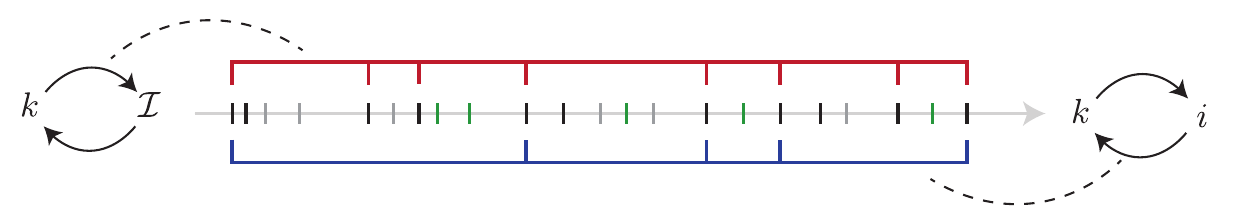}
    \caption{
        \label{fig:timeline-equivalence}
        Decomposition of a trajectory into cycles for example states $k \notin \I$ and $i \in \I$.
        The middle timeline depicts each jump in the Markov chain according to the state being jumped to, with $k$ annotated in black, $i \in \I$ in green, $\Iminus{i}$ in gray, and all other states unlabeled.
        Cycles corresponding to $k \ra \I \ra k$ are demarcated on the top in red, while
        cycles corresponding to $k \ra i \ra k$  are demarcated on the bottom in blue.
        Observe that endpoints of the latter comprise a subset of endpoints of the former.
        In \Cref{th:probabilistic-obliqueness-factor}, the average lengths and probabilities of these cycle types are exploited in order to bound the the obliqueness norm $\On$ in terms of the \Nystrom{} error $\En$.
    }
\end{figure}

We will define two key stopping times. The first is $\tau_{ i\ra k }$, ``the first return to $k$ after reaching $i$'': 
\begin{eqn} \label{eq:i-cycle-stop}
    \tau_{ i\ra k } = \min\lrb{t: t \geq \tau_{\{i\}}, \, X_t = k }.
\end{eqn}
The second is $\tau_{ \I \ra k }$, ``the first return to $k$ after reaching any state in $\I$'':
\begin{eqn} \label{eq:I-cycle-stop}
    \tau_{ \I \ra k } = \min\lrb{t: t \geq  \tau_{\I} , \,  X_t = k }.
\end{eqn}

We will think about the conditional expectations of these quantities given $X_0 = k$, which are the expected lengths of cycles $k \ra i \ra k$ and $k \ra \I \ra k$, respectively. 
We can directly compute the first of these, also known as the commute time: 
\begin{eqn} \label{eq:i-cycle}
    \Ex_{k} ( \tau_{ i \ra k }  ) = H_{k,i} + H_{i,k},
\end{eqn}
where we use $\Ex_k$ going forward to denote the expectation conditioned on $X_0 = k$.

A direct consequence of the renewal-reward theorem (cf. \cite{Aldous1995-reversible}, Proposition~2.3 for a statement) is that 
\begin{eqn}\label{eq:renewalreward}
\Ex_k ( \, \text{total time spent in $k$ before \ensuremath{\tau_{ \I \ra k }}} \, ) = \pi_k \, \Ex_k ( \tau_{ \I \ra k }  ).
\end{eqn}
It is also useful to realize that the left-hand side can alternatively be written as 
\begin{eqn}\label{eq:expectationtexts}
    \Ex_k ( \, \text{total time spent in $k$ before leaving \ensuremath{\Ic}} \, ),
\end{eqn}
since the stopping time $\tau_{ \I \ra k }$ occurs upon the \emph{first} return to $k$ after hitting $\I$. Then \Cref{eq:expectationtexts} can be computed as 
\begin{eqn}\label{eq:RRRHS}
    \int_0^\infty \Id_{k,:} \, e^{R_{\Ic,\Ic}t} \, \Id_{:,k}\, \odif t= -(R_{\Ic,\Ic}^{-1})_{k,k}= (L_{\Ic,\Ic}^{-1})_{k,k},
\end{eqn}
which furnishes the identity 
\begin{eqn} \label{eq:exstop2}
    \Ex_k ( \tau_{ \I \ra k }  ) = \frac{(L_{\Ic,\Ic}^{-1})_{k,k}}{\pi_k}
\end{eqn}
via \Cref{eq:renewalreward}.


Now we consider segmenting a long trajectory of $\FullChain$ into either type of cycle (\Cref{fig:timeline-equivalence}). Concretely we fix $X_0 = k$ and let $T$ denote a trajectory length. By defining the renewal process whose inter-renewal time is distributed as $\tau_{i \ra k}$, we can apply the elementary renewal theorem \cite{Ross1983} to deduce that (almost surely) 
\begin{eqn}\label{eq:renewal1}
    \lim_{T \ra \infty} \frac{\# (\text{cycles $k \ra i \ra k$})}{ T } = \frac{1}{\Ex_k (\tau_{i\ra k} ) },
\end{eqn}
where on the left-hand side we understand the numerator to indicate the number of cycles completed before time $T$. Similarly, 
\begin{eqn}\label{eq:renewal2}
    \lim_{T \ra \infty} \frac{\# (\text{cycles $k \ra \I \ra k$})}{ T } = \frac{1}{\Ex_k (\tau_{\I\ra k} )}
\end{eqn}
holds almost surely.

Then by combining \Cref{eq:renewal1}, \Cref{eq:renewal2}, \Cref{eq:i-cycle}, and \Cref{eq:exstop2}, we see that (almost surely)
\begin{eqn} \label{eq:renewallim}
    \lim_{T \ra \infty} \ \frac{\# (\text{cycles $k \ra \I \ra k$})}{\# (\text{cycles $k \ra i \ra k$})}  = \frac{\pi_k (H_{k,i} + H_{i,k})}{(L_{\Ic,\Ic}^\Mo)_{k,k}}.
\end{eqn}
Meanwhile, by the law of large numbers: 
\begin{eqn}\label{eq:lln}
    \lim_{T\ra\infty}\frac{\#(\text{cycles \ensuremath{k\ra\I\ra k} including \ensuremath{i} first from \ensuremath{\I})}}{\#(\text{cycles \ensuremath{k\ra\I\ra k})}}= \Prob_k ( \,  \text{hit $i$ first from $\I$} \, )  = \tilde{C}_{k,i},
\end{eqn}
where $\Prob_k$ denotes the expectation conditioned on $X_0 = k$.

Then by multiplying the limits \Cref{eq:renewallim} and \Cref{eq:lln}, we deduce that 
\begin{eqn}\label{eq:almostdone}
    \frac{\pi_k (H_{k,i} + H_{i,k}) \Cu_{k,i}}{(L_{\Ic,\Ic}^\Mo)_{k,k}} = \lim_{T \ra \infty} \frac{ \#(\text{cycles $k \ra \I \ra k$ including $i$ first from $\I$})}{ \#(\text{cycles $k \ra i \ra k$})}.
\end{eqn}
Note that the cycles $k \ra i \ra k$ can be put in bijection with the subset of cycles $k \ra \I \ra k$ that include $i$. Indeed, every cycle $k \ra i \ra k$ is composed of one or more cycles $k \ra \I \ra k$, and it is precisely the last of these which is a cycle $k \ra \I \ra k$ that includes $i$. (See \Cref{fig:timeline-equivalence} for illustration.) Therefore we can rewrite the right-hand side of \Cref{eq:almostdone} as 
\begin{eqn}\label{eq:done}
    \lim_{T \ra \infty} \frac{ \#(\text{cycles $k \ra \I \ra k$ including $i$ first from $\I$})}{ \#(\text{cycles $k \ra \I \ra k$ including $i$})}.
\end{eqn}
On the right-hand side, since cycles satisfying the numerator condition automatically satisfy the denominator condition, the ratio is at most one. 
This establishes \Cref{eq:obliqueWTS}, as was to be shown, and concludes the proof.
\end{proof}

\section{Controlling \Nystrom{} error with nuclear column subset selection \label{s:algs}}

Overall, \Cref{s:projective,s:structure-preserving} show that the \Nystrom{} error $\En(\I)$ can be used to bound the approximation error of both the projective and structure-preserving compressions of Markov dynamics. 
Now we turn toward the actual algorithmic selection of the subset $\I$, with a view toward controlling $\En(\I)$ while maintaining $\Ia$ as small as possible. Indeed, note that a smaller subset $\I$ yields a cheaper and simpler approximation of the original dynamics, which is useful for both interpretation and computation.
The problem of selecting such $\I$ can be viewed as an extension of the common column subset selection problem (CSSP, e.g., \cite{Deshpande2010-efficient,Boutsidis2009-improved,Cortinovis2024-adaptive,Chen2022-randomly}), where some additional care must be taken to account for the stationary distribution~\cite{Fornace2024-column}.
In particular we approach this problem using nuclear maximization, a method developed and studied in our prior work \cite{Fornace2024-column}, where it was shown to be highly effective in theory and practice. In this section, we explain the relevance of this algorithm and prove an extension of our previous theoretical results.

\Cref{alg:cssp} is a high-level pseudocode for selection of a subset $\I$ of size $k$ using nuclear maximization. 
We allow for estimation of the so-called nuclear scores $v_i$ up to a relative error $\Rand \geq 0$. 
In \cite{Fornace2024-column}, it is shown that a fixed relative error may be obtained with high probability by using randomized sketching techniques \cite{Meyer2021-hutch++} and fast Laplacian solvers \cite{Chen2020-rchol,Kyng2016-approximate,Gao2023-robust}, achieving an overall computational complexity of $\tilde{O}(nk^2)$ for graph Laplacians of constant degree. 
More precisely, assuming an approximate Cholesky factorization $U$ with $O(\mathrm{nnz}(L))$ entries and a preconditioned Laplacian $U^{+} L U$ of condition number $O(1)$, our algorithm runs in $\tilde{O}(n k^2 z + k z \, \mathrm{nnz}(L))$ time for sketching dimension $z$, where $z = O(\log n)$ to guarantee relative error $\Rand$ with high probability. 
Up to the specification of the Laplacian preconditioner, the algorithm is matrix-free in terms of $L$. We refer the reader to \cite{Fornace2024-column} for further algorithmic details as well as \url{https://github.com/mfornace/nuclear-score-maximization} for an open-source implementation.

\begin{algorithm}[h] \begin{algorithmic}
    \setstretch{1.2}
    \caption{Markov chain reduction via nuclear maximization \label{alg:cssp}}
  \Function{SelectMarkovChainIndices}{$k, \Rand$}
    \State $\I \gets \emptyset$
    \For{$j = 1, \dots, k$}
        \ForAll{$i \in \Ic$}
            \State $v_i \gets \En(\I) - \En(\I \cup \lrb{i})$, calculated up to relative error at most $\Rand$
        \EndFor
        \State $\I \gets \I \cup \lrb{\argmax_{i \in \Ic} v_i}$
    \EndFor
    \State \Return $\I$
  \EndFunction
\end{algorithmic} \end{algorithm}

In the more typical setting of \Nystrom{} approximation of a positive semidefinite matrix $K$, CSSP algorithms are often designed to control the approximation error $\nnorm{K - \Kn}$ in terms of the size of the selected subset $\I$. Another recent example includes the randomly pivoted Cholesky algorithm (RPCholesky) \cite{Chen2022-randomly}. 
Meanwhile, our nuclear maximization algorithm attempts to control this nuclear norm approximation error by greedy augmentation of the selected subset. The approach can be equivalently motivated as greedy maximization of the nuclear norm of the \Nystrom{} approximation, and this derivation inspires its name.
In our graph Laplacian setting, this approach is applied to the diverging matrix $\Ka$ in the $\Kill \ra 0$ limit (cf. \Cref{def:markov-epsilon}), and it is shown in \cite{Fornace2024-column} that a stable algorithm emerges in the limit.  Note that in \cite{Fornace2024-column}, we also introduced a suitable matrix-free implementation of RPCholesky suited to this graph Laplacian setting~\cite{Fornace2024-column}, but going forward we focus only on the nuclear maximization algorithm.


Now we discuss the theoretical guarantees enjoyed by \Cref{alg:cssp}.
First, column selection via nuclear maximization is exponentially close to optimal in the following sense:
\begin{restatable}[Optimality bound for nuclear maximization on Laplacians \cite{Fornace2024-column}]{Theorem 4A}{ThNuclearOptimality}\label{th:nuclear-optimality}
    \hypertarget{Theorem4A}{} Given a Laplacian $L$ with $K = L^+$ and defining $\En(\I)$ as in \Cref{def:markov-epsilon}, let $\I$ be a subset of size $k$ yielded by \Cref{alg:cssp} with maximum relative error $\Rand \geq 0$.
    Let $\Opt_s$ denote an $s$-subset which exactly minimizes $\En(\I)$ over all $s$-subsets, $1 \leq s \leq k$. Then 
    \begin{eqn} \label{eq:laplacian-trace-bound}
        \frac{\En(\I) - \En(\Opt_s)}{\Tr[K]}
    \leq (2 + \Rand) e^{-(k - 1)/(s \Orand)}.
    \end{eqn}
\end{restatable}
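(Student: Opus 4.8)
Since Theorem~4A is a restatement of a result of \cite{Fornace2024-column}, the plan is to reproduce the approximate-greedy analysis for the set function $\I \mapsto \En(\I)$. The first step is to pass to a finite, workable representation of the objective: by \Cref{def:markov-epsilon} and the closed forms of \Cref{th:markov-norm-limits}, for nonempty $\I$ one has $\En(\I) = \Tr[(L_{\Ic,\Ic})^{-1}]$, which is finite since $L_{\Ic,\Ic}$ is positive definite whenever $\Ic \subsetneq [n]$; and the nuclear scores $v_i = \En(\I) - \En(\I \cup \{i\})$ used in \Cref{alg:cssp} are well-defined finite limits of their killed-chain counterparts, even though $\En(\emptyset) = +\infty$ in the $\gamma \to 0$ limit, so that only \emph{differences} of scores converge (for the very first step, only the quantity $\En(\{i\})$ itself converges, to $\Tr[K] + K_{ii}/\pi_i$). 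Write $\delta_j := \En(\I_j) - \En(\Opt_s)$ for the subset $\I_j$ produced after $j$ greedy steps.

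The heart of the argument is an averaging lemma: for any nonempty $\I$ and any $s$-subset $\mathcal{S}$,
\begin{eqn}
  \max_{i \notin \I} v_i \;\ge\; \frac{1}{s}\bigl(\En(\I) - \En(\I \cup \mathcal{S})\bigr) \;\ge\; \frac{1}{s}\bigl(\En(\I) - \En(\mathcal{S})\bigr),
\end{eqn}
the second inequality being monotonicity of $\En$ under augmentation. The first inequality is where the determinantal structure of the Nystr\"om trace objective enters. Writing the residual $S_\I := K - \Kn \succeq 0$ (supported off $\I$), a rank-one update identity gives $v_i = (S_\I^2)_{ii}/(S_\I)_{ii}$, while $\En(\I) - \En(\I \cup \mathcal{S})$ equals the trace of the ($\le s$)-column Nystr\"om approximation of $S_\I$ on the index set $\mathcal{S}\setminus\I$. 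The lemma then reduces to the trace inequality $\Tr[(S_\I)_{:,\mathcal{S}'}(S_\I)_{\mathcal{S}',\mathcal{S}'}^{-1}(S_\I)_{\mathcal{S}',:}] \le \lvert\mathcal{S}'\rvert\,\max_{i \in \mathcal{S}'}(S_\I^2)_{ii}/(S_\I)_{ii}$, which follows from Schur-complement identities together with Cauchy--Schwarz, and which must be checked to survive the $\gamma \to 0$ limit (in which the divergent $1/\gamma$ contribution of the stationary mode cancels in every difference that appears).

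Given the averaging lemma, the relative-error model is handled by noting that selecting $i^\star = \argmax_i \hat v_i$ with each $\hat v_i$ within relative error $\Rand$ of $v_i$ yields a realized decrease $\En(\I_j) - \En(\I_{j+1}) = v_{i^\star} \ge \Orand^{-1}\max_i v_i \ge (s\,\Orand)^{-1}\delta_j$, hence the contraction $\delta_{j+1} \le (1 - \tfrac{1}{s\,\Orand})\,\delta_j$ for $j \ge 1$. Iterating and using $1-x \le e^{-x}$ gives $\delta_k \le \delta_1(1 - \tfrac{1}{s\,\Orand})^{k-1} \le \delta_1\, e^{-(k-1)/(s\,\Orand)}$. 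It then remains to bound $\delta_1$: the first greedy step picks $i_1$ (approximately) minimizing $\En(\{i\}) = \Tr[K] + K_{ii}/\pi_i$, and since the $\pi$-weighted average of $K_{ii}/\pi_i$ equals $\sum_i K_{ii} = \Tr[K]$, we get $\min_i K_{ii}/\pi_i \le \Tr[K]$, whence $\delta_1 \le \En(\I_1) \le (2+\Rand)\,\Tr[K]$ after accounting for the relative-error slack in the first selection. Dividing through by $\Tr[K]$ yields the claimed bound, uniformly over $1 \le s \le k$.

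The main obstacle is the averaging lemma — specifically, establishing the trace inequality above with the sharp factor $\lvert\mathcal{S}'\rvert$, and verifying that the entire recursion passes cleanly to the singular-Laplacian limit, given that only differences of the (individually divergent) killed-chain quantities converge as $\gamma \to 0$. The remaining work — tracking the exact constant $2+\Rand$ and the exponent $k-1$ through the special first step, and confirming the relative-error bookkeeping — is routine.
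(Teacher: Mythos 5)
The paper does not contain a proof of Theorem~4A: it is explicitly ``restated from~\cite{Fornace2024-column},'' and the appendix (\cref{s:cssp-proofs}) proves only Theorem~4B, for which 4A is invoked as an imported ingredient. So there is no internal proof to compare your reconstruction against, and any assessment has to be of the proposal on its own terms.

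Your overall skeleton --- approximate-greedy contraction with a special first step, followed by $1-x\leq e^{-x}$, terminating with the observation that $\min_i \En(\{i\}) \leq 2\Tr[K]$ since the $\pi$-weighted average of $K_{ii}/\pi_i$ equals $\Tr[K]$ --- is a sensible reconstruction and does produce the right shape of bound. What you correctly identify as ``the main obstacle,'' the averaging lemma
\begin{eqn}
    \max_{i \notin \I} v_i \;\geq\; \frac{1}{s}\bigl(\En(\I) - \En(\I \cup \mathcal{S})\bigr),
\end{eqn}
is, however, the entire content, and the proposal does not establish it. The reduction you state --- that it follows from $\Tr\bigl[(S_\I)_{\mathcal{S}',\mathcal{S}'}^{-1}(S_\I^2)_{\mathcal{S}',\mathcal{S}'}\bigr] \leq |\mathcal{S}'|\max_{i\in\mathcal{S}'}(S_\I^2)_{ii}/(S_\I)_{ii}$ ``by Schur-complement identities together with Cauchy--Schwarz'' --- is not a known linear-algebraic fact, and indeed the naive analogue $\Tr[A^{-1}B]\leq s\max_i B_{ii}/A_{ii}$ fails badly for general SPD $A,B$ (take $A$ nearly singular with unit diagonal and $B=\Id$). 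The Nystr\"om trace objective is not submodular, so the lemma cannot be obtained by a black-box submodularity argument either; whatever makes it true here must exploit problem structure. The paper says as much: ``the analysis of~\cite{Fornace2024-column} relied on the Markov chain interpretation of $L$ to prove Theorem~4A.'' In other words, the route is probabilistic, not the generic Schur/Cauchy--Schwarz route you are proposing, and your sketch does not touch any Markov-chain structure at all in the averaging step. As written, this is a genuine gap --- not a detail to ``confirm'' but a missing key lemma whose stated justification would fail.

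Two smaller points that would also need tightening if you pursued this: (i) the relative-error bookkeeping on the very first greedy step is ill-defined in the limit $\gamma \to 0$ where $\En(\emptyset) = +\infty$, and the two-sided relative-error model $\hat v_i \in [(1-\Rand)v_i,(1+\Rand)v_i]$ gives $v_{i^\star}\geq \tfrac{1-\Rand}{1+\Rand}\max_i v_i$, not the $\tfrac{1}{1+\Rand}$ you use (so you need to verify the error model of~\cite{Fornace2024-column} is one-sided); (ii) you need $\delta_1\leq (2+\Rand)\Tr[K]$, but $\delta_1 = \En(\I_1) - \En(\Opt_s)$, which is smaller than $\En(\I_1)$, so that step is fine once the first-step error model is nailed down. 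Neither of these is fatal, but neither is ``routine'' enough to wave off while the averaging lemma remains unproved.
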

\noindent In particular, the analysis of \cite{Fornace2024-column} relied on the Markov chain interpretation of $L$ to prove Theorem \hyperlink{Theorem4A}{4A}.
Among algorithmic guarantees for CSSP, which is NP-complete in general \cite{Shitov2021-column}, such a bound relative to the optimal combinatorial solution appears to be unique to our knowledge.

Now we prove that the performance of nuclear maximization can also be guaranteed in terms of the eigenvalues of $L$ by using determinantal point process (DPP) sampling \cite{Guruswami2012-optimal,Belabbas2009-spectral} to furnish a subset. We make use of the celebrated bound on the DPP expectation \cite{Chen2022-randomly,Guruswami2012-optimal,Belabbas2009-spectral,Derezinski2021-determinantal} but must make suitable modifications to deal with the stationary distribution.

\label{th:spectral-cssp:section}
\begin{restatable}[Spectral bound for nuclear maximization on Laplacians \ProofLink{th:spectral-cssp}]{Theorem 4B}{ThSpectralCSSP} \label{th:spectral-cssp}
    \hypertarget{Theorem4B}{} Given a Laplacian $L$ and defining $\En(\I)$ as in \Cref{def:markov-epsilon}, let $\I$ be a subset of size $k$ yielded by \Cref{alg:cssp} with maximum relative error $\Rand \geq 0$.
    Let $\Tr^{(r)}[K]$ denote the sum of the $r$ largest eigenvalues of $K = L^+$.
    Then for any $s$ with $r < s \leq k$:
\begin{eqn} \label{eq:spectral-cssp}
    \En(\I) \leq (2 + \Rand) \, \Tr[K] \, e^{-(k - 1)/(s \Orand)}  + \frac{s+1}{s-r} \, (\Tr[K] - \Tr^{(r)}[K]). 
\end{eqn}
\end{restatable}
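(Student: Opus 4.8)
The plan is to combine the near-optimality guarantee of Theorem~\hyperlink{Theorem4A}{4A} with a spectral estimate for the combinatorially optimal \Nystrom{} error $\En(\Opt_s)$. Theorem~\hyperlink{Theorem4A}{4A} gives $\En(\I) - \En(\Opt_s) \le (2 + \Rand)\,\Tr[K]\,e^{-(k-1)/(s\Orand)}$, so \eqref{eq:spectral-cssp} follows as soon as we establish $\En(\Opt_s) \le \frac{s+1}{s-r}\,(\Tr[K] - \Tp{r}{K})$ and add the two bounds. The rest of the argument is devoted to this inequality.

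To bound $\En(\Opt_s)$, I would work at a fixed killing rate $\Kill > 0$ and pass to the limit $\Kill \ra 0$ only at the very end; isolating the divergent stationary mode this way is the ``suitable modification'' needed to accommodate the stationary distribution. For fixed $\Kill > 0$ the matrix $\Ka = (L + \Kill\Id)^{-1}$ is symmetric positive definite, and by \Cref{def:markov-epsilon} its \Nystrom{} error with respect to $\I$ is $\Eps^\Kill(\I) = \Norms{\Ka - (\Ka)_{:,\I}(\Ka)_{\I,\I}^{-1}(\Ka)_{\I,:}}$; since the \Nystrom{} residual of a positive semidefinite matrix is itself positive semidefinite (it is a Schur complement), its nuclear norm equals its trace, i.e.\ $\En^\Kill(\I) = \Tr[\Ka - (\Ka)_{:,\I}(\Ka)_{\I,\I}^{-1}(\Ka)_{\I,:}]$. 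Now I would invoke the classical determinantal/volume-sampling bound \cite{Guruswami2012-optimal,Belabbas2009-spectral,Derezinski2021-determinantal}: for any positive semidefinite $A$ and integers $\rho \le s$ there exists a distribution over $s$-element subsets $S$ with $\E_S\big[\Tr(A - A_{:,S}A_{S,S}^{-1}A_{S,:})\big] \le \frac{s+1}{s+1-\rho}\sum_{i > \rho}\lambda_i(A)$. Applying this with $A = \Ka$ and $\rho = r+1$ — valid because $r < s$ — and using $\min_{|\I| = s}\En^\Kill(\I) \le \E_S[\En^\Kill(S)]$, we obtain $\min_{|\I| = s}\En^\Kill(\I) \le \frac{s+1}{s-r}\sum_{i > r+1}\lambda_i(\Ka)$ for every $\Kill > 0$.

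Finally I would take $\Kill \ra 0$. Each $\En^\Kill(\I) \ra \En(\I)$ by \Cref{def:markov-epsilon}, and since there are only finitely many $s$-subsets, $\min_{|\I|=s}\En^\Kill(\I) \ra \En(\Opt_s)$. On the right-hand side, the eigenvalues of $\Ka$ are $\{(\sigma + \Kill)^{-1} : \sigma \in \mathrm{spec}(L)\}$: the zero eigenvalue of $L$ produces the divergent largest eigenvalue $1/\Kill$ of $\Ka$, while the $n-1$ positive eigenvalues of $L$ produce eigenvalues converging to the $n-1$ nonzero eigenvalues of $K = L^+$. Hence, in decreasing order, $\lambda_1(\Ka) = 1/\Kill$ and $\lambda_{j+1}(\Ka) \ra \lambda_j(K)$ for $j = 1, \dots, n-1$, so $\sum_{i > r+1}\lambda_i(\Ka) \ra \sum_{j > r}\lambda_j(K) = \Tr[K] - \Tp{r}{K}$ (using $\lambda_n(K) = 0$). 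Therefore $\En(\Opt_s) \le \frac{s+1}{s-r}(\Tr[K] - \Tp{r}{K})$, and combining with Theorem~\hyperlink{Theorem4A}{4A} yields \eqref{eq:spectral-cssp}.

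I expect the main obstacle to be the careful handling of the singular limit $\Kill \ra 0$: because the kernel $\Ka$ itself diverges, one must (i) introduce the shifted rank parameter $\rho = r+1$ precisely so that the blowing-up mode of $\Ka$ is excluded from the spectral tail, and (ii) keep $\Kill$ fixed (so that all matrices are nonsingular and finite) while invoking the DPP bound, passing to the limit only after the inequality $\min \le \E$ has been recorded. The existence of the limits and the identification of $\En(\I)$ in terms of $K$ are supplied by \Cref{def:markov-epsilon} and its surrounding development (cf.\ \cite{Fornace2024-column}); the determinantal expectation inequality itself is standard and is used as a black box.
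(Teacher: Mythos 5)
Your decomposition via Theorem \hyperlink{Theorem4A}{4A} plus a DPP-based bound on $\En(\Opt_s)$, with the limit $\Kill \ra 0$ deferred to the end, is exactly the strategy of the paper's proof, and your estimate for $\En(\Opt_s)$ is correct and yields precisely the constant in \cref{eq:spectral-cssp}. Where you diverge from the paper is in how you handle the divergent stationary mode when invoking the classical DPP expectation inequality \cref{eq:dpp-nystrom}. The paper works with the rank-one perturbation $\tilde{K}_\gamma = K + \gamma^{-1}hh^\t$, applies Schur concavity of $\DPPfunction_k$ to replace the spectrum by a simplified profile consisting of the divergent spike plus two groups of averaged eigenvalues, and then evaluates the $k$-DPP error for that profile explicitly via ratios of elementary symmetric polynomials — showing in the limit $\gamma \ra 0$ that the spike ``absorbs'' one slot and $\DPPfunction_k(\mathrm{diag}(\hat\lambda_\gamma)) \ra \tfrac{k+1}{k}\DPPfunction_{k-1}(\mathrm{diag}(\hat\lambda))$, to which \cref{eq:dpp-nystrom} with parameter $r$ is then applied. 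You instead apply \cref{eq:dpp-nystrom} directly to $K_\gamma = (L+\gamma\Id)^{-1}$ but with the rank parameter shifted to $\rho = r+1$, so that the diverging eigenvalue $\gamma^{-1}$ automatically sits among the top $r+1$ eigenvalues being subtracted off; the spectral tail $\sum_{i>r+1}\lambda_i(\Ka)$ then stays bounded and converges to $\Tr[K]-\Tp{r}{K}$. Both routes deliver the identical prefactor $\tfrac{s+1}{s-r}$ (the paper via $\tfrac{s+1}{s}\cdot\tfrac{s}{s-r}$, you via $\tfrac{s+1}{s+1-(r+1)}$). Your approach is more economical: it dispenses with the Schur-concavity averaging and the elementary-symmetric-polynomial bookkeeping entirely, using only the standard $k$-DPP inequality and an elementary spectral limit. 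One small gap to tighten: you should note explicitly, as you do in your final paragraph, that the $s$-DPP distribution on subsets depends on $\gamma$ but this is immaterial because the minimum over the finite family of $s$-subsets is taken after the expectation bound is recorded, and $\En^\gamma(\I) \to \En(\I)$ subsetwise by \Cref{def:markov-epsilon} (cf.\ \Cref{th:markov-norm-limits}).
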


In the next section, we demonstrate in practice the performance of nuclear maximization as applied to Markov chain reduction, showing that high-fidelity approximations are achieved for both projective and structure-preserving compressions.

\section{Computational experiments \label{s:experiments}}

To examine the quality of our Markov chain compression schemes, we carried out numerical studies of three example systems (\Cref{tab:systems}).

System 1 is derived from a webgraph of chameleon-related Wikipedia pages with edges yielded by hyperlinks between the pages \cite{Rozemberczki2021-multi-scale} (downloaded from the SNAP dataset collection \cite{Leskovec2014-snap}).
Given the undirected adjacency matrix $\Delta$ of 0's and 1's, we assumed a stationary distribution of $\pi \propto \Delta \One$ to yield rate matrix $R = \DiagI(\pi) (\Delta - \Diag(\Delta \One))$.
We assumed an undirected adjacency matrix to ensure the reversibility of the dynamics, for simplicity.

Systems 2 and 3 are derived from kinetic modeling of nucleic acid secondary structures.
For each system, the continuous time Markov chain was constructed over all unpseudoknotted secondary structures compatible with a randomly generated DNA strand.
We utilized the \texttt{dna04} parameters in NUPACK 4.0.1 \cite{Fornace2022-nupack} at default (37\textdegree C, 1 M Na$^+$) conditions in the \texttt{nostacking} ensemble; with these parameters, only Watson-Crick (\texttt{A}$\cdot$\texttt{T} and \texttt{C}$\cdot$\texttt{G}) base pairs are considered \cite{Fornace2020-unified}.
By construction, the stationary probability of structure $s$ is proportional to its Boltzmann factor $\pi(s) \propto \exp(\Minus \Delta G(s) / k_B T)$, with $\Delta G(s)$ defined by the NUPACK 4 free energy model and \texttt{dna04} parameters.
Transitions were assumed to occur via the cleavage or formation of a single base pair \cite{Flamm2000-rna,Schaeffer2015-stochastic,Fornace2022-computational}.
Transition rates were generated according to the Kawasaki rate function, i.e., with rate $R_{i,j} \propto \sqrt{\pi_j / \pi_i} = \h_j / \h_i$ for neighboring secondary structures $s_i$ and $s_j$ \cite{Schaeffer2015-stochastic}.
System~2 was chosen to yield $n$ at the upper end of the regime in which $P(t)$ may be exactly computed via diagonalization.
On the other hand, System~3 (from \cite{Fornace2024-column}) is of a dimension ($n \approx 10^6$) which prohibits direct linear-algebraic techniques.

\begin{table}
    \centering
    \begin{tabular}{crrl}
        \toprule
        System & $n$ & nnz & Construction \\
        \midrule
        1 & 2,277 & 65,019 & webgraph of chameleon-related Wikipedia articles \cite{Rozemberczki2021-multi-scale} \\
        2 & 10,250 & 87,304 & random DNA sequence of 28 nucleotides \\
        3 & 1,087,264 & 12,519,798 & random DNA sequence of 40 nucleotides \cite{Fornace2024-column} \\
        \bottomrule
    \end{tabular}
    \caption{ \label{tab:systems}
        Dimension, number of nonzeros, and origin of systems investigated in our computational experiments.
    }
\end{table}

\subsection{Minimization of error via column subset selection \label{s:experiment-cssp}}

We performed nuclear maximization to choose $\I$ up to $\Ia = 100$ for Systems~1 and 2 and up to $\Ia = 25$ for System 3. 
For System~1, we used the deterministic Algorithm~3 of \cite{Fornace2024-column}.
For Systems~2 and 3, we used the randomized matrix-free Algorithm~6 of \cite{Fornace2024-column} with $z=200$ random vectors used for score estimation.
This algorithm uses fast Laplacian solvers with nearly linear scaling in order to efficiently optimize $\En (\I)$.
We used the rchol library \cite{Chen2020-rchol} to compute an approximate Cholesky factorization of the input Laplacian.
See \cite{Fornace2024-column} for precise procedures and additional performance validation versus alternative algorithms.

\begin{figure}
    \centering
    \includegraphics[width=0.95\textwidth]{./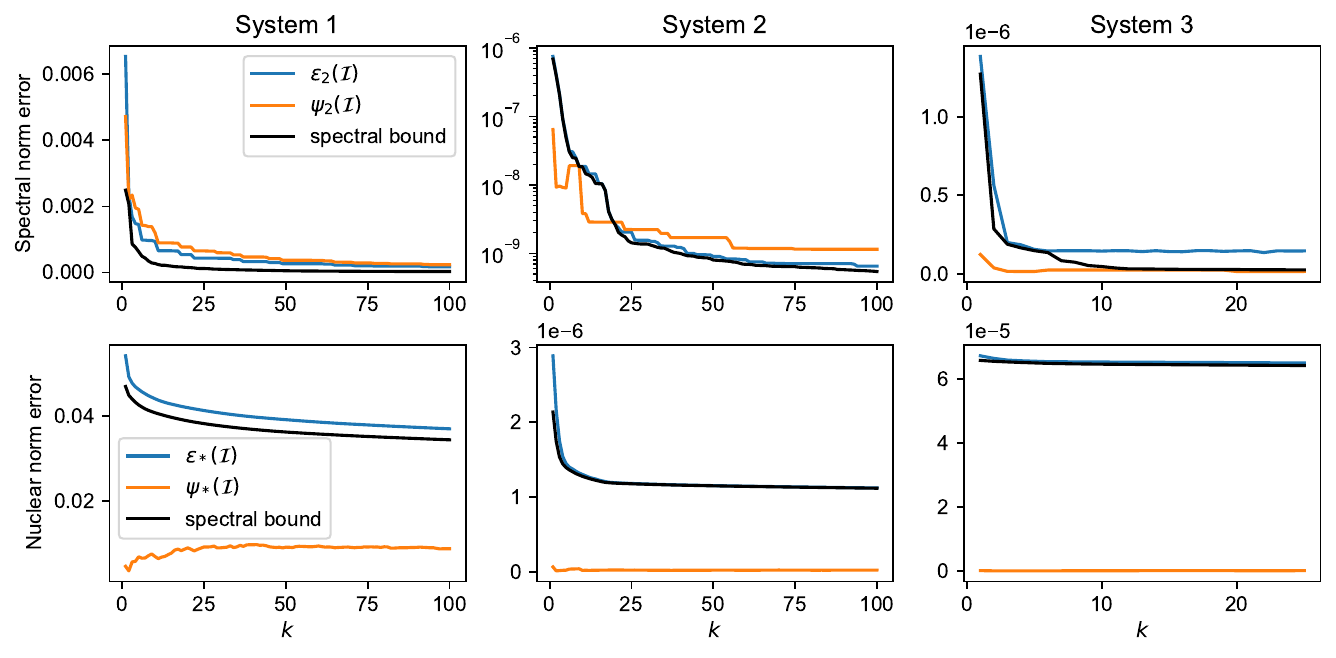}
    \caption{\label{fig:cssp}
        Performance of nuclear maximization as a function of $k = \vert \I \vert $, the number of columns selected, on Systems 1, 2, and 3 (\Cref{tab:systems}).
        The top row shows the \Nystrom{} approximation spectral norm error $\Es(\I)$ compared to the theoretical lower bound derived from the eigenvalues of $L^+$, as well as the spectral norm of the obliqueness $\Os(\I)$.
        The bottom row depicts the \Nystrom{} approximation nuclear norm error $\En(\I)$ compared to the theoretical lower bound derived from the eigenvalues of $L^+$, as well as the nuclear norm of the obliqueness $\On(\I)$.
    }
\end{figure}


\Cref{fig:cssp} shows the low-rank approximation quality $\Eps (\I)$ for each of Systems 1-3 achieved by solving the column subset selection problem with nuclear maximization, as a function of $k = \Abs{\I}$, the number of selected columns.
(For System~3, $\Es (\I)$ was estimated using the power method with random initialization and 20 iterations, while $\En (\I)$ was estimated using the Hutchinson trace estimator \cite{Meyer2021-hutch++} with $z=200$ random vectors.
The results were compared against the power method with 30 iterations and the trace estimator with $z=100$ random vectors, showing negligible difference.)

The performance of nuclear maximization on the nuclear \Nystrom{} norm $\En (\I) $ closely follows the spectral lower bound (\Cref{fig:cssp}, second row).
On the other hand, the spectrum decays relatively slowly for our example systems, meaning that the spectral lower bound on $\En$ itself is relatively flat for large $k$.

\Cref{fig:cssp} also displays the obliqueness norms $\Obs (\I)$ for each system as a function of $k$.
While the obliqueness is not directly targeted by nuclear maximization, empirically we find that (1) $\Os (\I)$ is roughly comparable to the spectral norm error $\Es (\I)$ and (2) $\On (\I)$ is far smaller than $\En (\I)$.
The latter finding indicates that in this case (and perhaps, many others), the worst-case bound given by \Cref{th:probabilistic-obliqueness-factor} is empirically loose.
Such a result is not necessarily surprising, as we believe that the columns chosen by nuclear maximization are likely to be spread out around the kinetic landscape in such a way that their effective obliqueness is low.

\subsection{Approximation quality \label{s:experiment-approximation}}

\begin{figure}
    \centering
    \includegraphics[width=0.9\textwidth]{./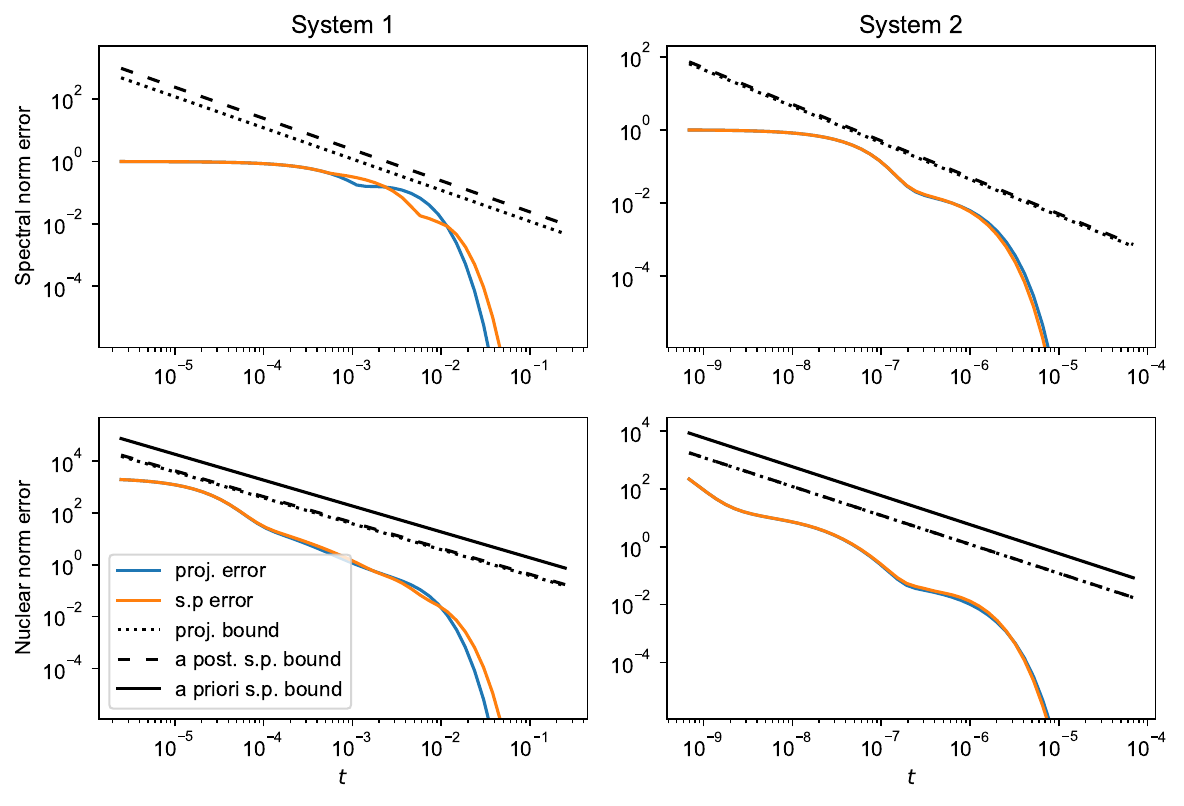}
    \caption{\label{fig:approximation-error}
        (Top) Spectral norm approximation error for projective (proj.) and structure-preserving (s.p.) compression ($\Ia = 5$), as well as bounds based on $\Es$ and $\Os$ (from \Cref{th:ortho-markov} and Theorem 3*).
        (Bottom) Nuclear norm approximation error for projective and structure-preserving compression ($\Ia = 5$), as well as bounds based on $\En$ and $\On$ (from \Cref{th:ortho-markov}, Theorem 3*, and \Cref{th:sp-nuclear-bound}).
    }
\end{figure}

In this section we examine the errors of the projective and structure-preserving compressions
\begin{eqn}
    \Norms{\OrthP_{\I}(t) - \SymP(t)}, \quad  
    \Norms{\ObliqueP_{\I}(t) - \SymP(t)}& 
\end{eqn}
induced by the selection of columns $\I$. 
We consider $\Ia = 5$ in all cases and use the subsets $\I$ as computed by nuclear maximization in \Cref{s:experiment-cssp}.
In 
\Cref{fig:approximation-error}, we consider Systems 1 and 2 as above and plot the errors as a function of $t$, comparing against the bounds furnished by \Cref{th:ortho-markov}, \Cref{th:sp-nuclear-bound}, and Theorem \hyperlink{Theorem3*}{3*}. 
In this section, we only test systems of moderate size (up to $n \approx 10^4$) in order to estimate the exact approximation error relative to the exact dynamics $P(t)$, which we compute by brute-force diagonalization. 
In other words, the computational bottleneck of these experiments is computing the exact reference solution $P(t)$. 
\Cref{fig:approximation-error} confirms that the bounds of \Cref{th:ortho-markov}, \Cref{th:sp-nuclear-bound}, and Theorem \hyperlink{Theorem3*}{3*} hold reasonably tightly, and moreover we observe that the structure-preserving compression is comparable in accuracy to the projective compression in approximating the full dynamics.



\begin{figure}
    \centering
    \includegraphics[width=0.95\textwidth]{./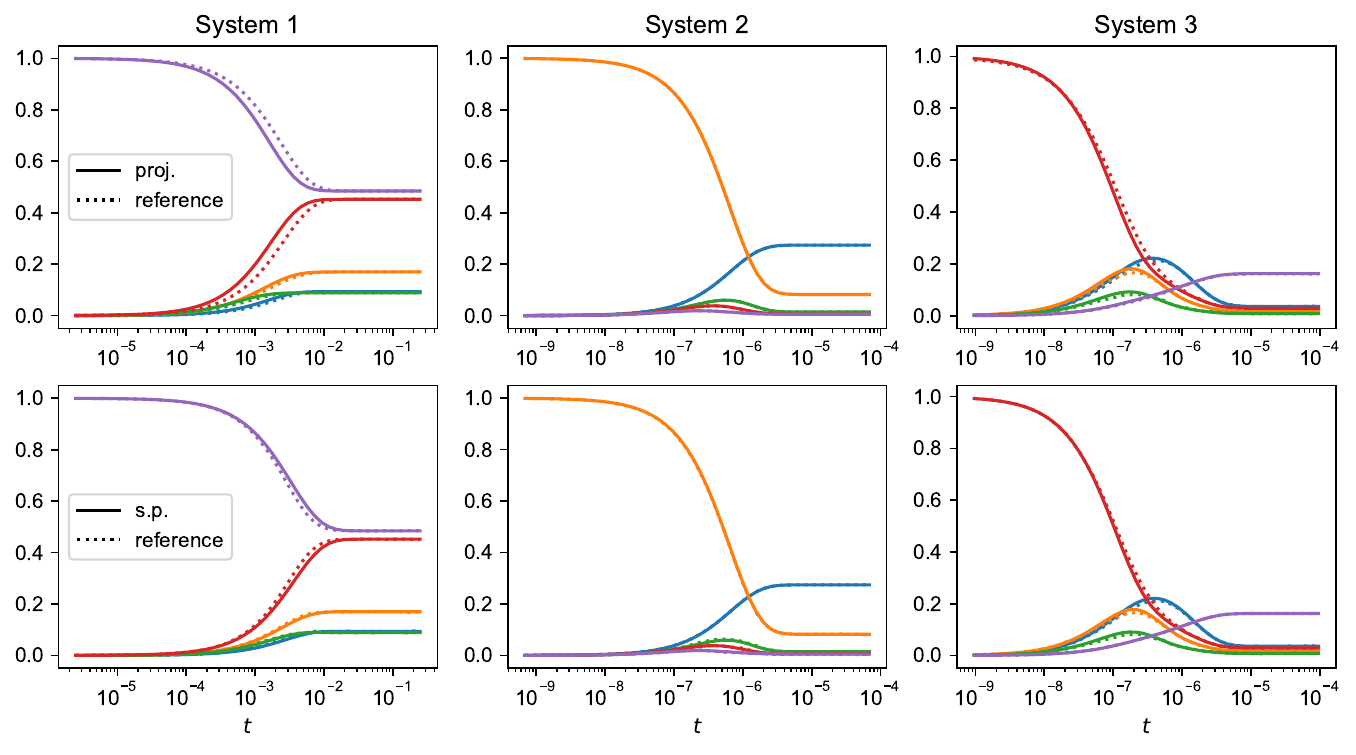}
    \caption{\label{fig:reduced}
        (Top) Comparison of the reduced dynamics using projective compression: compressed dynamics $\expt{V^\t L V}$ vs. the reference dynamics $V^\t \expt{L} V$.
        (An arbitrary row of each matrix is plotted.)
        (Bottom) Comparison of the reduced dynamics using structure-preserving compression: compressed dynamics $\expt{\So^\t \Le \So}$ vs. the reference dynamics $\So^\t \expt{\Le} \So$ (for the same rows as on top).
    }
\end{figure}

\subsection{Approximation in reduced subspace}

Finally we offer an illustration of the reduced models that are furnished by our Markov chain compression. 
While our main theorems (i.e., \Cref{th:ortho-markov,th:sp-nuclear-bound}) guarantee that such reduced models can be lifted to dynamics on the full space, matching the original dynamics with error controlled by the \Nystrom{} approximation, the reduced dynamics themselves may be of interest for inspection and downstream analysis.

In the case of projective compression, the reduced model dynamics are given by $e^{-V^\t L V t}$ where $V = \mathrm{orth} (C)$. In the top row of \Cref{fig:reduced}, we plot an arbitrary row of this matrix as a function of time for Systems~1, 2, and 3 and compare against the appropriate orthogonal projection  $V^\t e^{-Lt} V$ of the full dynamics as a reference.
For Systems~1 and 2, the reference solution is computed via diagonalization.
For System~3, the reference solution is computed as an expectation over $10^6$ trajectories of $\FullChain$ initialized from the stationary distribution $\pi$.
We again consider $\Ia = 5$ in all cases and use the subsets $\I$ as computed by nuclear maximization in \Cref{s:experiment-cssp}.

In the case of structure-preserving compression, the reduced dynamics are given by $e^{-C^\t L C t } = e^{-\Lr t}$. 
In the bottom row of \Cref{fig:reduced}, we plot the same row of this matrix as a function of time for Systems~1,2, and 3 and compare against the appropriate orthogonal projection $\So^\t e^{-\Le t} \So$ of the marked chain dynamics as a reference.
For all systems, the reference solution is computed as an expectation over $10^6$ trajectories of $\MarkedChain$ initialized from the stationary distribution $\pe$.

We comment that in both the projective and structure-preserving cases, the approximations in the reduced subspace are exact at $t=0$ and in the $t \ra \infty$ limit, by construction.

\paragraph{Acknowledgments:}
This work was supported in part by the U.S. Department of Energy, Office of Science, Office of Advanced Scientific Computing Research's Applied Mathematics Competitive Portfolios program under Contract No. AC02-05CH11231. M.L. was also partially
supported by a Sloan Research Fellowship.

\paragraph{Software and computational details:}
Algorithms were implemented and benchmarked in C++17 using 
    {armadillo} \cite{Sanderson2019-practical,Sanderson2016-armadillo} for common linear algebra routines,
    {rchol} \cite{Chen2020-rchol} for approximate Cholesky factorization of graph Laplacians,
    NUPACK 4.0.1 \cite{Fornace2022-nupack} for DNA secondary structure investigations, 
    and the {nuclear-score-maximization} software \cite{Fornace2024-nuclear-score-maximization} from our prior work \cite{Fornace2024-column}.
All computational studies were run on an Apple M3 Max Macbook Pro (36 GB RAM, 10 performance cores, 4 efficiency cores).

\newrefcontext[sorting=anyt]
\printbibliography[heading=bibintoc, title={Bibliography}]

\appendix

\boolfalse{STATING}

\section{Proofs for \Cref{s:compression}}


\subsection{Proof of Theorem \ref{th:dissipative} \label{si:thm}}

In this section we prove Theorem \ref{th:dissipative}.




\ThDissipative*
\begin{Proof}{th:dissipative}
Given \Cref{lem:contour0}, the identities (\ref{eq:int1}) and (\ref{eq:int2}) from the proof sketch rigorously establish that 
\begin{eqn}
    \SymP(t)-\OrthP_{\I}(t) = \frac{1}{2\pi}\oint_{\Contour} e^{-1/z} \, [\FullRes(z)-\OrthRes(z) ] \, {\odif z} \label{eq:intboundrecap}
\end{eqn}
for any contour $\mathcal{C}$ enclosing the eigenvalues of $K/t$.

Now we turn to the choice of contour. For $\delta \geq 0$,
define the triangular contour $\mathcal{C}=\mathcal{C}(T,\delta)$
by 
\begin{eqn}\label{eq:contourdef0}
\mathcal{C}(T,\delta)=\underbrace{\{\delta+\alpha t+i\beta t\,:\,t\in[0,T]\}\cup\{\delta+\alpha t-i\beta t\,:\,t\in[0,T]\}}_{=:\,\mathcal{C}_{1}(T,\delta)}\cup\underbrace{\{\delta+T+is\,:\,s\in[-T,T]\}}_{=:\,\mathcal{C}_{2}(T,\delta)}.
\end{eqn}
Assume that $T> \Vert K\Vert_2 /t$ and $\delta < 1/(\Vert L \Vert_2 t )$ so that the contour encloses the spectrum
of $K/t$ as required. Ultimately we will take $T\ra\infty$ to get
an infinite V-shaped contour, and we will argue that the vertical
crossbar segment $\mathcal{C}_{2}$ yields only a vanishing contribution
in this limit. Here $\alpha,\beta>0$ and we assume $\alpha^{2}+\beta^{2}=1$
so that $t\mapsto\delta+\alpha t\pm i\beta t$ is an arc-length parametrization. Refer to \Cref{fig:contour} for an illustration of the ensuing developments.

It is useful to note that if we write $z=a+ib$ for $a,b\in\R$, then
\begin{equation}
\vert e^{-1/z}\vert=e^{-\frac{a}{a^{2}+b^{2}}}.\label{eq:expidentity}
\end{equation}
 In particular, we have $\vert e^{-1/z}\vert\leq1$ on all of our
contours.

Now we will make the following moves in order: 
\begin{enumerate}
\item Take the limit $\delta\ra0^{+}$ of our contour integral, yielding
an exact expression for $\SymP(t)-P_{\I}(t)$ as a contour integral
over $\mathcal{C}(T,0)$. (\emph{A priori}, to apply Cauchy's integral
formula we must avoid touching the origin with our contour due to
the essential singularity of $e^{-1/z}$ at $z=0$, but we will see
that the limiting contour integral is nonetheless well-defined.)
\item Take the limit $T\ra\infty$ and argue that the contribution of the
vertical crossbar segment $\mathcal{C}_{2}$ vanishes.
\end{enumerate}

By \eqref{eq:intboundrecap} for $\mathcal{C} = \mathcal{C}(T,\delta)$, we deduce by changing variables that 
\begin{align*}
{ \SymP(t)-P_{\I}(t) } & = \frac{1}{2\pi i}\oint_{\mathcal{C}(T,0)}  e^{-1/(z+\delta)}  \, [ \mathcal{Q}_t (z+\delta)-\mathcal{R}_{t}(z+\delta) ] \, {\odif z}.
\end{align*}
 Note that the integrand $I_{\delta}(z):= e^{-1/(z+\delta)}  \, [ \mathcal{Q}_t (z+\delta)-\mathcal{R}_{t}(z+\delta)]$
converges pointwise to $I_{0}(z)$ on $\mathcal{C}(T,0)$ as $\delta\ra0^{+}$,
where we interpret $I_{0}(0) = 0$. 
Moreover, $\vert e^{-1/(z+\delta)}  \vert$  and $\vert  (z+\delta)^{-1} e^{-1/(z+\delta)}  \vert$ are uniformly bounded over the contour, independently of $\delta > 0$ sufficiently small, so the dominated convergence
theorem guarantees that we can take the limit as $\delta\ra0^{+}$
inside the integral to establish: 
\begin{eqn}\label{eq:contourT0}
{ \SymP(t)-P_{\I}(t) } =\frac{1}{2\pi i}\oint_{\mathcal{C}(T,0)}  e^{-1/z} [ \mathcal{Q}_t (z)-\mathcal{R}_{t}(z)] \, \odif z.
\end{eqn}
This completes step (1).

Now we recapitulate \Cref{eq:resdiffbound0}, which was established rigorously in the proof sketch: 
\begin{eqn}\label{eq:resdiffbound0recap}
    \Norms{\FullRes(z)-\OrthRes(z)} \leq \frac{\Eps}{t  \, \vert\im{(z)}\vert^2 }.
\end{eqn}
By plugging into \Cref{eq:contourT0} after separating the $\mathcal{C}_{1}$ and
$\mathcal{C}_{2}$ components, we get the bound 
\begin{equation}
\Vert \SymP(t)-P_{\I}(t)\Vert_{\{2,*\}} \leq \frac{\ve_{\{2,*\}} }{2 \pi t}\,\oint_{\mathcal{C}_{1}(T,0)}\frac{\vert e^{-1/z}\vert}{\vert\im (z) \vert^{2}}\, \Abs{\odif z} +\frac{1}{2\pi}\oint_{\mathcal{C}_{2}(T,0)}\Vert \mathcal{Q}_t (z)-\mathcal{R}_{t}(z) \Vert_{\{2,*\}} \, \Abs{\odif z}.\label{eq:contourbound}
\end{equation}
 We can split the vertical crossbar contour $\mathcal{C}_{2}(T,0)$
into two pieces, 
\begin{eqn}\label{eq:verticalcontourAB}
\mathcal{C}_{2A}(T):=\{T+is\,:\,\vert s\vert\leq\sqrt{T}\},\quad\mathcal{C}_{2B}(T):=\{T+is\,:\,\vert s\vert\geq\sqrt{T}\},
\end{eqn}
 one near and far from the real axis. Since $\Vert \mathcal{Q}_t (z)\Vert,\Vert \mathcal{R}_{t}(z)\Vert=O(T^{-1})$
on $\mathcal{C}_{2}(T,0)$ and the $\mathcal{C}_{2A}(T)$ contour
is of length $O(\sqrt{T})$, the $\mathcal{C}_{2A}$ contribution
vanishes as $T\ra\infty$.

Meanwhile, we can bound the $\mathcal{C}_{2B}$ contribution using
our less trivial bound \Cref{eq:resdiffbound0recap}: 
\begin{align*}
\frac{1}{2\pi}\oint_{\mathcal{C}_{2B}(T)}\Vert \mathcal{Q}_t (z)-\mathcal{R}_{t}(z) \Vert_{\{2,*\}} \, \Abs{\odif z} & \leq \frac{\ve_{\{2,*\}}}{2\pi t}\int_{\mathcal{C}_{2B}(T)}\frac{1}{\vert\mathrm{Im}(z)\vert^{2}}\,\Abs{\odif z}.
\end{align*}
 Then compute 
\begin{eqn}\label{eq:C2Bint}
\int_{\mathcal{C}_{2B}(T)}\frac{1}{\vert\mathrm{Im}(z)\vert^{2}}\, \Abs{\odif z} =2\int_{\sqrt{T}}^{T}\frac{1}{s^{2}}\,ds=2\frac{\sqrt{T}-1}{T}\ra0
\end{eqn}
 as $T\ra\infty$, so the $\mathcal{C}_{2B}$ contribution vanishes
as well.

Therefore by taking the limit as $T\ra\infty$ of our bound (\ref{eq:contourbound}),
we deduce that 
\begin{equation}
\Vert \SymP(t)-P_{\I}(t)\Vert_{\{2,*\}} \leq \frac{\ve_{\{2,*\}} }{2 \pi t}\ \lim_{T\ra\infty}\oint_{\mathcal{C}_{1}(T,0)}\frac{\vert e^{-1/z}\vert}{\vert\im(z)\vert^{2}}\, \Abs{\odif z}.\label{eq:contourbound2}
\end{equation}
Note that for $z=\alpha t\pm i\beta t\in\mathcal{C}_{1}(T,0)$ where
$\alpha^{2}+\beta^{2}=1$, following (\ref{eq:expidentity}) we can
write 
\[
\frac{\vert e^{-1/z}\vert}{\vert\im(z)\vert^{2}}=\frac{e^{-\frac{\alpha}{t}}}{\beta^{2}\,t^{2}},
\]
 hence 
\[
\lim_{T\ra\infty}\oint_{\mathcal{C}_{1}(T,0)}\frac{\vert e^{-1/z}\vert}{\vert\im(z)\vert^{2}}\, \Abs{\odif z}=\frac{2}{\beta^{2}}\int_{0}^{\infty}\frac{e^{-\frac{\alpha}{t}}}{t^{2}}\, \Abs{\odif t} =\frac{2}{\alpha\beta^{2}}.
\]
 We want to optimize the last expression subject to $\alpha^{2}+\beta^{2}=1$.
The optimal choice is given by $\alpha=\frac{1}{\sqrt{3}}$, which
yields $\frac{2}{\alpha\beta^{2}}=3\sqrt{3}$.

Putting everything together in (\ref{eq:contourbound2}), we have
\[
\Vert \SymP(t)-P_{\I}(t)\Vert_{\{2,*\}} \leq\frac{3\sqrt{3}}{2\pi}\,\frac{\ve_{\{2,*\}} }{t}.
\]
This completes step (2) and concludes the proof.
\end{Proof}

\subsection{Proofs of lemmas for Theorem \ref{th:dissipative} \label{si:lemmas}}

First we prove \Cref{lem:contour0}:

\LemContour*
\begin{Proof}{lem:contour0}
Let $\tilde{U}$ complete the columns of $U$ to an orthonormal basis of $\R^{n}$.
By considering blocks with respect to the $[U,\tilde{U}]$
basis, we can rewrite: 
\begin{eqn} \label{eq:altersolve}
(z\Id-UA^{-1}U^{\top})^{-1} & \ \leftrightarrow \ \left(z\Id-\left[\begin{array}{cc}
A^{-1} & 0\\
0 & 0
\end{array}\right]\right)^{-1}\\
 & \ = \ \left[\begin{array}{cc}
(z\Id-A^{-1})^{-1} & 0\\
0 & z^{-1}\Id
\end{array}\right]\leftrightarrow U(z\Id-A^{-1})^{-1}U^{\top}+z^{-1} \tilde{U} \tilde{U}^{\top},
\end{eqn}
where we use ``$\leftrightarrow$'' to indicate the suitable changes of basis. 

By Cauchy's
integral theorem for matrices \cite{Higham_2008}, we have that  
\begin{eqn}\label{eq:cauchy1}
e^{-A}=\oint_{\mathcal{C}}e^{-1/z}\left(z\Id-A^{-1}\right)^{-1}\,\odif z.
\end{eqn}
 We also have that 
\begin{eqn}\label{eq:cauchy2}
\oint_{\mathcal{C}}e^{-1/z} \, z^{-1} \, \tilde{U} \tilde{U}^{\top}\,\odif z=0,
\end{eqn}
since $e^{-1/z}z^{-1}$ is holomorphic on the interior of $\mathcal{C}$.

Then by taking the contour integral of both sides of \Cref{eq:altersolve} against $e^{-1/z} \, \odif z$ and using \Cref{eq:cauchy1,eq:cauchy2}, we conclude the proof.
\end{Proof}

Then we prove Lemma \ref{lem:refactor}:

\LemRefactor*
\begin{Proof}{lem:refactor}
We will prove a more general result.
Let $X$ be a tall rectangular matrix of full rank, and let $V=KX(X^{\top}K^{2}X)^{-1/2}$.
Expanding,
\[
V^{\top}K^{-1}V=(X^{\top}K^{2}X)^{-1/2}(X^{\top}KX)(X^{\top}K^{2}X)^{-1/2},
\]
 hence 
\[
(V^{\top}K^{-1}V)^{-1}=(X^{\top}K^{2}X)^{1/2}(X^{\top}KX)^{-1}(X^{\top}K^{2}X)^{1/2},
\]
 and in turn 
\[
V(V^{\top}K^{-1}V)^{-1}V^{\top}=KX(X^{\top}KX)^{-1}X^{\top}K.
\]
The desired statement in \cref{eq:refactor} follows from the choice of $X \Eq \Id_{:,\I}$.
\end{Proof}

Then to prove Lemma \ref{lem:polebound}, we first prove an even simpler
lemma.
\begin{lemma}
Let $B$ be a real symmetric matrix. Then $\Vert(\Id-iB)^{-1}\Vert_2 \leq1$. 
\end{lemma}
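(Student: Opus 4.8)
The plan is to reduce to a scalar computation. Since $B$ is real symmetric, the spectral theorem gives an orthogonal matrix $O$ and a real diagonal matrix $\Lambda = \Diag(\lambda_1, \dots, \lambda_n)$ with $B = O \Lambda O^\t$. Then $\Id - iB = O(\Id - i\Lambda)O^\t$, so $(\Id - iB)$ is invertible (none of its eigenvalues $1 - i\lambda_j$ vanish, as $\lambda_j \in \R$) and
\begin{eqn}
    \snorm{(\Id - iB)^{-1}} = \snorm{O (\Id - i\Lambda)^{-1} O^\t} = \max_{1 \leq j \leq n} \Abs{\frac{1}{1 - i \lambda_j}} = \max_{1 \leq j \leq n} \frac{1}{\sqrt{1 + \lambda_j^2}} \leq 1,
\end{eqn}
since $\lambda_j^2 \geq 0$. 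This is the entire argument.

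Alternatively, one can avoid invoking the spectral theorem by a direct energy estimate: for any $x \in \mathbb{C}^n$, expand $\snorm{(\Id - iB)x}^2 = x^* x + x^* B^* B x + i(x^* B x - x^* B^* x)$, and observe that the hypothesis $B = B^\t$ (real symmetric, hence Hermitian) forces $x^* B x - x^* B^* x = 0$, leaving $\snorm{(\Id - iB)x}^2 = \snorm{x}^2 + \snorm{Bx}^2 \geq \snorm{x}^2$. This shows $\Id - iB$ is injective, hence invertible, and that $\snorm{(\Id - iB)^{-1} y} \leq \snorm{y}$ for all $y$, which is the claim.

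There is no real obstacle here; the lemma is elementary. The only point that requires care is that the hypothesis of \emph{symmetry} of $B$ is exactly what makes the cross terms cancel (equivalently, what makes the eigenvalues of $B$ real so that $\Abs{1 - i\lambda_j} \geq 1$); the bound would fail for a general real matrix $B$. I would present the spectral-theorem version as the main proof for brevity, since it immediately yields the desired constant $1$, and Lemma~\ref{lem:polebound} will then follow by applying this lemma with $B \Eq \im(z)^{-1}(A - \re(z)\Id)$.
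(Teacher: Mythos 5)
Your proof is correct and takes essentially the same route as the paper: both invoke the spectral theorem to reduce to the eigenvalue computation $\Abs{1 - i\lambda}^{-1} \leq 1$ for $\lambda \in \R$ (the paper phrases this via normality of $\Id - iB$, you diagonalize $B$ directly — the same idea). The direct energy estimate you offer as an alternative is also valid and, if anything, slightly more self-contained.
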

\begin{proof}
$\Id-iB$ is not Hermitian, but it is a normal operator (i.e.,
it commutes with its adjoint $\Id+iB$). Therefore $\Id-iB$
can be unitarily diagonalized by the spectral theorem, and it follows
that $\Vert(\Id-iB)^{-1}\Vert_2 $ is the spectral radius of $(\Id-iB)^{-1}$.
Since $B$ is real symmetric, the eigenvalues of $(\Id-iB)^{-1}$
all have the form $(1-i\lambda)^{-1}$, where $\lambda\in\R$ is an
eigenvalue of $B$. For any such $\lambda$, we have $\vert(1-i\lambda)^{-1}\vert=\vert1-i\lambda\vert^{-1}\leq1$.
\end{proof}

Then Lemma \ref{lem:polebound} is restated and proved as follows.

\LemPolebound*
\begin{Proof}{lem:polebound}
Write $z=a+ib$ and unpack: 
\begin{align*}
(z\Id-A)^{-1} & =(ib\Id+a\Id-A)^{-1}\\
 & =\frac{1}{ib}(\Id-ib^{-1}[a\Id-A])^{-1},
\end{align*}
 so the desired conclusion follows from the preceding lemma.
\end{Proof}

\subsection{Proof of Theorem 1B}

\ThNonNystromSing*
\begin{Proof}{th:non-nystrom-sing} 
    Let $\Pi$ be the orthogonal projector onto the null space of $L$, and define $L^{(\gamma)} := L + \gamma \Pi$ for $\gamma > 0$. Then $L^{(\gamma)}$ is positive definite, and we can further define $K^{(\gamma)} := [L^{(\gamma)}]^{-1} = K + \gamma^{-1} \Pi$. Moreover, define ${P}^{(\gamma)} (t) := e^{-L^{(\gamma)} t}$ and 
    $\tilde{P}^{(\gamma)}(t) := V e^{- (V^\t L^{(\gamma)} V) t} V^\t$. Then 
    Theorem \hyperlink{link.Theorem1A}{1A} implies that 
    \begin{eqn}
        \Norms{ \SymP^{(\Kill)}(t) - \tilde{\SymP}^{(\Kill)}(t) } \leq \frac{3 \sqrt{3}}{2 \pi} \Nus^{(\Kill)}
    \end{eqn}
    where $\Nus^{(\Kill)} := \Norms{K^{(\gamma)} - V (V^\t L^{(\gamma)} V)^\Mo V^\t}$. 

    Evidently, for any $t\geq 0$, both ${P}^{(\gamma)} (t) \ra {P} (t)$ and $\tilde{P}^{(\gamma)}(t) \ra \tilde{P}(t)$ as $\gamma \ra 0$. Therefore it suffices to show that 
    \begin{eqn}
        \lim_{\Kill \ra 0} \Nus^{(\Kill)} =  \Nus.
    \end{eqn}

    Now since the columns of $V$ contain the null space of $L$, it follows that  $VV^\t \Pi = \Pi  VV^\t = \Pi$. It also follows that $V^\t \Pi V$ is the orthogonal projector onto the the null space of $V^\t L V$, so in turn  $(V^\t L V)^+ = (V^\t L V + \Kill V^\t \Pi V )^\Mo - \Kill^\Mo V^\t \Pi V$ for any $\Kill > 0$. We can use these facts to compute: 
    \begin{eqn}
        \lim_{\Kill \ra 0} \left[ K^{(\gamma)} - V (V^\t L^{(\gamma)} V)^\Mo V^\t \right] &= \lim_{\Kill \ra 0} \left[   K + \Kill^\Mo \Pi - V ( V^\t [L + \Kill \Pi ] V)^\Mo V^\t \right] \\   
         &= \lim_{\Kill \ra 0} \left[   K - V \left\{  ( V^\t [L + \Kill \Pi ] V)^\Mo - \gamma^{-1} V^\t \Pi V \right\} V^\t \right] \\   
        &= \lim_{\Kill \ra 0} \left[   K - V (V^\t L V)^+ V^\t \right] \\    
        &=  K - V (V^\t L V)^+ V^\t.
    \end{eqn}
    By taking norms we complete the proof.
\end{Proof}


\section{Proofs for \Cref{s:projective}}

\subsection{Closed-form expressions using the fundamental matrix}

In the next proposition, we provide a probabilistic proof of a closed form expression for the committor $\Cu$ induced by states $\I$ in terms of $K$, the pseudoinverse of $L$. An alternative linear-algebraic proof is sketched in the main text after the definition of the killed committor (\Cref{def:killed-committor}).

\begin{proposition}[Committor functions from fundamental matrix] \label{th:committor}
    For a reversible Markov chain associated to Laplacian $L$, fundamental matrix $K \Eq L^+$, and non-empty set $\I \subset \Indices$:
    \begin{eqn}
        \Cu = \DiagI(h) \lrp{\frac{h - K_{:,\I} \Ki \hi}{\hi^\t \Ki \hi} \hi^\t + K_{:,\I}} \Ki \Diag(\hi).
        \label{eq:committor}
    \end{eqn}
    The stationary expectation of the committor is:
    \begin{eqn}
        \pr = \Cu^\t \pi = \frac{\hi^\t \Ki \Diag(\hi)} {\hi^\t \Ki \hi}.
        \label{eq:committor-pi}
    \end{eqn}
\end{proposition}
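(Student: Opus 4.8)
The plan is to prove \Cref{th:committor} using a probabilistic first-step analysis, then derive \eqref{eq:committor-pi} by a short linear-algebraic reduction. First I would establish that the committor columns $\Cu_{:,j}$ for $j \in \I$ are characterized by the boundary value problem $L^{\mathrm{sym}}$-harmonicity on $\Ic$ with prescribed boundary data on $\I$: concretely, writing $u^{(j)} := \DiagI(h)\Cu_{:,j}$ in the untransformed picture, the function $\Cu_{:,j}$ is the unique vector with $\Cu_{\I,j} = \delta_{\cdot,j}$ (i.e.\ $\Cu_{i,j} = \mathbb{1}[i=j]$ for $i \in \I$) and $(R\Cu_{:,j})_i = 0$ for all $i \in \Ic$. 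This is standard: conditioning on the first jump out of state $i \in \Ic$ gives the harmonicity relation, and irreducibility gives uniqueness. Translating through the symmetrization $L = -\Diag(h) R \DiagI(h)$, this says $(L \Diag(h)^{-1}\,\text{something})$... more cleanly: the matrix $\Cu$ is the unique $n \times \Ia$ matrix satisfying $\Cu_{\I,:} = \Id_{\I,\I}$ and $(\DiagI(h) L \Diag(h)\,\Cu)_{\Ic,:}$... Let me instead work directly: set $M := \DiagI(h)\Cu$ so the rows of $M$ indexed by $\I$ equal $\DiagI(h_\I)$, and $M$ is $R$-harmonic on $\Ic$, i.e.\ $(RM)_{\Ic,:} = 0$. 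Equivalently $R^\t \Diag(\pi) M$ restricted appropriately... The cleanest route is to verify that the claimed formula \eqref{eq:committor} satisfies the two defining conditions (correct values on $\I$, and $L$-harmonicity on $\Ic$ in the symmetrized sense $(\Id - \Pi_h) L\,(\text{claimed }\Co) = 0$ on rows $\Ic$, where $\Pi_h = hh^\t$), and conclude by uniqueness.

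So concretely I would: (i) Let $\Co^\star$ denote the right-hand side of \eqref{eq:committor}, i.e.\ $\Cu^\star := \DiagI(h)\lrp{\frac{h - K_{:,\I}\Ki \hi}{\hi^\t \Ki \hi}\hi^\t + K_{:,\I}}\Ki \Diag(\hi)$. (ii) Check the boundary condition: restricting to rows $\I$, use $(K_{:,\I})_{\I,:} = K_{\I,\I}$ so that $K_{\I,\I}\Ki = \Id$, and the correction term $(h - K_{:,\I}\Ki\hi)$ restricted to $\I$ is $\hi - K_{\I,\I}\Ki\hi = 0$; hence $\Cu^\star_{\I,:} = \DiagI(h_\I)\,\Id\,\Diag(\hi) = \Id_{\I,\I}$, as required. (iii) Check harmonicity on $\Ic$: using $L K = \Id - hh^\t$ (the defining property of $K = L^+$ together with the null vector $h$), compute $\Diag(h) L \Diag(h)\,\DiagI(h)K_{:,\I}\Diag(\hi)$ on rows $\Ic$; the term $K_{:,\I}$ contributes $(\Id - hh^\t)_{\Ic,\I}\Diag(\hi) = -h_\Ic (h_\I^\t \Diag(\hi)) = -h_\Ic \hi^\t$ after transforming, and the rank-one correction term involving $h\hi^\t$ is designed precisely to cancel this, since $L h = 0$ kills the $h$-component but the $\DiagI(h)$ conjugation produces exactly the compensating piece. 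More carefully, one shows $(\DiagI(h) L \Diag(h))\,\DiagI(h)\,\Cu^\star\,\Diag(\hi)^{-1}$ — the untransformed harmonic operator $-R$ applied to $M^\star := \DiagI(h)\Cu^\star$ — vanishes on rows $\Ic$; the algebra reduces to $R_{\Ic,:}\,K^{\mathrm{sym}}_{:,\I}$-type terms plus the correction, and using $R^\t\pi = 0$ and the structure of $K$ the cancellation is exact. (iv) Conclude $\Cu^\star = \Cu$ by uniqueness of the harmonic extension.

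For the stationary expectation \eqref{eq:committor-pi}: compute $\pr = \Cu^\t \pi$ using $\pi = h \odot h$, i.e.\ $\Diag(h)h = \pi$ componentwise, so $\Cu^\t \pi = \Cu^\t \Diag(h) h$. Substituting the formula and using $K h = 0$ (so $\Ki$-terms acting through $K_{:,\I}^\t \Diag(h) h = K_{\I,:}\pi$, and $K^\t h = Kh = 0$ gives $K_{\I,:}h$... one must be careful: it is $K h = 0$, and $\Diag(h)h = \pi$, and $(K_{:,\I})^\t h = (K h)_\I \cdot$... actually $(K_{:,\I})^\t \Diag(h) h = (K_{\I,:}) \pi$, hmm) — the point is that the $K_{:,\I}$ term, when transposed and hit with $\Diag(h)h$, interacts with $Kh = 0$ to vanish, leaving only the rank-one piece $\frac{1}{\hi^\t\Ki\hi}\Diag(\hi)\Ki\hi\,h^\t \Diag(h)h = \frac{1}{\hi^\t\Ki\hi}\Diag(\hi)\Ki\hi$, which rearranges to $\frac{\hi^\t\Ki\Diag(\hi)}{\hi^\t\Ki\hi}$ after noting symmetry of $\Ki$. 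The normalization $\One^\t\pr = 1$ follows from $\Cu\One = \One$ (irreducibility) and $\One^\t\pi = 1$, consistent with the displayed formula.

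The main obstacle I anticipate is bookkeeping the diagonal similarity transformations in step (iii): the harmonicity condition is most natural in the $R$-picture but the formula is stated in the symmetrized $K = L^+$ picture, and one must carefully track how $K h = 0$ and $LK = \Id - hh^\t$ interact with the left/right multiplications by $\DiagI(h)$ and $\Diag(\hi)$, and in particular verify that the rank-one correction $\frac{h - K_{:,\I}\Ki\hi}{\hi^\t\Ki\hi}\hi^\t$ is exactly — not just up to scalar — the term needed to repair the boundary condition on $\I$ while remaining harmonic on $\Ic$. Everything else is routine Schur-complement and pseudoinverse algebra.
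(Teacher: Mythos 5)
Your proposal is correct, and it takes a genuinely different route from the paper's proof. The paper characterizes $\Cu$ by applying the renewal--reward theorem (Aldous, Proposition~2.3) to cycles $j \to \I \to j$: for each $i \in \I$ one obtains a balance equation relating $\pi_i$ times the expected cycle duration to the expected occupancy time in $i$, and after substituting Aldous's identities $H_{i,j} = S_{j,j} - S_{i,j}$ and $T^i_{k,j} = \pi_i(S_{j,j}-S_{k,j}-S_{j,i}+S_{k,i})$ (with $S := \DiagI(h)K\DiagI(h)$) these collapse into the linear system $(\TimeToI{:})\One^\top = \Cu S_{\I,\I} - S_{:,\I}$, which is then closed with $\Cu\One = \One$ and solved. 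Your approach instead characterizes $\Cu$ as the unique $R$-harmonic extension ($\Cu_{\I,:}=\Id$, $(R\Cu)_{\Ic,:}=0$) and then directly \emph{verifies} the closed-form formula. This verification works cleanly: the boundary check on rows $\I$ uses $K_{\I,\I}\Ki = \Id$ exactly as you say; the harmonicity check on rows $\Ic$ follows from $Lh = 0$, $LK = \Id - hh^\top$, and the observation that $(\Id - hh^\top)_{\Ic,\I} = -h_{\Ic}\hi^\top$ (since $\I$ and $\Ic$ are disjoint), so that $(LK_{:,\I})_{\Ic,:} = -h_\Ic\hi^\top$ is exactly cancelled by the rank-one correction, whose $\Ic$-rows contribute $+h_\Ic\hi^\top$ via $\hi^\top\Ki\hi/\omega = 1$. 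The bookkeeping you worried about in step (iii) is thus less delicate than you feared. What each approach buys: the paper's derivation is constructive (it shows where the formula comes from) and ties $\Cu$ to occupation-time identities used again later (e.g.\ in \Cref{th:obliqueness-from-hitting}); your verification is shorter and more self-contained linear algebra, and is in fact the same characterization the paper invokes for the Dirichlet form result (\Cref{th:dirichlet-trace}). The paper also sketches a third route via the $\gamma \to 0$ limit of the killed committor (\Cref{th:killed-committor}). Your computation of $\pr$ is also correct: $\Cu^\top\pi = \Cu^\top\Diag(h)h$, $K_{\I,:}h = 0$ kills the $K_{:,\I}$ contribution, $h^\top h = 1$ handles the rank-one piece, yielding $\Diag(\hi)\Ki\hi/\omega$ which matches the stated formula by symmetry of $\Ki$.
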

\begin{proof}
\newcommand{\TimeToI}[1]{H^{\I}_{#1}}
Let $S \Eq \DiagI(h) K \DiagI(h)$.
Since $K$ is symmetric and $K h = \Zero$, $S$ is symmetric and $S \pi = \Zero$ (recall $\pi = h^{2}$).
Let $\TimeToI{j} = \Ex[\tau_{\I} \mid X_0 = j]$ be the average time for the Markov chain starting in state $j$ to first reach any of $\I$ (so that $\TimeToI{j} = 0$ for $j \in \I$), $H_{i,j} = \Ex[\tau_\lrb{j} \mid X_0 = i]$ be the average time for the Markov chain starting in state $i$ to reach state $j$, and $T^{k}_{i,j}$ be the average time spent in state $k$ for the Markov chain in state $i$ run until reaching state $j$. 

For any $j \in \Indices$, we will next consider the path starting at $j$ with stopping time ``the first return to $j$ after hitting any one of $\I$''(a path $j \ra \I \ra j$) in order to establish a linear system yielding $\Cu$.
To proceed, for any $i \in \I$,
application of Proposition~2.3 of \cite{Aldous1995-reversible} yields:
\begin{eqn}
    \label{eq:committor-measure}
    \p_i \pp{\TimeToI{j} + \sum_{k \in \I} \Cu_{j,k} H_{k,j}}
    &= \sum_{k \in \I} \Cu_{j, k} T^i_{k, j}
\end{eqn}
where the left-hand side is the expected duration of such a path (weighted by $\p_i$), and the right-hand side is the expected occupancy time in $i$ of such a path.\footnote{
We can explain the intuition in more detail. 
Consider an infinitely long trajectory of the Markov chain from a stationary start and break it into intervals based on the cycles as defined ($j \ra \I \ra j$).
The average time of such a cycle is $\TimeToI{j}$ plus the sum over $k \in \I$ of $H_{k,j}$, weighted by the probability that $k$ was the first state in $\I$ encountered (i.e., the committor).
Meanwhile, the time spent in $i$ during such a cycle is just the sum over $k \in \I$ of $T^i_{k, j}$ weighted by the committor, because no time is spent in $i$ before reaching $\I$, by definition.
Now, the fraction of time spent in $i$ for an infinitely long trajectory must just be $\pi_i$, so multiplying the average cycle time by $\pi_i$ must be the same as the average time spent in $i$ during such a cycle, giving \cref{eq:committor-measure}.
}
\cref{eq:committor-measure} gives us one equation involving the unknown probabilities $\Cu_{j,:}$ for each $i \in \I$ with $\TimeToI{j}$ as an additional unknown.

Now, substitute $H_{i,j} = S_{j,j} - S_{i,j}$ (\cite{Aldous1995-reversible}, Lemma~2.12) and $T^i_{k, j} = \pi_i (S_{j,j} - S_{k,j} - S_{j,i} + S_{k,i})$ (\cite{Aldous1995-reversible}, Lemma~2.9) to find that:
\begin{eqn}
    \label{eq:committor-column-i}
    \TimeToI{j} + \sum_{k \in \I} \Cu_{j,k} (S_{j,j} - S_{k,j}) &=
    \sum_{k \in \I} \Cu_{j,k} (S_{j,j} - S_{k,j} - S_{j,i} + S_{k,i}). \\
\end{eqn}
By collecting equations \cref{eq:committor-column-i} for each $i \in \I$ and simplifying, we find:
\begin{eqn}
    (\TimeToI{:}) \One^\t &= \Cu S_{\I, \I} - S_{:,\I}.
  \label{eq:committor-system}
\end{eqn}
For each $j \in \Indices$, this provides $\Ia$ equations for the $\Ia+1$ unknowns $\Cu_{j,:}$ and $\TimeToI{j}$.
Another constraint is gained from normalization of probability,
%
  $\Cu \One = \One$,
%
such that solving the system of equations gives:
%
\begin{eqn}
    \Cu = \pp{\frac{\One -S_{:, \I} S_{\I, \I}^{-1} \One}{\One^\t S_{\I, \I}^{-1} \One} \One^\t + S_{:, \I}} S_{\I,\I}^{-1}, \qquad
    \TimeToI{:} = \frac{\One - S_{:, \I} S_{\I,\I}^{-1} \One}{\One^\t S_{\I,\I}^{-1} \One},
    \label{eq:committor-column}
\end{eqn}
which implies \cref{eq:committor} by substitution via the definition of $S$.
Finally, \cref{eq:committor-pi} follows from computation of $\Cu^\t \pi$ with simplifications since $S \pi = \Zero$.
\end{proof}

\begin{lemma}[Limiting norms from fundamental matrix] \label{th:markov-norm-limits}
    For a nonempty set $\I$, Laplacian $L$, and $K = L^+$: 
    \begin{eqn}
        \label{eq:markov-norm-k}
    \Es(\I) &= \snorm*{K - K_{:,\I} \Ki K_{\I,:} + \frac{(h - K_{:,\I} \Ki \hi) (h^\t - \hi^\t \Ki K_{\I,:})}{\hi^\t \Ki \hi} } \\
    \En(\I) &=\Tr\left[K - K_{:,\I} \Ki K_{\I,:} \right] + \frac{1+\hi^\t \Ki (K^{2})_{\I,\I} \Ki \hi}{\hi^\t \Ki \hi}
    \end{eqn}
\end{lemma}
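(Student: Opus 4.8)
The statement to prove is \Cref{th:markov-norm-limits}, which asserts closed-form expressions for the limiting \Nystrom{} errors $\Es(\I) = \lim_{\gamma \ra 0} \Es^{\Kill}(\I)$ and $\En(\I) = \lim_{\gamma \ra 0} \En^{\Kill}(\I)$ in terms of the pseudoinverse $K = L^+$. The overall strategy is to write down the \Nystrom{} residual matrix $E_\gamma := \Ka - (\Ka)_{:,\I} (\Ka)_{\I,\I}^\Mo (\Ka)_{\I,:}$ for $\gamma > 0$, decompose $\Ka$ in the eigenbasis of $L$ as $\Ka = K_\perp + \gamma^{-1} h h^\t$ where $K_\perp$ denotes the part supported on the orthogonal complement of $h$ (note $K_\perp \ra K$ as $\gamma \ra 0$ since $K$ is the pseudoinverse), and carefully track which contributions survive the $\gamma \ra 0$ limit. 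The diverging $\gamma^{-1} h h^\t$ term must be handled by showing its contribution is exactly cancelled in the Schur-complement-like combination, up to a rank-one correction that converges.

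\textbf{Key steps in order.} First, I would expand $(\Ka)_{:,\I} = K_{:,\I} + \gamma^{-1} h h_\I^\t$ (abusing notation, with $K$ really meaning $K_\perp$ inside, but converging to $K$) and $(\Ka)_{\I,\I} = K_{\I,\I} + \gamma^{-1} h_\I h_\I^\t$, and apply the Sherman--Morrison formula to $(\Ka)_{\I,\I}^\Mo = (K_{\I,\I} + \gamma^{-1} h_\I h_\I^\t)^{-1}$. Since $K_{\I,\I}$ here is really the principal submatrix of $K_\perp$, which is positive definite (as $h$ is entrywise positive so no coordinate projection kills the relevant directions), Sherman--Morrison gives $(\Ka)_{\I,\I}^\Mo = K_{\I,\I}^{-1} - \frac{\gamma^{-1} K_{\I,\I}^{-1} h_\I h_\I^\t K_{\I,\I}^{-1}}{1 + \gamma^{-1} h_\I^\t K_{\I,\I}^{-1} h_\I}$. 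Second, I would substitute this into the \Nystrom{} residual and expand the product $(\Ka)_{:,\I} (\Ka)_{\I,\I}^\Mo (\Ka)_{\I,:}$, collecting terms by their power of $\gamma^{-1}$. The leading $\gamma^{-2}$ and $\gamma^{-1}$ pieces involving $h h^\t$ should partially cancel the $\gamma^{-1} h h^\t$ in $\Ka$, and the dominant $\gamma \ra 0$ behavior of the denominator $1 + \gamma^{-1} h_\I^\t K_{\I,\I}^{-1} h_\I \approx \gamma^{-1} h_\I^\t K_{\I,\I}^{-1} h_\I$ should produce the rank-one correction term $\frac{(h - K_{:,\I} K_{\I,\I}^{-1} h_\I)(h^\t - h_\I^\t K_{\I,\I}^{-1} K_{\I,:})}{h_\I^\t K_{\I,\I}^{-1} h_\I}$ that appears in \eqref{eq:markov-norm-k}. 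Third, having obtained the limiting residual matrix $E_0 = K - K_{:,\I} K_{\I,\I}^{-1} K_{\I,:} + \frac{(h - K_{:,\I} K_{\I,\I}^{-1} h_\I)(h^\t - h_\I^\t K_{\I,\I}^{-1} K_{\I,:})}{h_\I^\t K_{\I,\I}^{-1} h_\I}$, the spectral norm statement follows immediately by continuity of $\snorm{\cdot}$. For the nuclear norm, I would argue that $E_0$ is positive semidefinite (being a limit of positive semidefinite \Nystrom{} residuals $E_\gamma$), so $\En(\I) = \Tr[E_0]$, and then compute the trace: $\Tr[K - K_{:,\I} K_{\I,\I}^{-1} K_{\I,:}]$ plus $\frac{\Tr[(h - K_{:,\I} K_{\I,\I}^{-1} h_\I)(h^\t - h_\I^\t K_{\I,\I}^{-1} K_{\I,:})]}{h_\I^\t K_{\I,\I}^{-1} h_\I}$, where the numerator of the second term is $\norm{h}_2^2 - 2 h_\I^\t K_{\I,\I}^{-1} K_{\I,:} h + h_\I^\t K_{\I,\I}^{-1} (K^2)_{\I,\I} K_{\I,\I}^{-1} h_\I$; using $\norm{h}_2 = 1$ and $K h = \Zero$ (which forces $K_{\I,:} h = -K_{\I,\Ic} h_\Ic$, but more usefully the cross term can be simplified via orthogonality relations) this should collapse to $\frac{1 + h_\I^\t K_{\I,\I}^{-1} (K^2)_{\I,\I} K_{\I,\I}^{-1} h_\I}{h_\I^\t K_{\I,\I}^{-1} h_\I}$, matching the claimed formula.

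\textbf{Main obstacle.} The delicate part will be rigorously controlling the $\gamma \ra 0$ limit of the product $(\Ka)_{:,\I} (\Ka)_{\I,\I}^\Mo (\Ka)_{\I,:}$, since it is a difference of quantities that individually diverge like $\gamma^{-1}$: one must show the $O(\gamma^{-1})$ and $O(\gamma^{-2})$ terms cancel precisely against the $\gamma^{-1} h h^\t$ in $\Ka$, leaving a finite limit. The cleanest way to do this is probably to avoid expanding $K_\perp$ versus $K$ prematurely — instead work with the decomposition $\Ka = \Rr_\gamma + \gamma^{-1} h h^\t$ where $\Rr_\gamma$ is the resolvent-like part on $h^\perp$, track the algebra symbolically in $\gamma$, and only at the end invoke $\Rr_\gamma \ra K$ and that all coordinate-block submatrices converge as well. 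A secondary subtlety is justifying that $K_{\I,\I}$ (meaning the principal block of $\Rr_\gamma$, or its limit) is invertible: this requires that the columns of $\Rr_{\I}$ indexed by $\I$ are linearly independent, which holds because $h_\Ic$ (hence the relevant projection) is entrywise nonzero and $L$ (equivalently $K_\perp$) restricted appropriately is nonsingular — this is essentially the same nondegeneracy used implicitly throughout \Cref{s:projective}. Once these limiting manipulations are pinned down, the trace computation for the nuclear norm is routine algebra using $\norm{h}_2=1$ and $Kh = \Zero$. One should also double-check that $E_0$ is genuinely PSD (not merely that each $E_\gamma$ is), but this follows since the set of PSD matrices is closed.
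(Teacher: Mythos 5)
Your proposal is correct and follows essentially the same route as the paper's proof: both decompose $K_\gamma = \tilde{K}_\gamma + \gamma^{-1} hh^\top$ (your $K_\perp$ is the paper's $\tilde{K}_\gamma$), apply Sherman--Morrison to $(K_\gamma)_{\I,\I}^{-1}$, expand so that the divergent pieces cancel, and pass to the $\gamma \to 0$ limit. The paper compresses your explicit ``residual-is-PSD-so-nuclear-norm-equals-trace'' reasoning and the $Kh = \Zero$ cancellation into a single closing phrase, but the underlying algebra is the same.
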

\begin{proof}
    Recall that $\La \Eq L + \Kill \Id$ and $\Ka \Eq \La^\Mo$ and moreover that $K := L^+$. Additionally it is useful to define a matrix $\tilde{K}_\gamma := K_\gamma - \gamma^{-1} hh^\t$. This definition subtracts off the divergent rank-one component of $K_\gamma$ in the limit $\gamma \ra 0$, so that $\tilde{K}_\gamma$ shares the same null space as $K$ and satisfies $\lim_{\gamma \ra 0} \tilde{K}_\gamma = K$. For ease of notation, in the following computations we will suppress the $\gamma$-dependence as $\tilde{K} = \tilde{K}_\gamma$.
    
    

    Then we can view $K_\gamma = \tilde{K} + \gamma^{-1} hh^\t$ as a rank-one update and compute:
    \begin{eqn}
        \Ka - (\Ka)_{:,\I} (\Ka)_{\I,\I}^\Mo (\Ka)_{\I,:} &= \Ka - (\Ka)_{:,\I} \lrp{\Kti - \frac{\Kti \hi \hi^\t \Kti}{\Kill + \hi^\t \Kti \hi}} (\Ka)_{\I,:} \\
        &= \Ka - \lrp{\Kt_{:,\I} \Kti + \frac{(h - \Kt_{:,\I} \Kti \hi) \hi^\t \Kti}{\Kill + \hi^\t \Kti \hi}} (\Ka)_{\I,:} \\
        &= \Kt - \Kt_{:,\I} \Kti \Kt_{\I,:} + \frac{(\Id -  \Kt_{:,\I} \Kti \Id_{:,\I}) h h^\t (\Id - \Id_{:,\I} \Kti \Kt_{\I,:})}{\Kill + \hi^\t \Kti \hi},
    \end{eqn}
    with the first line from the Sherman-Morrison formula and the latter lines by expansion of $\Ka$ and collection of terms.
    Then taking the limit $\Kill \ra 0$ yields:
    \begin{eqn}
        \Eps &= \lim_{\Kill \ra 0} \Norms{\Ka - (\Ka)_{:,\I} (\Ka)_{\I,\I}^\Mo (\Ka)_{\I,:}} \\
        &= \Norms*{K - K_{:,\I} \Ki K_{\I,:} + \frac{(\Id -  K_{:,\I} \Ki \Id_{:,\I}) h h^\t (\Id - \Id_{:,\I} \Ki K_{\I,:})}{\hi^\t \Ki \hi}}
    \end{eqn}
    with the trace formula in \cref{eq:markov-norm-k} following by rearrangement and use of $h^\t h = 1$.
\end{proof}

\subsection{Analysis of killed chain and limit $\gamma \ra 0$}

In this section we detail how analysis of the killed chain leads to \Cref{th:projective-compression}. First, for motivation and future reference, we prove the probabilistic interpretation of the killed committor suggested in the main text. 
\newcommand{\Xa}{X^{\Kill}}
\begin{lemma}[Probabilistic interpretation of killed committor] \label{th:killed-committor-prob}
For $\Kill > 0$, consider the (irreversible) Markov chain on the space $[n]\cup \{x\}$, augmented by a single cemetery state $x$, with rate matrix given by
\begin{eqn}
    R^{(\Kill)} \Eq \begin{bmatrix}
        R - \Kill \Id & \Kill \One \\
        \Zero & 0
    \end{bmatrix}.
\end{eqn}
Let $\Process{\Xa}$ be the associated stochastic process with initialization $\Xa_0 \sim [\pi^\t, 0]^\t$.
Then for any $i \in \Indices$, $j \in \I$, and with $\Cu_{\Kill}$ as in \Cref{th:killed-committor}:
\begin{eqn} \label{eq:killed-committor-prob}
    (\Cu_{\Kill})_{i,j} = \Prob(\Xa_{\tau_{\I \cup \lrb{x}}} = j \mid \Xa_0 = i).
\end{eqn}
\end{lemma}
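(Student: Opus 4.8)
The plan is to recognize the matrix of hitting probabilities appearing on the right-hand side of \eqref{eq:killed-committor-prob} as the solution of a discrete Dirichlet problem for the generator $R^{(\Kill)}$, and then, after the symmetrizing conjugation by $\Diag(h)$, to identify that solution with the explicit formula defining $\Cu_\Kill$ in \Cref{def:killed-committor}.

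First I would observe that the augmented chain is a finite-state continuous-time Markov chain, hence non-explosive, and that since the killing rate $\Kill > 0$ is strictly positive, $\tau_{\I \cup \lrb{x}} \leq \tau_{\lrb{x}} < \infty$ almost surely from every starting state, so $\I \cup \lrb{x}$ is reached with probability one. Collecting the hitting probabilities into a matrix $Q \in \R^{n \times \Ia}$ with $Q_{i,j} \Eq \Prob(\Xa_{\tau_{\I \cup \lrb{x}}} = j \mid \Xa_0 = i)$ for $i \in \Indices$, $j \in \I$, the standard first-transition (harmonic function) characterization then says that $Q$ is the unique bounded solution of $Q_{\I,:} = \Id$ together with $(R^{(\Kill)} \widehat{Q})_{i,:} = \Zero$ for every $i \in \Ic$, where $\widehat{Q}$ denotes $Q$ augmented by the cemetery row $Q_{x,:} = \Zero$. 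The key point is that this cemetery row decouples: for $i \in \Ic$ we have $(R^{(\Kill)} \widehat{Q})_{i,:} = ((R - \Kill \Id) Q)_{i,:} + \Kill\, Q_{x,:} = ((R - \Kill \Id) Q)_{i,:}$, so $Q$ already satisfies $((R - \Kill \Id) Q)_{\Ic,:} = \Zero$, a system purely on $\Indices$.

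Next I would pass to the symmetrized picture. Since $\La = L + \Kill \Id = -\Diag(h)(R - \Kill \Id)\DiagI(h)$, setting $U \Eq \Diag(h) Q$ turns the above into $(\La U)_{\Ic,:} = \Zero$ with $U_{\I,:} = \Diag(\hi)$, whose block solution is $U_{\Ic,:} = -[(\La)_{\Ic,\Ic}]^\Mo (\La)_{\Ic,\I}\,\Diag(\hi)$. The Schur-complement identity for the positive definite $\La$,
\[
(\Ka)_{:,\I}(\Ka)_{\I,\I}^\Mo \;=\; \begin{bmatrix} \Id_{\I,\I} \\ -[(\La)_{\Ic,\Ic}]^\Mo (\La)_{\Ic,\I} \end{bmatrix}
\]
(with block rows indexed by $\I$ and $\Ic$), then gives $U = (\Ka)_{:,\I}(\Ka)_{\I,\I}^\Mo \Diag(\hi)$, and hence $Q = \DiagI(h) U = \DiagI(h) (\Ka)_{:,\I}(\Ka)_{\I,\I}^\Mo \Diag(\hi) = \Cu_\Kill$ by \Cref{def:killed-committor}, which is exactly \eqref{eq:killed-committor-prob}.

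I expect the only delicate points to be bookkeeping: (i) invoking the harmonic-function characterization together with its uniqueness, which is precisely where strict positivity of $\Kill$ enters, since it guarantees almost-sure absorption into $\I \cup \lrb{x}$; and (ii) keeping the diagonal conjugations $\Diag(h)$, $\DiagI(h)$, $\Diag(\hi)$ straight so that the Dirichlet solution matches the formula of \Cref{def:killed-committor} verbatim. The decoupling of the cemetery row from the harmonic equations is what keeps the linear algebra clean, and no genuinely hard estimate is required.
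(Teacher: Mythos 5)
Your proof is correct and reaches the identity \eqref{eq:killed-committor-prob}, but the probabilistic step is a genuinely different route from the paper's. The paper computes the hitting probability by integrating the exit flux,
\[
\Prob\bigl(\Xa_{\tau_{\I \cup \lrb{x}}} = j \,\big|\, \Xa_0 = i\bigr)
= \int_0^\infty \Id_{i,\Ic}\, e^{(R_{\Ic,\Ic} - \Kill\Id)t}\, R_{\Ic,j}\, \odif t
= -\,\Id_{i,\Ic}\,(R_{\Ic,\Ic} - \Kill\Id)^\Mo\, R_{\Ic,j},
\]
then conjugates by $\Diag(h)$ and invokes the block-inversion identity $A_{\Ic,\Ic}^\Mo A_{\Ic,\I} = -B_{\Ic,\I}B_{\I,\I}^\Mo$ for $A = B^\Mo$, exactly the Schur-complement step you also use. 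You instead characterize the matrix $Q$ of hitting probabilities as the unique bounded solution of a discrete Dirichlet problem ($Q_{\I,:}=\Id$, harmonic on $\Ic$, cemetery row $\Zero$), observe that the cemetery row decouples so the problem collapses to a system on $[n]$, conjugate by $\Diag(h)$, and solve the resulting block system. Both routes feed into the same linear algebra at the end; the difference is purely in the probabilistic derivation of the linear system. Your Dirichlet-problem framing is arguably more structural and parallels the committor characterization via \Cref{eq:laplace-equation} and \Cref{th:dirichlet-trace}, whereas the paper's flux-integration is more directly computational and sidesteps the explicit appeal to uniqueness of the boundary-value problem. The role of $\Kill>0$ is slightly overstated in your discussion---absorption into $\I$ alone is already almost sure for the irreducible chain, so uniqueness would hold without killing; what $\Kill>0$ really buys is invertibility of $\La$ so that the closed-form $(\Ka)_{:,\I}(\Ka)_{\I,\I}^\Mo$ in \Cref{def:killed-committor} is meaningful---but this does not affect the correctness of the argument.
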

\begin{proof}
The probability evolution outside of $\I \cup \{ x \}$ is given by $\exp((R_{\Ic,\Ic} - \gamma \Id) t)$, while the rates of entering the selected states from unselected states are $R_{\Ic,\I}$.
Therefore the committor probability from any state $i \notin \I \cup \lrb{x}$ to any state $j \in \I$ is:
\begin{eqn}
    \Prob(\Xa_{\tau_{\I \cup \lrb{x}}} = j \mid \Xa_0 = i) &= \int_0^\infty  \Id_{i,\Ic} \exp((R_{\Ic, \Ic} - \Kill \Id)t) R_{\Ic, j} \odif t \\
    &= \Minus \Id_{i, \Ic} (R_{\Ic, \Ic} - \Kill \Id)^\Mo R_{\Ic, j} \\
    &= \Minus \frac{h_j}{h_i} \Id_{i, \Ic} ( (\La)_{\Ic,\Ic} )^\Mo (\La)_{\Ic, j} \\
    &= \frac{h_j}{h_i} (\Ka)_{i, \I} (\Ka)_{\I,\I}^\Mo \Id_{\I,j} \\
\end{eqn}
where the third line follows from the definition of $\La$, and the last line follows from the general identity $A_{\Ic,\Ic}^{-1} A_{\Ic,\I} = -B_{\Ic,\I} B_{\I,\I}^{-1}$ for arbitrary $A = B^{-1}$, which in turn can be seen from the block matrix inversion formula. 
Additionally, we know automatically that $\Prob(\Xa_{\tau_{\I \cup \lrb{x}}} = j \mid \Xa_0 = i) = \delta_{i,j}$ for any $i \in \I$. Therefore $\Prob(\Xa_{\tau_{\I \cup \lrb{x}}} = j \mid \Xa_0 = i)$ matches the $(i,j)$ entry of $\Cu_\Kill \Eq \Diag(1/h) (\Ka)_{:,\I} (\Ka)_{\I,\I}^\Mo \Diag(\hi)$ (\Cref{def:killed-committor}) for all $i \in [n]$ and $j \in \I$, as was to be shown.
\end{proof}


Next we prove that our killed quantities recover the original quantities in the limit $\gamma \ra 0$.




\ThKilledCommittor*
\begin{Proof}{th:killed-committor}
    Recall that $\La \Eq L + \Kill \Id$ and $\Ka \Eq \La^\Mo$ and moreover that $K := L^+$. As in the proof of \Cref{th:markov-norm-limits}, it is useful to define a matrix $\tilde{K}_\gamma := K_\gamma - \gamma^{-1} hh^\t$, suppress the $\gamma$-dependence from the notation $\tilde{K} = \tilde{K}_\gamma$, and view $K_\gamma = \tilde{K} + \gamma^{-1} hh^\t$ as a rank-one update. Now by \Cref{def:killed-committor} for $\Cu_{\Kill}$: 
\begin{eqn}
    \lim_{\Kill \ra 0} \Cu_{\Kill} &= \lim_{\Kill \ra 0} \DiagI(h) (\tilde{K} + \Kill^\Mo h h^\t)_{:,\I} (\tilde{K} + \Kill^\Mo h h^\t)_{\I,\I}^\Mo \Diag(\hi).
\end{eqn}
Then compute by the Sherman-Morrison formula: 
\begin{eqn}
    \label{eq:committor-killed}
    (\Kt + \Kill^\Mo h h^\t)_{:,\I} (\Kt + \Kill^\Mo h h^\t)_{\I,\I}^\Mo
    &= (\Kt + \Kill^\Mo h h^\t)_{:,\I} \lrp{\Kti - \frac{\Kti \hi \hi^\t \Kti}{\Kill + \hi^\t \Kti \hi}} \\
    &= \Kt_{:,\I} \Kti + \frac{(h - \Kt_{:,\I} \Kti \hi) \hi^\t \Kti}{\Kill + \hi^\t \Kti \hi},
\end{eqn}
with the second line following by expansion and collection of outer products.
Then taking the limit $\Kill \ra 0$ yields $\lim_{\Kill \ra 0} \Cu_{\Kill} = \Cu$ by comparison to \Cref{th:committor}.
    Then $\lim_{\Kill \ra 0} \pr_\Kill = \pr$, $\lim_{\Kill \ra 0} \hr_\Kill = \hr$, and $\lim_{\Kill \ra 0} \Co_\Kill = \Co$ from \Cref{def:killed-committor}, trivially.
\end{Proof}


\ThProjectiveCompression*
\begin{Proof}{th:projective-compression}
    Recall from \cref{eq:killed-projective-compression} that:
    \begin{eqn} \label{eq:PIlim}
        \OrthP_{\I}(t) &\Eq \lim_{\Kill \ra 0} \Va \Expt{\Va^\t \La \Va} \Va^\t \text{ where } \Va \Eq \Orth(\KA_{:,\I})
    \end{eqn}
    From \cref{eq:killed-chp} we have that, for any $\Kill > 0$:
    \begin{eqn}
        \Ca = \KA_{:,\I} \KA_{\I,\I}^\Mo \Diag(\hi / \ha)
    \end{eqn}
    Therefore the range of $\KA_{:,\I}$ and $\Ca$ are the same, and in the limit appearing \cref{eq:PIlim} we can assume that we have taken $V_\gamma = \Orth(C_\gamma)$, where for concreteness $\Orth$ indicates the symmetric orthogonalization as usual. Since $\lim_{\Kill \ra 0} \Ca = \Co$, it follows that $\lim_{\Kill \ra 0} V_\gamma = V = \Orth(C)$ since the symmetric orthogonalization is a continuous operation. Moreover, $\lim_{\Kill \ra 0} \La = L$, so we can directly take the limit in \cref{eq:PIlim} to deduce that $\OrthP_{\I}(t) = V \Expt{V^\t L V} V^\t$. The additional expression in \eqref{eq:orth-prop} is justified by standard linear-algebraic manipulations.
\end{Proof}

\ThOrthoMarkov*
\begin{Proof}{th:ortho-markov}
    Consider $t$ fixed and $\Kill > 0$. 
    Let $\SymP_\Kill(t) = \Expt{\La}$ and $\OrthP_{\I,\Kill}(t)$ be as in \cref{eq:killed-projective-compression}.
    First, since $\SymP_\Kill(t) = \expt{\Kill} \SymP(t)$ and $\OrthP_{\I,\Kill}(t) = \expt{\Kill} \OrthP_{\I}(t)$,
    \begin{eqn}
        \label{eq:ortho-markov-1}
        \Norms{\SymP(t) - \SymP_{\Kill}(t)} = (1 - \expt{\Kill}) \Norms{\SymP(t)} \\
        \Norms{\OrthP_{\I}(t) - \OrthP_{\I,\Kill}(t)} = (1 - \expt{\Kill}) \Norms{\OrthP_{\I}(t)}. 
    \end{eqn}
    Second, by \Cref{th:dissipative} (covering symmetric positive definite matrices), we have
    \begin{eqn}
        \label{eq:ortho-markov-2}
        \Norms{\SymP_\Kill(t) - \OrthP_{\I,\Kill}(t)} \leq \frac{3\sqrt{3}}{2\pi}\,\frac{\Eps^\Kill}{t}.
    \end{eqn}
    So by the triangle inequality and use of \cref{eq:ortho-markov-1,eq:ortho-markov-2},
    \begin{eqn}
        \Norms{\SymP(t) - \OrthP_{\I}(t)} &\leq \Norms{\SymP_\Kill(t) - \OrthP_{\I,\Kill}(t)} + \Norms{\SymP(t) - \SymP_{\Kill}(t)} + \Norms{\OrthP_{\I}(t) - \OrthP_{\I,\Kill}(t)} \\
        &\leq \frac{3 \sqrt{3}}{2 \pi} \frac{\Eps^\Kill}{t} + (1 - \expt{\Kill}) (\Norms{\SymP(t)} + \Norms{\OrthP_{\I}(t)}).
    \end{eqn}
    In the $\Kill \ra 0$ limit, $\Eps^\Kill \ra \Eps$ and the latter term vanishes, yielding \cref{eq:ortho-markov-norm-bound}.
\end{Proof}

\subsection{Relating symmetric and asymmetric approximations}

\ThAutocorrelation*
\begin{Proof}{th:autocorrelation}
Given the definition of autocorrelation \cref{eq:acorr,eq:acorr2}, since the initial state $X_0$ is sampled out of $\pi$, we compute: 
\begin{eqn}
    A_t(f, \FullP) &= f^\t \Diag(\pi) \FullP(t) f = f^\t \Diag(h) \SymP(t) \Diag(h) f, \\
    A_t(F, \FullP) &= F^\t \Diag(\pi) \FullP(t) F = F^\t \Diag(h) \SymP(t) \Diag(h) F.
\end{eqn}
Analogous relations hold for $\FullQ$, $\SymQ$.

From standard properties of the spectral norm:
\begin{eqn}
    \snorm{\SymP(t) - \SymQ(t)} &= \sup_{g \in \R^n} \lrb{ \Abs{g^\t (\SymP(t) - \SymQ(t)) g} \;:\; \norm{g}_2 = 1}
\end{eqn}
and \cref{eq:acorr} follows from defining $f \Eq \DiagI(h) g$.

Next we consider the nuclear norm bound. Note that for $m \leq n $ and a general  matrix $G \in \R^{n \times m}$ satisfying $G^\t G = \Id$, we have (by considering the SVD) that $\Vert G \Vert_2 = 1$, and therefore $\Vert G^\t A G \Vert_* \leq \Vert A \Vert_*$ for any $A \in \R^{n\times n}$ by H\"{o}lder's inequality for the Schatten $p$-norms. Moreover, equality is attained by simply taking $G = \Id$. Therefore, by substituting $A = P(t) - Q(t)$, we deduce that 
\begin{eqn}
    \nnorm{\SymP(t) - \SymQ(t)} &= \sup_{m \leq n, \, G \in \R^{n\times m} } \lrb{ \nnorm{G^\t (\SymP(t) - \SymQ(t)) G} \;:\; G^\t G = \Id}.
\end{eqn}
and \cref{eq:acorr2} follows from defining $F \Eq \DiagI(h) G$.
\end{Proof}

\section{Proofs for \Cref{s:structure-preserving} \label{si:structure-preserving}}

\subsection{Induced chain formulation}

\newcommand{\DiagRat}[2]{\Diag\lrp{\tfrac{#1}{#2}}}

\begin{lemma}[Marking time from a stationary start] \label{th:timescale}
    The time to get to $\I$ from a stationary start satisfies:
    \begin{eqn}
        \Ex[\tau_{\I}] = (\hi^\t \Ki \hi)^\Mo.
    \end{eqn}    
\end{lemma}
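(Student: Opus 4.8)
\textbf{Proof proposal for Lemma~\ref{th:timescale}.}

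The plan is to reduce the claim to the probabilistic interpretation of the killed committor that was established in \Cref{th:killed-committor-prob}, together with the relation $\Ex[\tau_\I] = \lim_{\Kill \ra 0} \Ex[\min\{\tau_\I, \tau_x\}]$, where $\tau_x$ is an independent killing time with rate $\gamma$. Concretely, consider the killed chain on $[n] \cup \{x\}$ with rate matrix $R^{(\gamma)}$ as in \Cref{th:killed-committor-prob}, initialized at $\Xa_0 \sim [\pi^\t,0]^\t$. I would compute the expected time to absorption into $\I \cup \{x\}$ from a stationary start in two ways. On one hand, starting the (unkilled) chain from $\pi$, the probability of still being outside $\I$ at time $s$ is $\pi_{\Ic}^\t \exp(R_{\Ic,\Ic} s) \One_{\Ic}$ suitably interpreted, and integrating over $s$ gives $-\pi_{\Ic}^\t R_{\Ic,\Ic}^{-1} \One_{\Ic} = h_{\Ic}^\t (L_{\Ic,\Ic})^{-1} h_{\Ic}$. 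On the other hand, I will reconcile this with the expression $(\hi^\t \Ki \hi)^{-1}$ claimed in the lemma by using the identity \cref{eq:exstop2}, namely $\Ex_k(\tau_{\I \ra k}) = (L_{\Ic,\Ic}^{-1})_{k,k}/\pi_k$, established during the proof of \Cref{th:probabilistic-obliqueness-factor}, but this alone does not directly close the gap; instead I would match it to the closed-form committor-$\pi$ formula.

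The cleaner route, which I would actually pursue, is purely linear-algebraic via the $\gamma \ra 0$ limit. From \Cref{def:killed-committor} and the Sherman--Morrison expansion \cref{eq:committor-killed} appearing in the proof of \Cref{th:killed-committor}, together with the additional ``time-to-$\I$'' bookkeeping: observe that \Cref{th:killed-committor-prob} shows $\Cu_\Kill \One_\I$ is the probability of hitting $\I$ before the cemetery, so $\One - \Cu_\Kill \One_\I$ is the probability of being killed first, and $\gamma^{-1}(\One - \Cu_\Kill \One_\I)$ converges as $\gamma \ra 0$ to the vector of mean hitting times $H^\I_{:}$ of $\I$ (this is the standard ``killing rate $\leftrightarrow$ Laplace transform at $0$'' correspondence; it can also be read off from \cref{eq:committor-column}, where $\TimeToI{:}$ appears explicitly alongside $\Cu$). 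Then $\Ex[\tau_\I] = \pi^\t H^\I_{:} = \lim_{\gamma\ra 0}\gamma^{-1}\pi^\t(\One - \Cu_\Kill\One_\I) = \lim_{\gamma\ra 0}\gamma^{-1}(1 - \One_\I^\t \pr_\gamma)$, using $\pi^\t \One = 1$ and $\pr_\gamma = \Cu_\gamma^\t \pi$. Finally I would plug in the closed form $\pr_\gamma = \Cu_\gamma^\t \pi$ computed from \cref{eq:committor-killed}: a short calculation gives $\One_\I^\t \pr_\gamma = \tfrac{\hi^\t \Kti \hi}{\gamma + \hi^\t \Kti \hi}$, whence $1 - \One_\I^\t \pr_\gamma = \tfrac{\gamma}{\gamma + \hi^\t\Kti\hi}$ and $\gamma^{-1}(1-\One_\I^\t\pr_\gamma) = (\gamma + \hi^\t\Kti\hi)^{-1}$, which tends to $(\hi^\t\Ki\hi)^{-1}$ as $\gamma \ra 0$ since $\Kti \ra \Ki$.

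The main obstacle I anticipate is the first equality $\Ex[\tau_\I] = \lim_{\gamma\ra 0}\gamma^{-1}\pi^\t(\One-\Cu_\gamma\One_\I)$: one must justify interchanging the limit with the expectation (e.g.\ via monotone convergence, since $\gamma^{-1}\Prob(\tau_x < \tau_\I)$ increases to $\Ex[\tau_\I \wedge \tau_x]$-type quantities as $\gamma \downarrow 0$, or more simply by the explicit formula $\gamma^{-1}\Prob_i(\tau_x < \tau_\I) = \Id_{i,\Ic}(-R_{\Ic,\Ic} + \gamma\Id)^{-1}\One_{\Ic} \uparrow \Id_{i,\Ic}(-R_{\Ic,\Ic})^{-1}\One_{\Ic} = H^\I_i$ entrywise, which makes the interchange trivial by monotone convergence). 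Once this is in hand, everything else is a direct computation using the Sherman--Morrison formula already recorded in \cref{eq:committor-killed} and the fact that $\Kti \ra \Ki$, so the proof should be short.
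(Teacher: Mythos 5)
Your proof is correct, and it takes a genuinely different route from the paper's. The paper computes $\Ex[\tau_{\I}] = \int_0^\infty \pi_{\Ic}^\t e^{R_{\Ic,\Ic}t}\One_{\Ic}\,\odif t = h_{\Ic}^\t L_{\Ic,\Ic}^{-1} h_{\Ic}$ directly (this is exactly your first-paragraph observation), and then invokes a Schur-complement identity $h_{\Ic}^\t L_{\Ic,\Ic}^{-1} h_{\Ic} = \bigl(\hi^\t K_{\I,\I}^{-1}\hi\bigr)^{-1}$, which it leaves to the reader. Your second-paragraph route bypasses that identity entirely: you write $\Ex[\tau_{\I}] = \pi^\t H^{\I}_{:} = \lim_{\gamma\to 0}\gamma^{-1}\bigl(1 - \One_{\I}^\t\hat\pi_\gamma\bigr)$ using the killed-committor interpretation, then compute $\One_{\I}^\t\hat\pi_\gamma = \hi^\t\tilde K_{\I,\I}^{-1}\hi / (\gamma + \hi^\t\tilde K_{\I,\I}^{-1}\hi)$ from the Sherman--Morrison expansion already recorded in the paper's \cref{eq:committor-killed}, and pass to the limit. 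I verified these computations (the key facts are $\tilde K_\gamma h = 0$, $h^\t h = 1$, $\DiagI(h)\pi = h$, and $\tilde K_{\I,\I}^{-1}\to K_{\I,\I}^{-1}$). Your monotone-convergence justification for the interchange of limit and expectation is also sound, since $(-R_{\Ic,\Ic}+\gamma\Id)^{-1} = \int_0^\infty e^{-\gamma t}e^{R_{\Ic,\Ic}t}\,\odif t$ is entrywise increasing as $\gamma\downarrow 0$. The trade-off: the paper's proof is shorter if one takes the Schur-complement identity as known; yours is a bit longer but self-contained, deriving the target formula from the killed-chain machinery that the paper already built rather than invoking an unproved block-matrix identity.
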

\begin{proof}
    Trajectories starting in $\I$ contribute 0 to this expectation, whereas the probability distribution of the chain started in $\Ic$ and run until reaching $\I$ is governed by $R_{\Ic,\Ic}$.
    Therefore we compute:
    \begin{eqn}
        \Ex[\tau_{\I}] = \int_0^\infty \pi_{\Ic}^\t \ert{R_{\Ic,\Ic}} \One_{\Ic} \odif t 
        &= -\pi^\t_{\Ic} R_{\Ic,\Ic}^\Mo \One_{\Ic}
        = h_{\Ic}^\t L_{\Ic,\Ic}^\Mo h_{\Ic}.
    \end{eqn}
    Then we derive that $h_{\Ic}^\t L_{\Ic,\Ic}^\Mo h_{\Ic} = (\hi^\t \Ki \hi)^\Mo$ by Schur complement computations.
\end{proof}
\noindent We henceforth define for convenience:
\begin{eqn} \label{eq:omega}
    \TimeScale \Eq \Ex[\tau_{\I}] = (\hi^\t \Ki \hi)^\Mo.
\end{eqn}

\begin{proposition}[Induced chain in terms of fundamental matrix] \label{th:induced-from-k}
    Given the definitions in \Cref{eq:reduced-h-L-K} and \Cref{eq:simple-induced}, in the unsymmetrized rate matrix formulation, we derive that:
    \begin{eqn} \label{eq:induced-R-from-K}
        \pr &= \TimeScale \Diag(\hi) \Ki \hi, \\
        \Rr &= \TimeScale^\Mo \One \pr^\t - \DiagRat{\hi}{\pr} \Ki \Diag(\hi).
    \end{eqn}
    In the symmetrized formulation, letting $\hr = \pr^\Oh$:
    \begin{eqn}
        \Lr &= \DiagRat{\hi}{\hr} \Ki \DiagRat{\hi}{\hr} - \TimeScale^\Mo \hr \hr^\t, \\
        \Kr &= \DiagRat{\hr}{\hi} K_{\I,\I} \DiagRat{\hr}{\hi} - \TimeScale \hr \hr^\t.
    \end{eqn}
    $\Lr$ and $\Kr$ are symmetric matrices with $\Kr = \Lr^+$.
\end{proposition}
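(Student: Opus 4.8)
The plan is to derive all four formulas from the closed form of the committor $\Cu$ in \Cref{th:committor} together with the defining relation $\Lr = \Co^\t L \Co$ of \eqref{eq:oblique-p}, reducing everything to elementary manipulations of principal submatrices of $K$. Write $\TimeScale = (\hi^\t \Ki \hi)^{-1}$ as in \eqref{eq:omega}; the formula for $\pr$ in the proposition is nothing but \eqref{eq:committor-pi} rewritten with $\TimeScale$, so no work is needed there. I would record at the outset the facts the derivation repeatedly uses: (i) since $K = L^+$ and $\snorm{h} = 1$, $L h = 0$ and $L K_{:,\I} = (\Id - hh^\t)\Id_{:,\I} = \Id_{:,\I} - h\,\hi^\t$; (ii) $\Cu_{\I,:} = \Id_{\I,\I}$, directly from the probabilistic definition \eqref{eq:committor-prob-def} (if $X_0\in\I$ then $\tau_\I=0$); (iii) $\Ki K_{\I,\I} = K_{\I,\I}\Ki = \Id_{\I,\I}$; and (iv) rewriting $\pr = \TimeScale\Diag(\hi)\Ki\hi$ from \eqref{eq:committor-pi} gives $\Ki\hi = \TimeScale^{-1}\DiagI(\hi)\pr$, whence $\DiagRat{\hi}{\hr}\Ki\hi = \TimeScale^{-1}\hr$ and (using additionally (iii) and $\pr = \hr\odot\hr$) $\DiagRat{\hr}{\hi}K_{\I,\I}\DiagRat{\hr}{\hi}\hr = \TimeScale\hr$.

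First I would compute $L\,\Diag(h)\Cu$. Substituting \eqref{eq:committor}, so that $\Diag(h)\Cu = \bigl(\tfrac{(h - K_{:,\I}\Ki\hi)\,\hi^\t}{\hi^\t \Ki \hi} + K_{:,\I}\bigr)\Ki\Diag(\hi)$, and applying $Lh = 0$ and $LK_{:,\I} = \Id_{:,\I} - h\hi^\t$ from (i), all terms proportional to $h$ cancel and one is left with $L\,\Diag(h)\Cu = \Id_{:,\I}\bigl(\Ki - \tfrac{\Ki\hi\hi^\t\Ki}{\hi^\t\Ki\hi}\bigr)\Diag(\hi)$. Since $\Co = \Diag(h)\Cu\DiagI(\hr)$ by \eqref{eq:stationary-chp}, this gives $L\Co = \Id_{:,\I}\bigl(\Ki - \tfrac{\Ki\hi\hi^\t\Ki}{\hi^\t\Ki\hi}\bigr)\DiagRat{\hi}{\hr}$, and left-multiplying by $\Co^\t$ with $\Co^\t\Id_{:,\I} = \DiagI(\hr)(\Cu_{\I,:})^\t\Diag(\hi) = \DiagRat{\hi}{\hr}$ (by (ii)) yields $\Lr = \Co^\t L \Co = \DiagRat{\hi}{\hr}\Ki\DiagRat{\hi}{\hr} - \tfrac{1}{\hi^\t\Ki\hi}\bigl(\DiagRat{\hi}{\hr}\Ki\hi\bigr)\bigl(\DiagRat{\hi}{\hr}\Ki\hi\bigr)^\t$. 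By (iv) the rank-one term is $\TimeScale\,(\TimeScale^{-1}\hr)(\TimeScale^{-1}\hr)^\t = \TimeScale^{-1}\hr\hr^\t$, which is exactly the claimed $\Lr$; its symmetry is manifest. The expression for $\Rr$ then follows from $\Rr = -\DiagI(\hr)\Lr\Diag(\hr)$ (inverting \eqref{eq:reduced-h-L-K}) and distributing the diagonals via $\DiagI(\hr)\DiagRat{\hi}{\hr} = \DiagRat{\hi}{\pr}$, $\DiagRat{\hi}{\hr}\Diag(\hr) = \Diag(\hi)$, $\DiagI(\hr)\hr = \One$, $\hr^\t\Diag(\hr) = \pr^\t$, giving $\Rr = \TimeScale^{-1}\One\pr^\t - \DiagRat{\hi}{\pr}\Ki\Diag(\hi)$. (This agrees automatically with $\Ct R \Cu$, since the symmetrizing similarity is a bijection.)

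It remains to check $\Kr = \Lr^+$ for the candidate $\Kr = \DiagRat{\hr}{\hi}K_{\I,\I}\DiagRat{\hr}{\hi} - \TimeScale\hr\hr^\t$; symmetry is clear. Fact (iv) with $\snorm{\hr} = 1$ gives $\Lr\hr = 0$, and $\DiagRat{\hr}{\hi}K_{\I,\I}\DiagRat{\hr}{\hi}\hr = \TimeScale\hr$ gives $\Kr\hr = 0$; thus $\Lr$ and $\Kr$ are symmetric with range $\hr^\perp$. Expanding $\Lr\Kr$, the cross term is $\DiagRat{\hi}{\hr}\Ki\DiagRat{\hi}{\hr}\DiagRat{\hr}{\hi}K_{\I,\I}\DiagRat{\hr}{\hi} = \DiagRat{\hi}{\hr}\Ki K_{\I,\I}\DiagRat{\hr}{\hi} = \Id$ by (iii), while the remaining three terms collapse, using $\Lr\hr = 0$, $\Kr\hr = 0$, $\hr^\t\hr = 1$, to $-\hr\hr^\t - \hr\hr^\t + \hr\hr^\t = -\hr\hr^\t$; hence $\Lr\Kr = \Id - \hr\hr^\t$, the orthogonal projector onto $\hr^\perp = \mathrm{range}(\Lr)$. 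The four Moore--Penrose conditions ($\Lr\Kr\Lr = \Lr$, $\Kr\Lr\Kr = \Kr$, and $\Lr\Kr$, $\Kr\Lr$ symmetric) then follow immediately, so $\Kr = \Lr^+$.

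The only real obstacle is the bookkeeping: carrying the three-term closed form of $\Cu$ through the diagonal conjugations in $L\,\Diag(h)\Cu$, and tracking the rank-one pieces of $\Lr$ and of $\Lr\Kr$ without a sign slip or a $\DiagRat{\hi}{\hr}$-versus-$\DiagRat{\hr}{\hi}$ mix-up. Once (i)--(iv) are in place each step is a short deterministic computation, and no further idea is required.
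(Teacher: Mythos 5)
Your proposal is correct and follows essentially the same route as the paper's proof: substitute the closed form of $\Cu$ from \Cref{th:committor} into $\Lr = \Co^\t L \Co$, simplify using $Lh = 0$ and the pseudoinverse identity for $LK$, obtain $\Rr$ by inverting the diagonal similarity transform, and verify $\Kr = \Lr^+$ by computing $\Lr\Kr = \Id - \hr\hr^\t$ and invoking $\Lr\hr = 0$. The only difference is organizational — you pre-record the helper identities (i)--(iv) and compute $L\,\Diag(h)\Cu$ as an intermediate, which makes the cancellations slightly more explicit, whereas the paper substitutes directly and cites $KLK = K$; these are equivalent simplifications.
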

\begin{proof}
    We first verify the formula for $\pr$:
    \begin{eqn}
        \pr &= \Cu^\t \pi = \Diag(\hi) \Ki \lrs{\frac{\hi (h^\t - \hi^\t \Ki K_{\I,:}) }{\hi^\t \Ki \hi} + K_{\I,:}} h
        = \frac{\Diag(\hi) \Ki  \hi}{\hi^\t \Ki \hi}
    \end{eqn}
    where the last equality follows from $K h = 0$ and $h^\t h = 1$.

    We next verify the formula for $\Lr$ following its definition \cref{eq:L-h-def} and $\TimeScale$ as above:
    \begin{eqn}
        \Lr &= \Co^\t L \Co \\
        &= \DiagRat{\hi}{\hr} \Ki \lrs{\frac{\hi (h^\t - \hi^\t \Ki K_{\I,:}) }{\hi^\t \Ki \hi} + K_{\I,:}} L \lrs{\frac{(h - K_{:,\I} \Ki \hi) \hi^\t}{\hi^\t \Ki \hi} + K_{:,\I}} \Ki \DiagRat{\hi}{\hr} \\
        &= \DiagRat{\hi}{\hr} \Ki \DiagRat{\hi}{\hr} - (\hi^\t \Ki \hi) \hr \hr^\t
    \end{eqn}
    where simplifications are yielded by considering that $L h = 0$ and $K L K = K$.
    The formula for $\Rr$ is straightforwardly shown via the relation $\Rr = \Minus \DiagI(h) L \Diag(h)$.

    Finally, we verify the formula for $\Kr$ by showing:
    \begin{eqn}
        \Lr \Kr &= \lrs{\DiagRat{\hi}{\hr} \Ki \DiagRat{\hi}{\hr} - (\hi^\t \Ki \hi) \hr \hr^\t} \lrs{\DiagRat{\hr}{\hi} K_{\I,\I} \DiagRat{\hr}{\hi} - (\hi^\t \Ki \hi)^\Mo \hr \hr^\t} \\
        &= \Id - \hr \hr^\t,
    \end{eqn}
    implying that $\Kr = \Lr^+$ since $\hr$ is the zero eigenvector of $\Lr$.
\end{proof}



\newcommand{\fopt}{f^{(i)}}
\ThDirichletTrace*
\begin{Proof}{th:dirichlet-trace}
    $\Tr[F^\t \Delta F]$ is a sum of independent terms for each vector $f^{(i)} \Eq F_{:,i}$.
    Therefore, the minimization problem over $F$ decouples over the columns into independent minimization problems 
    \begin{eqn}
    \label{eq:indargmin}
        \fopt = \argmin_{f \in \R^n}  \left\{ f^\t \Delta f \, : \, f_{\I} = \Id_{\I,i} \right\}
    \end{eqn}
    for each $i \in \I$.

    Recalling that $\Delta$ is symmetric positive semidefinite, the Euler-Lagrange equation for \Cref{eq:indargmin} is the Laplace equation:
    \begin{eqn}
      (\Delta \fopt)_{\Ic} &= \Zero, \qquad
         \fopt_{\I} &= \Id_{\I,i}.
      \label{eq:laplace-equation}
    \end{eqn}
    By \Cref{eq:flow-matrices}, it follows that $(R f^{(i)})_{\Ic} = \Zero$, and in turn that 
    \begin{eqn} \label{eq:optional-stopping}
      \fopt_k & = 
      \sum_{j \neq k } \Prob(k \ra j) \,  \fopt_j \text{ for all } k \in \Ic,
    \end{eqn}
    where $$\Prob(k \ra j) = \Minus \frac{R_{k,j}}{R_{k,k}}$$ is the probability that the chain $\FullChain$ in state $k$ will next jump to state $j$.
    
    By the optional stopping theorem (e.g., \cite{Durrett2019-probability}, Chapter 4.8), given \cref{eq:optional-stopping} and the prescribed boundary condition of $\fopt_{\I}$, the solution $\fopt$ is unique and satisfies:
    \begin{eqn}
      \fopt_k & = \E \big[ \fopt_{X_{\tau_{\,\I}}}  \, \big\vert \,  X_0 = k \big] \text{ for all } k \notin \I,
    \end{eqn}
    where $\tau_{\,\I}$ is the stopping time defined as in \cref{eq:stoppingtime}.
    Therefore $\fopt_k = \Cu_{k,i}$ with $\Cu_{k,i}$ the committor function from \Cref{def:sym-operators}.
    Straightforward computation yields that $\Cu^\t \Delta \Cu = \Diag(\hr) \Lr \Diag(\hr) = \hat{\Delta}$.
\end{Proof}


\ThInducedBasics*
\begin{Proof}{th:induced-basics}
    First, reversibility is implied since the matrix
    \begin{eqn} \label{eq:diagpiR}
        \Diag(\pr) \Rr = \Cu^\t \Diag(\pi) R \Cu
    \end{eqn}
    is symmetric and negative semidefinite by inspection. 
    (Recall that the original chain being reversible implies that $\Diag(\pi) R$ is symmetric and negative semidefinite.)
    Therefore $\Rr$ has nonpositive eigenvalues.
    
    We next verify that $\pr$ does in fact define the stationary distribution, recalling that $\pr \Eq \Cu^\t \pi$. 
    By the probabilistic formulation of $\Cu$ \cref{eq:committor-prob-def},
    \begin{eqn}
        \pr_i = \sum_{k \in \Indices} \Prob(X_{\tau_{\I}} = i \mid X_0 = k) \pi_k.
    \end{eqn}
    Therefore $\pr_i > 0$ for all $i \in \I$, and, since $\Cu \One = \One$, moreover we have that $\One^\t \pr = 1$.
    Again using $\Cu \One = \One$ and the stationarity of the original rate matrix $R$, we compute:
    \begin{eqn}
        \Rr \One &= \DiagI(\pr) \Cu^\t \Diag(\pi) R \Cu \One = \Cu^\t \Diag(\pi) R \One = \Zero \\
        \Rr^\t \pr &= \Cu^\t R^\t \Diag(\pi) \Cu \DiagI(\pr) \pr = \Cu^\t R^\t \Diag(\pi) \Cu \One = \Cu^\t R^\t \pi = \Zero,
    \end{eqn}
    verifying that $\pr$ is the stationary distribution and moreover that $\Rr \One  = \Zero$.
    
    Next, we establish that the offdiagonal elements are nonnegative, i.e., $\Rr_{i,j} \geq 0$ for all $i \neq j$.
    By \Cref{eq:diagpiR}, it suffices to prove the same about the matrix $\Cu^\t \Diag(\pi) R \Cu$.
    By \cref{eq:laplace-equation}, for any $i \in \I$, $\Delta \Cu_{:,i} = \Minus \Diag(\pi) R \Cu_{:,i}$ is zero on all rows except $\I$. 
   Then 
    \begin{eqn}
        \Cu^\t \Diag(\pi) R \Cu = \Cu^\t \Id_{:, \I} \Id_{\I, :} \Diag(\pi) R \Cu = \Diag(\pi_{\I}) R_{\I,:} \Cu, 
    \end{eqn}
    where we have additionally used the identity that $\Id_{\I,:}\Cu = \Cu_{\I,:} = \Id$.
    Therefore it suffices to show that $R_{i,:} \Cu_{:,j} \geq 0$ for all $i,j \in \I, i \neq j$.
    Using the fact that the rows of $R$ sum to 0, we compute that
    \begin{eqn}
        R_{i,:} \Cu_{:,j} = \sum_{k \neq i} R_{i,k} (\Cu_{k,j} - \Cu_{i,j}) = \sum_{k \neq i} R_{i,k} \Cu_{k,j}
    \end{eqn}
    where the last equality follows from $\Cu_{i,j} = 0$, since $\Prob(X_{\tau_{\I}} = i \mid X_0 = i) = 1$.
    The final right-hand side is nonnegative since (1) $R_{i,k} \geq 0$ for any $i \neq k$ by the definition of the original chain's rate matrix and (2) $\Cu$ is entrywise nonnegative via its probabilistic definition.
    
    To guarantee irreducibility, consider that $\Cu_{k,i}$ is positive for all states $k \in \Indices$ which connect to $i$ via a path not involving $\Iminus{i}$.
    Therefore $\Rr_{i,j}$ is nonzero for any $i \neq j$ where $i$ and $j$ are connected in the adjacency graph of $R$ by a path not involving $\I \setminus \lrb{i,j}$.
    Thus $i$ and $j$ are connected in $\Rr$ if there is any path connecting them in the adjacency graph of $R$.
    Considering that the adjacency graph of $R$ is connected (since $R$ is irreducible), then it follows that the adjacency graph of $\Rr$ must be connected. 
    Therefore $\Rr$ is irreducible.

    Given that $\Rr$ is irreducible, the stationary distribution is unique.
    Together with the negative semidefiniteness of $\Diag(\pr) \Rr$, this implies that the diagonal of $\Diag(\pr) \Rr$ (and therefore $\Rr$) is strictly negative.
\end{Proof}

\ThInducedInterpretation*
\begin{Proof}{th:induced-interpretation}
    For claim 1, we compute:
    \begin{eqn}
        \Prob(\tau_{\Iminus{i}} > t \mid X_0 = i) = \Id_{i,:} \Ert{R_{\Ic \cup \lrb{i},\Ic \cup \lrb{i}}} \One.
    \end{eqn}
    It simplifies the linear algebra to express this quantity as the limit:
    \begin{eqn}
        \Prob(\tau_{\Iminus{i}} > t \mid X_0 = i) = \lim_{\beta \ra 0} \Id_{i,:} \Ert{(R - \beta \One \pi^\t)_{\Ic \cup \lrb{i},\Ic \cup \lrb{i}}} \One.
    \end{eqn}
    Then by \Cref{lem:expalpha}, we express the expected time via a limit $\alpha \ra \infty$ of a perturbed probability evolution where the chain is killed instantly on reaching $\Iminus{i}$.
    Given this approach, the probability of the chain not having reached $\Iminus{i}$ at time $t$ is: 
    \begin{eqn}
        \Prob(\tau_{\Iminus{i}} > t \mid X_0 = i) = \lim_{\Shift \ra 0} \lim_{\alpha \ra \infty} \Id_{i,:} \Ert{(R - \Shift \One \pi^\t - \alpha \Id_{:,\Iminus{i}} \Id_{:,\Iminus{i}}^\t)} \One.
    \end{eqn}
    So, integrating to find the expected lifetime of such a process:
    \begin{eqn}
        \Ex(\tau_{\Iminus{i}} \mid X_0 = i) &= \lim_{\Shift \ra 0} \lim_{\alpha \ra \infty} \int_0^\infty \Id_{i,:} \Ert{(R - \Shift \One \pi^\t - \alpha \Id_{:,\Iminus{i}} \Id_{:,\Iminus{i}}^\t)} \One \odif t.
    \end{eqn}
    Then we compute the right-hand integral for finite $\Shift > 0$ and $\alpha > 0$ to find that:
    \begin{eqn}
        \Ex(\tau_{\Iminus{i}} \mid X_0 = i) &= \lim_{\Shift \ra 0} \lim_{\alpha \ra \infty} \Minus \Id_{i,:} (R - \Shift \One \pi^\t - \alpha \Id_{:,\Iminus{i}} \Id_{:,\Iminus{i}}^\t)^\Mo \One \\
        &= \lim_{\Shift \ra 0} \lim_{\alpha \ra \infty} h_i^\Mo \Id_{i,:} (L + \Shift h h^\t + \alpha \Id_{:,\Iminus{i}} \Id_{:,\Iminus{i}}^\t)^\Mo h
    \end{eqn}
    where the last line follows from the definition of $L$, $h$, and $\pi$.
    Next, let $\Kb \Eq (L + \Shift h h^\t)^\Mo = K + \Shift^\Mo h h^\t$. 
    Then, by use of the Woodbury matrix identity, we find that:
    \newcommand{\Kbi}{((\Kb)_{\I,\I}^\Mo)}
    \begin{eqn} \label{eq:Rii-shift-limit}
        \Ex(\tau_{\Iminus{i}} \mid X_0 = i) &= \lim_{\Shift \ra 0} \frac{h^\t (\Kb)_{:,\I} \Kbi_{:,i} }{h_i \Kbi_{i,i}}.
    \end{eqn}
    Invoking the Sherman-Morrison formula and taking the limit $\Shift \ra 0$, we compute that:
    \begin{eqn}
        \Ex(\tau_{\Iminus{i}} \mid X_0 = i) &= h_i^\Mo \lrp{ \Ki \hi - \frac{(\hi^\t \Ki \hi) (\Ki)_{i,i}}{(\Ki)_{i,:} \hi} }^\Mo \\
        &= \Mo / \Rr_{i,i}
    \end{eqn}
    with the last line following by comparison to \cref{eq:induced-R-from-K} in \Cref{th:induced-from-k}.
    
    For claim 2, we use the same limiting approach, but now we measure the accumulated flux of the perturbed chain into states $\Iminus{i}$ by the following integral:
    \begin{eqn}
        \Prob(X_{\tau_{\Iminus{i}}} = j \mid X_0 = i) = \lim_{\Shift \ra 0} \lim_{\alpha \ra \infty} \int_0^\infty \Id_{i,:} \Ert{(R - \Shift \One \pi^\t - \alpha \Id_{:,\Iminus{i}} \Id_{:,\Iminus{i}}^\t)} (R - \Shift \One \pi^\t)_{:,j} \odif t
    \end{eqn}
    Again computing the integral, we compute that:
    \begin{eqn}
        \Prob(X_{\tau_{\Iminus{i}}} = j \mid X_0 = i) &= \lim_{\Shift \ra 0} \lim_{\alpha \ra \infty} 
            \Minus \Id_{i,:} \lrp{R - \Shift \One \pi^\t - \alpha \Id_{:,\Iminus{i}} \Id_{:,\Iminus{i}}^\t}^\Mo (R - \Shift \One \pi^\t)_{:,j} \\
            &= \lim_{\Shift \ra 0} \lim_{\alpha \ra \infty} \Minus \frac{h_j}{h_1} \Id_{i,:} \lrp{(L + \Shift h h^\t) + \alpha \Id_{:,\Iminus{i}} \Id_{:,\Iminus{i}}^\t}^\Mo (L + \Shift h h^\t)_{:,j}.
    \end{eqn}
    By further use of the Woodbury matrix identity, we find that:
    \begin{eqn}
        \Prob(X_{\tau_{\Iminus{i}}} = j \mid X_0 = i) &= \lim_{\Shift \ra 0} \Minus \frac{h_j \Kbi_{j,i}}{h_i \Kbi_{i,i}}
    \end{eqn}
    so that by invoking \cref{eq:Rii-shift-limit},
    \begin{eqn}
        \Prob(X_{\tau_{\Iminus{i}}} = j \mid X_0 = i) &= \Minus \Rr_{i,i}^\Mo \lim_{\Shift \ra 0} \Minus \frac{\Diag(\hi) \Kbi_{:,i}}{h^\t (\Kb)_{:,\I} \Kbi_{:,i}}.
    \end{eqn}
    Then, evaluating the limit gives:
    \begin{eqn}
        \Prob(X_{\tau_{\Iminus{i}}} = j \mid X_0 = i) &= \Minus \Rr_{i,i}^\Mo \cdot h_j \lrp{ \Kbi_{j,:} \hi - \frac{\hi^\t (\Kb)_{\I,\I}^\Mo \hi}{\Kbi_{i,:} \hi} \Kbi_{i,j} }
        &=  \Minus \frac{\Rr_{i,j}}{\Rr_{i,i}}
    \end{eqn}
    with the last line again by comparing to entry $\Rr_{i,j}$ from \cref{eq:induced-R-from-K} in \Cref{th:induced-from-k}.

    Finally, for claim 3, we much more simply compute that:
    \begin{eqn}
        \pr_i = \sum_{j} \pi_j \Cu_{j,i} = \sum_{j} \pi_j \Prob(X_{\tau_{\I}} = i \mid X_0 = j) = \Prob(X_{\tau_{\I}} = i).
    \end{eqn}
\end{Proof}

\begin{corollary}[Preservation of mean first passage times] \label{th:hitting-times-preserved}
For states $i, j \in \I$, consider the mean first passage time for $\FullChain$ (respectively $\InducedChain$) started in $i$ to reach $j$ and denote it $H_{i,j}$ (respectively $\hat{H}_{i,j}$).
        Then $H_{i,j} = \hat{H}_{i,j}$.
\end{corollary}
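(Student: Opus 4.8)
The plan is to show that the induced chain $\InducedChain$, viewed as a continuous-time process on $\I$, has the \emph{same} embedded jump chain and the \emph{same} mean holding times as the process obtained from $\FullChain$ by recording successive visits to distinct states of $\I$. Since a mean first passage time between two states of $\I$ is determined by these two data alone, the two values must agree. The case $i=j$ is trivial, as $H_{i,i}=0=\hat H_{i,i}$, so I assume $i\neq j$ (hence $\Ia>1$, and \Cref{th:induced-interpretation} applies).

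Running $\FullChain$ from $X_0=i$, set $\sigma_0\Eq 0$, $Z_0\Eq i$, and recursively
\[
\sigma_{m+1}\Eq\min\{t\geq\sigma_m:X_t\in\Iminus{Z_m}\},\qquad Z_{m+1}\Eq X_{\sigma_{m+1}},
\]
so that $(Z_m)_{m\geq 0}$ lists the distinct states of $\I$ visited in order. By the strong Markov property at $\sigma_m$ together with \Cref{th:induced-interpretation} (claims 1 and 2), writing $\mathcal F$ for the natural filtration,
\[
\Ex\big[\sigma_{m+1}-\sigma_m\,\big|\,\mathcal F_{\sigma_m}\big]=-\Rr_{Z_m,Z_m}^{-1},\qquad \Prob\big(Z_{m+1}=\ell\,\big|\,\mathcal F_{\sigma_m}\big)=-\Rr_{Z_m,\ell}/\Rr_{Z_m,Z_m}\quad(\ell\neq Z_m).
\]
Thus $(Z_m)$ is a Markov chain whose transition matrix is the jump chain of the rate matrix $\Rr$ (a bona fide rate matrix by \Cref{th:induced-basics}), and conditionally on the whole path $(Z_m)$ the increments $\sigma_{m+1}-\sigma_m$ have mean $-\Rr_{Z_m,Z_m}^{-1}$. (Note these increments are not exponential, unlike the holding times of $\InducedChain$, but only their means will matter.)

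Next I would observe that the first visit of $\FullChain$ to $j$ occurs exactly at $\sigma_N$ with $N\Eq\min\{m\geq 1:Z_m=j\}$: on each interval $[\sigma_m,\sigma_{m+1})$ the only state of $\I$ visited is $Z_m$, so $X_t=j$ for some $t<\sigma_N$ is impossible, while $X_{\sigma_N}=Z_N=j$; irreducibility of $\FullChain$ gives $N<\infty$ almost surely. Since $\{N>m\}$ is determined by $Z_1,\dots,Z_m$ and hence lies in $\mathcal F_{\sigma_m}$, summing the identities above over $m\geq 0$ and applying Tonelli yields, with $\Ex_i$ the expectation conditioned on $X_0=i$,
\[
H_{i,j}=\Ex_i[\sigma_N]=\Ex_i\!\Big[\sum_{m=0}^{N-1}(\sigma_{m+1}-\sigma_m)\Big]=\Ex_i\!\Big[\sum_{m=0}^{N-1}\big(-\Rr_{Z_m,Z_m}^{-1}\big)\Big].
\]
The identical computation for $\InducedChain$ started at $i$ — whose embedded jump chain is Markov with the same transition probabilities $\{-\Rr_{\ell,\ell'}/\Rr_{\ell,\ell}\}$ and whose holding time at $\ell$ is exponential with mean $-\Rr_{\ell,\ell}^{-1}$ — gives $\hat H_{i,j}=\hat\Ex_i\big[\sum_{m=0}^{N-1}(-\Rr_{Z_m,Z_m}^{-1})\big]$ with $(Z_m)$ and $N$ having exactly the same joint law as above. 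Hence $H_{i,j}=\hat H_{i,j}$.

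The main obstacle is the bookkeeping in the last two steps: one must verify that $(Z_m)$ is genuinely Markov with the stated transition law (so the joint distribution of $(Z_m)_{m<N}$ and $N$ is literally the same under $\FullChain$ and $\InducedChain$), and that the first hitting time of $j$ coincides with $\sigma_N$ rather than some earlier time within an excursion. Both facts follow cleanly from the strong Markov property and the definition of the $\sigma_m$; everything else is linearity of expectation.
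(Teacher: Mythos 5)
Your proof is correct, and it takes a genuinely different route from the paper. The paper's proof is linear-algebraic: it invokes Aldous's formula $H_{i,j}=S_{j,j}-S_{i,j}$ with $S = \mathrm{diag}^{-1}(h)\,K\,\mathrm{diag}^{-1}(h)$ (and the analogous $\hat S$ for $\InducedChain$), then uses the closed-form expression for $\hat K$ from \Cref{th:induced-from-k} to show $\hat S = S_{\I,\I} - \omega\One\One^\t$, so that the constant rank-one shift cancels in the difference $\hat S_{j,j}-\hat S_{i,j}$. Your proof is pathwise/probabilistic: you identify the sequence of distinct $\I$-states visited by $\FullChain$ with the embedded jump chain of $\InducedChain$, use \Cref{th:induced-interpretation} (claims 1 and 2) via the strong Markov property to match both the transition law and the conditional mean inter-visit times, and then conclude via a Wald-type decomposition that the two mean hitting times are equal because they are given by the same functional of the same Markov chain law. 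Your argument needs only the probabilistic characterization of $\hat R$ and is conceptually transparent; the paper's argument leans on the explicit formula for $\hat K$ and is shorter given that formula is already in hand. One small remark: you should note (as you implicitly do with "$\Ia>1$") that you also need $i\ne j$, since \Cref{th:induced-interpretation} requires $\Ia>1$; the case $i=j$ is handled trivially as $H_{i,i}=\hat H_{i,i}=0$.
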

\begin{proof}
    Let $S \Eq \DiagI(h) K \DiagI(h)$ and $\Sr \Eq \DiagI(\hr) \Kr \DiagI(\hr)$, where $\Kr \Eq \Lr^+$.
    Then from Lemma~2.12 of \cite{Aldous1995-reversible},
    \begin{eqn}
        H_{i,j} = S_{j,j} - S_{i,j}, \qquad \hat{H}_{i,j} = \Sr_{j,j} - \Sr_{i,j}.
    \end{eqn}
    Recalling our definition $\TimeScale := (\hi^\t \Ki \hi)^\Mo$ \Cref{eq:omega} and substituting the formula for $\Kr$ from \Cref{th:induced-from-k}:
    \begin{eqn}
        \Sr &= \DiagI(\hr) \lrs{\Diag(\hr / \hi) K_{\I,\I} \Diag(\hr / \hi) - \TimeScale \hr \hr^\t} \DiagI(\hr) \\
        &= \DiagI(\hi) K_{\I,\I} \DiagI(\hi) - \TimeScale \One \One^\t \\
        &= S_{\I,\I} - \TimeScale \One \One^\t.
    \end{eqn}
    This implies $\hat{H}_{i,j} = H_{i,j}$, since the constant matrix $\TimeScale \One \One^\t$ vanishes when subtracting $\Sr_{j,j} - \Sr_{i,j}$.
\end{proof}

\ThInducedFlow*
\begin{Proof}{th:induced-flow}
    For the first statement, we rearrange terms:
    \begin{eqn} \label{eq:flow-1}
        \lim_{t \ra 0} \frac{1}{2t} \Ex \lrs{ (\Id_{X_t,:} - \Id_{X_0,:})^\t (\Id_{X_t,:} - \Id_{X_0,:})  } &= 
        \lim_{t \ra 0} \frac{1}{2t} \Ex \lrs{ \Id_{X_0,:}^\t (\Id_{X_0,:} - \Id_{X_t,:})
        + \Id_{X_t,:}^\t (\Id_{X_t,:} - \Id_{X_0,:}) } \\
        &= \lim_{t \ra 0} \frac{1}{t} \Ex \lrs{ \Id_{X_0,:}^\t (\Id_{X_0,:} - \Id_{X_t,:}) }
    \end{eqn}
    where the second line follows from reversibility.
    Then:
    \begin{eqn} \label{eq:flow-2}
        \lim_{t \ra 0} \frac{1}{t} \Ex \lrs{ \Id_{X_0,i} (\Id_{X_0,j} - \Id_{X_t,j}) }
        &= \Prob(X_0 = i) \cdot \lim_{t \ra 0} \frac{1}{t} \Ex \lrs{ \Id_{X_0,j} - \Id_{X_t,j} \mid X_0 = i}  \\
        &= \Prob(X_0 = i) \cdot (\Minus R_{i,j}) = \Minus \pi_i R_{i,j}.
    \end{eqn}
    The second statement follows by the exact same procedure. 
    For the third statement, by insertion of identity and use of \cref{eq:flow-1,eq:flow-2}:
    \begin{eqn}
        \lim_{t \ra 0} \frac{1}{2t} \Ex \lrs{ (\Cu_{X_t,:} - \Cu_{X_0,:})^\t (\Cu_{X_t,:} - \Cu_{X_0,:})  }
        &= \lim_{t \ra 0} \frac{1}{2t} \Ex \lrs{ \Cu^\t (\Id_{X_t,:} - \Id_{X_0,:})^\t (\Id_{X_t,:} - \Id_{X_0,:}) \Cu  } \\
        &= \Cu^\t \lim_{t \ra 0} \frac{1}{2t} \Ex \lrs{ (\Id_{X_t,:} - \Id_{X_0,:})^\t (\Id_{X_t,:} - \Id_{X_0,:}) } \Cu \\
        &= \Minus \Cu^\t \Diag(\pi) R \Cu
    \end{eqn}
\end{Proof}

\subsection{Features of structure-preserving compression \label{a:sp-features}}

\ThInducedIsLaplacian*
\begin{Proof}{th:induced-is-laplacian}
    This a corollary of \Cref{th:induced-basics}.
    First, observe that 
    \begin{eqn}
        \Lr = \Minus \Diag(\hr) \Rr \DiagI(\hr) = \Minus\DiagI(\hr) \Cu^\t \Diag(\pi) R \Cu \DiagI(\hr).
    \end{eqn}
    Since $\Diag(\pi) R$ is symmetric by the reversibility of the original chain, $\Lr$ is therefore symmetric.
    Moreover, since $\hr$ is entrywise positive, it follows from \Cref{th:induced-basics} that $\Lr_{i,j} \leq 0$ for all $i \neq j$ and $\Lr_{i,i} > 0$ for all $i$.

    Next, since $\Lr$ is obtained from $\Minus\Rr$ by a similarity transform, it has the same eigenvalues as $\Minus\Rr$.
    This shows that $\Lr$ is positive semidefinite with a single zero eigenvalue.
    
    Finally, observe that $\snorm{\hr} = 1$ since $\One^\t \hr^2 = \One^\t \pr = 1$.
    Therefore we verify that the zero eigenvalue corresponds to eigenvector $\hr$ by computing that $\Lr \hr = \Minus\Diag(\hr) \Rr \DiagI(\hr) \hr = \Minus\Diag(\hr) \Rr \One = \Zero$.
\end{Proof}


\ThSpNonnegativity*
\begin{Proof}{th:sp-nonnegativity}
    To prove entrywise nonnegativity, we expand the definition of $\NonSymObliqueP_{\I}(t)$:
    \begin{eqn}
        \NonSymObliqueP_{\I}(t) = \Cu \ert{\Rr}  \DiagI(\pr) \Cu^\t \Diag(\pi).
    \end{eqn}
    $\Cu$ is entrywise nonnegative by its probabilistic construction, while $\p$ and $\pr$ are entrywise positive (the latter by its definition in \cref{eq:stationary-chp}).
    Since $\Rr$ is nonnegative outside of its diagonal (by \Cref{th:induced-basics}), $\ert{\Rr}$ is entrywise nonnegative.
    Therefore $\NonSymObliqueP_{\I}(t)$ is entrywise nonnegative, and $\ObliqueP_{\I}(t)$ is as well, since the latter is obtained as a similarity transformation of the former via $\Diag(h) \cdots \DiagI(h)$.

    Next, since $\ert{\Rr}$ is the solution of the probability evolution of a Markov chain with rate matrix $\Rr$, it follows that $\ert{\Rr} \One = \One$.
    Therefore, since $\pr \Eq \Cu^\t \pi$ and $\Cu \One = \One$:
    \begin{eqn}
        \Cu \ert{\Rr}  \DiagI(\pr) \Cu^\t \Diag(\pi) \One = \Cu \ert{\Rr} \One = \Cu \One = \One,
    \end{eqn}
    showing that $\NonSymObliqueP_{\I}(t)$ is row stochastic.

    Finally, by the convergence theorem for irreducible Markov chains, $\lim_{t \ra \infty} \ert{\Rr} = \One \pr^\t$.
    Since $\expt{\Lr} = \Diag(\hr) \ert{\Rr} \DiagI(\hr)$ by the definition of $\Lr$, this shows that 
    $\lim_{t \ra \infty} \expt{\Lr} = \hr \hr^\t$.
    Therefore $\lim_{t \ra \infty} \ObliqueP_{\I}(t) = \Co \hr \hr^\t \Co^\t = h h^\t$ by direct computation.
    Since $\Lr$ is symmetric, $\NonSymObliqueP_{\I}(t)$ is symmetric by inspection of \cref{eq:oblique-p}.
\end{Proof}

\ThSpIntegral*
\begin{Proof}{th:sp-integral}
    Using $\Ca \Eq \KA_{:, \I} \KA_{\I,\I}^\Mo \Diag\lrp{\tfrac{\hi}{\ha}}$ (\Cref{def:killed-committor}), we compute:
    \begin{eqn}
        \int_0^\infty \ObliqueP_{\I,\Kill}(t) \; \odif t 
        &= \int_0^\infty \Ca \expt{\Ca^\t \La \Ca} \Ca^\t \; \odif t \\
        &= \Ca (\Ca^\t \La \Ca)^\Mo \Ca^\t \\
        &= \KA_{:, \I} \KA_{\I,\I}^\Mo \Diag\lrp{\tfrac{\hi}{\ha}} \lrs{\Diag\lrp{\tfrac{\hi}{\ha}} \KA_{\I,\I}^\Mo \Diag\lrp{\tfrac{\hi}{\ha}}}^\Mo \Diag\lrp{\tfrac{\hi}{\ha}} \KA_{\I,\I}^\Mo \KA_{\I,:} \\
        &= \KA_{:, \I} \KA_{\I,\I}^\Mo \KA_{\I,:}.
    \end{eqn}
\end{Proof}

\subsection{Marked process formulation \label{a:marked-formulation}}

\ThMarkedChainDynamics*
\begin{Proof}{th:marked-chain-dynamics}
    We will explain each subblock of $\Rex$, though the formulation may best be verified by inspection.
    \begin{enumerate}
        \item \underline{Upper-left block (marked state $\ra$ marked state)}.
        The rates between marked states $(i,i) \ra (i',i')$ are inherited from the rates $i \ra i'$ in the original chain $\FullChain$. 
        Other than transitions to other marked states, a marked state $(i,i)$ can only transition to an unmarked state $(i,j)$, $j \in \Ic$, with the same marking. Thus the outgoing rate (reflected in the diagonal of $\Rex_{:\Ia,:\Ia}$) is unchanged as well.
        \item \underline{Lower-right block (unmarked state $\ra$ unmarked state)}.
        The rates between unmarked states $(i,j) \ra (i',j')$ are inherited from the rates $j \ra j'$ in the original chain $\FullChain$ when $i=i'$, otherwise they are zero. Indeed, 
        $\Id_{\I,\I} \otimes R_{\Ic,\Ic}$ is the block-diagonal repetition of $R_{\Ic,\Ic}$.
        \item \underline{Upper-right block (marked state $\ra$ unmarked state)}.
        The only unmarked states to which a marked state $(i,i)$ can transition are those of the form $(i,j)$, $j\in \Ic$, sharing the same marking, with rates inherited from the rates $i \ra j$ in the original chain $\FullChain$.
        The face-splitting product selects out these feasible transitions and endows them with the inherited rates from $R$.
        \item \underline{Lower-left block (unmarked state $\ra$ marked state)}.
        Any unmarked state $(i,j)$ is allowed to transition to any marked state $(k,k)$, with a rate inherited from the rate $j \ra k$ in the original chain $\FullChain$.
        Block repetition of the rates $R_{\Ic,\I}$ yields $\One_{\I} \otimes R_{\Ic,\I}$.
    \end{enumerate}
    
    Now we turn to the stationary distribution $\pe$. By ergodicity, the formula for $\pe$ can be computed by considering the law of the marked chain in the infinite-time limit, from arbitrary initialization. In particular, in this limit, the top block must coincide with the limiting law of the original chain, restricted to $\I$, i.e., $\pi_{\I}$.
    
    Meanwhile, in this limit the probability that the chain is in another state $j \in \Ic$ while last having visited $i \in \I$ is $\pi_j \Cu_{j,i}$.
    To see this, note that $\lim_{t \ra \infty} \Prob(X_t = j) = \pi_j$, while $\lim_{t \ra \infty} \Prob(X_s = i : s = \max(s': s' \leq t, X_{s'} \in \I) \mid X_t = j) = \Cu_{j,i}$ by considering the chain run in reverse.
    This determines the lower block of $\pe$.
\end{Proof}


\PropMarkeig*
\begin{Proof}{prop:markeig}
We will first exhibit $n$ right eigenvectors of $\Rex$. To do so,
let $(\lambda,z)$ denote an eigenpair of the original rate matrix,
and consider the blocks $z_{\I}\in\R^{\vert\I\vert}$, $z_{\Ic}\in\R^{n-\vert\I\vert}$.
Then we claim that 
\begin{equation}
\left[\begin{array}{c}
z_{\I}\\
\mathbf{1}_{\I}\otimes z_{\Ic}
\end{array}\right]\label{eq:righteig}
\end{equation}
 defines a right eigenvector of $\Rex$ with eigenvalue $\lambda$.
This can be verified directly using the block presentation of $\Rex$
\Cref{eq:expanded-R}.

Next, for every eigenpair $(\lambda,v)$ of of $R_{\Ic,\Ic}$, we
exhibit $\vert\I\vert-1$ linearly independent left eigenvectors of
$\Rex$ with the same eigenvalue $\lambda$. To do so, let $u\in\R^{\vert\I\vert}$
such that $\mathbf{1}_{\I}^{\top}u=0$. We claim that in this case,
\begin{equation}
\left[\begin{array}{c}
0\\
u\otimes v
\end{array}\right]\label{eq:lefteig}
\end{equation}
 defines a left eigenvector of $\Rex$ with eigenvalue $\lambda$.
This can also be verified directly using the block presentation of
$\Rex$ \Cref{eq:expanded-R}. Since we can choose $\vert\I\vert-1$ linearly independent
vectors $u$ satisfying $\mathbf{1}_{\I}^{\top}u=0$, we have achieved
our goal.

Note that 
\[
n+(\vert\I\vert-1)(n-\vert\I\vert)=m:=\vert\I\vert+\vert\I\vert(n-\vert\I\vert),
\]
 the dimension of the matrix $\Rex$. So formally, it seems that we
have exhausted all the eigenvalues. But if some eigenvalues of $R$
and $R_{\Ic,\Ic}$ coincide with one another, it is not clear at this
point that they are repeated with the appropriate multiplicity as
eigenvalues of $\Rex$.

We can rule out such difficulties emphatically without resorting to
soft arguments. To do so, let $X\in\R^{m\times n}$ collect our linearly
independent right eigenvectors (\ref{eq:righteig}) as its columns,
and let $Y\in\R^{m\times(m-n)}$ collect our linearly independent
right eigenvectors (\ref{eq:lefteig}) as its columns. Therefore 
\begin{equation}
\Rex X=X\Lambda_{\mathrm{R}},\quad\Rex^{\top}Y=Y\Lambda_{\mathrm{L}},\label{eq:leftright}
\end{equation}
 where $\Lambda_{\mathrm{R}}$ and $\Lambda_{\mathrm{L}}$ are diagonal
matrices collecting the right and left eigenvalues, respectively.
Importantly, note via (\ref{eq:righteig}) and (\ref{eq:lefteig})
that our right eigenvectors are orthogonal to our left eigenvectors,
i.e., $Y^{\top}X=0$.

Then define a change-of-basis matrix 
\[
G=\left(\begin{array}{cc}
X & Y(Y^{\top}Y)^{-1}\end{array}\right),
\]
 and observe that 
\[
G^{-1}=\left(\begin{array}{c}
(X^{\top}X)^{-1}X^{\top}\\
Y^{\top}
\end{array}\right).
\]
One can verify by direct computation that $G^{-1}G=\Id$.

Then we can compute 
\[
G^{-1}\Rex G=\left(\begin{array}{cc}
(X^{\top}X)^{-1}X^{\top}\Rex X & *\\
0 & Y^{\top}\Rex Y(Y^{\top}Y)^{-1}
\end{array}\right)=\left(\begin{array}{cc}
\Lambda_{\mathrm{R}} & *\\
0 & \Lambda_{\mathrm{L}}
\end{array}\right),
\]
 where the upper-right block is unimportant to us and the second equality
follows from (\ref{eq:lefteig}). Then we have triangularized $\Rex$,
revealing the eigenvalue structure that we desired to prove.
\end{Proof}

\subsection{Properties of projections of the marked process \label{a:marked-properties}}

Here we provide proofs of the fundamental linear-algebraic properties satisfied by $\Co$ (the committor), $\Qo$ (the projection to position space), and $\So$ (the projection to marking space).
We must also consider the analogous killed operators $\Ca$ (\Cref{def:killed-committor}) and $\Qa$, $\Sa$ defined as follows:

\begin{definition}[Formulation of marked chain operators including killing] \label{def:killed-projections}
For a killing rate $\Kill > 0$ and $\Ca$ as in \Cref{def:killed-committor}, define:
\begin{eqn}
    \pea & \  \Eq \  \begin{bmatrix} \pi_{\I} \\ \mathrm{vec}(\Diag(\pi_{\Ic}) (\Cu_\Kill)_{\Ic, :}) \end{bmatrix}, \qquad
    &\hea & \  \Eq \  \pea^{1/2}, \\
    \Qa & \  \Eq \  \Diag(\hea) \Qu \DiagI(h), \qquad
    &\Sa & \  \Eq \ \Diag(\hea) \Su \DiagI(\ha).
\end{eqn}
Under the same indexing convention as in \Cref{eq:pealt}, since $(\tilde{C}_\gamma)_{\I,:} = \Id_{\I,\I}$, we can write 
\begin{eqn} \label{eq:pealt2}
(\pea)_{(i,j)} = (\tilde{C}_{\gamma})_{j,i} \, \pi_j,
\end{eqn}
extending the above specification by zeros to the impossible states.
\end{definition}




First, while $\Co$ is not semi-orthogonal (i.e. $\Co^\t \Co \neq \Id$), it is straightforward to show that $\Co$ is a contraction as follows:

\begin{lemma}[Committor as a contraction] \label{th:committor-norm}
    For $\gamma > 0$ let $\Ca$ be as in \Cref{def:killed-committor} and $\Co = \lim_{\Kill \ra 0} \Ca$ as in \Cref{th:killed-committor}.
    Then $\Ca$ is a contraction in the sense that 
    \begin{eqn}
        \snorm{\Ca} \leq 1
    \end{eqn}
    for $\Kill > 0$. Moreover, 
    \begin{eqn}
        \snorm{\Co} = 1, \quad
        \Co^\t h = \hr, \quad
        \Co \hr = h.
    \end{eqn}
\end{lemma}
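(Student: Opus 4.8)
The plan is to prove the bound for the killed committor $\Ca$ first, and then obtain the three statements about $\Co$ by taking the $\gamma \ra 0$ limit together with one direct computation. For the killed case, the key observation is the probabilistic interpretation established in \Cref{th:killed-committor-prob}: the unsymmetrized killed committor $\Cu_\gamma$ has row sums bounded by $1$, since $(\Cu_\gamma)_{i,j} = \Prob(\Xa_{\tau_{\I \cup \{x\}}} = j \mid \Xa_0 = i)$ is a sub-probability distribution over $j \in \I$ (mass can escape to the cemetery state $x$). In matrix terms $\Cu_\gamma \One_{\I} \leq \One$ entrywise, with all entries nonnegative. The symmetrized version is $\Ca = \Diag(h)\, \Cu_\gamma \DiagI(\ha)$, and I would bound $\snorm{\Ca}$ by exhibiting a suitable positive diagonal congruence: concretely, note that $\pea$ as defined in \Cref{def:killed-projections} satisfies $(\pea)_{(i,j)} = (\Cu_\gamma)_{j,i}\,\pi_j$, which is exactly the statement that $\Diag(\pa) = \Cu_\gamma^\t \Diag(\pi) \Cu_\gamma$ evaluated against... — more directly, I would show $\Ca^\t \Ca \preceq \Id$ by checking that $\Ca$ is a contraction with respect to the standard inner product using the fact that both $\Cu_\gamma$ and its stationary-adjoint $\DiagI(\pa)\Cu_\gamma^\t \Diag(\pi)$ are substochastic (row sums $\le 1$); then $\snorm{\Ca}^2 = \snorm{\Ca^\t \Ca} \le 1$ follows from the general fact that a matrix which is substochastic together with its transpose-conjugate (in the appropriate weighting) has spectral norm at most $1$ after the symmetrizing similarity. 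Equivalently and perhaps most cleanly: $\Ca = \Qo_\gamma^{\,\t}$-type projection composition — but since $\Qo_\gamma$ need not be semi-orthogonal for $\gamma>0$, I would instead argue directly that $\Ca = \Diag(h)\Cu_\gamma\DiagI(\ha)$ maps the unit ball into itself by the substochasticity of $\Cu_\gamma$ in both the forward and adjoint senses.

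Next, for $\Co$: by \Cref{th:killed-committor} we have $\Ca \ra \Co$ as $\gamma \ra 0$, and since the spectral norm is continuous, $\snorm{\Co} \le \lim_{\gamma \ra 0}\snorm{\Ca} \le 1$. To get equality $\snorm{\Co} = 1$ it suffices to exhibit a unit vector on which $\Co$ acts as an isometry; this will follow from the identity $\Co \hr = h$ together with $\snorm{\hr} = 1$ and $\snorm{h}=1$ (recall $\norm{h}_2 = 1$ from \Cref{sec:notation} and $\norm{\hr}_2 = 1$ from the remark after \Cref{def:sym-operators}). So the two identities $\Co^\t h = \hr$ and $\Co \hr = h$ are the crux. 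For $\Co^\t h = \hr$: unpack $\Co = \Diag(h)\Cu\DiagI(\hr)$, so $\Co^\t h = \DiagI(\hr)\Cu^\t \Diag(h) h = \DiagI(\hr)\Cu^\t h^{\otimes 2} = \DiagI(\hr)\Cu^\t \pi = \DiagI(\hr)\pr = \hr$, using $\pr = \Cu^\t \pi$ \Cref{eq:stationary-chp} and $\hr = \pr^{1/2}$, and the fact that $\hr$ is entrywise positive so the diagonal division is legitimate (the entries of $\pr$ are positive per \Cref{def:sym-operators}). For $\Co \hr = h$: compute $\Co\hr = \Diag(h)\Cu\DiagI(\hr)\hr = \Diag(h)\Cu\One = \Diag(h)\One = h$, using $\Cu\One = \One$ (stated just after \Cref{def:sym-operators}).

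I expect the main obstacle to be the rigorous argument for $\snorm{\Ca} \le 1$ for $\gamma > 0$. The clean conceptual route — that $\Ca$ equals $\Qa^\t \Sa$ restricted appropriately — is complicated here because, as the excerpt warns, $\Qa$ is \emph{not} semi-orthogonal for $\gamma > 0$ (only $\Qo$ is, in the $\gamma \ra 0$ limit), so I cannot directly invoke an orthogonal-projection composition. Instead I would phrase it via substochasticity: let $M \Eq \Diag(\hi/\ha)^{-1} \Cu_\gamma^\t \Diag(h)$ act the other way, check that both $\Cu_\gamma$ (forward) and the stationary-reversal of $\Cu_\gamma$ are substochastic using \Cref{th:killed-committor-prob} and a reversed-chain argument analogous to the one in the proof of \Cref{th:marked-chain-dynamics}, and then conclude $\snorm{\Ca} \le 1$ from the fact that the maximum of the $\ell^\infty \to \ell^\infty$ and $\ell^1 \to \ell^1$ operator norms of a matrix (in the right weighting) bounds its $\ell^2 \to \ell^2$ norm (Schur test / Riesz–Thorin interpolation). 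Alternatively, since this lemma is only used downstream to conclude $\Id - \Ca^\t \Ca \succeq 0$, it may be cleanest to prove directly that $\Id - \Ca^\t\Ca$ is a Gram matrix of a Schur complement and hence PSD; I would pick whichever of these is shortest once I check the weightings carefully.
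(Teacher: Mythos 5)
Your proposal is essentially correct and rests on the same two probabilistic ingredients the paper uses: the entrywise nonnegativity and row-substochasticity of $\Cu_\gamma$ (from \Cref{th:killed-committor-prob}, $\Cu_\gamma \One \leq \One$ because mass can escape to the cemetery), together with the exact row-stochasticity of the stationary-adjoint $\DiagI(\pa)\Cu_\gamma^\t \Diag(\pi)$ (this is immediate from $\Cu_\gamma^\t\pi = \pa$, so you do \emph{not} need the reversed-chain argument you gesture at — that's an overcomplication). Your limit argument for $\snorm{\Co}\leq 1$, the isometry-on-$\hr$ step for equality, and the two identities $\Co^\t h = \hr$, $\Co\hr = h$ all match the paper's computations line for line.

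Where you genuinely diverge is the last step: you propose to conclude $\snorm{\Ca}\leq 1$ by an $\ell^1$–$\ell^\infty$ interpolation (Riesz–Thorin / Schur test) applied to $\Cu_\gamma$ in the appropriate weighted spaces. The paper instead forms the composition $M_\gamma := \DiagI(\pa)\Cu_\gamma^\t\Diag(\pi)\Cu_\gamma$, observes $M_\gamma\One\leq\One$ with nonnegative entries so $\rho(M_\gamma)\leq\|M_\gamma\|_{\infty\to\infty}\leq 1$, and notes that $M_\gamma = \Diag(\ha)^{-1}(\Ca^\t\Ca)\Diag(\ha)$ is diagonally similar to the PSD matrix $\Ca^\t\Ca$ — so the eigenvalue bound transfers immediately without any interpolation machinery. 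Both arguments work, but the paper's avoids the need to "check the weightings carefully" that you flag as a concern: by passing to the composed $|\I|\times|\I|$ matrix $M_\gamma$, the similarity makes the spectral statement one-line. Two small cautions on your draft: the mid-sentence claim that "$\Diag(\pa) = \Cu_\gamma^\t\Diag(\pi)\Cu_\gamma$" is false (the right-hand side is not diagonal, and the correct identity is $\pa=\Cu_\gamma^\t\pi$) — you abandon it, but be careful not to let it slip into a final write-up; and the alternative you float ($\Id-\Ca^\t\Ca$ as a Gram matrix of a Schur complement) is not developed and is not the paper's route, so if you pursue it you'd be on your own to supply the details.
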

\begin{proof}
    First note that $\Cu$ and $\Cu_\Kill$ are entrywise nonnegative by their probabilistic definitions (\Cref{def:sym-operators} and \Cref{eq:killed-committor-prob}). 
    Then consider the matrices 
    \begin{eqn}
        \Co^\t \Co &= \DiagI(\hr) \Cu^\t \Diag(\pi) \Cu \DiagI(\hr) \\
        \Ca^\t \Ca &= \DiagI(\ha) \Cu^\t_\Kill \Diag(\pi) \Cu_\Kill \DiagI(\ha).
    \end{eqn}
    $\Co^\t \Co$ and $\Ca^\t \Ca$ are symmetric positive semidefinite by construction.
    
    Observe that $\Co^\t \Co$ is similar to the matrix $M \Eq \DiagI(\pr) \Cu^\t \Diag(\pi) \Cu$, while $\Ca^\t \Ca$ is similar to the matrix $M_\Kill \Eq \DiagI(\pa) \Cu_\Kill^\t \Diag(\pi) \Cu_\Kill$.
    Both $M$ and $M_\Kill$ have nonnegative entries by construction.
    Moreover $M$ is row stochastic since:
    \begin{eqn}
        M \One = \DiagI(\pr) \Cu^\t \Diag(\pi) \Cu \One = \DiagI(\pr) \Cu^\t \pi = \One
    \end{eqn}
    where we have used the fact that $\Cu \One = 1$, again following from the probabilistic definition of $\Cu$, and recall that $\pr \Eq \Cu^\t \pi$.
    Since $M$ is entrywise nonnegative and row-stochastic, its operator norm with respect to the infinity vector norm is 1, hence the eigenvalues of $M$ are bounded by 1.
    Since $\Co^\t \Co$ is similar to $M$, it too has a maximum eigenvalue of 1, implying that $\snorm{\Co} \leq 1$.

    The same logic applies to $\Ca^\t \Ca$, except that we use the softer condition following from \Cref{th:killed-committor-prob} that $\Cu_\Kill \One \leq \One$ so that
    \begin{eqn}
        M_\Kill \One = \DiagI(\pr_\Kill) \Cu_\Kill^\t \Diag(\pi) \Cu_\Kill \One \leq \DiagI(\pr_\Kill) \Cu_\Kill^\t \Diag(\pi) \One = \DiagI(\pa) \Cu_\Kill^\t \pi = \One
    \end{eqn}
    in an entrywise sense.
    Therefore $M_\Kill$ is row substochastic, implying that $\snorm{\Ca} \leq 1$.

    The remaining statements can be verified by computing:
    \begin{eqn}
        \Co^\t h &= \DiagI(\hr) \Cu^\t \pi = \hr, \\
        \Co \hr &=  \Diag(h) \Cu \DiagI(\hr) \hr = \Diag(h) \Cu \One = h,
    \end{eqn}
    again using  $\pr = \Cu^\t \pi$ and $\Cu \One = \One$.
    Since $\snorm{\hr}=1$ and $\snorm{h}=1$ by construction, this shows that $\snorm{C}=1$.
\end{proof}


Next, both $\Qo$ and $\So$ are entrywise non-negative matrices with orthonormal columns.
On the other hand, in the killed case for $\gamma > 0$, $\Sa$ has orthonormal columns, while $\Qa$ only satisfies $\snorm{\Qa} = 1$ but does not have orthonormal columns. 
These results are stated in the next two lemmas.

\begin{lemma}[Basic properties of $\Qo$ and $\Qa$] \label{th:q-contraction}
    Given $\Qo$ as specified in \Cref{eq:marked-projections}: 
    \begin{eqn}
        \Qo^\t \Qo = \Id, \qquad \Qo \Qo^\t \he = \he. 
    \end{eqn}
    Meanwhile, for any $\Kill > 0$, with $\Qa$ as defined in \Cref{def:killed-projections}:
    \begin{eqn}
        \snorm{\Qa} = 1.
    \end{eqn}
\end{lemma}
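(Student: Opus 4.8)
The plan is to prove the three claimed identities by passing to the underlying unscaled indicator matrix $\Qu$ and the stationary vectors $\he$, $\pe$, working blockwise according to the decomposition of the augmented space into the ``marked'' block indexed by $\I$ and the ``unmarked'' block indexed by $\I \times \Ic$.

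First I would establish $\Qo^\t \Qo = \Id$. Since $\Qo = \Diag(\he)\, \Qu\, \DiagI(h)$, we have $\Qo^\t \Qo = \DiagI(h)\, \Qu^\t \Diag(\pe)\, \Qu\, \DiagI(h)$, so it suffices to show $\Qu^\t \Diag(\pe)\, \Qu = \Diag(\pi)$. Using the characterization $\Qu_{(i,j),l} = \delta_{j,l}$ (valid over the augmented space), the $(l,l')$ entry of $\Qu^\t \Diag(\pe)\, \Qu$ is $\sum_{(i,j)} \delta_{j,l}\,\delta_{j,l'}\, \pe_{(i,j)} = \delta_{l,l'} \sum_{i:\,(i,l) \text{ in aug.\ space}} \pe_{(i,l)}$. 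By \Cref{eq:pealt}, $\pe_{(i,l)} = \tilde{C}_{l,i}\,\pi_l$, and summing over $i \in \I$ gives $\pi_l \sum_{i} \tilde{C}_{l,i} = \pi_l$ since $\Cu \One = \One$. Hence $\Qu^\t \Diag(\pe)\,\Qu = \Diag(\pi)$ and $\Qo^\t \Qo = \Id$, as desired.

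Next, for $\Qo \Qo^\t \he = \he$: note $\he = \pe^{1/2}$ and, blockwise, $\Qu$ maps the marking block identically and the unmarked block $(i,j) \mapsto j$ via the all-ones-in-$\I$ pattern, so $\Qo \Qo^\t$ is the orthogonal projector onto the column span of $\Qo$. Since $\Qo^\t \Qo = \Id$, we have $\Qo \Qo^\t = \Qo \Qo^\t$ is idempotent, and it suffices to verify $\he \in \mathrm{range}(\Qo)$, i.e.\ $\he = \Qo v$ for some $v$. Taking $v = \DiagI(h)^{-1}\big|_{\ldots}$ — concretely, I claim $\he = \Diag(\he)\,\Qu\,\DiagI(h)\cdot h = \Diag(\he)\,\Qu\,\One$, and since $\Qu\,\One = \One$ (each row of $\Qu$ has exactly one $1$, as $\Qu_{(i,j),l} = \delta_{j,l}$ picks out a unique $l$), this gives $\Diag(\he)\,\One = \he$. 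So $\he = \Qo h$, and therefore $\Qo\Qo^\t \he = \Qo(\Qo^\t \Qo) h = \Qo h = \he$.

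Finally, for the killed case $\snorm{\Qa} = 1$ with $\Qa = \Diag(\hea)\,\Qu\,\DiagI(h)$: the key point is that $\Qu\,\DiagI(h)$ has, in each row, a single nonzero entry equal to $1/h_l$ for the appropriate $l$, so $\Qa$ maps $e_{(i,j)}$ to $(\hea_{(i,j)}/h_j)\, e_j$. Thus $\Qa^\t \Qa = \DiagI(h)\,\Qu^\t \Diag(\pea)\,\Qu\,\DiagI(h)$ is diagonal with entries $\sum_{i:\,(i,l)\in\text{aug.}} (\pea)_{(i,l)} / \pi_l = \sum_i (\tilde C_\gamma)_{l,i}$, using \Cref{eq:pealt2}. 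By \Cref{th:killed-committor-prob}, $\Cu_\gamma \One \leq \One$ entrywise (with equality on rows indexed by $\I$), so each such diagonal entry is $\leq 1$, hence $\snorm{\Qa}^2 = \snorm{\Qa^\t \Qa} \leq 1$; and it equals $1$ because $\sum_i (\tilde C_\gamma)_{l,i} = 1$ for $l \in \I$. The main obstacle — really the only subtlety — is keeping the indexing conventions straight when summing $\pe$ (resp.\ $\pea$) over the marking index, and correctly invoking $\Cu\One = \One$ (resp.\ $\Cu_\gamma\One \le \One$ with equality on $\I$); once \Cref{eq:pealt}, \Cref{eq:pealt2}, and \Cref{th:killed-committor-prob} are in hand, each identity reduces to a one-line row-sum computation.
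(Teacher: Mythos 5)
Your proof is correct and follows essentially the same route as the paper's: both establish $\Qu^\t \Diag(\pe)\,\Qu = \Diag(\pi)$ via $\tilde{C}\One = \One$, use $\Qu\One = \One$ for the projector identity, and derive $\snorm{\Qa} = 1$ from the row-substochasticity of $\tilde{C}_\gamma$ (with equality on rows indexed by $\I$). The only cosmetic difference is in the second identity, where you write $\he = \Qo h$ and invoke $\Qo^\t\Qo = \Id$ rather than computing $\Qo\Qo^\t\he$ directly, which is an equivalent one-line rearrangement.
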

\begin{proof}
    Recall (cf. \Cref{eq:tildeproj} and the ensuing discussion) that $\Qu$ is the indicator matrix which yields the position space representation of a state in the augmented space, i.e., $\Qu_{(k,l),i} = \delta_{i,l}$ for all $(k,l)$ in the augmented space and $i \in [n]$. For this discussion, it is convenient to index the augmented space by $(k,l) \in \I \times [n]$, and (abusing notation slightly) to extend matrices and vectors by zeros for the omitted ordered pair indices $(k,l)$ for which $k,l \in \I$ but $k\neq l$.
    
    Note that marginalizing the stationary distribution of the marked chain over the marking space implies that $\pi = \Qu^\t \pe$, or under our indexing convention,  $\pi_i = \sum_{(k,l)} \Qu_{(k,l),i} \pe_{(k,l)} = \sum_{k \in \I} \pe_{(k,i)}$. (Algebraically, this identity can be confirmed from \Cref{eq:pealt} using the fact that $\tilde{C} \One = \One$.)
    
    Moreover, compute: 
    \begin{eqn}
        \Qo_{:,i}^\t \Qo_{:,j} &= h_i^\Mo h_j^\Mo \Qu_{:,i}^\t \Diag(\pe) \Qu_{:,j} = 
        h_i^\Mo h_j^\Mo
        \sum_{(k,l)} \Qu_{(k,l),i} \, \pe_{(k,l)} \, \Qu_{(k,l),j} \\
        &=
        h_i^\Mo h_j^\Mo \delta_{i,j} \sum_{k \in \I} \pe_{(k,i)} = h_i^\Mo h_j^\Mo \pi_i \delta_{i,j} = \delta_{i,j},
    \end{eqn}
    i.e., $Q^\t Q = \Id$ as was to be shown.
    
    To show the second conclusion:
    \begin{eqn}
        \Qo \Qo^\t \he &= \Diag(\he) \Qu \DiagI(\pi) \Qu^\t \Diag(\he) \he \\
        &= \Diag(\he) \Qu \DiagI(\pi) \Qu^\t \pe \\
        &= \Diag(\he) \Qu \One \\
        &= \he 
    \end{eqn}
    where the third line is implied by $\pi = \Qu^\t \pe$ and the fourth by $\Qu \One = \One$ (since each row of $\Qu$ consists of a single 1, corresponding to the position of the state).

    It remains to show that $\Vert Q_\gamma \Vert_2 = 1$. Note that by the probabilistic interpretation of $\tilde{C}_\gamma$ (\Cref{th:killed-committor-prob}), $\tilde{C}_\gamma \One \leq \One$ (entrywise).
    Thus, by the specification \Cref{eq:pealt2} of $\pea$: 
    \begin{eqn} \label{eq:killed-sums}
        \sum_{k \in \I} (\pea)_{(k,i)} = \sum_{k \in \I} \pi_i (\Cu_\Kill)_{i,k} = \pi_i \Cu_{i,:} \One \leq \pi_i.
    \end{eqn}
    Note that equality holds when $i \in \I$. Therefore, computation of $\Qa^\t \Qa$ yields:
    \begin{eqn}
        (\Qa)_{:,i}^\t (\Qa)_{:,j} &= h_i^\Mo h_j^\Mo \Qu_{:,i}^\t \Diag(\pea) \Qu_{:,j} = 
        h_i^\Mo h_j^\Mo
        \sum_{(k,l)} \Qu_{(k,l),i} \, (\pea)_{(k,l)} \, \Qu_{(k,l),j} \\
        &=
        h_i^\Mo h_j^\Mo \delta_{i,j} \sum_{k \in \I} (\pea)_{(k,i)} \leq h_i^\Mo h_j^\Mo \pi_i \delta_{i,j} = \delta_{i,j},
    \end{eqn}
    using \Cref{eq:killed-sums} for the middle inequality.
    Therefore $\Qa^\t \Qa$ is a diagonal matrix upper-bounded by $\Id$ entrywise.
    Furthermore, the diagonal entries in the $\I$ block are 1, yielding the strict equality of $\snorm{\Qa} = 1$.
\end{proof}

\begin{lemma}[Basic properties of $\So$ and $\Sa$] \label{th:s-orthogonality}
    For any $\Kill \geq 0$:
    \begin{eqn}
        \So^\t \So &= \Id, \qquad \snorm{\So} = 1 \\
        \qquad \Sa^\t \Sa &= \Id, \qquad \snorm{\Sa} = 1
    \end{eqn}
    and
    \begin{eqn}
        \So \So^\t \he = \he .
    \end{eqn}
\end{lemma}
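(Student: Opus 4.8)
The plan is to reduce all three claims to a single diagonal identity, $\Su^\t \Diag(\pe) \Su = \Diag(\pr)$, together with its killed analogue $\Su^\t \Diag(\pea) \Su = \Diag(\pa)$; once these are in hand, everything follows by substituting the definitions $\So = \Diag(\he)\,\Su\,\DiagI(\hr)$ and $\Sa = \Diag(\hea)\,\Su\,\DiagI(\ha)$ and using $\Diag(\hr)^2 = \Diag(\pr)$, $\Diag(\ha)^2 = \Diag(\pa)$ (which hold since $\hr = \pr^{1/2}$, $\ha = \pa^{1/2}$ entrywise).

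First I would establish the diagonal identity. Recall that $\Su$ is the marking indicator, $\Su_{(i,j),k} = \delta_{i,k}$, so each of its rows has exactly one nonzero entry, a $1$; in particular $\Su\One_{\I} = \One$, and $\Su^\t \Diag(\pe) \Su$ is diagonal with $(k,k)$ entry $\sum_{j}\pe_{(k,j)}$, the sum running over all $j \in \Indices$ under the convention of \Cref{eq:pealt,eq:pealt2} that $\pe$ and $\pea$ are extended by zeros on the impossible pairs. Using $\pe_{(i,j)} = \Cu_{j,i}\,\pi_j$ from \Cref{eq:pealt}, this sum equals $\sum_j \Cu_{j,k}\pi_j = (\Cu^\t\pi)_k = \pr_k$; hence $\Su^\t \Diag(\pe)\Su = \Diag(\pr)$. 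The identical computation with $\pea_{(i,j)} = (\Cu_\Kill)_{j,i}\pi_j$ (\Cref{eq:pealt2}) and $\pa = \Cu_\Kill^\t\pi$ gives $\Su^\t \Diag(\pea)\Su = \Diag(\pa)$.

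Given these, $\So^\t\So = \DiagI(\hr)\,\Su^\t\Diag(\pe)\Su\,\DiagI(\hr) = \DiagI(\hr)\,\Diag(\pr)\,\DiagI(\hr) = \Id$, and likewise $\Sa^\t\Sa = \Id$; since a matrix with orthonormal columns has all singular values equal to $1$ (and $\I$ is nonempty, so the matrices are nonzero), this also yields $\snorm{\So} = \snorm{\Sa} = 1$. For the last identity, expand $\So\So^\t\he = \Diag(\he)\,\Su\,\DiagI(\hr)^2\,\Su^\t\,\Diag(\he)\,\he = \Diag(\he)\,\Su\,\DiagI(\pr)\,\Su^\t\pe$, using $\Diag(\he)\he = \pe$; then $\Su^\t\pe$ has $k$-th entry $\sum_j\pe_{(k,j)} = \pr_k$, so $\Su^\t\pe = \pr$, giving $\So\So^\t\he = \Diag(\he)\,\Su\,\DiagI(\pr)\,\pr = \Diag(\he)\,\Su\,\One = \Diag(\he)\One = \he$.

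The argument is essentially bookkeeping and I expect no real obstacle; the one point that will require care is the indexing convention for the augmented state space, where vectors and matrices indexed by the pair $(i,j)$ must be understood as extended by zeros on the impossible pairs $i,j\in\I$ with $i\neq j$, so that the summations over $j\in\Indices$ above are literal and the entrywise identities $\sum_j\pe_{(k,j)} = \pr_k$ and $\sum_j\pea_{(k,j)} = \pa_k$ hold verbatim (the latter being exactly $\Cu_\Kill^\t\pi = \pa$).
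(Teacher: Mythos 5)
Your proof is correct and follows essentially the same route as the paper's: you exploit the disjoint column supports of $\Su$ and the formula $\pe_{(i,j)} = \Cu_{j,i}\pi_j$ (respectively $\pea_{(i,j)} = (\Cu_\Kill)_{j,i}\pi_j$) to reduce to $\Su^\t\pe = \pr$ and $\Su\One = \One$, just as the paper does, only packaging the key step as a clean matrix identity $\Su^\t\Diag(\pe)\Su = \Diag(\pr)$ rather than computing $\So_{:,i}^\t\So_{:,j}$ entrywise. No gaps.
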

\begin{proof}
    Recall (cf. \Cref{eq:tildeproj} and the ensuing discussion) that $\Su$ is the indicator matrix which yields the marking space representation of a state in the augmented space, i.e., that 
    $\Su_{(l,k),i} = \delta_{i,l}$ for all $(l,k)$ in the augmented space, $i \in \I$. We use the same indexing convention for the augmented space as in the proof of \Cref{th:q-contraction}.

    Direct computation (using the specification \Cref{eq:pealt} for $\pe$) yields that $\Su^\t \pe = \Cu^\t \pi = \pr$, the latter equality following from the definition of $\pr$.
    (In our indexing convention, one may equivalently write that $\sum_{k \in \Indices} \pe_{(i,k)} = \pr_i$.)
    Moreover, compute: 
    \begin{eqn}
        \So_{:,i}^\t \So_{:,j} &= \hr_i^\Mo \hr_j^\Mo \Su_{:,i}^\t \Diag(\pe) \Su_{:,j} 
        = \hr_i^\Mo \hr_j^\Mo \sum_{(l,k)} \Su_{(l,k),i} \,  \pe_{(l,k)} \,  \Su_{(l,k),j} \\
        &= \hr_i^\Mo \hr_j^\Mo \delta_{i,j} \sum_{k \in \Indices} \pe_{(i,k)}  = \delta_{i,j} \hr_i^\Mo \hr_j^\Mo \pr_i = \delta_{i,j}.
    \end{eqn}
    For the killed case, analogously, direct computation (using the specification \Cref{eq:pealt2} for $\pea$) yields that $\Su^\t \pea = \Cu_{\Kill}^\t \pi = \pa$, the latter by the definition of $\pa$ \Cref{eq:killed-chp}. 
    Then:
    \begin{eqn}
        (\Sa)_{:,i}^\t (\Sa)_{:,j} &= (\ha)_i^\Mo (\ha)_j^\Mo \Su_{:,i}^\t \Diag(\pea) \Su_{:,j} 
        = (\ha)_i^\Mo (\ha)_j^\Mo \sum_{(l,k)} \Su_{(l,k),i} \,  (\pea)_{(l,k)} \,  \Su_{(l,k),j} \\
        &= (\ha)_i^\Mo (\ha)_j^\Mo \delta_{i,j} \sum_{k \in \Indices} (\pea)_{(i,k)} = \delta_{i,j} (\ha)_i^\Mo (\ha)_j^\Mo (\pa)_i = \delta_{i,j}.
    \end{eqn}
    
    To show the second conclusion:
    \begin{eqn}
        \So \So^\t \he &= \Diag(\he) \Su \DiagI(\pr) \Su^\t \Diag(\he) \he \\
        &= \Diag(\he) \Su \DiagI(\pr) \Su^\t \pe \\
        &= \Diag(\he) \Su \One \\
        &= \he
    \end{eqn}
    where the third line is implied by $\pr = \Su^\t \pe$ and the fourth by $\Su \One = \One$ (since each row of $\Su$ consists of a single 1, corresponding to the marking of the state).
\end{proof}


The committor $\Co$ may be related to $\Qo$ and $\So$ as follows: 
\begin{lemma}[Relating marked chain projection to the committor] \label{th:q-s-equals-c}
    With $\Qo$, $\So$, and $\Co$ as in \Cref{eq:marked-projections} and \Cref{th:committor}:
    \begin{eqn}
        \Qo^\t \So = \Co.
    \end{eqn}
    In the killed case, for $\Kill > 0$ and $\Qa$, $\Sa$, and $\Ca$ as in \Cref{def:killed-projections} and \Cref{def:killed-committor}:
    \begin{eqn}
        \Qa^\t \Sa = \Ca.
    \end{eqn}
\end{lemma}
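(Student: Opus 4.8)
The plan is to verify both identities by a direct entrywise computation, leveraging the explicit formulas for the stationary distributions $\pe$ and $\pea$ recorded in \Cref{eq:pealt} and \Cref{eq:pealt2}, together with the definitions of the rescaled projectors.

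First I would treat the unkilled case. Substituting the definitions $\So = \Diag(\he) \Su \DiagI(\hr)$ and $\Qo = \Diag(\he) \Qu \DiagI(h)$ from \Cref{eq:marked-projections}, and using that $\he$ is the entrywise square root of $\pe$ (so that $\Diag(\he)^2 = \Diag(\pe)$), one obtains
\begin{eqn}
    \Qo^\t \So = \DiagI(h) \, \Qu^\t \Diag(\pe) \, \Su \, \DiagI(\hr).
\end{eqn}
It then remains to evaluate the middle factor $\Qu^\t \Diag(\pe) \Su$ entrywise. Recalling from \Cref{eq:tildeproj} and the surrounding discussion that $\Qu_{(i,j),l} = \delta_{j,l}$ and $\Su_{(i,j),k} = \delta_{i,k}$ for $(i,j)$ in the augmented space, the $(l,k)$ entry is $\sum_{(i,j)} \delta_{j,l}\, \pe_{(i,j)}\, \delta_{i,k} = \pe_{(k,l)}$, where we interpret $\pe$ as extended by zeros over the impossible states $(i,j)$ with $i \neq j$, $i,j\in\I$ (as in \Cref{eq:pealt}); this extension is consistent since $\tilde{C}_{j,i} = \delta_{i,j}$ vanishes there. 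Invoking \Cref{eq:pealt} gives $\pe_{(k,l)} = \tilde{C}_{l,k}\, \pi_l$, so the $(l,k)$ entry of $\Qo^\t \So$ equals $h_l^{-1} \tilde{C}_{l,k} \pi_l \hr_k^{-1} = h_l\, \tilde{C}_{l,k}\, \hr_k^{-1}$, using $\pi_l = h_l^2$. By the definition $\Co = \Diag(h) \Cu \DiagI(\hr)$ in \Cref{def:sym-operators}, this is precisely $(\Co)_{l,k}$, so $\Qo^\t \So = \Co$.

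The killed case follows by the identical computation, with $\pe$, $\he$, $\hr$, and $\tilde{C}$ replaced by $\pea$, $\hea$, $\ha$, and $\Cu_\gamma$ respectively, and with \Cref{eq:pealt2} used in place of \Cref{eq:pealt}: one gets $(\Qa^\t \Sa)_{l,k} = h_l^{-1} (\pea)_{(k,l)} (\ha)_k^{-1} = h_l (\Cu_\gamma)_{l,k} (\ha)_k^{-1} = (\Ca)_{l,k}$ by \Cref{def:killed-committor}, valid for every $\gamma > 0$. I do not expect a genuine obstacle here: the argument is routine linear algebra, and the only point requiring care is bookkeeping of the augmented-space indexing convention and the zero-extension of $\pe$ (resp.\ $\pea$) across impossible states, which is already justified by the remarks following \Cref{eq:pealt}.
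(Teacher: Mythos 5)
Your proof is correct and follows essentially the same path as the paper's: both reduce the claim to the identity $\Qu^\t \Diag(\pe)\, \Su = \Diag(\pi)\, \Cu$ and then conjugate by the diagonal scalings. The only difference is stylistic — you compute the middle product entrywise via the indicator identifications $\Qu_{(i,j),l} = \delta_{j,l}$, $\Su_{(i,j),k} = \delta_{i,k}$ and \Cref{eq:pealt}, while the paper expands the blockwise definitions of $\Qu$ and $\Su$ and carries out the Kronecker/vec algebra — and both handle the killed case by the same substitution.
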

\begin{proof}
    First compute: 
    \begin{eqn}
        \Qo^\t \So = \Diag(h) \Qu^\t \Diag(\pe) \Su \Diag(1/\hr).
    \end{eqn}
    Then, by expanding the blockwise definitions of $\Qu$ and $\Su$:
    \begin{eqn}
        \Qu^\t \Diag(\pe) \Su &= \Id_{:,\I} \Diag(\pi_{\I}) + 
        \Id_{:,\Ic} (\One_{\I} \otimes \Id_{\Ic,\Ic})^\t \Diag(\mathrm{vec}(\Diag(\pi_{\Ic}) \Cu_{\Ic,:})) (\Id_{\I,\I} \otimes \One_{\Ic}) \\
        &= \Id_{:,\I} \Diag(\pi_{\I}) + \Id_{:,\Ic} \Diag(\pi_{\Ic}) \Cu_{\Ic,:} \\
        &= \Diag(\pi) \Cu.
    \end{eqn}
    Finally, substituting back in yields 
    \begin{eqn}
        \Qo^\t \So = \Diag(1/h) \Diag(\pi) \Cu \Diag(1/\hr) = \Co
    \end{eqn}
    using the definitions of $h$, $\pi$, and $\Co$.

    The exact same logic applies above for the killed case, with $\pea$, $\hea$, and $\Cu_{\Kill}$ replacing $\pe$, $\he$, and $\Cu$, respectively.
\end{proof}

\begin{lemma}[Original Laplacian in terms of the marked chain] \label{th:qlq}
    With $\Qo$ as in \cref{eq:marked-projections} and $\Le$ as in \Cref{def:marked-laplacian}:
    \begin{eqn}
        \Qo^\t \Le \Qo = L.
    \end{eqn}
    In fact $\Qo$ defines an invariant subspace of $\Le$ in that $\Le \Qo = \Qo L$.
\end{lemma}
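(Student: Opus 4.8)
The plan is to prove the stronger claim $\Le \Qo = \Qo L$ first, since $\Qo^\t \Le \Qo = L$ then follows immediately by left-multiplying by $\Qo^\t$ and using $\Qo^\t \Qo = \Id$ (\Cref{th:q-contraction}). The invariant-subspace identity $\Le\Qo = \Qo L$ should itself be derived from an \emph{unsymmetrized} analog, namely $\Rex \Qu = \Qu R$. Indeed, unwinding the definitions \Cref{def:marked-laplacian} and \Cref{eq:marked-projections}, we have $\Le = -\Diag(\he)\Rex\DiagI(\he)$, $\Qo = \Diag(\he)\Qu\DiagI(h)$, and $L = -\Diag(h)R\DiagI(h)$, so $\Le\Qo = \Qo L$ is equivalent after cancelling the diagonal conjugations to $\Rex\Qu = \Qu R$. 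This is the natural statement: $\Qu$ reads off the position coordinate of an augmented state, and the claim $\Rex\Qu = \Qu R$ says precisely that the position-space marginal of the marked chain evolves under the original rate matrix $R$ --- which is the defining property of the marked chain (the marking is a passive label that does not affect the position dynamics).

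Concretely, I would verify $\Rex\Qu = \Qu R$ blockwise using the explicit block form of $\Rex$ from \Cref{th:marked-chain-dynamics} \Cref{eq:expanded-R} and the block form of $\Qu$ from \Cref{eq:tildeproj}. Writing $\Qu = \begin{bmatrix}\Id_{\I,\I} & \Zero_{\I,\Ic} \\ \Zero_{\I\times\Ic,\I} & \One_{\I}\otimes\Id_{\Ic,\Ic}\end{bmatrix}$ and partitioning $R$ into the four blocks $R_{\I,\I}, R_{\I,\Ic}, R_{\Ic,\I}, R_{\Ic,\Ic}$, the product $\Rex\Qu$ has a $2\times 2$ block structure whose $(1,1)$ block is $R_{\I,\I}$, $(1,2)$ block is $(\Id_{\I,\I}\bullet R_{\I,\Ic})(\One_{\I}\otimes\Id_{\Ic,\Ic})$, $(2,1)$ block is $\One_{\I}\otimes R_{\Ic,\I}$, and $(2,2)$ block is $(\Id_{\I,\I}\otimes R_{\Ic,\Ic})(\One_{\I}\otimes\Id_{\Ic,\Ic})$. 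Meanwhile $\Qu R$ has $(1,1)$ block $R_{\I,\I}$, $(1,2)$ block $R_{\I,\Ic}$, $(2,1)$ block $\One_{\I}\otimes R_{\Ic,\I}$, and $(2,2)$ block $\One_{\I}\otimes R_{\Ic,\Ic}$. The $(1,1)$ and $(2,1)$ blocks already agree. For the $(2,2)$ block, the mixed-product property of the Kronecker product gives $(\Id_{\I,\I}\otimes R_{\Ic,\Ic})(\One_{\I}\otimes\Id_{\Ic,\Ic}) = \One_{\I}\otimes R_{\Ic,\Ic}$. For the $(1,2)$ block, one checks from the definition $(A\bullet B)_{i,(j,k)} = A_{i,j}B_{i,k}$ that $\big[(\Id_{\I,\I}\bullet R_{\I,\Ic})(\One_{\I}\otimes\Id_{\Ic,\Ic})\big]_{i,k} = \sum_{j\in\I}(\Id_{\I,\I})_{i,j}(R_{\I,\Ic})_{i,k} = (R_{\I,\Ic})_{i,k}$, so this block equals $R_{\I,\Ic}$ as well. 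This establishes $\Rex\Qu = \Qu R$, hence $\Le\Qo = \Qo L$, hence the proposition.

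I do not anticipate a serious obstacle here; the only mildly delicate point is keeping the face-splitting and Kronecker product index conventions straight in the $(1,2)$ block, and being careful that the block ordering of $[n]$ places $\I$ first (as assumed in the definition of $\Qu$) so that the partition of $R$ is consistent with that of $\Rex$. One could alternatively give a purely probabilistic one-line argument --- observing that the law of $\XM_t$ projected to position space is exactly the law of $X_t$, so the generators must agree under the projection $\Qu$ --- but the blockwise linear-algebra verification is cleaner to state rigorously and is the version I would write.
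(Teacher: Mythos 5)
Your proposal is correct and follows exactly the same route as the paper's proof: reduce $\Qo^\t \Le \Qo = L$ to the stronger identity $\Le\Qo = \Qo L$ via $\Qo^\t\Qo = \Id$, then unwind the diagonal conjugations to reduce to $\Rex\Qu = \Qu R$, then verify blockwise. The paper states the blockwise computation as ``verify by direct computation'' and simply displays the resulting block matrix; you have spelled out the $(1,2)$ entry of the face-splitting product and the Kronecker mixed-product step that the paper leaves implicit, and these details are correct.
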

\begin{proof}
    Note that it suffices to show that $\Le \Qo = \Qo L$, which implies the first statement via left-multiplication by $\Qo^\t$ (since $\Qo$ has orthonormal columns). In turn, by unraveling the similarity transformations of \Cref{def:marked-laplacian} and \Cref{eq:marked-projections}  used to define $\Le$ and $\Qo$ in terms of $\Rex$ and $\Qu$, respectively, we see that it suffices to show that $\Rex \Qu = \Qu R$. Then by substituting the blockwise expressions \Cref{eq:expanded-R} and \Cref{eq:tildeproj} for $\Rex$ and $\Qu$, we verify by direct computation that 
    \begin{eqn}
        \Rex\tilde{Q}=\left[\begin{array}{cc}
R_{\I,\I} & R_{\I,\Ic}\\
\mathbf{1}_{\I}\otimes R_{\Ic,\I} & \mathbf{1}_{\I}\otimes R_{\Ic,\Ic}
\end{array}\right]=\tilde{Q} R.
    \end{eqn}
    \end{proof}

\begin{lemma}[Induced Laplacian in terms of the marked chain] \label{th:sls}
    With $\So$ as in \cref{eq:marked-projections}, $\Le$ as in \Cref{def:marked-laplacian}, and $\Lr$ as in \cref{eq:oblique-p}:
    \begin{eqn}
        \So^\t \Le \So = \Lr.
    \end{eqn}
    Furthermore, for any $\Kill > 0$, with $\Sa$ as in \Cref{def:killed-projections} and $\Lea$ as in \Cref{eq:Lea}:
    \begin{eqn}
        \Sa^\t \Lea \Sa = \Lr_\Kill.
    \end{eqn}
\end{lemma}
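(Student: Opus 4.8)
## Proof Proposal for Lemma \ref{th:sls}

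\textbf{Overall strategy.} The plan is to reduce both identities to analogous statements about the unsymmetrized rate matrices, following the same pattern used in the proof of \Cref{th:qlq}. For the first identity, I would unravel the similarity transformations defining $\Le$ (via \Cref{def:marked-laplacian}), $\So$ (via \Cref{eq:marked-projections}), and $\Lr$ (via \Cref{eq:reduced-h-L-K}), reducing the claim $\So^\t \Le \So = \Lr$ to the rate-matrix statement $\Su^\t \Diag(\pe) \Rex \Su = \Diag(\pr) \Rr$, or more precisely to showing $\Rex \Su = \Su \Rr$ combined with the diagonal rescalings. For the killed case, I would use $\Lrk = \Ca^\t \La \Ca$ from \Cref{eq:Pspgamma} and argue analogously using the killed operators $\Lea$, $\Sa$ from \Cref{eq:Lea,def:killed-projections}.

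\textbf{Key steps for the recurrent case.} First, recall from \Cref{th:q-s-equals-c} that $\Qo^\t \So = \Co$, and from \Cref{th:qlq} that $\Le \Qo = \Qo L$ with $\Qo^\t \Qo = \Id$ (\Cref{th:q-contraction}). These give $\Qo^\t \Le = L \Qo^\t$ on the range of $\Qo$, but I would prefer a more direct route: I would first establish that $\So$ also defines (a piece of) an invariant-subspace-like relation, or instead compute $\So^\t \Le \So$ directly. The cleanest approach: unwind to rate matrices. Using $\Le = -\Diag(\he)\Rex\DiagI(\he)$ and $\So = \Diag(\he)\Su\DiagI(\hr)$, one computes
\begin{eqn}
\So^\t \Le \So = -\DiagI(\hr)\Su^\t \Diag(\he)^2 \Rex \DiagI(\he)^2 \cdot \text{(...)},
\end{eqn}
so I would instead organize this as $\So^\t \Le \So = -\DiagI(\hr)\, \Su^\t \Diag(\pe)\, \Rex\, \Su \DiagI(\hr)$ after using $\DiagI(\he)\Diag(\he) = \Id$ and $\Diag(\he)^2 = \Diag(\pe)$. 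Then I would show $\Su^\t \Diag(\pe)\Rex\Su = \Diag(\pr)\Rr$. To prove this, I would use the block structures: $\Su^\t \Diag(\pe) = \Cu^\t \Diag(\pi) \Qu^\t$ (which can be verified from the block forms \Cref{eq:tildeproj}, \Cref{eq:expanded-R}, analogously to the computation $\Qu^\t\Diag(\pe)\Su = \Diag(\pi)\Cu$ appearing in the proof of \Cref{th:q-s-equals-c}), and the relation $\Rex \Qu$... wait, here I need $\Rex \Su$ instead. I would show $\Qu^\t \Rex \Su = R \Cu$ by direct block computation, then assemble $\Su^\t \Diag(\pe) \Rex \Su = \Cu^\t \Diag(\pi) \Qu^\t \Rex \Su = \Cu^\t \Diag(\pi) R \Cu = \Diag(\pr)\Rr$, where the last equality is \Cref{eq:diagpiR} from the proof of \Cref{th:induced-basics}. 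Finally, rescaling by $\DiagI(\hr)$ on both sides gives $\So^\t\Le\So = -\DiagI(\hr)\Diag(\pr)\Rr\DiagI(\hr) = -\Diag(\hr)\Rr\DiagI(\hr) = \Lr$ by \Cref{eq:reduced-h-L-K}.

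\textbf{Key steps for the killed case.} I would replicate the argument with $\pea, \hea, \Cu_\Kill$ in place of $\pe, \he, \Cu$, using $\mathring{R}_\gamma = \mathring{R} - \gamma\Id$ and $\Lea = -\Diag(\hea)\mathring{R}_\gamma\DiagI(\hea)$. The block identity $\Su^\t \Diag(\pea) = \Cu_\Kill^\t \Diag(\pi)\Qu^\t$ follows from \Cref{eq:pealt2} exactly as in \Cref{th:q-s-equals-c}. The subtlety is that $\Rex_\gamma = \Rex - \gamma\Id$ (the killed marked rate matrix), so I would split $\Su^\t\Diag(\pea)(\Rex - \gamma\Id)\Su = \Su^\t\Diag(\pea)\Rex\Su - \gamma\Su^\t\Diag(\pea)\Su$. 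The first term, by the same block manipulation, gives $\Cu_\Kill^\t \Diag(\pi)R\Cu_\Kill$; the second is $-\gamma\Diag(\pa)$ since $\Su^\t\Sa = \Id$ translates to $\Su^\t\Diag(\pea)\Su = \Diag(\pa)$ (cf. proof of \Cref{th:s-orthogonality}). Then after rescaling, $\Sa^\t\Lea\Sa = -\DiagI(\ha)\Cu_\Kill^\t\Diag(\pi)R\Cu_\Kill\DiagI(\ha) + \gamma\Id$. I then need to identify this with $\Lrk = \Ca^\t\La\Ca$. Since $\La = L + \gamma\Id = -\Diag(h)(R - \gamma\Id... )$ — more carefully, $\La = L + \gamma\Id$ and $\Ca = \Diag(h)\Cu_\Kill\DiagI(\ha)$, so $\Ca^\t\La\Ca = \DiagI(\ha)\Cu_\Kill^\t\Diag(h)L\Diag(h)\Cu_\Kill\DiagI(\ha) + \gamma\DiagI(\ha)\Cu_\Kill^\t\Diag(\pi)\Cu_\Kill\DiagI(\ha)$; the second term equals $\gamma\Ca^\t\Ca$, and since $\Ca^\t\Ca$ is \emph{not} the identity in general, I must check it equals $\gamma\Id$ only if... actually I expect the correct bookkeeping to show $\gamma\Ca^\t\Ca$ combines correctly, so \textbf{the main obstacle} is getting the $\gamma$-dependent terms to reconcile: one must verify carefully that $\Diag(\pi)R + \gamma\Diag(\pi)$ interacts with the $\Cu_\Kill$ sandwiching to produce exactly $\gamma\Id$ minus a recurrent-type term. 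I anticipate this works because $\Cu_\Kill$ is precisely the operator making the killed Schur complement collapse correctly (it satisfies a Laplace-type equation for the killed generator), but the algebraic verification — likely via the Schur complement identity $-((\La)_{\Ic,\Ic})^{-1}(\La)_{\Ic,\I} = $ the relevant block of $\Cu_\Kill$ — will be the delicate part, and I would handle it by expanding $\Rex_\gamma \Su$ blockwise and using that $(\mathring{R}_\gamma)_{\Ic,\Ic} = R_{\Ic,\Ic} - \gamma\Id$ annihilates columns of $\Cu_\Kill$ restricted appropriately.
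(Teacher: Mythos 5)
Your plan for the recurrent case hinges on two algebraic identities, and at least one of them is false, which makes the chain of equalities $\Su^\t \Diag(\pe) \Rex \Su = \Cu^\t \Diag(\pi) \Qu^\t \Rex \Su = \Cu^\t \Diag(\pi) R \Cu$ break down. The claimed factorization $\Su^\t \Diag(\pe) = \Cu^\t \Diag(\pi) \Qu^\t$ does not hold. Indexing by augmented states $(l,k)$, the left side has entry $(\Su^\t \Diag(\pe))_{i,(l,k)} = \delta_{i,l}\, \Cu_{k,i}\, \pi_k$ (zero whenever $l\neq i$), whereas the right side has entry $(\Cu^\t\Diag(\pi)\Qu^\t)_{i,(l,k)} = \Cu_{k,i}\,\pi_k$ regardless of $l$; these disagree on the unmarked block $(l,k)\in\I\times\Ic$ with $l\neq i$. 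The true relation coming from transposing $\Qu^\t\Diag(\pe)\Su = \Diag(\pi)\Cu$ is $\Su^\t\Diag(\pe)\Qu = \Cu^\t\Diag(\pi)$, and one cannot strip $\Qu$ off the right because $\Qu\Qu^\t\neq\Id$. Worse, $\Rex\Su$ is \emph{not} in the column span of $\Qu$ — the transition rate from an unmarked state $(i,j)$ to marking class $k$ depends on the starting marking $i$ (if $k = i$ the outgoing rate accumulates an additional $\sum_{j'\in\Ic}R_{j,j'}$ term), so there is no $M$ with $\Rex\Su = \Qu M$. In fact the second identity you posit, $\Qu^\t\Rex\Su = R\Cu$, is also false: for $l \in \Ic$ and $k \in \I$ direct blockwise computation gives $(\Qu^\t\Rex\Su)_{l,k} = \vert\I\vert R_{l,k} + \sum_{j'\in\Ic}R_{l,j'}$, whereas $(R\Cu)_{l,k} = 0$ by the Laplace equation.

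The target identity $\Su^\top\Diag(\pe)\Rex\Su = \Cu^\top\Diag(\pi)R\Cu$ \emph{is} what the paper proves, but the paper cannot — and does not — get there by a factorization through $\Qu$. Instead, it first simplifies the right-hand side to $\Diag(\pi_\I) R_{\I,:}\Cu$ via the Laplace equation and $\Cu_{\I,:}=\Id$, notes that reversibility makes this symmetric, derives an explicit entrywise formula for $\Diag(\pe)\Rex$ from the marked-chain transition structure, and then expands the $(k,k')$ entry of $\Su^\top\Diag(\pe)\Rex\Su$ directly, using reversibility of $\Diag(\pi)R$ and the Laplace equation once more to identify it with $\pi_{k'}(R\Cu)_{k',k}$. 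For the killed case the paper reuses the same entrywise argument with $R_\gamma$, $\Cu_\gamma$, $\mathring{\pi}_\gamma$, verifying separately that the killed Laplace equation $(R_\gamma\Cu_\gamma)_{\Ic,:}=0$ holds. Your recognition that the killed case is delicate (because $\Ca^\top\Ca\neq\Id$ and the $\gamma$-terms must reconcile) is well-founded, but the obstacle you named is downstream of a more basic problem in the recurrent factorization. To repair your proposal you would need to replace the factorization-through-$\Qu$ step with either the paper's entrywise computation or a different mechanism entirely.
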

\begin{proof}
We tackle the first statement first. By unraveling the diagonal transformations
\Cref{def:marked-laplacian} and \Cref{eq:marked-projections}
used to define $\mathring{L}$ and $W$ in terms of $\mathring{R}$
and $\tilde{W}$, respectively, we see that it suffices to show that
\[
\tilde{W}^{\top}\mathrm{diag}(\mathring{\pi})\mathring{R}\tilde{W}=\mathrm{diag}(\hat{\pi})\hat{R}.
\]
 Recall from \Cref{eq:simple-induced} that 
\[
\hat{R}=\mathrm{diag}^{-1}(\hat{\pi})\tilde{C}^{\top}\mathrm{diag}(\pi)R\tilde{C},
\]
 so in turn it suffices to show that 
\begin{equation}
\tilde{W}^{\top}\mathrm{diag}(\mathring{\pi})\mathring{R}\tilde{W}=\tilde{C}^{\top}\mathrm{diag}(\pi)R\tilde{C}.\label{eq:LHSRHS}
\end{equation}

Let us simplify the right-hand side of \Cref{eq:LHSRHS}.
By the Laplace equation \Cref{eq:laplace-equation}, we know that $(R\tilde{C})_{\Ic,:}=0$.
(Note that this can also be verified by direct computation, as we
explicitly demonstrate below in the killed $\gamma>0$ case.) Moreover,
$(\tilde{C})_{\I,:}=\Id_{\I,\I}$. Therefore 
\begin{equation}
\tilde{C}^{\top}\mathrm{diag}(\pi)R\tilde{C}=\left[\begin{array}{c}
\Id_{\I,\I}\\
0
\end{array}\right]^{\top}\mathrm{diag}(\pi)\left[\begin{array}{c}
(R\tilde{C})_{\I,:}\\
0
\end{array}\right]=\mathrm{diag}(\pi_{\I})R_{\I,:}\tilde{C}.\label{eq:RHS}
\end{equation}
 Now by reversibility, $\mathrm{diag}(\pi)R$ is symmetric, so the
left-hand side of \Cref{eq:RHS} is symmetric, hence
so is the right-hand side, and in summary, for the proof it suffices
to show that 
\begin{equation}
\tilde{W}^{\top}\mathrm{diag}(\mathring{\pi})\mathring{R}\tilde{W}=\left[\mathrm{diag}(\pi_{\I})R_{\I,:}\tilde{C}\right]^{\top}.\label{eq:LHSRHS2}
\end{equation}

Now we turn to the left-hand side of \Cref{eq:LHSRHS}.
It is useful to adopt the same indexing convention as in the proof
of \Cref{th:q-contraction}. Within this convention, recall from \eqref{eq:pealt} that $\mathring{\pi}_{(i,j)}=\tilde{C}_{j,i}\pi_{j}$.
Moreover, for any ordered pair $(i,j)$ that is \emph{possible}
(i.e., not omitted from the augmented space), we have that 
\begin{equation}
\mathring{R}_{(i,j),(i',j')}=\begin{cases}
\delta_{i',j'}R_{j,j'}, & j'\in\I\\
\delta_{i,i'}R_{j,j'}, & j'\in\Ic.
\end{cases}\label{eq:ReEntrywise}
\end{equation}
 The first line follows from the fact that during a transition to
a position state $j'\in\I$, we must pick up the new marking $j'$.
The second line follows from the fact that during a transition to
a position state $j'\in\Ic$, we must retain our old marking.

Now since the expression $\mathring{\pi}_{(i,j)}=\tilde{C}_{j,i}\pi_{j}$
extends by zeros to all impossible states, in turn the expression
\begin{equation}
\left(\mathrm{diag}(\mathring{\pi})R\right)_{(i,j),(i',j')}=\begin{cases}
\tilde{C}_{j,i}\pi_{j}\delta_{i',j'}R_{j,j'}, & j'\in\I\\
\tilde{C}_{j,i}\pi_{j}\delta_{i,i'}R_{j,j'}, & j'\in\Ic,
\end{cases}\label{eq:piRcon}
\end{equation}
 extends $\mathrm{diag}(\mathring{\pi})R$ by zeros and satisfies
our indexing convention.

Then we expand the left-hand side of \Cref{eq:LHSRHS2}
and insert $\tilde{W}_{(i,j),k}=\delta_{i,k}$ to compute, for $k,k'\in\I$:
\begin{align*}
\left(\tilde{W}^{\top}\mathrm{diag}(\mathring{\pi})\mathring{R}\tilde{W}\right)_{k,k'} & =\sum_{i,i'\in\I,\,j,j'\in[n]}\tilde{W}_{(i,j),k}\left(\mathrm{diag}(\mathring{\pi})R\right)_{(i,j),(i',j')}\tilde{W}_{(i',j'),k'}\\
 & =\sum_{j,j'\in[n]}\left(\mathrm{diag}(\mathring{\pi})R\right)_{(k,j),(k',j')}\\
 & =\sum_{j\in[n],\,j'\in\I}\left(\mathrm{diag}(\mathring{\pi})R\right)_{(k,j),(k',j')}+\sum_{j\in[n],\,j'\in\Ic}\left(\mathrm{diag}(\mathring{\pi})R\right)_{(k,j),(k',j')}\\
 & \overset{(*)}{=}\sum_{j\in[n],\,j'\in\I}\tilde{C}_{j,k}\pi_{j}\delta_{k',j'}R_{j,j'}+\sum_{j\in[n],\,j'\in\Ic}\tilde{C}_{j,k}\pi_{j}\delta_{k,k'}R_{j,j'}\\
 & =\sum_{j\in[n]}\tilde{C}_{j,k}\pi_{j}R_{j,k'}+\delta_{k,k'}\sum_{j\in[n],\,j'\in\Ic}\tilde{C}_{j,k}\pi_{j}R_{j,j'}\\
 & \overset{(\star)}{=}\sum_{j\in[n]}\pi_{k'}R_{k',j}\tilde{C}_{j,k}+\delta_{k,k'}\sum_{j\in[n],\,j'\in\Ic}\pi_{j'}R_{j',j}\tilde{C}_{j,k}\\
 & =\pi_{k'}(R\tilde{C})_{k',k}+\delta_{k,k'}\sum_{j'\in\Ic}\pi_{j'}(R\tilde{C})_{j',k},
\end{align*}
where in the $(*)$ equation we have substituted \Cref{eq:piRcon} and in the $(\star)$ equation we have used reversibility: $\pi_i R_{i,j} = \pi_j R_{j,i}$ for all $i,j$.
 Now in the last expression, $(R\tilde{C})_{j',k}=0$ for $j'\in\Ic$,
again by the Laplace equation \Cref{eq:laplace-equation}. So we conclude that 
\[
\left(\tilde{W}^{\top}\mathrm{diag}(\mathring{\pi})\mathring{R}\tilde{W}\right)_{k,k'}=\pi_{k'}(R\tilde{C})_{k',k}
\]
 for all $k,k'\in\I$, which means precisely that \Cref{eq:LHSRHS2}
holds, as desired.

This concludes the proof of the first statement. For the second statement,
we want to show that $W_{\gamma}^{\top}\mathring{L}_{\gamma}W_{\gamma}=\hat{L}_{\gamma}$,
where $\gamma>0$. Recall that $\hat{L}_{\gamma}=C_{\gamma}^{\top}L_{\gamma}C_{\gamma}$,
so by unraveling the diagonal transformations (cf. \Cref{def:killed-committor},
\Cref{eq:Lea}, and \Cref{eq:marked-projections})
used to define $C_{\gamma}$, $\mathring{L}_{\gamma}$, and $W_{\gamma}$
in terms of $\tilde{C}_{\gamma}$, $\mathring{R}_\gamma$, and $\tilde{W}$,
we see that it suffices to show that 
\begin{equation}
\tilde{W}^{\top}\mathrm{diag}(\mathring{\pi}_{\gamma})\mathring{R}_{\gamma}\tilde{W}=\tilde{C}_{\gamma}^{\top}\mathrm{diag}(\pi)R_{\gamma}\tilde{C}_{\gamma},\label{eq:LHSRHSgamma}
\end{equation}
similar to \Cref{eq:LHSRHS}. The proof of \Cref{eq:LHSRHSgamma}
is the same as that of \Cref{eq:LHSRHS}, given the
ingredients: 
\begin{enumerate}
\item $\mathrm{diag}(\pi)R_{\gamma}$ is symmetric, following directly from
the same fact for $\mathrm{diag}(\pi)R$, since $R_\gamma = R - \gamma \Id$.
\item The entrywise formula $\left(\mathrm{diag}(\mathring{\pi}_{\gamma})R_{\gamma}\right)_{(i,j),(i',j')}$
is the same as \Cref{eq:piRcon}, upon substitution
of $\tilde{C}_{\gamma}$ for $\tilde{C}$ (based on the entrywise
expression \Cref{eq:pealt2} for $\mathring{\pi}_{\gamma}$) and $R_{\gamma}=R-\gamma\Id$
for $R$ (based on the fact that this reproduces the effect of subtracting
$\gamma\Id$ from $\mathring{R}$ in the blockwise formula \Cref{eq:expanded-R} for
$\mathring{R}$).
\item The ``killed Laplace equation'' is satisfied by the killed commitor on $\Ic$, i.e., $[R_{\gamma}\tilde{C}_{\gamma}]_{\Ic,:}=0$. We verify this last point below.
\end{enumerate}
Then it only remains to show $[R_{\gamma}\tilde{C}_{\gamma}]_{\Ic,:}=0$.
To see this, recall the formula (\Cref{def:killed-committor})
\[
\tilde{C}_{\gamma}= \mathrm{diag}^{-1}(h)(K_{\gamma})_{:,\I} \ \cdots,
\]
 where ``$\,\cdots\,$'' indicates an omitted matrix factor. 
 Then since $K_{\gamma}=-\mathrm{diag}(h)R_{\gamma}^{-1}\mathrm{diag}^{-1}(h)$,
it follows that 
\[
[R_{\gamma}\tilde{C}_{\gamma}]_{\Ic,:}=-(R_{\gamma})_{\Ic,:}(R_{\gamma}^{-1})_{:,\I} \ \cdots
\]
But $(R_{\gamma})_{\Ic,:}(R_{\gamma}^{-1})_{:,\I}=\Id_{\Ic,\I}=0$,
which completes the proof.
\end{proof}


Now we are in position to write the probability evolution of $\FullChain$ and $\InducedChain$ in terms of $\MarkedChain$:
\ThInducedFromMarked*
\begin{Proof}{th:induced-from-marked}
    The first statement is evident by considering that $\Qo$ has orthonormal columns (\Cref{th:q-contraction}) and defines an invariant subspace for $\Le$ with $\Qo^\t \Le \Qo = L$ (\Cref{th:qlq}).
    
    For the second statement, $\So$ has orthonormal columns, so $\Id - \So \So^\t$ defines an orthogonal projection. Using the block structure defined by $\So$ and its complement, \Cref{lem:expalpha} directly implies that for any $t>0$:
    \begin{eqn}
        \lim_{\alpha \ra \infty} \Expt{[\Le + \alpha (\Id - \So \So^\t)]} = \So \Expt{\So^\t \Le \So} \So^\t.
    \end{eqn}
    Next, $\Qo^\t \So = \Co$ by \Cref{th:q-s-equals-c}, and $\So^\t \Le \So = \Co^\t L \Co$ by \Cref{th:sls}, yielding the desired result.
\end{Proof}

\subsection{Contour integral derivations \label{a:sp-contour}}

Equipped with the preceding results, we now prove how Cauchy's integral formula can be used with the marked chain projections to rewrite the structure-preserving approximation:

\ThObliqueCauchy*
\begin{Proof}{th:oblique-cauchy}
Fix $\gamma, t>0$ for the duration of the proof. Expand 
\begin{eqn} \label{eq:unpack}
    \ObliqueP_{\I,\Kill}(t) = \Ca e^{- \Lrk t} \Ca^\t = Q_\gamma^\t \left( W_\gamma \, e^{- \Lrk t} \,  W_\gamma^\t \right) Q_\gamma,
\end{eqn}
using the fact that $\Ca =  Q_\gamma^\t W_\gamma$ (\Cref{th:q-s-equals-c}). Since $W_\gamma$ has orthonormal columns (\Cref{th:s-orthogonality}), we can directly apply \Cref{lem:contour0} with $A = \Lrk t$ and $U = W_\gamma$ to obtain 
\begin{eqn}
    W_\gamma \, e^{- \Lrk t} \, W_\gamma^\t = \frac{1}{2\pi i} \oint_{\Contour} e^{\Minus 1/z} \lrp{z \Id -  \Sa (t \Lrk)^{-1} \Sa^\t }^\Mo \odif z.
\end{eqn}
Together with \Cref{eq:unpack} this implies:
\begin{eqn}
    \ObliqueP_{\I,\Kill}(t) = \frac{1}{2\pi i} \oint_{\Contour} e^{\Minus 1/z} \Qa^\t \lrp{z \Id -  \Sa (t \Lrk)^{-1} \Sa^\t }^\Mo \Qa \odif z.
\end{eqn}
    Examining the integrand: 
    \begin{eqn}
        \Qa^\t \lrp{z \Id - \Sa (t \Lrk)^\Mo \Sa^\t}^\Mo \Qa
        &= z^\Mo \Qa^\t \lrp{\Id - \Sa (\Id - z t \Lrk )^\Mo \Sa^\t} \Qa \\
        &= z^\Mo \lrp{\Qa^\t \Qa - \Ca (\Id - z t \Lrk )^\Mo \Ca^\t} \\
        &= \mathcal{T}^{\,\Kill}(z) + z^\Mo \lrp{\Id - \Ca (\Id - z t \Lrk )^\Mo \Ca^\t} \\
        &= \mathcal{T}^{\,\Kill}(z) + \lrp{z \Id - \Ca \left[ t \Lrk - z^\Mo (\Id - \Ca^\t \Ca) \right]^\Mo \Ca^\t}^\Mo \\
        &= \mathcal{T}^{\,\Kill}(z) + \ObliqueRes^\Kill(z),
    \end{eqn}
    where the first line follows from the Woodbury matrix identity, the second from \Cref{th:q-s-equals-c}, and the fourth from the Woodbury identity (most easily verified in reverse).
    The other lines follow from the definitions of $\mathcal{T}^{\,\Kill}(z)$ and $\ObliqueRes^\Kill(z)$ \Cref{eq:oblique-res-def}.
\end{Proof}


We now turn to bounding the difference in resolvents of the structure-preserving and projective compressions, $\ObliqueRes$ and $\OrthRes$.

\begin{lemma}\label{th:resolvent-sqrt}
    Let $B$ be a real symmetric positive definite matrix and $z \in \mathbb{C}$, $\re(z) > 0$. 
    Then
    \begin{eqn}
    \snorm*{\frac{(B^\Mo - z B)^\Mo}{z^{1/2}}} \leq 2^{\Mo/2} \lrp{1 - \frac{\re(z)}{\Abs{z}}}^\Mh \Abs{z}^\Mo. \\
    \end{eqn}
\end{lemma}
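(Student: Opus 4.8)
The plan is to reduce the matrix bound to a scalar estimate by diagonalizing $B$. Since $B$ is real symmetric positive definite, write $B = \sum_\lambda \lambda \, \Pi_\lambda$ in its spectral decomposition with $\lambda > 0$. Then $(B^{-1} - z B)^{-1} = \sum_\lambda (\lambda^{-1} - z\lambda)^{-1} \Pi_\lambda$, so the spectral norm of $z^{-1/2}(B^{-1}-zB)^{-1}$ is $\max_\lambda \big| z^{-1/2}(\lambda^{-1} - z\lambda)^{-1}\big|$. Hence it suffices to prove the scalar inequality
\begin{eqn}
    \left| \frac{1}{z^{1/2}(\lambda^{-1} - z\lambda)} \right| \leq 2^{-1/2}\lrp{1 - \frac{\re(z)}{\Abs{z}}}^{-1/2}\Abs{z}^{-1}
\end{eqn}
for every $\lambda > 0$ and every $z$ with $\re(z) > 0$.

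Next I would simplify the left-hand side. Factoring, $z^{1/2}(\lambda^{-1} - z\lambda) = \lambda^{-1} z^{1/2}(1 - z\lambda^2) = \lambda^{-1} z^{1/2} - \lambda z^{3/2}$; it is cleaner to write the modulus squared of the reciprocal as $\big| z^{1/2}(\lambda^{-1}-z\lambda)\big|^{-2} = \big(|z|\,|\lambda^{-1} - z\lambda|^2\big)^{-1}$. Expanding, with $\mu := \lambda^2 > 0$ and writing $z = a + ib$ with $a = \re(z) > 0$,
\begin{eqn}
    |\lambda^{-1} - z\lambda|^2 = \mu^{-1}|1 - z\mu|^2 = \mu^{-1}\big[(1 - a\mu)^2 + b^2\mu^2\big] = \mu^{-1} - 2a + |z|^2 \mu.
\end{eqn}
By AM–GM, $\mu^{-1} + |z|^2\mu \geq 2|z|$, so $|\lambda^{-1} - z\lambda|^2 \geq 2|z| - 2a = 2|z|\lrp{1 - \tfrac{\re(z)}{|z|}}$, uniformly in $\mu > 0$. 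Therefore $|z|\,|\lambda^{-1}-z\lambda|^2 \geq 2|z|^2\lrp{1 - \tfrac{\re(z)}{|z|}}$, and taking reciprocals and square roots gives exactly the claimed scalar bound. Reassembling the spectral decomposition completes the proof.

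There is no real obstacle here; the only point requiring a little care is that the AM–GM step must be applied in a way that is uniform over the whole spectrum of $B$ — i.e.\ the bound $\mu^{-1} + |z|^2\mu \geq 2|z|$ holds for all $\mu>0$, so the worst case over eigenvalues is controlled by a $\mu$-independent quantity. Note also that $\re(z)>0$ guarantees $1 - \re(z)/|z| > 0$, so the right-hand side is finite and the square roots are well defined; and one should fix a consistent branch of $z^{1/2}$ (any branch works since only $|z^{1/2}| = |z|^{1/2}$ enters). Everything else is routine algebra.
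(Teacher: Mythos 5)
Your proof is correct and follows essentially the same path as the paper: diagonalize $B$ (exploiting normality of $B^{-1}-zB$) and establish the scalar bound at each eigenvalue. The only cosmetic difference is that you identify the worst-case eigenvalue $\lambda=|z|^{-1/2}$ via AM--GM applied to $\mu^{-1}+|z|^2\mu$, whereas the paper explicitly maximizes the scalar expression over $\lambda>0$; these are the same computation.
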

\begin{proof}
    Let $\lambda_i > 0$ denote the eigenvalues of $B$, which are in particular real. Now $B^\Mo - z B$ is a normal matrix, so it can be orthogonally diagonalized by the spectral theorem, and therefore 
    \begin{eqn}
        \norm{(B^\Mo - z B)^\Mo}_2 = \max_i \; \Abs{\lambda_i^\Mo - z \lambda_i}^\Mo .
    \end{eqn}
Let $z = a + b i$ for real $a,b$ with $a > 0$. Then 
\begin{eqn}
    \frac{\Abs{\lambda_i^\Mo - z \lambda_i}^\Mo}{\Abs{z}^{1/2}} = \lrp{a^2 + b^2}^{-1/4} \lrp{b^2 \lambda_i^2 + (\lambda_i^\Mo - a \lambda_i)^2}^\Mh \eqqcolon f(a,b,\lambda_i).
\end{eqn}
Consider $a,b$ fixed. Then $f(a,b,\lambda_i)$ is upper bounded as follows:
\begin{eqn}
    f(a,b,\lambda_i) \leq \max_{\lambda > 0} \{ f(a,b,\lambda) \}  = 2^\Mh \lrp{1 - \frac{a}{\sqrt{a^2 + b^2}}}^\Mh (a^2 + b^2)^\Mh
\end{eqn}
with the maximum achieved at $\lambda = (a^2 + b^2)^{-1/4} = \Abs{z}^{-1/2}$.
\end{proof}


\begin{lemma}\label{th:obliqueness-equivalence}
     Let $\La$ be a killed Laplacian with committor $\Ca$ as in \Cref{def:killed-committor}, $z \in \mathbb{C}$ with $\re(z) > 0$, 
    $\OrthRes^\Kill$ as in \cref{eq:orth-res-def}, and
    $\ObliqueRes^\Kill$ as in \Cref{th:oblique-cauchy}. Then:
\begin{eqn}
    \label{eq:oblique-resolvent-diff}
    \ObliqueRes^\Kill(z) - \OrthRes^\Kill(z) = \Ca \,
    \SqrtResolvent_z [\Ca^\t \La \Ca]
    \, \Phi_\Kill \,
  \SqrtResolvent_z [\Va^\t \La \Va] \,
 \Va^\t / t,
\end{eqn}
where  $\Va = \Ca (\Ca^\t \Ca)^\Mh$ (cf. \Cref{def:killed-committor}), and moreover we define
\begin{eqn}
    \SqrtResolvent_z[A] \Eq \frac{(A^\Mh - z A^{1/2})^\Mo}{z^{1/2}}
\end{eqn}
and 
\begin{eqn}
    \Phi_\Kill \Eq (\Ca^\t \La \Ca)^\Mh \big[ (\Ca^\t \Ca)^\Mh - (\Ca^\t \Ca)^{1/2} \big] (\Va^\t \La \Va)^\Mh.
\end{eqn}
\end{lemma}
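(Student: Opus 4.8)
The goal is to express the difference of resolvents $\ObliqueRes^\Kill(z) - \OrthRes^\Kill(z)$ in the factored form \eqref{eq:oblique-resolvent-diff}. My plan is to start from the two resolvent definitions and apply a resolvent-difference identity, then carefully rewrite each factor using the structure $\Va = \Ca (\Ca^\t \Ca)^\Mh$. Recall from \cref{eq:orth-res-def} that $\OrthRes^\Kill(z) = (z\Id - t^{-1}\Ca \Lrk^\Mo \Ca^\t)^\Mo$ (using $\Lrk = \Ca^\t \La \Ca$), which by \Cref{lem:refactor} applied to $\Va$ also equals $(z\Id - t^{-1}\Va(\Va^\t \La \Va)^\Mo \Va^\t)^\Mo$; and from \cref{eq:oblique-res-def}, $\ObliqueRes^\Kill(z) = (z\Id - \Ca[\Lrk t - z^\Mo(\Id - \Ca^\t\Ca)]^\Mo \Ca^\t)^\Mo$. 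Both resolvents have the shape $(z\Id - \Ca M \Ca^\t)^\Mo$ for appropriate $M$; the plan is to reduce to comparing the ``inner'' operators $M$.

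First I would use the general identity that for matrices of the form $X = (z\Id - \Ca M_1 \Ca^\t)^\Mo$ and $Y = (z\Id - \Ca M_2 \Ca^\t)^\Mo$ one has $X - Y = X \, \Ca(M_1 - M_2)\Ca^\t \, Y$. Here the danger is that $M_1$ (the projective inner operator) and $M_2$ (the structure-preserving inner operator) live on different-looking spaces — $M_1 = t^{-1}(\Va^\t \La \Va)^\Mo$ is expressed in terms of $\Va$, while $M_2$ involves $\Lrk$ directly — so the cleanest path is to push everything through $\Ca$. A better route: write $\OrthRes^\Kill = (z\Id - t^{-1}\Ca(\Ca^\t\Ca)^{-1}\Lrk^\Mo(\Ca^\t\Ca)^{-1}\Ca^\t)^\Mo$? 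No — instead I'd keep $\OrthRes^\Kill = (z\Id - t^{-1}\Ca\Lrk^\Mo\Ca^\t)^\Mo$ via \Cref{lem:refactor}, and $\ObliqueRes^\Kill = (z\Id - \Ca[t\Lrk - z^\Mo(\Id-\Ca^\t\Ca)]^\Mo\Ca^\t)^\Mo$, so both have the form $(z\Id - \Ca N_j \Ca^\t)^\Mo$ with $N_1 = t^{-1}\Lrk^\Mo$ and $N_2 = [t\Lrk - z^\Mo(\Id-\Ca^\t\Ca)]^\Mo$. Then $\ObliqueRes^\Kill - \OrthRes^\Kill$ factors through $\Ca(N_2 - N_1)\Ca^\t$, but this isn't yet in the stated form.

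The real work is algebraically rewriting $N_2 - N_1$. The plan is: $N_2 - N_1 = N_2(N_1^\Mo - N_2^\Mo)N_1 = [t\Lrk - z^\Mo(\Id-\Ca^\t\Ca)]^\Mo \cdot \big(t\Lrk - (t\Lrk - z^\Mo(\Id-\Ca^\t\Ca))\big)\cdot t^{-1}\Lrk \cdot$ wait, need to be careful with the order: $N_2 - N_1 = -N_2(N_2^\Mo - N_1^\Mo)N_1$, and $N_2^\Mo - N_1^\Mo = [t\Lrk - z^\Mo(\Id - \Ca^\t\Ca)] - t\Lrk = -z^\Mo(\Id - \Ca^\t\Ca)$. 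So $N_2 - N_1 = z^\Mo N_2 (\Id - \Ca^\t\Ca) N_1 = z^\Mo [t\Lrk - z^\Mo(\Id-\Ca^\t\Ca)]^\Mo (\Id-\Ca^\t\Ca) \cdot t^{-1}\Lrk^\Mo$. Now I have to connect this to the $\SqrtResolvent_z$ operators and the bridging factor $\Phi_\Kill$. The main obstacle — and the step I expect to be fiddliest — is recognizing that $[t\Lrk - z^\Mo(\Id - \Ca^\t\Ca)]^\Mo$ can be symmetrized as $\Lrk^\Mh \big[t\Id - z^\Mo \Lrk^\Mh(\Id - \Ca^\t\Ca)\Lrk^\Mh\big]^\Mo \Lrk^\Mh = \Lrk^\Mh\big[t\Id - z^\Mo\Ob_\Kill\big]^\Mo\Lrk^\Mh$, and then further massaged into a product $\SqrtResolvent_z[\Ca^\t\La\Ca]\,\Phi_\Kill\,\SqrtResolvent_z[\Va^\t\La\Va]$ using the definition $\SqrtResolvent_z[A] = z^{-1/2}(A^\Mh - zA^{1/2})^\Mo$. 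This will require carefully inserting factors of $(\Ca^\t\Ca)^{\pm 1/2}$ to reconcile the $\Va^\t\La\Va$ version with the $\Ca^\t\La\Ca = \Lrk$ version (note $\Va^\t\La\Va = (\Ca^\t\Ca)^\Mh\Lrk(\Ca^\t\Ca)^\Mh$ by a short computation, using $\Va = \Ca(\Ca^\t\Ca)^\Mh$), and tracking how the ``$\Ca$'' prefactor on the left and ``$\Va^\t$'' on the right in \eqref{eq:oblique-resolvent-diff} arise from grouping $\Ca N_2(\cdots)$ on one side and $(\cdots)N_1\Ca^\t$ versus $\Va(\cdots)\Va^\t$ on the other. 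I would verify the final matching of $\Phi_\Kill$ by expanding both sides and checking the inner $(\Ca^\t\Ca)^\Mh - (\Ca^\t\Ca)^{1/2}$ term emerges from $(\Id - \Ca^\t\Ca)$ via $\Id - \Ca^\t\Ca = (\Ca^\t\Ca)^\Mh\big[(\Ca^\t\Ca)^{-1} - (\Ca^\t\Ca)\big]^{1/2}$? No — more simply, the bridging factor should absorb the mismatch between $\Lrk$-normalization and $\Va^\t\La\Va$-normalization of the two square-root resolvents, and I'd pin down its exact form by matching degrees of $(\Ca^\t\Ca)$ on each side; this bookkeeping, while elementary, is where errors are most likely, so I would do it slowly and symbolically rather than trying to guess the grouping.
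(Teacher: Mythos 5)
Your identification of both resolvents as $(z\Id - \Ca N_j \Ca^\t)^\Mo$ with $N_1 = t^\Mo\Lrk^\Mo$ and $N_2 = [t\Lrk - z^\Mo(\Id-\Ca^\t\Ca)]^\Mo$, and your computation $N_2 - N_1 = z^\Mo N_2(\Id - \Ca^\t\Ca)N_1$, is the right nucleus of the argument and matches the paper's key algebraic step. But the proposal then veers off in a direction that cannot close: you propose symmetrizing $N_2$ alone as $\Lrk^\Mh[t\Id - z^\Mo\Ob_\Kill]^\Mo\Lrk^\Mh$ and ``further massaging'' this into $\SqrtResolvent_z[\Ca^\t\La\Ca]\Phi_\Kill\SqrtResolvent_z[\Va^\t\La\Va]$. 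That single factor has one inverse with $z^\Mo$-type dependence; it cannot produce two separate factors each of the form $(A^\Mh - zA^{1/2})^\Mo$, whose $z$-dependence is linear. The product of two square-root resolvents does not come from $N_2$ at all.

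What is actually missing is that the full difference is $\ObliqueRes^\Kill - \OrthRes^\Kill = \ObliqueRes^\Kill\,\Ca(N_2-N_1)\Ca^\t\,\OrthRes^\Kill$, and the outer full-dimensional resolvents still have to be absorbed. The paper does this upfront by applying the Woodbury identity to each resolvent, yielding $\OrthRes^\Kill(z) = z^\Mo(\Id - \Ca(\Ca^\t\Ca - z\Lrk t)^\Mo\Ca^\t)$ and $\ObliqueRes^\Kill(z) = z^\Mo(\Id - \Ca(\Id - z\Lrk t)^\Mo\Ca^\t)$, after which the subtraction is elementary and the reduced-space resolvent identity gives $z^\Mo\Ca(\Ca^\t\Ca - z\Lrk t)^\Mo(\Id-\Ca^\t\Ca)(\Id - z\Lrk t)^\Mo\Ca^\t$. (Equivalently one can use the push-through identities $\ObliqueRes^\Kill\Ca N_2 = -\Ca(\Id - zt\Lrk)^\Mo$ and $N_1\Ca^\t\OrthRes^\Kill = -(\Ca^\t\Ca - zt\Lrk)^\Mo\Ca^\t$.) Only at that point are there \emph{two} reduced resolvents that are linear in $z$, and the symmetrization by inserting $\Lrk^{\pm 1/2}$, $(\Ca^\t\Ca)^{\pm 1/2}$, $(\Va^\t\La\Va)^{\pm 1/2}$ gives the stated factors $\SqrtResolvent_z[\cdot]$ and $\Phi_\Kill$. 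Without the Woodbury/push-through step the argument cannot reach the stated form, and the $N_2$-symmetrization you outline is not a substitute for it.
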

\begin{proof}
    Applying the Woodbury matrix identity to both $\OrthRes^\Kill(z)$ and $\ObliqueRes^\Kill(z)$, we deduce that 
    \begin{eqn}
        \OrthRes^\Kill(z) &= z^\Mo \lrp{\Id - \Ca (\Ca^\t \Ca - z \Lrk t)^\Mo \Ca^\t} \\
        \ObliqueRes^\Kill(z) &= z^\Mo \lrp{\Id - \Ca (\Id - z \Lrk t)^\Mo \Ca^\t}
    \end{eqn}
    so that upon rearrangement,
    \begin{eqn}
        \ObliqueRes^\Kill(z) - \OrthRes^\Kill(z) \ \ = \ \ & z^\Mo \Ca \left[ (\Ca^\t \Ca - z \Lrk t)^\Mo - (\Id - z \Lrk t)^\Mo \right] \Ca^\t \\
        = \ \ & z^\Mo \Ca (\Ca^\t \Ca - z \Ca^\t \La \Ca t)^\Mo (\Id - \Ca^\t \Ca) (\Id - z \Ca^\t \La \Ca t)^\Mo \Ca^\t \\
        = \ \ & z^\Mo \Ca 
            \left[ (\Ca^\t \La \Ca t)^\Mh - z (\Ca^\t \La \Ca t)^{1/2} \right]^\Mo \\
            & \qquad (\Ca^\t \La \Ca)^\Mh
         \big[ (\Ca^\t \Ca)^\Mh - (\Ca^\t \Ca)^{1/2} \big]
         (\Va^\t \La \Va)^\Mh \\
         & \qquad  \left[ (\Va^\t \La \Va t)^\Mh - z (\Va^\t \La \Va t)^{1/2} \right]^\Mo 
         \Va^\t / t\\
         = \ \ & \Ca  \, 
            \SqrtResolvent_z [\Ca^\t \La \Ca] \\
            & \qquad (\Ca^\t \La \Ca)^\Mh
         \big[ (\Ca^\t \Ca)^\Mh - (\Ca^\t \Ca)^{1/2} \big] 
         (\Va^\t \La \Va)^\Mh \\
         & \qquad  \SqrtResolvent_z [\Va^\t \La \Va ] \, 
         \Va^\t / t.
         \label{eq:resolvent-difference-forms}
    \end{eqn}
\end{proof}


Finally, we are able to directly invoke $\Obs^\Kill$ in the bounding of $\ObliqueRes^\Kill(z) - \OrthRes^\Kill(z)$ via:
\begin{lemma}[Obliqueness norm identity] \label{th:obliqueness-norm}
    Consider a nonempty subset $\I$, $\La$ with $\Kill > 0$, $\Ca$ as in \Cref{th:killed-committor}, and $\Va = \Ca (\Ca^\t \Ca)^\Mh$ (as before).
    Then with $\Ob_\Kill$ and $\Obs^\Kill$ as in \Cref{def:obliqueness} and $\Phi_\Kill$ as in \Cref{th:obliqueness-equivalence}:
    \begin{eqn}
        \Norms{\Phi_\Kill} = \Obs^\Kill.
    \end{eqn}
\end{lemma}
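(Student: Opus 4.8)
The plan is to compute $\Norms{\Phi_\Kill}$ directly by manipulating the expression for $\Phi_\Kill$ into a form that is unitarily (or at least norm-preservingly) equivalent to $\Ob_\Kill$. Recall
\[
\Phi_\Kill = (\Ca^\t \La \Ca)^\Mh \big[ (\Ca^\t \Ca)^\Mh - (\Ca^\t \Ca)^{1/2} \big] (\Va^\t \La \Va)^\Mh,
\]
and $\Lrk = \Ca^\t \La \Ca$, while $\Va^\t \La \Va = (\Ca^\t\Ca)^\Mh \Lrk (\Ca^\t\Ca)^\Mh$ since $\Va = \Ca(\Ca^\t\Ca)^\Mh$. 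The first step is to substitute this identity for $\Va^\t\La\Va$, so that every factor in $\Phi_\Kill$ is expressed in terms of the two commuting-in-spirit-but-not-literally symmetric positive definite matrices $\Lrk$ and $M \Eq \Ca^\t\Ca$. Writing $N \Eq M^\Mh - M^{1/2} = M^\Mh(\Id - M)$, we have $\Phi_\Kill = \Lrk^\Mh N \big( M^\Mh \Lrk M^\Mh \big)^\Mh$.

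The key move is then to recognize that $\big(M^\Mh \Lrk M^\Mh\big)^\Mh = M^{1/2} U (\Lrk)^\Mh$ for some \emph{orthogonal} matrix $U$. This is the polar-decomposition / symmetric-square-root fact: for symmetric positive definite $A, B$, one has $(A B A)^{1/2} = A U B^{1/2}$ where $U = (AB^{1/2})^{-1}(ABA)^{1/2}$ is orthogonal — equivalently $(A^\Mh B A^\Mh)^\Mh = A^{1/2} B^\Mh U^\t$ after rearranging with $A = M^\Mh$, $B = \Lrk$. I would verify this cleanly by checking that $V \Eq (\Lrk)^\Mh M^{1/2} (M^\Mh \Lrk M^\Mh)^{1/2}$ (or its appropriate analogue) satisfies $V^\t V = \Id$, which is a one-line computation once the products are written out. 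Granting this, $\Phi_\Kill = \Lrk^\Mh N M^{1/2} U (\Lrk)^\Mh = \Lrk^\Mh M^\Mh(\Id - M) M^{1/2} U \Lrk^\Mh = \Lrk^\Mh (\Id - M) U \Lrk^\Mh$, using that $M^\Mh$, $(\Id-M)$, and $M^{1/2}$ all commute (being functions of $M$), so $M^\Mh (\Id - M) M^{1/2} = \Id - M$.

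Thus $\Phi_\Kill = \Lrk^\Mh (\Id - \Ca^\t \Ca) U \Lrk^\Mh = \Ob_\Kill \cdot (\Lrk^{1/2} U \Lrk^\Mh)$, but it is cleaner to write $\Phi_\Kill = \Lrk^\Mh(\Id - \Ca^\t\Ca)\Lrk^\Mh \cdot (\Lrk^{1/2} U \Lrk^\Mh)$ and observe that the trailing factor $\Lrk^{1/2} U \Lrk^\Mh$ — wait, this is not orthogonal. The correct accounting is that $\Phi_\Kill$ and $\Ob_\Kill = \Lrk^\Mh(\Id-\Ca^\t\Ca)\Lrk^\Mh$ differ by right-multiplication by the orthogonal matrix $U$: $\Phi_\Kill = \Ob_\Kill \, \tilde U$ where $\tilde U = \Lrk^{1/2}(\ldots)$; more carefully, after the substitution one gets $\Phi_\Kill = \Lrk^\Mh(\Id - M)\,U\,\Lrk^\Mh$ and one factors as $\big[\Lrk^\Mh(\Id-M)\Lrk^\Mh\big]\big[\Lrk^{1/2}U\Lrk^\Mh\big]$ — since $\Lrk^{1/2}U\Lrk^\Mh$ need not be orthogonal this factoring is not immediately useful, so instead I would keep $\Phi_\Kill = \Lrk^\Mh(\Id-M)U\Lrk^\Mh$ and note $\Obs^\Kill = \Norms{\Lrk^\Mh(\Id-M)\Lrk^\Mh}$; then since $(\Id-M)^{1/2}$ exists (as $\norm{\Ca}_2\le 1$ by \Cref{th:committor-norm}, so $M \preceq \Id$), write $\Ob_\Kill = \big[\Lrk^\Mh(\Id-M)^{1/2}\big]\big[(\Id-M)^{1/2}\Lrk^\Mh\big]$ and $\Phi_\Kill = \big[\Lrk^\Mh(\Id-M)^{1/2}\big]\big[(\Id-M)^{1/2}U\Lrk^\Mh\big]$, both of which have the same singular values because $\Norms{AB} = \Norms{B^\t A^\t}$ for these Schatten norms and the first factor $\Lrk^\Mh(\Id-M)^{1/2}$ is shared while the second factors $(\Id-M)^{1/2}\Lrk^\Mh$ and $(\Id-M)^{1/2}U\Lrk^\Mh$ have the same singular values (differing by the left orthogonal factor... ). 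The main obstacle is getting this orthogonal-equivalence bookkeeping exactly right — identifying precisely which orthogonal matrix $U$ appears and on which side, and confirming that it is genuinely norm-preserving for both the spectral and nuclear norms simultaneously; once that polar-decomposition identity for square roots of products $A^\Mh B A^\Mh$ is pinned down, the conclusion $\Norms{\Phi_\Kill} = \Norms{\Ob_\Kill} = \Obs^\Kill$ is immediate since unitarily equivalent matrices have identical singular values and hence identical Schatten norms.
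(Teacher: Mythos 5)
Your route is genuinely different from the paper's: the paper simply computes $\Phi_\Kill \Phi_\Kill^\t$ directly, using $(\Va^\t \La \Va)^\Mo = (\Ca^\t\Ca)^{1/2}\Lrk^\Mo(\Ca^\t\Ca)^{1/2}$ to collapse the middle factor $[(\Ca^\t\Ca)^\Mh - (\Ca^\t\Ca)^{1/2}](\Va^\t\La\Va)^\Mo[\cdots]$ to $(\Id - \Ca^\t\Ca)\Lrk^\Mo(\Id - \Ca^\t\Ca)$, giving $\Phi_\Kill\Phi_\Kill^\t = \Ob_\Kill^2$ in one step, from which both Schatten norms agree immediately. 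Your polar-decomposition strategy is also a legitimate way to get there, but as written there is a genuine gap in the execution.

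The problem is where your orthogonal factor lands. You claim $(M^\Mh \Lrk M^\Mh)^\Mh = M^{1/2} U \Lrk^\Mh$ with $U$ orthogonal in the middle, and then try to conclude by factoring $\Phi_\Kill$ and $\Ob_\Kill$ through $(\Id - M)^{1/2}$. That final step fails: if $A = XY$ and $A' = XY'$ with $Y$ and $Y'$ having the same singular values, it does \emph{not} follow that $A$ and $A'$ have the same singular values — orthogonal factors are only norm-preserving when they act on the far left or far right of the whole product, not wedged in the interior. You already sense something is off (``wait, this is not orthogonal''), but the waffling at the end never resolves it. The fix is that the correct polar identity places $U$ on the outside: applying the polar decomposition to $P = \Lrk^{1/2} M^\Mh$ gives $P^\t P = M^\Mh \Lrk M^\Mh$, so $(M^\Mh\Lrk M^\Mh)^{1/2} = U^\t \Lrk^{1/2} M^\Mh$ for orthogonal $U$, hence $(M^\Mh\Lrk M^\Mh)^\Mh = M^{1/2}\Lrk^\Mh U$. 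Substituting and using that $M^\Mh$, $\Id - M$, $M^{1/2}$ commute, you get $\Phi_\Kill = \Lrk^\Mh(\Id-M)\Lrk^\Mh \, U = \Ob_\Kill U$, and now $\Norms{\Phi_\Kill} = \Norms{\Ob_\Kill}$ does follow by orthogonal invariance of Schatten norms. So the idea can be salvaged, but either you need that precise placement of $U$, or — more economically — just square $\Phi_\Kill$ as the paper does and skip the polar decomposition entirely.
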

\begin{proof}
    Straightforward computation yields:
    \begin{eqn}
        \Phi_\Kill \Phi_\Kill^\t
        & \ =  \ \ (\Ca^\t \La \Ca)^\Mh \big[ (\Ca^\t \Ca)^\Mh - (\Ca^\t \Ca)^{1/2} \big] (\Va^\t \La \Va)^\Mo
         \\ 
         & \qquad \qquad \big[ (\Ca^\t \Ca)^\Mh - (\Ca^\t \Ca)^{1/2} \big] (\Ca^\t \La \Ca)^\Mh \\
         & \ =  \ \   (\Ca^\t \La \Ca)^\Mh (\Id - \Ca^\t \Ca) (\Ca^\t \La \Ca)^\Mo
         (\Id - \Ca^\t \Ca) (\Ca^\t \La \Ca)^\Mh \\
         & \ =  \ \  \left[ (\Ca^\t \La \Ca)^\Mh (\Id - \Ca^\t \Ca) (\Ca^\t \La \Ca)^\Mh \right]^2 = \Ob_\Kill^2.
    \end{eqn}
    Since $\Ob_\Kill$ is symmetric positive semidefinite, it follows that 
    \begin{eqn}
        \snorm{\Phi_\Kill} &= \snorm{\Phi_\Kill \Phi_\Kill^\t}^{1/2} = \snorm{\Ob_\Kill} \\
        \nnorm{\Phi_\Kill} &= \nnorm{(\Phi_\Kill \Phi_\Kill^\t)^{1/2}} = \nnorm{\Ob_\Kill},
    \end{eqn}
    by expanding identities for the nuclear and spectral norms.
\end{proof}


Putting our bounds together and using \Cref{th:resolvent-sqrt} and \Cref{def:obliqueness}, we can bound the resolvent difference as follows:
\ThResolventObliqueness*
\begin{Proof}{th:resolvent-obliqueness}
    Applying H\"{o}lder's inequality for the Schatten $p$-norms repeatedly to \cref{eq:oblique-resolvent-diff} yields:
    \begin{eqn}
        \Norms{\ObliqueRes^\Kill(z) - \OrthRes^\Kill(z)} & \ \leq  \ 
        \snorm{\Ca}
        \cdot \snorm{\SqrtResolvent_z[\Ca^\t \La \Ca]}  \cdot 
        \Norms{\Phi_\Kill}  \cdot 
        \snorm{\SqrtResolvent_z[\Va^\t \La \Va]}
        \cdot \snorm{\Va} / t
    \end{eqn}
    following the notation of \Cref{th:obliqueness-equivalence}. 
    
    We deal with the factors in their order of appearance as follows: 
    \begin{eqn}
        \snorm{\Ca} &\leq 1 
            & \hspace{-0.1in}\text{by \Cref{th:committor-norm}} \\
        \snorm{\SqrtResolvent_z [\Ca^\t \La \Ca]} &\leq 2^\Mh \lrp{1 - \re(z) / \Abs{z}}^\Mh \Abs{z}^\Mo 
            & \hspace{-0.1in}\text{by \Cref{th:resolvent-sqrt}} \\
        \Norms{\Phi_\Kill}  &= \Obs^\Kill
            & \hspace{-0.1in}\text{by \Cref{th:obliqueness-norm}} \\
        \snorm{\SqrtResolvent_z [\Va^\t \La \Va]} &\leq 2^\Mh \lrp{1 - \re(z) / \Abs{z}}^\Mh \Abs{z}^\Mo  
            & \hspace{-0.1in}\text{by \Cref{th:resolvent-sqrt}} \\
        \snorm{\Va} &= 1 
            & \hspace{-0.1in}\text{by \Cref{th:committor-norm}.}
    \end{eqn} 
    Then \cref{eq:resolvent-diff-bound} follows directly.
\end{Proof}



\begin{figure}
    \centering
    \includegraphics[width=0.9\textwidth]{./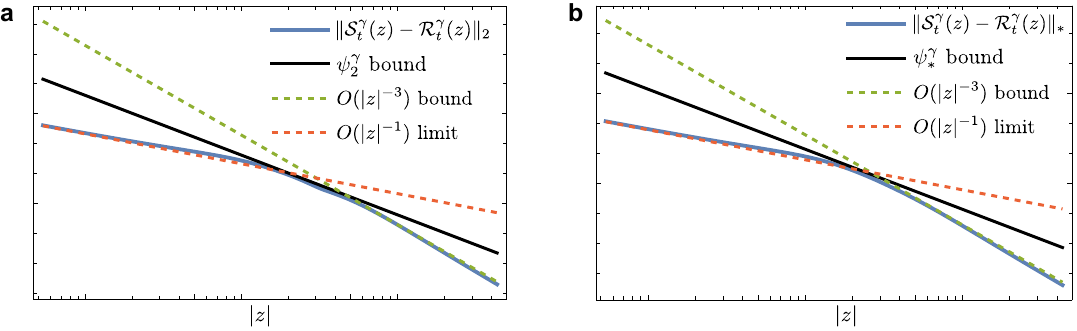}
    \caption{
        Resolvent error along ray for a randomly generated killed Laplacian $\La$ and fixed $t = 1$.
        (a) Log-log plot of $\snorm{\ObliqueRes^\Kill(z) - \OrthRes^\Kill(z)}$ as a function of $\Abs{z}$ for $z \propto \tfrac{1}{2} + \tfrac{\sqrt 3}{2} i$, exhibiting a transition from an $O(|z|)^\Mo$ regime (small $|z|$) 
        to an $O(|z|)^{\Minus 3}$ regime (large $|z|$). 
        \Cref{th:resolvent-obliqueness} gives a bound of $\large(1 - \frac{\re(z)}{\Abs{z}} \large)^\Mo \frac{\Os^\Kill}{2 t \Abs{z}^2}$ (black) over the entire ray.
        (b) Same as (a) with the nuclear norm replacing the spectral norm throughout.
        \label{fig:oblique-contour-schematic}
    }
\end{figure}

Similar to the proof of \Cref{th:dissipative}, ultimately the key difficulty in bounding the error of the structure-preserving compression is to bound $\Norms{\ObliqueRes^\Kill(z) - \OrthRes^\Kill(z)}$ on the diagonal contour segments comprising $\mathcal{C}_{1}(T,0)$ defined in \Cref{eq:contourdef0}. 
Along such a segment, \Cref{fig:oblique-contour-schematic} depicts $\Norms{\ObliqueRes^\Kill(z) - \OrthRes^\Kill(z)}$ in a typical scenario.
Close to the origin, the norm is $O( \Abs{z}^\Mo )$, while in the large $\Abs{z}$ limit, it is possible to prove an $O(\Abs{z}^{\Minus 3})$ bound. Since it is not useful to us in our analysis, we will omit the proof.
Indeed, it is apparently not possible to make use of sharp asymptotics in the small-$\vert z \vert$ or large-$\vert z\vert$ regime in terms of quantities that can be made small via column selection like the \Nystrom{} error.
Instead we use an $O(\Abs{z}^{\Minus 2})$ bound which holds globally over the contour segment with a preconstant controlled by $\Obs^\gamma$ via \Cref{th:resolvent-obliqueness}.

\ThObliqueSumBound*
\begin{Proof}{th:oblique-sum-bound}
     First fix $t>0$. In this proof we will let $c > 0$ denote a suitably large constant, which may change or enlarge based on context. Importantly $c$ must be independent of $\gamma$ as well as any contour integral argument $z$, but it can otherwise depend on $t$, $L$ (and in particular the dimension), and $\I$. Likewise, we will use big-$O$ notation in the same sense (i.e., in which the preconstants are independent of $\gamma$ and $z$ and moreover the norm is arbitrary).
     These soft arguments are only used to perform contour manipulations and do not affect the quantitative error bounds.
     
     Then by \Cref{th:oblique-cauchy} and \Cref{eq:orthogonal-cauchy}, we can write 
    \begin{eqn}\label{eq:bigsum}
        \ObliqueP_{\I,\Kill}(t) - \OrthP_{\I,\Kill}(t)  
        = \underbrace{ \frac{1}{2\pi i} \oint_{\mathcal{C}} e^{\Minus 1/z} [\mathcal{S}_t^\gamma (z) - \mathcal{R}_t^\gamma (z)] \, \odif z}_{ =: \, I_1 }
        + \underbrace{\frac{1}{2\pi i} \oint_{\mathcal{C}} e^{\Minus 1/z} \, \mathcal{T}^{\,\gamma} (z) \, \odif z }_{ =: \, I_2 }
    \end{eqn}
    for any contour $\mathcal{C}$ in the strict right half-plane enclosing the eigenvalues of both $(t \Lrk)^{-1}$ and $(t L_\gamma)^{-1}$. Note that as $\gamma \ra 0$, these eigenvalues extend further away from the origin, and we will need a bound on their growth. First, since $L_\gamma \succeq \gamma$, we know that the eigenvalues of $(t L_\gamma)^{-1}$ are bounded by $(t \gamma)^{-1}$. However, $\Lrk \neq \Lr + \gamma \Id$, and we need a more delicate argument to get a lower bound on its eigenvalues. Indeed, $\Lrk = C_\gamma^\t L_\gamma C_\gamma = \Lr + \gamma C_\gamma^\t C_\gamma \succeq \gamma C_\gamma^\t C_\gamma$. Now by the arguments of \Cref{th:killed-committor}, we know that $C_\gamma = C + O(\gamma)$. Moreover, $C$ has full rank because $\tilde{C}_{\I,\I} = \Id$ (by its probabilistic definition), so $\lambda_{\min} (C^\t C) > 0$. Therefore $\Lrk \succeq \gamma \lambda_{\min} (C^\t C) + O(\gamma^2)$. It follows that all of the eigenvalues of $(t \Lrk)^{-1}$ and $(t L_\gamma)^{-1}$ are $O(\gamma^{-1})$.
    
    Then we can choose the same contour $\mathcal{C} = \mathcal{C}(T,\delta)$ as defined in \Cref{eq:contourdef0} from the proof of \Cref{th:dissipative}, where $\delta > 0$ sufficiently small and $T = \gamma^{-2}$. By the preceding arguments, this contour is valid for $\gamma >0$ sufficiently small. We will choose the constants $\alpha,\beta > 0$ in \Cref{eq:contourdef0} in order to optimize our bound, and ultimately these constants will be different than in the proof of \Cref{th:dissipative}. (Alternatively, we can view the values $\alpha = 1/2$ and $\beta = \sqrt{3}/2$ that we uncover as being fixed from the start.) By the same arguments as in the proof of \Cref{th:dissipative}, we can take the limit $\delta \ra 0^+$ to adopt the contour $\mathcal{C} = \mathcal{C}(T,0)$, cf. \Cref{fig:contour}. Then let us view $I_1 = I_1 (\gamma)$ and $I_2 = I_2 (\gamma)$ defined in \Cref{eq:bigsum} as depending on $\gamma$ only, where the $\gamma$-dependent contour is fixed by our choice $T = T(\gamma) = \gamma^{-2}$.

    Now by the expression \Cref{eq:oblique-res-def} for $\mathcal{T}^{\,\gamma} (z)$, we know that $\Norms{ e^{\Minus 1/z} \, \mathcal{T}^{\,\gamma} (z) } \leq c ( 1+\vert z \vert )^{-1} \Norms{ Q_\gamma^\t Q_\gamma - \Id }$ on $\mathcal{C} (T,0)$, so by integration we deduce that 
    \begin{eqn}\label{eq:I2bound}
        \Norms{I_2 (\gamma)} \leq c \log(T) \Norms{ Q_\gamma^\t Q_\gamma - \Id }.
    \end{eqn}
    By inspection of \Cref{def:killed-projections} for $Q_\gamma$, we have that $Q_\gamma = Q + O(\gamma)$. Since $Q$ has orthonormal columns, it follows that $\Norms{ Q_\gamma^\t Q_\gamma - \Id } = O(\gamma)$, and therefore 
    \begin{eqn}\label{eq:I2bound2}
        \Norms{I_2 (\gamma)} = O(\gamma \log(1/\gamma)),
    \end{eqn}
    so in particular $ \Norms{I_2 (\gamma)} \ra 0 $ as $\gamma \ra 0$.

    Thus we can ignore $I_2$ asymptotically, and we turn our attention to $I_1$. First we focus on the contribution due to the vertical crossbar $\mathcal{C}_2 (T)$ as defined in the proof of \Cref{th:dissipative}, cf. \Cref{eq:contourdef0}.
    Just as in that proof (cf. \Cref {eq:verticalcontourAB}), we divide the vertical crossbar into two pieces, near and far from the imaginary axis: 
    \begin{eqn}
    \mathcal{C}_{2A}(T):=\{T+is\,:\,\vert s\vert\leq \sqrt{T}\},\quad\mathcal{C}_{2B}(T):=\{T+is\,:\,\vert s\vert\geq\sqrt{T}\}.
    \end{eqn}
    Now the poles of $\ObliqueRes^\Kill(z)$ and $\OrthRes^\Kill(z)$ lie in $[0,c \sqrt{T}]$, so  $\snorm{\ObliqueRes^\Kill(z)},\snorm{\OrthRes^\Kill(z)} = O(T^\Mo)$ on $\Contour_{2A}$. Since the $\Contour_{2A}$ contour is of length $O( \sqrt{T} )$, it follows that the  $\Contour_{2A}$ contribution is $O(1/\sqrt{T}) = O(\gamma)$, which vanishes as $\gamma \ra 0$.

    We will use \Cref{th:resolvent-obliqueness} to manage the $\mathcal{C}_{2B}$ contribution. First note that for $z \in \mathcal{C}_{2B} (T)$, we can write $z = T + i \, \im(z)$ and in turn compute 
    \begin{eqn}
    \left(1-\frac{\re(z)}{\vert z\vert}\right)^{-1}=\frac{1+(\im(z)/T)^{2}+\sqrt{1+(\im(z)/T)^{2}}}{(\im(z)/T)^{2}}.
    \end{eqn}
    We also have $\im(z)/T \leq \beta/\alpha$ for $z \in \mathcal{C}_{2B} (T)$, so we deduce 
    \begin{eqn}
        \left(1-\frac{\re(z)}{\vert z\vert}\right)^{-1} = O(T^2 / \vert \im (z) \vert^2).
    \end{eqn}
    Moreover $1/\vert z\vert^2 = O(T^{-2})$ for $z \in \mathcal{C}_{2B} (T)$, and by \Cref{th:recurrent-obliqueness} we know that $\Obs^\Kill$ are convergent as $\gamma \ra 0$. Therefore from \Cref{th:resolvent-obliqueness}, it follows that $\Norms{\ObliqueRes^\Kill(z) - \OrthRes^\Kill(z)} = O( \vert \im (z) \vert^{-2})$ on $\mathcal{C}_2 (T) $. Then by the same reasoning \Cref{eq:C2Bint} as in the proof of \Cref{th:dissipative}, the $\mathcal{C}_{2B}$ contribution is $O(1/\sqrt{T})$ and in particular vanishes in our limit $\gamma \ra 0$.
    

    Finally, consider the contribution from $\mathcal{C}_1 (T)$ as defined in the proof of \Cref{th:dissipative}, cf. \Cref{eq:contourdef0}. It is useful to let $\phi := \alpha + i \beta$ be the complex unit defining the angle of the segments forming $\mathcal{C}_1 (T)$, where we assume $\alpha^2 + \beta^2 = 1$ without loss of generality.

    By \Cref{th:resolvent-obliqueness}, for any $z \in \Contour_{1} (T)$ we have 
        \begin{eqn}
            \Norms{\ObliqueRes^\Kill(z) - \OrthRes^\Kill(z)} \leq 
            \frac{1}{2}\lrp{1 - \re(\phi)}^\Mo \frac{\Obs^\Kill}{t \cdot \Abs{z}^2},
        \end{eqn}
    and we can  compute: 
    \begin{eqn}
        \int_{\Contour_{1}(T)} \Abs{z}^{\Minus 2} \Abs{e^{\Mo/z}} \Abs{\odif z} 
        \leq  2 \int_0^\infty x^{\Minus 2} \Abs{e^{\Mo/ (\phi x)}} \odif x
        = \frac{2}{\re(\phi)},
    \end{eqn}
    so we deduce that
    \begin{eqn}
        \int_{\Contour_{1}(T)} e^{-1/z} \Norms{\ObliqueRes^\Kill(z) - \OrthRes^\Kill(z)} \, \Abs{\odif z}
        \leq 
        \left[ \re(\phi) (1-\re(\phi) ) \right]^{-1}  \frac{\Obs^\Kill}{t}.
    \end{eqn}
    To fix our contour, we want to determine $\phi$ on the unit circle in the first quadrant of the complex plane that maximizes $\re(\phi) (1-\re(\phi))$. Since $\re(\phi)$ can attain any value in $[0,1]$, it is equivalent to maximize $x(1-x)$ over $x \in [0,1]$. The optimizer is $\re(\phi) = x = 1/2$, which yields $\phi = \frac{1}{2} + \frac{\sqrt{3}}{2} i$. Fixing this choice of $\phi$, and in turn our contour, we obtain at last 
    \begin{eqn}
        \int_{\Contour_{1}(T)} e^{-1/z} \Norms{\ObliqueRes^\Kill(z) - \OrthRes^\Kill(z)} \, \Abs{\odif z}
        \leq 
          4 \frac{\Obs^\Kill}{t}.
    \end{eqn}
    
    Then by taking norms on both sides of \eqref{eq:bigsum} and taking the limit as $\gamma \ra 0$ where we recall the specification of our $\gamma$-dependent contour, we deduce the desired bound \Cref{eq:oblique-bound}. 
    
    
    Finally, \cref{eq:composed-bound} follows from the triangle inequality, using Theorem~\ref{th:ortho-markov} together with \cref{eq:oblique-bound}. This completes the proof.
\end{Proof}

\subsection{Probabilistic derivations related to obliqueness}


We begin with the following general lemma which allows us to write the obliqueness norms $\Obs^\Kill$ in terms of $\Ka$.
\begin{lemma}
    \label{th:norm-switch}
    Let $A$ and $B$ be real symmetric positive definite matrices and $C$ be real. Then
\begin{eqn}
    \Norms{C B (B A B)^\Mh} = \Norms{C A^\Mh}.
\end{eqn}
\end{lemma}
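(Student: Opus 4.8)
\textbf{Proof plan for Lemma \ref{th:norm-switch}.}

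The plan is to reduce the identity $\Norms{C B (B A B)^{-1/2}} = \Norms{C A^{-1/2}}$ to the elementary observation that the two matrices $CB(BAB)^{-1/2}$ and $CA^{-1/2}$ have the same Gram matrix (equivalently, the same singular values), so that any unitarily invariant norm — and in particular both the spectral and nuclear norms bundled into $\Norms{\cdot}$ — agrees on the two. First I would form the product
\begin{eqn}
    \big[ C B (BAB)^{-1/2} \big] \big[ C B (BAB)^{-1/2} \big]^\t = C B (BAB)^{-1} B C^\t,
\end{eqn}
using that $B$ and $BAB$ (hence $(BAB)^{-1/2}$) are symmetric, so transposition only reverses the order of the symmetric factors. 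The key algebraic step is then the cancellation $(BAB)^{-1} = B^{-1} A^{-1} B^{-1}$, valid since $A$ and $B$ are invertible, which gives
\begin{eqn}
    C B (BAB)^{-1} B C^\t = C B \, B^{-1} A^{-1} B^{-1} \, B C^\t = C A^{-1} C^\t = \big[ C A^{-1/2} \big] \big[ C A^{-1/2} \big]^\t.
\end{eqn}
Thus $M M^\t = N N^\t$ where $M := C B (BAB)^{-1/2}$ and $N := C A^{-1/2}$, so $M$ and $N$ have identical singular values.

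To conclude, I would invoke the fact that both $\norm{\cdot}_2$ (the largest singular value) and $\nnorm{\cdot}$ (the sum of the singular values) are functions of the singular values alone: since the positive semidefinite matrices $MM^\t$ and $NN^\t$ coincide, their spectra coincide, and hence $\snorm{M} = \snorm{MM^\t}^{1/2} = \snorm{NN^\t}^{1/2} = \snorm{N}$ and likewise $\nnorm{M} = \nnorm{(MM^\t)^{1/2}} = \nnorm{(NN^\t)^{1/2}} = \nnorm{N}$, exactly as in the argument used in the proof of \Cref{th:obliqueness-norm}. This yields $\Norms{M} = \Norms{N}$ in both cases simultaneously.

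There is essentially no serious obstacle here; the only point requiring a modicum of care is the bookkeeping around symmetry and invertibility — one must note that $A$, $B$ positive definite guarantees $BAB$ is positive definite (so $(BAB)^{-1/2}$ is well-defined and symmetric) and that the matrix square root commutes appropriately so that $(BAB)^{-1/2}(BAB)^{-1/2} = (BAB)^{-1}$. (If one wanted to avoid even assuming $C$ square or invertible, nothing changes: the Gram matrix computation above makes no use of any property of $C$ beyond being a real matrix of compatible dimensions.) Everything else is the routine identity $\snorm{X} = \snorm{XX^\t}^{1/2}$ and $\nnorm{X} = \nnorm{(XX^\t)^{1/2}}$ for the two Schatten norms in play.
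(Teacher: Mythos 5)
Your proposal is correct and follows essentially the same route as the paper's proof: form the Gram matrix $MM^\t$ for $M=CB(BAB)^{-1/2}$, simplify $CB(BAB)^{-1}BC^\t=CA^{-1}C^\t$, and conclude via the identities $\snorm{M}=\snorm{MM^\t}^{1/2}$ and $\nnorm{M}=\nnorm{(MM^\t)^{1/2}}$. The only difference is that you spell out the cancellation $(BAB)^{-1}=B^{-1}A^{-1}B^{-1}$ explicitly, which the paper leaves implicit.
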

\begin{proof}
    For any real $M$, $\snorm{M} = \snorm{M M^\t}^{1/2}$ and $\nnorm{M} = \nnorm{(M M^\t)^{1/2}}$.
    With $M = C B (B A B)^\Mh$,
\begin{eqn}
    M M^\t = [C B (B A B)^\Mh] [C B (B A B)^\Mh]^\t = C B (B A B)^\Mo B C^\t = C A^\Mo C^\t
\end{eqn}
so that $\snorm{M M^\t}^{1/2} = \snorm{C A^\Mo C^\t}^{1/2}  = \snorm{C A^\Mh}$ and $\nnorm{(M M^\t)^{1/2}} = \nnorm{(C A^\Mo C^\t)^{1/2}} = \nnorm{C A^\Mh}$.
\end{proof}


Using this lemma and the definition of $\Ca$ (\Cref{def:killed-committor}), we obtain the following:
\begin{lemma}[Obliqueness in terms of $\Ka$] \label{th:obliqueness-using-k}
    For the killed chain $\La$, $\Ka = \La^\Mo$, $\pa$ as in \Cref{def:killed-committor}, and $\Obs^\Kill$ as in \Cref{def:obliqueness}:
    \begin{eqn}
        \label{eq:obliqueness-using-k}
        \Obs^\Kill = \Norms*{\KA_{\I,\I}^\Mh \KA_{\I,:} \left[ \Id_{:,\I} \Diag \lrp{\frac{\pa}{\pi_{\I}}} \Id_{\I,:} - \Id \right] \KA_{:,\I} \KA_{\I,\I}^\Mh}.
    \end{eqn}
\end{lemma}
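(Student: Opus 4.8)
The plan is to replace $\Ca$ and $\Lrk$ by closed forms in terms of the blocks of $\Ka$, and then to move every matrix square root from the outside of the norm to the inside using the singular-value bookkeeping that underlies \Cref{th:norm-switch}. Abbreviate $B \Eq \KA_{\I,\I}$ (positive definite, being a principal submatrix of $\Ka$) and $D \Eq \Diag(\hi/\ha)$ (positive diagonal). From \Cref{def:killed-committor}, cancelling $\Diag(h)$ against $\Diag^\Mo(h)$, we have $\Ca = \KA_{:,\I}\,B^\Mo D$. Since $\La\Ka = \Id$ forces $\La\,\KA_{:,\I} = \Id_{:,\I}$, and hence $\KA_{\I,:}\,\La\,\KA_{:,\I} = B$, it follows that
\begin{eqn}
    \Lrk = \Ca^\t\La\Ca = D\,B^\Mo D, \qquad \Ca^\t\Ca = D\,B^\Mo (\Ka^2)_{\I,\I}\,B^\Mo D,
\end{eqn}
with $(\Ka^2)_{\I,\I} = \KA_{\I,:}\KA_{:,\I}$; in particular $\Lrk^\Mo = D^\Mo B D^\Mo$. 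I would also note $D^{-2} = \Diag(\pa/\pi_{\I})$, immediately from $\ha = \pa^\Oh$ and $\hi = \pi_{\I}^\Oh$.

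Now the norm computation. By \Cref{th:committor-norm}, $\snorm{\Ca}\le 1$, so $\Id-\Ca^\t\Ca\succeq 0$ and $\Ob_\Kill = \Lrk^\Mh(\Id-\Ca^\t\Ca)\Lrk^\Mh\succeq 0$, whence $\Obs^\Kill = \Norms{\Ob_\Kill}$. Factoring $\Ob_\Kill = X^\t X$ with $X \Eq (\Id-\Ca^\t\Ca)^\Oh\Lrk^\Mh$ and invoking $\Norms{X^\t X} = \Norms{X X^\t}$ (valid for both norms, by the same singular-value argument used to prove \Cref{th:norm-switch}), I obtain $\Obs^\Kill = \Norms{(\Id-\Ca^\t\Ca)^\Oh\Lrk^\Mo(\Id-\Ca^\t\Ca)^\Oh} = \Norms{(\Id-\Ca^\t\Ca)^\Oh D^\Mo B D^\Mo (\Id-\Ca^\t\Ca)^\Oh}$, substituting $\Lrk^\Mo = D^\Mo B D^\Mo$. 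Applying the same swap a second time, now factoring $B = B^\Oh B^\Oh$ and carrying $B^\Oh$ to the outside, gives $\Obs^\Kill = \Norms{B^\Oh D^\Mo (\Id-\Ca^\t\Ca) D^\Mo B^\Oh}$. I would then expand, using $D^\Mo\Ca^\t\Ca D^\Mo = B^\Mo(\Ka^2)_{\I,\I}B^\Mo$, to get $D^\Mo(\Id-\Ca^\t\Ca)D^\Mo = D^{-2} - B^\Mo(\Ka^2)_{\I,\I}B^\Mo$; conjugating by $B^\Oh$ turns this into $B^\Mh\left(BD^{-2}B - (\Ka^2)_{\I,\I}\right)B^\Mh$; and finally substituting $D^{-2} = \Diag(\pa/\pi_{\I})$ together with $B = \KA_{\I,:}\Id_{:,\I} = \Id_{\I,:}\KA_{:,\I}$ and $(\Ka^2)_{\I,\I} = \KA_{\I,:}\KA_{:,\I}$ rewrites the inner factor as $\KA_{\I,:}\left[\Id_{:,\I}\Diag(\pa/\pi_{\I})\Id_{\I,:} - \Id\right]\KA_{:,\I}$, which gives exactly \eqref{eq:obliqueness-using-k}.

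The main obstacle is mild but requires attention on two points. First, the repeated ``swap'' $\Norms{X^\t X} = \Norms{X X^\t}$ must be justified simultaneously for the spectral and nuclear norms (legitimate since it only permutes the singular values of $X$), and the square roots $(\Id-\Ca^\t\Ca)^\Oh$, $B^\Oh$, $B^\Mh$ must be well defined (they are, since $\Id-\Ca^\t\Ca\succeq 0$ and $B\succ 0$). Second, and most importantly, one must carry out the simplification $\Lrk = DB^\Mo D$ correctly via $\La\,\KA_{:,\I} = \Id_{:,\I}$; this is the step that tames the seemingly bulky $\Lrk^\Mh$ factors and lets the whole expression collapse to a function of $\KA_{\I,:}$, $\KA_{:,\I}$, $\KA_{\I,\I}$, and $\pa$. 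Everything else is routine linear algebra.
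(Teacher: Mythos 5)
Your proof is correct and takes essentially the same route as the paper's: compute $\Lrk = D\,\KA_{\I,\I}^{-1} D$ and $\Ca^\t\Ca = D\,\KA_{\I,\I}^\Mo(\Ka^2)_{\I,\I}\KA_{\I,\I}^\Mo D$ from \Cref{def:killed-committor}, then commute the square roots through the norm via the singular-value argument. The only cosmetic difference is that you perform two explicit $\Norms{X^\t X}=\Norms{XX^\t}$ swaps, whereas the paper packages the same singular-value bookkeeping into \Cref{th:norm-switch} and applies it once on each side with $A=\KA_{\I,\I}^{-1}$, $B=D$.
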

\begin{proof}
From direct use of \Cref{def:killed-committor}, we compute that:
\begin{eqn}
    \Ca^\t \La \Ca &= \Diag(\hi / \ha) [\KA_{\I,\I}]^\Mo \Diag(\hi / \ha), \\
    \Ca^\t \Ca &= \Diag(\hi / \ha) \KA_{\I,\I}^\Mo (\Ka^2)_{\I,\I} \KA_{\I,\I}^\Mo \Diag(\hi / \ha).
\end{eqn}
Then by substitution, with $\Ob_\Kill$ as defined as in \Cref{def:obliqueness},
\begin{eqn}
    \Ob_\Kill \ &= \ (\Ca^\t \La \Ca)^\Mh (\Id - \Ca^\t \Ca) (\Ca^\t \La \Ca)^\Mh \\
    &= \ \left[ \DiagRat{\hi}{\ha}  \KA_{\I,\I}^\Mo \DiagRat{\hi}{\ha} \right] ^\Mh \DiagRat{\hi}{\ha} \\ 
    &\hspace{3em} \KA_{\I,\I}^\Mo  \left[ \KA_{\I,\I} \DiagRat{\ha^2}{\hi^2} \KA_{\I,\I} - (\Ka^2)_{\I,\I} \right] \KA_{\I,\I}^\Mo  \\
 &\hspace{3em} \DiagRat{\hi}{\ha} \left[ \DiagRat{\hi}{\ha} \KA_{\I,\I}^\Mo \DiagRat{\hi}{\ha} \right]^\Mh.
\end{eqn}
Finally, by application of \Cref{th:norm-switch} once on the left and once on the right (with $A=\KA_{\I,\I}^\Mo$, $B=\DiagRat{\hi}{\ha}$, $C$ the rest), we compute that:
\begin{eqn}
    \Norms{\Ob_\Kill} &= \Big\Vert \left[  \KA_{\I,\I}^\Mo \right] ^\Mh \KA_{\I,\I}^\Mo  \left[ \KA_{\I,\I} \DiagRat{\ha^2}{\hi^2} \KA_{\I,\I} - (\Ka^2)_{\I,\I} \right] \\ 
    & \hspace{3em} \KA_{\I,\I}^\Mo \DiagRat{\hi}{\ha} \left[ \DiagRat{\hi}{\ha} \KA_{\I,\I}^\Mo \DiagRat{\hi}{\ha} \right]^\Mh \Big\Vert_{\lrb{2,*}} \\
    &= \Big\Vert \left[  \KA_{\I,\I}^\Mo \right] ^\Mh \KA_{\I,\I}^\Mo  \left[ \KA_{\I,\I} \DiagRat{\ha^2}{\hi^2} \KA_{\I,\I} - (\Ka^2)_{\I,\I} \right] \KA_{\I,\I}^\Mo \left[ \KA_{\I,\I}^\Mo \right]^\Mh \Big\Vert_{\lrb{2,*}}
\end{eqn}
and therefore upon simplification:
\begin{eqn}
    \Obs^\Kill = \Norms*{\KA_{\I,\I}^\Mh  \lrp{\KA_{\I,\I} \Diag\lrp{\tfrac{\ha^2}{\hi^2}} \KA_{\I,\I} - (\Ka^2)_{\I,\I}} \KA_{\I,\I}^\Mh}.
\end{eqn}
Then \cref{eq:obliqueness-using-k} follows by rearrangement and the facts that $\pi=h^2$, $\pa=\ha^2$.
\end{proof}


Next we demonstrate how the obliqueness norms converge and may be computed in the $\Kill \ra 0$ in closed form.
\begin{proposition}[Obliqueness in the $\Kill \ra 0$ limit] \label{th:recurrent-obliqueness}
    With $\Obs^\Kill$ as defined in \Cref{def:obliqueness}, $\On^\Kill$ converges in the $\Kill \ra 0$ limit as:
    \begin{eqn}  \label{eq:obliqueness-nuclear-limit}
        \lim_{\Kill \ra 0} \On^\Kill &= \TimeScale \Tr\left[K_{\I,\I} \Diag\lrp{\tfrac{\Ki \hi}{\hi}}\right] 
        +\TimeScale \hi^\t \Ki (K^2)_{\I,\I} \Ki \hi
        -  \Tr[(K^2)_{\I,\I} \Ki] 
    \end{eqn}
    and $\Os^\Kill$ converges as
    \begin{eqn} \label{eq:obliqueness-spectral-limit}
        \lim_{\Kill \ra 0} \Os^\Kill &= \rho\lrp{\TimeScale \lrp{ K_{\I,\I} \Diag\lrp{\tfrac{\Ki \hi}{\hi}} + (K^2)_{\I,\I} \Ki \hi \hi^\t \Ki } - (K^2)_{\I,\I} \Ki},
    \end{eqn}
    where $\TimeScale = 1 / \hi^\t \Ki \hi$ as in \Cref{eq:omega} and $\rho(\, \cdot \, )$ is the spectral radius.
\end{proposition}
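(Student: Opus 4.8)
The plan is to express the obliqueness operator $\Ob_\Kill$ (\Cref{def:obliqueness}) through principal submatrices of $\Ka$, strip off the rank-one pieces that blow up as $\gamma\ra0$, and pass to the limit termwise using the closed forms already established for the committor and for $\Ka$. First I would recall from the discussion following \Cref{def:obliqueness} that $\Ob_\Kill$ is symmetric positive definite, so $\On^\Kill=\Tr[\Ob_\Kill]$ and $\Os^\Kill=\rho(\Ob_\Kill)$, with $\rho$ the spectral radius. Since $\Ob_\Kill=\Lrk^\Mh(\Id-\Ca^\t\Ca)\Lrk^\Mh$ is similar (conjugate by $\Lrk^\Mh$) to $\Lrk^\Mo(\Id-\Ca^\t\Ca)$, and both $\Tr$ and $\rho$ are similarity invariants, it suffices to study $\Lrk^\Mo(\Id-\Ca^\t\Ca)$. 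Using $\Ca=\KA_{:,\I}\KA_{\I,\I}^\Mo\DiagRat{\hi}{\ha}$ together with $\La\KA_{:,\I}=\Id_{:,\I}$ yields, as in the proof of \Cref{th:obliqueness-using-k}, the identities $\Lrk=\DiagRat{\hi}{\ha}\KA_{\I,\I}^\Mo\DiagRat{\hi}{\ha}$ and $\Ca^\t\Ca=\DiagRat{\hi}{\ha}\KA_{\I,\I}^\Mo(\Ka^2)_{\I,\I}\KA_{\I,\I}^\Mo\DiagRat{\hi}{\ha}$; plugging these in, peeling off the diagonal conjugation by $\DiagRat{\hi}{\ha}$ (again leaving $\Tr$ and $\rho$ unchanged), and using $\ha^2/\hi^2=\pa/\pi_{\I}$, I reduce to showing that $G_\Kill\Eq\KA_{\I,\I}\DiagRat{\pa}{\pi_{\I}}-(\Ka^2)_{\I,\I}\KA_{\I,\I}^\Mo$ converges, with $\On^\Kill=\Tr[G_\Kill]$ and $\Os^\Kill=\rho(G_\Kill)$.

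The next step handles the divergence of $\Ka$, hence $G_\Kill$, as $\gamma\ra0$. Following the proof of \Cref{th:markov-norm-limits}, set $\Kt_\Kill\Eq\Ka-\Kill^\Mo hh^\t$; then $\Kt_\Kill h=\Zero$ (so $\Ka^2=\Kt_\Kill^2+\Kill^{-2}hh^\t$) and $\Kt_\Kill\ra K$. The key identity is
\[ \pa=\Kill^\Mo\Diag(\hi)\,\KA_{\I,\I}^\Mo\,\hi, \]
which follows from $\pa=\Cu_\Kill^\t\pi$, the formula for $\Cu_\Kill$ in \Cref{def:killed-committor}, $\DiagI(h)\pi=h$, and $\Ka h=\Kill^\Mo h$. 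It gives $\DiagRat{\pa}{\pi_{\I}}\hi=\Kill^\Mo\KA_{\I,\I}^\Mo\hi$, so upon expanding $\KA_{\I,\I}=(\Kt_\Kill)_{\I,\I}+\Kill^\Mo\hi\hi^\t$ and $(\Ka^2)_{\I,\I}=(\Kt_\Kill^2)_{\I,\I}+\Kill^{-2}\hi\hi^\t$, the two $\Kill^{-2}\hi\hi^\t\KA_{\I,\I}^\Mo$ contributions inside $G_\Kill$ cancel exactly, leaving the convergent expression $G_\Kill=(\Kt_\Kill)_{\I,\I}\DiagRat{\pa}{\pi_{\I}}-(\Kt_\Kill^2)_{\I,\I}\KA_{\I,\I}^\Mo$. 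Then I would take $\gamma\ra0$ factor by factor: $\pa\ra\pr$ by \Cref{th:killed-committor}, with $\DiagRat{\pr}{\pi_{\I}}=\TimeScale\,\DiagRat{\Ki\hi}{\hi}$ from \eqref{eq:committor-pi} and \eqref{eq:omega}; $(\Kt_\Kill)_{\I,\I}\ra K_{\I,\I}$ and $(\Kt_\Kill^2)_{\I,\I}\ra(K^2)_{\I,\I}$; and by Sherman--Morrison applied to $\KA_{\I,\I}=(\Kt_\Kill)_{\I,\I}+\Kill^\Mo\hi\hi^\t$ (using that $K_{\I,\I}$ is invertible, since $\ker K=\mathrm{span}(h)$ with $h$ entrywise positive), $\KA_{\I,\I}^\Mo\ra\Ki-\TimeScale\,\Ki\hi\hi^\t\Ki$. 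Hence $G_\Kill\ra N\Eq\TimeScale\,K_{\I,\I}\DiagRat{\Ki\hi}{\hi}+\TimeScale\,(K^2)_{\I,\I}\Ki\hi\hi^\t\Ki-(K^2)_{\I,\I}\Ki$, and by continuity of $\Tr$ and $\rho$, $\On^\Kill\ra\Tr[N]$ and $\Os^\Kill\ra\rho(N)$. Expanding $\Tr[N]$ with $\Tr[(K^2)_{\I,\I}\Ki\hi\hi^\t\Ki]=\hi^\t\Ki(K^2)_{\I,\I}\Ki\hi$ reproduces \eqref{eq:obliqueness-nuclear-limit}, and $\rho(N)$ is exactly \eqref{eq:obliqueness-spectral-limit}.

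The hard part will be the exact cancellation of the $\Kill^\Mo$- and $\Kill^{-2}$-divergent rank-one terms in $G_\Kill$, which rests entirely on the identity $\pa=\Kill^\Mo\Diag(\hi)\KA_{\I,\I}^\Mo\hi$; once that is in hand, the rest (a Sherman--Morrison limit, insertion of the stationary committor formula, and continuity of the trace and spectral radius) is routine bookkeeping.
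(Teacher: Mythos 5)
Your proposal is correct and follows essentially the same route as the paper: reduce to the non-symmetrized matrix $G_\Kill=\KA_{\I,\I}\Diag(\pa/\pi_\I)-(\Ka^2)_{\I,\I}\KA_{\I,\I}^\Mo$ (the paper's $\ObAsym_\Kill$, where the printed $\pr$ is evidently meant to be $\pa$), split $\Ka=\Kt_\Kill+\Kill^\Mo hh^\t$, and pass to the limit with Sherman--Morrison. The only added content in your version is the explicit identity $\pa=\Kill^\Mo\Diag(\hi)\KA_{\I,\I}^\Mo\hi$ used to make the cancellation of the $\Kill^{-2}\hi\hi^\t\KA_{\I,\I}^\Mo$ term transparent; the paper leaves this as implicit Sherman--Morrison bookkeeping, so your write-up is a more detailed but not materially different argument.
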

\begin{proof}
Define the following (asymmetric) obliqueness matrix:
\begin{eqn}
    \ObAsym_\Kill &\Eq \left[ \KA_{\I,\I} \Diag \lrp{\tfrac{\pr}{\pi_{\I}}} \KA_{\I,\I} - (\Ka^2)_{\I,\I} \right] \KA_{\I,\I}^\Mo
    \label{eq:obliqueness-asym}
\end{eqn}
Since $\ObAsym_\Kill$ is a similarity transform of $\Ob_\Kill$, it follows that
 $
    \On^\Kill 
    = \Tr[\ObAsym_\Kill]
$ and $\Os^\Kill = \rho(\ObAsym_\Kill)$.
Therefore we proceed to computing $\ObAsym \Eq \lim_{\Kill \ra 0} \ObAsym_\Kill$.
If this limit exists, then the limits of $\On^\Kill$ and $\Os^\Kill$ do as well, since the trace and spectral radius are continuous functions.



As in the proof of \Cref{th:markov-norm-limits}, it is useful to define a matrix $\tilde{K}_\gamma := K_\gamma - \gamma^{-1} hh^\t$, suppress the $\gamma$-dependence from the notation $\tilde{K} = \tilde{K}_\gamma$, and view $K_\gamma = \tilde{K} + \gamma^{-1} hh^\t$ as a rank-one update. 
Then using the Sherman-Morrison formula, the right-hand side of \Cref{eq:obliqueness-asym} may be simplified to show that:
\begin{eqn}
    \ObAsym \Eq \lim_{\Kill \ra 0} \ObAsym_\Kill &= \TimeScale \lrp{ K_{\I,\I} \Diag\lrp{\tfrac{\Ki \hi}{\hi}} + (K^2)_{\I,\I} \Ki \hi \hi^\t \Ki } - (K^2)_{\I,\I} \Ki.
    \label{eq:obliqueness-asym-limit}
\end{eqn}
Taking the spectral radius of $\ObAsym$ yields \Cref{eq:obliqueness-spectral-limit} immediately, while taking its trace and rearranging yields \cref{eq:obliqueness-nuclear-limit}.
\end{proof}


\ThObliquenessFromHitting*
\begin{Proof}{th:obliqueness-from-hitting}
    First, let $S \Eq \DiagI(h) K \DiagI(h)$. 
    Then $S$ is symmetric with $S \pi = \Zero$.
    From Lemma~2.12 of \cite{Aldous1995-reversible}, $H_{j,i} = S_{i,i} - S_{j,i}$ for all $i,j$, so we may write:
    \begin{eqn}\label{eq:hitting-time}
        H = \One \Diag(S)^\t - S.
    \end{eqn}
    Then we compute the diagonal entries of $H_{:,\I}^\t \Diag(\pi) \Cu$.
    For $i \in \I$,
    \begin{eqn}
        H_{:,i}^\t \Diag(\pi) \Cu_{:,i} &= (S_{i,i} \One - S_{:,i})^\t \Diag(\pi) \DiagI(h) \lrp{\frac{(h - K_{:,\I} \Ki \hi^\t) \hi^\t}{\hi^\t \Ki \hi} + K_{:,\I}} (\Ki)_{:,i} \, h_i  \\
        &= (K_{i,i} h_i^\Mo h^\t - K_{i,:}) \lrp{\frac{(h - K_{:,\I} \Ki \hi^\t) \hi^\t}{\hi^\t \Ki \hi} + K_{:,\I}} (\Ki)_{:,i} \\
        &=  \lrp{K_{i,i} h_i^\Mo + (K^2)_{i,\I} \Ki \hi} \frac{(\Ki)_{i,:} \hi}{\hi^\t \Ki \hi} - (K^2)_{i,\I} (\Ki)_{:,i} \\
        &= \omega \lrp{K_{i,i} (\Ki)_{i,:} \hi h_i^\Mo  + (K^2)_{i,\I} \Ki \hi \hi^\t (\Ki)_{:,i}}   - (K^2)_{i,\I} (\Ki)_{:,i}
    \end{eqn}
    by use of the symmetry of $K$, the fact that $K h = \Zero$, and the definition \Cref{eq:omega} of $\TimeScale$.
    Summing and comparing to \Cref{th:recurrent-obliqueness} yields the desired result.
\end{Proof}



\section{Matrix perturbation lemma}\label{s:matpert}

In this section we prove an elementary lemma pertaining to the perturbation of a block of a matrix with a shift by a large multiple of the identity. Although the result is intuitive, we are not aware of a standard reference.

\begin{lemma}[Exponential of block perturbation] \label{lem:expalpha}
For $\alpha\geq0$, consider the matrix 
\[
A_{\alpha}=\left(\begin{array}{cc}
A_{11} & A_{12}\\
A_{21} & A_{22}+\alpha\Id
\end{array}\right).
\]
 Then 
\[
\lim_{\alpha\ra\infty}e^{-A_{\alpha}}=\left(\begin{array}{cc}
e^{-A_{11}} & 0\\
0 & 0
\end{array}\right).
\]
 
\end{lemma}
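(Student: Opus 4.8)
\textbf{Proof proposal for Lemma~\ref{lem:expalpha}.} The plan is to reduce to a scalar-like estimate by conjugating $A_\alpha$ into a block-triangular form whose off-diagonal ``error'' blocks are killed by the growing shift, and then pass to the limit blockwise. First I would look for a similarity transformation $S_\alpha = \left(\begin{smallmatrix} \Id & X_\alpha \\ Y_\alpha & \Id \end{smallmatrix}\right)$ that block-diagonalizes $A_\alpha$ up to controllable remainders; the natural guess is $X_\alpha, Y_\alpha = O(1/\alpha)$, solving the Sylvester-type equations coming from the off-diagonal blocks. Concretely, for $\alpha$ large the bottom-right block $A_{22} + \alpha\Id$ is invertible with norm $\Theta(\alpha)$, so one can choose $Y_\alpha$ with $\|Y_\alpha\| = O(1/\alpha)$ so that $S_\alpha^{-1} A_\alpha S_\alpha$ has a vanishing lower-left block, and the resulting upper-left block is $A_{11} + O(1/\alpha)$ while the lower-right block remains $A_{22} + \alpha\Id + O(1/\alpha)$.

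Given such a transformation, I would write $e^{-A_\alpha} = S_\alpha \, e^{-S_\alpha^{-1} A_\alpha S_\alpha} \, S_\alpha^{-1}$ and analyze the exponential of the (nearly) block-triangular matrix. The upper-left block of $e^{-\widetilde A_\alpha}$ converges to $e^{-A_{11}}$ by continuity of the matrix exponential, since that block of $\widetilde A_\alpha$ tends to $A_{11}$. The lower-right block is $e^{-(A_{22} + \alpha \Id + O(1/\alpha))}$, whose norm is bounded by $e^{-\alpha}\,e^{\|A_{22}\| + O(1)} \to 0$; this is the crucial decay estimate, and it is clean because the shift $\alpha\Id$ commutes with everything. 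The off-diagonal blocks of $e^{-\widetilde A_\alpha}$ involve integrals of products of these two exponential-type factors (via Duhamel's formula / the variation-of-constants expansion for the exponential of a triangular perturbation), and each such term carries at least one factor decaying like $e^{-\alpha}$, hence vanishes. Finally, since $S_\alpha, S_\alpha^{-1} \to \Id$, multiplying back recovers $\lim_{\alpha\to\infty} e^{-A_\alpha} = \left(\begin{smallmatrix} e^{-A_{11}} & 0 \\ 0 & 0\end{smallmatrix}\right)$.

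An alternative, perhaps more self-contained route avoids the similarity transformation: use the Duhamel (Dyson) expansion directly, writing $A_\alpha = D_\alpha + N$ where $D_\alpha = \operatorname{diag}(A_{11}, A_{22} + \alpha\Id)$ is block-diagonal and $N = \left(\begin{smallmatrix} 0 & A_{12} \\ A_{21} & 0\end{smallmatrix}\right)$ is the off-diagonal coupling. Then
\begin{eqn}
e^{-A_\alpha} = e^{-D_\alpha} + \sum_{k\geq 1} (-1)^k \int_{0 \le s_1 \le \cdots \le s_k \le 1} e^{-(1-s_k)D_\alpha} N e^{-(s_k - s_{k-1}) D_\alpha} N \cdots N e^{-s_1 D_\alpha} \, ds_1 \cdots ds_k.
\end{eqn}
Because $e^{-sD_\alpha}$ is block-diagonal with lower-right block of norm $\le e^{-s\alpha} e^{s\|A_{22}\|}$, every term with $k \ge 1$ forces the trajectory through at least one lower-right factor (since $N$ swaps the two blocks), producing an overall bound $O(\alpha^{-1})$ or better after integrating; and $e^{-D_\alpha} \to \operatorname{diag}(e^{-A_{11}}, 0)$. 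I expect the main obstacle to be bookkeeping: making the Duhamel-series estimate uniform in $k$ so that the tail sums and the limit interchange legitimately. This is handled by a crude bound $\|N\| \le c$, $\|e^{-sD_\alpha}\| \le e^{c}$ for $0\le s\le 1$ uniformly in $\alpha \ge 0$, giving a convergent majorant $\sum_k c^k e^{c(k+1)}/k!$, after which one isolates the $O(e^{-\alpha})$ (more precisely $O(1/\alpha)$, since $\int_0^1 e^{-s\alpha}ds = O(1/\alpha)$) factor in each $k\ge 1$ term and concludes by dominated convergence over the series.
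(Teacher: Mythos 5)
Your proposal is correct, and both of your routes differ from the paper's proof, which instead takes an ODE perspective: it writes $e^{-A_\alpha}z_0 = z_\alpha(1)$ for the solution of $z_\alpha' = -A_\alpha z_\alpha$, establishes a Gr\"onwall-type a priori bound $\|z_\alpha(t)\|\leq C$ uniform in $\alpha\geq0$ and $t\in[0,1]$, and then applies Duhamel twice---first on the $y$-block to show $\|y_\alpha(t)\| \leq e^{-\alpha t}\|y_0\| + O(1/\alpha)$, and then, inserting that bound into the $x$-equation, to deduce $x_\alpha(1) \to e^{-A_{11}}x_0$. Your Dyson-series route is the all-orders version of this bootstrap: your uniform majorant $\|e^{-sD_\alpha}\|\leq e^{s\max(\|A_{11}\|,\|A_{22}\|)}$ for $\alpha\geq 0$ plays the role of the Gr\"onwall a priori bound, and the observation that $N$ swaps blocks so every $k\geq 1$ term carries at least one factor $e^{-\tau(A_{22}+\alpha\Id)}$ (hence is $O(1/\alpha)$ after integrating over the simplex) is precisely the same source of decay exploited by the paper; dominated convergence over the series then isolates the $k=0$ term $e^{-D_\alpha}\to\mathrm{diag}(e^{-A_{11}},0)$. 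The paper's two-step version is slightly leaner because it avoids series bookkeeping; yours exposes a single symmetric decomposition that would generalize readily (e.g.\ to several fast blocks or to higher-order asymptotics in $1/\alpha$). The Riccati block-decoupling route is genuinely different from anything in the paper: it is sound as sketched, but to complete it you would need to prove existence of the $O(1/\alpha)$ similarity transform (say, by a contraction argument on $Y\mapsto -(A_{22}+\alpha\Id)^{-1}\bigl(A_{21}-YA_{11}-YA_{12}Y\bigr)$ for $\alpha$ large) and check that the decoupled lower block retains the form $A_{22}+\alpha\Id+O(1/\alpha)$, after which the conclusion is immediate because $\alpha\Id$ commutes with the remainder and $S_\alpha, S_\alpha^{-1}\to\Id$.
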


\begin{proof}
Let $z_{0}=\left(\begin{array}{c}
x_{0}\\
y_{0}
\end{array}\right)$ be arbitrary. It suffices to show that 
\[
\lim_{\alpha\ra\infty}e^{-A_{\alpha}}z_{0}=\left(\begin{array}{c}
e^{-A_{11}}x_{0}\\
0
\end{array}\right).
\]
 We can express 
\[
e^{-A_{\alpha}}z_{0}=z_{\alpha}(1),
\]
 where $z_{\alpha}(t)=\left(\begin{array}{c}
x_{\alpha}(t)\\
y_{\alpha}(t)
\end{array}\right)$ is the solution of the ODE: 
\[
z_{\alpha}'(t)=-A_{\alpha}z_{\alpha}(t),\qquad z(0)=z_{0}.
\]

First we want to show an \emph{a priori} bound of the form $\Vert z_{\alpha}(t)\Vert\leq C$
that is uniform in $\alpha\geq0$, $t\in[0,1]$. (In the following
discussion, for simplicity, we use $\Vert\,\cdot\,\Vert$ to denote
the vector 2-norm and the induced spectral norm.) To achieve this
\emph{a priori} bound, we write ODEs for $\Vert x_{\alpha}(t)\Vert^{2}$
and $\Vert y_{\alpha}(t)\Vert^{2}$ and bound their right-hand sides:
\begin{align*}
\frac{d}{dt}\Vert x_{\alpha}(t)\Vert^{2} & \ =\ -2x_{\alpha}(t)\cdot[A_{11}x_{\alpha}(t)+A_{12}y_{\alpha}(t)]\\
 & \ \leq\ 2\Vert A_{11}\Vert\,\Vert x_{\alpha}(t)\Vert^{2}+2\Vert A_{12}\Vert\,\Vert x_{\alpha}(t)\Vert\,\Vert y_{\alpha}(t)\Vert\\
 & \ \leq\ 2\Vert A_{11}\Vert\,\Vert x_{\alpha}(t)\Vert^{2}+\Vert A_{12}\Vert\,(\Vert x_{\alpha}(t)\Vert^{2}+\Vert y_{\alpha}(t)\Vert^{2})\\
 & \ =\ 2\Vert A_{11}\Vert\,\Vert x_{\alpha}(t)\Vert^{2}+\Vert A_{12}\Vert\,\Vert z_{\alpha}(t)\Vert^{2},
\end{align*}
 and similarly 
\begin{align*}
\frac{d}{dt}\Vert y_{\alpha}(t)\Vert^{2} & \ =\ -2\alpha\Vert y_{\alpha}(t)\Vert^{2}-2y_{\alpha}(t)\cdot[A_{21}x_{\alpha}(t)+A_{22}y_{\alpha}(t)]\\
 & \ \leq\ -2y_{\alpha}(t)\cdot[A_{21}x_{\alpha}(t)+A_{22}y_{\alpha}(t)]\\
 & \ \leq\ 2\Vert A_{22}\Vert\,\Vert y_{\alpha}(t)\Vert^{2}+\Vert A_{21}\Vert\,\Vert z_{\alpha}(t)\Vert^{2}.
\end{align*}
 Summing the bounds yields 
\[
\frac{d}{dt}\Vert z_{\alpha}(t)\Vert^{2}\leq C'\,\Vert z_{\alpha}(t)\Vert^{2}
\]
 for a suitably defined constant $C'>0$, independent of $\alpha,t$.
Then Gr\"{o}nwall's inequality yields the \emph{a priori} bound
$\Vert z_{\alpha}(t)\Vert\leq C$ for suitable $C>0$, as we aimed
to show.

Now examine the dynamics $y_{\alpha}(t)$ individually, 
\[
y_{\alpha}'(t)=-\alpha y_{\alpha}(t)+\underbrace{\left[-A_{21}x_{\alpha}(t)-A_{22}y_{\alpha}(t)\right]}_{=:\,b_{\alpha}(t)}.
\]
 We can view the bracketed term as an inhomogeneous contribution in
Duhamel's principle to deduce that 
\[
y_{\alpha}(t)=e^{-\alpha t}y_{0}+\int_{0}^{t}e^{-\alpha(t-s)}b_{\alpha}(s)\,\odif s.
\]
 Then taking norms we see that 
\[
\Vert y_{\alpha}(t)\Vert\leq e^{-\alpha t}\Vert y_{0}\Vert+C''\int_{0}^{t}e^{-\alpha(t-s)}\,\odif s
\]
for suitable $C''$, via our uniform bound on $\Vert z_{\alpha}(t)\Vert$,
which implies a uniform bound on $\Vert b_{\alpha}(t)\Vert$. We can
compute $\int_{0}^{t}e^{-\alpha(t-s)}\,\odif s=\alpha^{-1}[1-e^{-\alpha t}]\leq\alpha^{-1}$
to deduce that 
\begin{equation}
\Vert y_{\alpha}(t)\Vert\leq e^{-\alpha t}\Vert y_{0}\Vert+C''\alpha^{-1}\leq C'''(e^{-\alpha t}+\alpha^{-1})\label{eq:ybound}
\end{equation}
 for suitable $C'''$.

Then examine the dynamics for $x_{\alpha}(t)$ individually: 
\[
x_{\alpha}'(t)=-A_{11}x_{\alpha}(t)-A_{12}y_{\alpha}(t),
\]
 and again viewing the second term as an inhomogeneous contribution
using Duhamel's principle we deduce that 
\[
x_{\alpha}(t)=e^{-A_{11}t}x_{0}-\int_{0}^{t}e^{-(t-s)A_{11}}A_{12}y_{\alpha}(s)\,\odif s.
\]
 Now $\Vert e^{-\tau A_{11}}A_{12}\Vert$ is uniformly bounded over
$\tau\in[0,1]$, and recalling (\ref{eq:ybound}) it follows that
there exists $C''''>0$ such that 
\begin{align*}
\Vert x_{\alpha}(1)-e^{-A_{11}}x_{0}\Vert & \leq C''''\int_{0}^{1}(e^{-\alpha s}+\alpha^{-1})\,\odif s=O(\alpha^{-1}).
\end{align*}
 Hence $\lim_{\alpha\ra\infty}x_{\alpha}(1)=e^{-A_{11}}x_{0}$. Moreover,
(\ref{eq:ybound}) implies that $\lim_{\alpha\ra\infty}y_{\alpha}(1)=0$.
In other words, $\lim_{\alpha\ra\infty}z_{\alpha}(1)=\left(\begin{array}{c}
e^{-A_{11}}x_{0}\\
0
\end{array}\right)$, as was to be shown.
\end{proof}

\section{Proofs for \Cref{s:algs} \label{s:cssp-proofs}}

\newcommand{\DPPerror}{\mathcal{E}}
\newcommand{\DPPfunction}{\mathcal{E}^\mathrm{DPP}}
For an $n\times n$ symmetric  positive semidefinite matrix $M$, we define the expected nuclear approximation error of $k$-determinantal point process (DPP) sampling (see, e.g., \cite{Fornace2024-column}) as:
\begin{eqn}
    \label{eq:dpp-trace}
    \DPPfunction_k(M) \Eq \Ex_{\I \sim k\text{-DPP}(M)} \lrs{\Tr(M - M_{:,\I} M_{\I,\I}^\Mo M_{\I, :})}.
\end{eqn}

We will use the classical fact (\cite{Guruswami2012-optimal}, see also Proposition~5.1 of \cite{Fornace2024-column}) that 
    \begin{eqn}\label{eq:dppclassical}
        \mathcal{E}_{k}^{\mathrm{DPP}}(M)=(k+1)\frac{e_{k+1}(\sigma(M))}{e_{k}(\sigma(M))},
    \end{eqn}
where $\sigma(M) \in \R^n$ denotes the spectrum of $M$ and $e_{k}$ denotes the elementary symmetric polynomial of order $k$.


We will also use the celebrated bound on the DPP expectation \cite{Chen2022-randomly,Guruswami2012-optimal,Belabbas2009-spectral,Derezinski2021-determinantal}:
\begin{eqn} \label{eq:dpp-nystrom}
    \DPPfunction_k(M) \leq \lrp{1 + \frac{r}{k-r+1}} (\Tr[M] - \Tr^{(r)}[M]), 
\end{eqn}
in which $\Tr^{(r)}[M]$ denote the sum of the largest $r$ eigenvalues.


    



Then with a view toward proving Theorem \hyperlink{Theorem4B}{4B}, we prove the following lemma: 
\begin{lemma}\label{lem:dppstuff}
For any positive integer $k$, there exists a subset $\I$ of size $k$ such that
\begin{eqn}
    {\ve}_{*} (\I) \leq \left( 1 + \frac{r+1}{k-r} \right) (\Tr[K]- \Tr^{(k)}[K]).
\end{eqn}
\end{lemma}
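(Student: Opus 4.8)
The plan is to realize the error $\ve_*(\I) = \En(\I)$ as the $\gamma \to 0$ limit of ordinary nuclear-norm Nyström errors of the invertible killed kernels $K_\gamma = (L+\gamma\Id)^\Mo$, to apply the classical $k$-DPP bound \eqref{eq:dpp-nystrom} at each fixed $\gamma > 0$, and then to pass to the limit after fixing a single $\gamma$-independent subset. Throughout, $r$ denotes a fixed integer with $0 \le r < k$, and we may assume $k \le n$.

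First I would record two preliminary observations. By \Cref{def:markov-epsilon}, $\En(\I) = \lim_{\gamma \to 0} \En^{\gamma}(\I)$ with $\En^{\gamma}(\I) = \nnorm{K_\gamma - (K_\gamma)_{:,\I}(K_\gamma)_{\I,\I}^\Mo (K_\gamma)_{\I,:}}$, and this limit exists by \Cref{th:markov-norm-limits}. Because $K_\gamma \succ 0$ for $\gamma > 0$, the Nyström residual is positive semidefinite --- writing $K_\gamma = BB^\t$ it equals $B(\Id - \Pi)B^\t$, with $\Pi$ the orthogonal projection onto the row space of $B_{\I,:}$ --- so $\En^{\gamma}(\I) = \Tr[K_\gamma - (K_\gamma)_{:,\I}(K_\gamma)_{\I,\I}^\Mo (K_\gamma)_{\I,:}]$, which is exactly the quantity averaged in the definition \eqref{eq:dpp-trace} of $\DPPfunction_k(K_\gamma)$. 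Hence $\DPPfunction_k(K_\gamma) = \Ex_{\I \sim k\text{-DPP}(K_\gamma)}[\En^{\gamma}(\I)]$, and since $K_\gamma$ has full rank $n \ge k$, the support of this $k$-DPP is the family of all $k$-subsets of $[n]$.

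Next, for each fixed $\gamma > 0$ I would apply \eqref{eq:dpp-nystrom} to $M = K_\gamma$ with parameter $r+1$ in place of $r$, obtaining $\DPPfunction_k(K_\gamma) \le \bigl(1 + \tfrac{r+1}{k-r}\bigr)\bigl(\Tr[K_\gamma] - \Tr^{(r+1)}[K_\gamma]\bigr)$. A nonnegative random variable lies at or below its mean on an event of positive probability, so there is a $k$-subset $\I_\gamma$ (in the support of the $k$-DPP) with $\En^{\gamma}(\I_\gamma) \le \bigl(1 + \tfrac{r+1}{k-r}\bigr)\bigl(\Tr[K_\gamma] - \Tr^{(r+1)}[K_\gamma]\bigr)$. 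Since there are only finitely many $k$-subsets of $[n]$, I can extract a sequence $\gamma_j \downarrow 0$ along which $\I_{\gamma_j}$ is one fixed subset $\I$.

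Finally I would pass to the limit on the right-hand side. With the eigenvalues of $L$ written as $0 = \nu_1 < \nu_2 \le \cdots \le \nu_n$, the eigenvalues of $K_\gamma$ are $\gamma^\Mo$ together with $(\nu_i + \gamma)^\Mo$ for $i \ge 2$, and those of $K = L^+$ are $0$ together with $\nu_i^\Mo$ for $i \ge 2$. For every $\gamma > 0$, $\gamma^\Mo$ is the strictly largest eigenvalue of $K_\gamma$ (using $\nu_2 > 0$ by irreducibility), so $\Tr[K_\gamma] - \Tr^{(r+1)}[K_\gamma] = \sum_{i \ge r+2}(\nu_i + \gamma)^\Mo \to \sum_{i \ge r+2}\nu_i^\Mo = \Tr[K] - \Tr^{(r)}[K]$. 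Combining this with $\En^{\gamma_j}(\I) \to \En(\I)$ and letting $j \to \infty$ in the displayed inequality gives $\ve_*(\I) = \En(\I) \le \bigl(1 + \tfrac{r+1}{k-r}\bigr)\bigl(\Tr[K] - \Tr^{(r)}[K]\bigr)$, as claimed. The step needing the most care is this limit interchange: it is legitimate precisely because \Cref{th:markov-norm-limits} guarantees the existence of $\lim_{\gamma \to 0}\En^{\gamma}(\I)$ (so that every subsequence converges to $\En(\I)$), and because the finiteness of the family of $k$-subsets allows us to fix a single $\I$ before sending $\gamma \to 0$; the eigenvalue bookkeeping is then routine.
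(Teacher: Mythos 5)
Your proof is correct and reaches the intended conclusion $\ve_*(\I) \le \bigl(1 + \tfrac{r+1}{k-r}\bigr)\bigl(\Tr[K] - \Tr^{(r)}[K]\bigr)$; incidentally, the $\Tr^{(k)}$ in the lemma's displayed inequality is a typographical slip for $\Tr^{(r)}$, as both the paper's proof and the application in Theorem 4B confirm. Your route, however, is genuinely simpler than the one in the paper. The paper regularizes with $\tilde K_\gamma := (L + \gamma h h^\t)^{-1} = K + \gamma^{-1} h h^\t$, invokes Schur concavity of $\DPPfunction_k$ in the spectrum to replace eigenvalues with block averages, invokes the elementary-symmetric-polynomial identity \eqref{eq:dppclassical} to peel off the divergent $\gamma^{-1}$ mode asymptotically (reducing $\DPPfunction_k$ of a $\gamma$-dependent spectrum to $\tfrac{k+1}{k}\,\DPPfunction_{k-1}$ of a $\gamma$-free one), and only then applies \eqref{eq:dpp-nystrom} at rank $k-1$ with parameter $r$. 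You instead regularize with $K_\gamma = (L + \gamma\Id)^{-1}$, the object that \Cref{def:markov-epsilon} is already phrased in, and observe that $\gamma^{-1}$ is always strictly the largest eigenvalue of $K_\gamma$, so applying \eqref{eq:dpp-nystrom} at rank $k$ with parameter $r+1$ automatically lumps the divergent mode into the subtracted top-$(r+1)$ trace, leaving a finite, $\gamma$-convergent right-hand side. The arithmetic $1 + \tfrac{r+1}{k-(r+1)+1} = \tfrac{k+1}{k-r}$ lands on the same constant as the paper's $\tfrac{k+1}{k}\bigl(1 + \tfrac{r}{k-r}\bigr)$. Your version dispenses with the Schur-concavity step, the symmetric-polynomial asymptotics, and the auxiliary $hh^\t$-regularization altogether, at no cost in generality; the pigeonhole argument for fixing a single subset across a sequence $\gamma_j \downarrow 0$ and the passage to the limit via \Cref{th:markov-norm-limits} are also handled correctly.
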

\begin{proof}
   Consider performing $k$-DPP sampling on $\tilde{K}_\gamma := (L+\gamma hh^{\top})^{-1}=K+\gamma^{-1}hh^{\top}$. 
    For symmetric positive semidefinite matrices $M$, $\DPPfunction_k(M)$ is Schur concave in the eigenvalues of $M$ (see, e.g., \cite{Schur1923-uber}, Theorem~10 of \cite{Fornace2024-column}).
    Therefore replacing any subset of the eigenvalues of $\tilde{K}_\gamma$ with repeated copies of its average value can only increase the value of  $\DPPfunction_k$.
    It follows that $\DPPfunction_k(\tilde{K}_\gamma) \leq \DPPfunction_k(\Diag(\hat{\lambda}_\gamma))$, where 
    \begin{eqn}
        \hat{\lambda}_\gamma \Eq 
        [\hat{\lambda}, \gamma^{-1}] :=
        \Bigg[ \underbrace{\frac{t}{r}, \dots, \frac{t}{r}}_{r \text{ times}},  \, 
        \underbrace{\frac{T-t}{n-r-1}, \dots, \frac{T-t}{n-r-1}}_{n-r-1 \text{ times}}, \, \gamma^\Mo \Bigg]
    \end{eqn}
    with $t := \Tr^{(r)} [K]$  and $T:= \Tr[K]$.

    Now by \Cref{eq:dppclassical}, we know that 
    \[
    \mathcal{E}_{k}^{\mathrm{DPP}}(\mathrm{diag}(\hat{\lambda}_{\gamma}))=(k+1)\frac{e_{k+1}(\hat{\lambda}\cup\{\gamma^{-1}\})}{e_{k}(\hat{\lambda}\cup\{\gamma^{-1}\})},
    \]
     and in the small $\gamma$ limit, only products including $\gamma^{-1}$
    dominate the symmetric polynomials, so 
    \begin{align*}
    \mathcal{E}_{k}^{\mathrm{DPP}}(\mathrm{diag}(\hat{\lambda}_{\gamma})) & =(k+1)\frac{e_{k}(\hat{\lambda})}{e_{k-1}(\hat{\lambda})} + O(\gamma) \\
     & =\frac{k+1}{k}\mathcal{E}_{k-1}^{\mathrm{DPP}}(\mathrm{diag}(\hat{\lambda})) + O(\gamma) \\
     & \leq\frac{k+1}{k}\left(1 + \frac{r}{k-r}\right)(T-t) + O(\gamma) \\ 
     & = \left( 1 + \frac{r+1}{k-r} \right) (T-t) + O(\gamma).
    \end{align*}
    where in the second equation we have used \Cref{eq:dppclassical}, in the third line we have used \Cref{eq:dpp-nystrom}, and the last line follows by elementary manipulations. In summary, we have established that 
    \begin{eqn}\label{eq:dppwah}
        \DPPfunction_k(\tilde{K}_\gamma) \leq   \left( 1 + \frac{r+1}{k-r} \right) (T-t) + O(\gamma).
    \end{eqn}

    Now the proof technique of \Cref{th:markov-norm-limits} establishes that in the limit of $\gamma \ra 0$, the quantity $\ve_{*}^{\gamma}(\I)$
(cf. \Cref{def:markov-epsilon}) agrees with 
\[
\tilde{\ve}_{*}^{\gamma}(\I):=\Tr\left[\tilde{K}_{\gamma}-(\tilde{K}_{\gamma})_{:,\I}(\tilde{K}_{\gamma})_{\I,\I}^{-1}(\tilde{K}_{\gamma})_{\I,:}\right],
\]
 as well as the limit $\ve_{*}(\I)$, both up to error $O(\gamma)$. 

    For every $\gamma > 0$, there must exist a $k$-subset $\I^\gamma$ that performs at least as well as the DPP expectation, i.e., satisfying $ \tilde{\ve}_{*}^{\gamma}(\I^\gamma) \leq \DPPfunction_k(\tilde{K}_\gamma)$. But also $\tilde{\ve}_{*}^{\gamma}(\I^\gamma) = {\ve}_{*}(\I^\gamma) + O(\gamma)$, so it follows from \Cref{eq:dppwah} that 
    \begin{eqn}
        {\ve}_{*}(\I^\gamma) \leq \left( 1 + \frac{r+1}{k-r} \right) (T-t) + O(\gamma).
    \end{eqn}
    Since there are only finitely many subsets, it follows that there exists some $k$-subset $\I$ such that $\tilde{\ve}_{*} (\I) \leq \left( 1 + \frac{r+1}{k-r} \right) (T-t)$, which completes the proof.
\end{proof}

Now we can restate and prove Theorem \hyperlink{Theorem4B}{4B}:
\ThSpectralCSSP*
\begin{Proof}{th:spectral-cssp}
The proof follows directly from Theorem \hyperlink{Theorem4A}{4A}, together with \Cref{lem:dppstuff}.
\end{Proof}
\noindent While the choice of $s$ in Theorem \hyperlink{Theorem4B}{4B}  may be optimized to give an asymptotic $(r, \epsilon)$ bound in the sense of \cite{Chen2022-randomly,Derezinski2021-determinantal} for nuclear maximization (following the strategy in \cite{Fornace2024-column}), we omit such derivations for simplicity's sake.

\end{document}